\newcommand*\circled[1]{\tikz[baseline=(char.base)]{
            \node[shape=circle,draw,inner sep=1pt] (char) {#1};}}
  \def\th@plain{
  \thm@headfont{\bfseries} 
  \thm@notefont{\itshape} 
  \itshape
}
  \def\th@definition{
  \thm@headfont{\bfseries} 
  \thm@notefont{\bfseries} 
}
  \def\th@remark{
  \thm@headfont{\bfseries} 
  \thm@notefont{\bfseries} 
	}
\newtheorem{theorem}{Theorem}[section]
\newtheorem{lemma}[theorem]{Lemma}
\newtheorem{proposition}[theorem]{Proposition}
\newtheorem{corollary}[theorem]{Corollary}
\newtheorem{alphtheorem}{Theorem}
\theoremstyle{definition}
\newtheorem{definition}[theorem]{Definition}
\newtheorem{example}[theorem]{Example}
\theoremstyle{remark}
\newtheorem{remark}[theorem]{Remark}
\newtheoremstyle{cited}{.5\baselineskip\@plus.2\baselineskip\@minus.2\baselineskip}{.5\baselineskip\@plus.2\baselineskip\@minus.2\baselineskip}{\itshape}{}{\bfseries}{\bfseries .}{5pt plus 1pt minus 1pt}{\thmname{#1}\thmnumber{~#2}\thmnote{ \normalfont#3}}
\theoremstyle{cited}
\newtheorem{citedthm}[theorem]{Theorem}
\newtheorem{citedprop}[theorem]{Proposition}
\newtheoremstyle{citeddef}{.5\baselineskip\@plus.2\baselineskip\@minus.2\baselineskip}{.5\baselineskip\@plus.2\baselineskip\@minus.2\baselineskip}{}{}{\bfseries}{\bfseries .}{5pt plus 1pt minus 1pt}{\thmname{#1}\thmnumber{~#2}\thmnote{ \normalfont#3}}
\theoremstyle{citeddef}
\DeclareSymbolFont{cyrillic}{T2A}{cmr}{m}{n}
\DeclareMathSymbol{\Sha}{\mathalpha}{cyrillic}{216}
\newcommand{\sA}{\mathcal{A}}
\newcommand{\sC}{\mathcal{C}}
\newcommand{\sH}{\mathcal{H}}
\newcommand{\sJ}{\mathcal{J}}
\newcommand{\sM}{\mathcal{M}}
\newcommand{\sO}{\mathcal{O}}
\newcommand{\sP}{\mathcal{P}}
\newcommand{\sQ}{\mathcal{Q}}
\newcommand\sY{{\mathcal Y}}
\newcommand{\bbC}{\mathbb{C}}
\newcommand{\bbD}{\mathbb{D}}
\newcommand{\bbP}{\mathbb{P}}
\newcommand{\bbQ}{\mathbb{Q}}
\newcommand{\bbZ}{\mathbb{Z}}
\newcommand{\into}{\hookrightarrow}
\DeclareMathOperator{\Bl}{Bl}
\DeclareMathOperator{\Coker}{Coker}
 \renewcommand{\div}{\text{div}}
\DeclareMathOperator{\DR}{DR}
\DeclareMathOperator{\image}{Im}
\DeclareMathOperator{\codim}{codim}              
\DeclareMathOperator{\Hom}{Hom}
\DeclareMathOperator{\CH}{CH}
\DeclareMathOperator{\CC}{CC}
\DeclareMathOperator{\im}{Im}
\DeclareMathOperator{\Ker}{Ker}
\DeclareMathOperator{\gr}{gr}
\DeclareMathOperator{\dr}{DR}
\DeclareMathOperator{\Pic}{Pic}
\DeclareMathOperator{\pr}{pr}
\def\C{\mathbb C}
\def\G{\mathbb G}
\def\P{\mathbb P}
\def\Q{\mathbb Q}
\def\Z{\mathbb Z}
\def\AA{\mathcal A}
\def\CC{\mathcal C}
\def\DD{\mathcal D}
\def\FF{\mathcal F}
\def\HH{\mathcal H}
\def\JJ{\mathcal J}
\def\LL{\mathcal L}
\def\MM{\mathcal M}
\def\NN{\mathcal N}
\def\OO{\mathcal O}
\def\PP{\mathcal P}
\def\QQ{\mathcal Q}
\def\XX{\mathcal X}
\def\YY{\mathcal Y}
\def\qed{{\hfill $\Box$}}
\def\Hom{{\operatorname{Hom}}}
\def\Ker{{\operatorname{Ker}}}
\def\Coker{{\operatorname{Coker}}}
\def\im{{\operatorname{Im}}}
\def\coker{{\operatorname{Coker}}}
\def\div{{\operatorname{div}}}
\def\div{{\operatorname{div}}}
\def\Cl{{\operatorname{Cl}}}
\def\Pic{{\operatorname{Pic}}}
\def\NS{{\operatorname{NS}}}
\def\Bl{{\operatorname{Bl}}}
\def\cl{{\operatorname{cl}}}
\def\Sing{{\operatorname{Sing}}}
\def\codim{{\operatorname{codim}}}
\def\Aut{{\operatorname{Aut}}}
\def\rk{{\operatorname{rk}}}
\def\wt{\widetilde}
\def\ol{\overline}
\def\cHom{\HH om}
\theoremstyle{plain}
\numberwithin{equation}{section}
\newcommand\rmD{\mathrm{D}}
\newcommand\rmH{{\mathrm{H}}}
\newcommand\rmL{\mathrm{L}}
\newcommand\rmR{\mathrm{R}}
\newcommand\rmS{\mathrm{S}}
\newcommand\rmV{\mathrm{V}}
\newcommand\calA{\mathcal{A}}
\newcommand\calC{\mathcal{C}}
\newcommand\calD{\mathcal{D}}
\newcommand\calH{\mathcal{H}}
\newcommand\calJ{\mathcal{J}}
\newcommand\calL{\mathcal{L}}
\newcommand\calM{\mathcal{M}}
\newcommand\calO{\mathcal{O}}
\newcommand\calP{\mathcal{P}}
\newcommand\calQ{\mathcal{Q}}
\newcommand\calX{\mathcal{X}}
\newcommand\calY{\mathcal{Y}}
\DeclareMathOperator{\Br}{Br}
\DeclareMathOperator{\BrAn}{Br_\mathrm{an}}
\DeclareMathOperator{\an}{an}
\DeclareMathOperator{\IC}{IC}
\author{Yajnaseni Dutta}
\address{Mathematisch Instituut, Universiteit Leiden, Gorlaeus Gebouw, Einsteinweg 55, 2333 CA, Leiden, NL}
\email{y.dutta@math.leidenuniv.nl}
\author{Dominique Mattei}
\address{Institut für Algebraische Geometrie, Leibniz Universität Hannover,
Welfengarten 1, 30167 Hannover, DE}
\email{mattei@math.uni-hannover.de}
\author{Evgeny Shinder}
\address{School of Mathematical and Physical Sciences, University of Sheffield, Hounsfield Rd, S3 7RH, Sheffield, UK}
\email{e.shinder@sheffield.ac.uk}
\keywords{Hyperk\"ahler manifold, cubic fourfold, Lagrangian fibration, Tate--Shafarevich twist, intermediate Jacobian, OG10, Hodge module, analytic Brauer group, Deligne cohomology}
\begin{document}  
\title{Twists of intermediate Jacobian fibrations}

\setcounter{tocdepth}{2}
\begin{abstract}
    We study the sections, Tate--Shafarevich twists, and the period 
    for an OG10 hyperk\"ahler Lagrangian
    associated to a cubic fourfold.
    To do so, we introduce the analytic relative Jacobian sheaf for a Lagrangian fibration of a hyperk\"ahler variety.
    The
    Tate--Shafarevich group parameterizing twists
    is isomorphic to the first cohomology group of this sheaf and we
    compute it in terms of certain analytic Brauer groups associated to the cubic fourfold.
    We prove that the primitive Hodge lattice of the cubic fourfold is, up to a sign, isometric to a distinguished sublattice of the second cohomology group of the associated OG10 hyperk\"ahler manifold.
    Among the main tools we use
    are intersection complexes with integral coefficients,
    Decomposition Theorem,
    Hodge modules
    and Deligne cohomology.
\end{abstract}

\maketitle

\tableofcontents

\section{Introduction}
To a Lagrangian fibration $f\colon M\to B$ of a complex projective hyperk\"ahler manifold $M$ over a smooth base $B$, we can 
associate
the constructible pushforward sheaf
$\Lambda_M\coloneqq \rmR^1f_*\bbZ_M$ and the relative Jacobian sheaf $\sJ_{M/B} \coloneqq (\rmR^1 f_* \OO_M) /\Lambda_M$, considered in the analytic topology. 
Under appropriate assumptions, the two sheaves $\Lambda_M$ and $\JJ_{M/B}$ control the sections, the twists, and the Hodge lattice
of $f\colon M\to B$. 
In this paper, we prove several results in favor of this statement for an OG10 fibration where the fibers are generically given by intermediate Jacobians of cubic threefolds.

To motivate the problem, let us briefly recall how this works for K3 surfaces.
Let 
$f\colon S\to \bbP^1$ be an
elliptic K3 surface 
with a section, which we, for simplicity, assume to have irreducible fibers. 
We denote by $\theta,\eta \in \NS(S)$ the classes
of the section and of the fiber, respectively.
We can consider
 the constructible pushforward sheaf $\Lambda_S = \rmR^1 f_*\Z_S$ and
 the 
 sheaf  $\sJ_{S/\P^1}$
 of local analytic sections of $f$.
These two sheaves are related by the short sequence
\[
0 \to \Lambda_S \to \Omega^1_{\P^1} \to \JJ_{S/\P^1} \to 0.
\]
The sections of $f$ are identified with the sections $\rmH^0(\bbP^1, \sJ_{S/\P^1})$ and by \cite[Theorem 1.3]{Shioda}
\begin{equation}\label{eq:Shioda-intro}
\rmH^0(\bbP^1, \sJ_{S/\P^1}) \simeq \NS(S) / \langle \theta, \eta \rangle.
\end{equation}
The twists of $f$ are parametrized by the 
Tate--Shafarevich group 
$
\Sha(S/\bbP^1)\coloneqq \rmH^1(\bbP^1, \sJ_{S/\P^1}).
$
It classifies all K3 surfaces, not necessarily projective,
that can be constructed as twists of $S$ by regluing the fibers.
An easy spectral sequence 
argument gives an isomorphism
\begin{equation}\label{eq:K3-Sha-intro}
\Sha(S/\bbP^1)\simeq \rmH^2(S, \OO_S^*) =: \Br_{\an}(S) \simeq  \C / \Z^{22-\rk(\NS(S))}.
\end{equation}
Tate--Shafarevich twists
that are projective
are parametrized by the torsion subgroup of $\Br_{\an}(S)$
which is isomorphic to 
$\Br(S) = \rmH^2_{\mathrm{et}}(S, \G_m)$, see e.g. \cite[\S 11]{HuybrechtsK3}.
Finally, for the Hodge lattice, one can show that there is a Hodge isometry (for a natural Hodge  lattice structure on the group on the left in the equation below)
\begin{equation}\label{eq:S-Lambda-intro}
\rmH^1(\P^1, \Lambda_S) \simeq \langle \theta,\eta \rangle^\perp \subset \rmH^2(S, \Z).
\end{equation}

\medskip

All these phenomena seem to be replicated by hyperk\"ahler manifolds with a Lagrangian fibration, which is a higher dimensional analog of elliptic K3 surfaces. 
A hyperk\"ahler manifold $M$ is a simply connected compact K\"ahler manifold that admits  a  nondegenerate complex $2$-form $\sigma$ which generates $\rmH^{0}(M, \Omega^2_M)$. Whenever
a hyperk\"ahler manifold admits a fibration onto a lower dimensional smooth variety $B$ that is not a point, then $B\simeq \bbP^n$ \cite{HwangBase} and the fibration is forced to be Lagrangian \cite{Mats1, Mats2}. 
This means that $\sigma$ restricts trivially to  fibers of $f$. Furthermore, it follows that general fibers of $f$ are  Abelian varieties \cite[Proposition 2.1]{Camisotrivial}. 

In addition to the relative Jacobian
sheaf $\JJ_{M/B}$, another generalization
of the sheaf $\JJ_{S/\P^1}$ for a K3 surface 
is the so-called
relative Albanese sheaf
$\AA_{M/B}$. 
This sheaf naturally acts on $M$ over $B$.
The relative  Albanese sheaf was constructed
by Markushevich \cite{Markushevich} 
and in larger generality by Abasheva--Rogov \cite{AbashevaRogov} 
as a subsheaf of the sheaf of vertical automorphisms of $f$.
When all fibers of $f$ are integral and it admits a section, 
the relative Albanese sheaf
is represented by
the smooth locus of $f$
\cite{AriFed}.
The purely algebraic construction of the Albanese sheaf given by \cite{AriFed}
in the case of integral fibers
was generalized in \cite{Yoonjoo}
for any Lagrangian fibration without multiple fibers.
Our first result is the following comparison between the Jacobian and the Albanese sheaves.

\begin{alphtheorem}\emph{(=Theorem \ref{thm:JMBisAMppHK})}\label{intro:JMBisAMppHK}
     Let $M$ be a projective hyperk\"ahler manifold 
     of dimension $2n$ and $f\colon M\to B=\bbP^n$ be a Lagrangian fibration  with no strictly multiple fibers in codimension one. Assume that there exists a class in $\rmH^2(M, \Q)$ that restricts to a principal polarization on all smooth fibers of $f$.
    Then
    $$\calA_{M/B}\simeq \calJ_{M/B} \coloneqq \dfrac{\rmR^1f_*\sO_M}{\rmR^1f_*\bbZ_M}.$$
\end{alphtheorem}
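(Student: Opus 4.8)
The plan is to produce the isomorphism first over the locus where $f$ is smooth, using the principal polarization, and then to propagate it across the discriminant, where the codimension-one hypothesis on multiple fibers is decisive. Write $\Delta\subset B$ for the discriminant of $f$, set $B^\circ=B\setminus\Delta$, $M^\circ=f^{-1}(B^\circ)$, and let $j\colon B^\circ\hookrightarrow B$ be the inclusion. Over $B^\circ$ the fibers are $n$-dimensional abelian varieties, so the relative exponential sequence identifies $\calJ_{M/B}|_{B^\circ}$ with the dual abelian scheme $\underline{\Pic}^0(M^\circ/B^\circ)$, whereas, by its construction in \cite{AriFed, Yoonjoo, AbashevaRogov}, $\calA_{M/B}|_{B^\circ}$ is the sheaf of sections of the Albanese scheme $\underline{\Alb}(M^\circ/B^\circ)$. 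A class $\eta\in\rmH^2(M,\Q)$ restricting to a principal polarization on every smooth fiber defines fiberwise isomorphisms $\lambda_b\colon\Alb(M_b)\xrightarrow{\sim}\Pic^0(M_b)$; since $\eta$ is global these vary holomorphically and glue to an isomorphism of abelian schemes over $B^\circ$, hence to $\calA_{M/B}|_{B^\circ}\xrightarrow{\sim}\calJ_{M/B}|_{B^\circ}$. Here it is \emph{principality} — not merely the existence of a polarization — that makes $\lambda$ an isomorphism rather than an isogeny.

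The next step is to upgrade $\lambda$ to a morphism of sheaves on all of $B$. Both $\calJ_{M/B}$ and $\calA_{M/B}$ are presented globally as quotients of a Hodge-type sheaf by the integral local system $\Lambda_M=\rmR^1f_*\Z_M$ (respectively by its Poincar\'e--Lefschetz dual $\rmR^{2n-1}f_*\Z_M$, a Tate twist of $\Lambda_M^\vee$), and cup product with $\eta$ induces compatible morphisms on these integral and Hodge data over the whole base. This yields a morphism $\phi\colon\calA_{M/B}\to\calJ_{M/B}$ of sheaves on $B$ agreeing with $\lambda$ over $B^\circ$, so the problem is reduced to checking that $\phi$ is an isomorphism at each point of $\Delta$. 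Working with a globally defined $\phi$ is precisely what avoids the (generally false) claim that either sheaf equals $j_*$ of its restriction to $B^\circ$.

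\textbf{Main obstacle.} The heart of the argument is the behavior at the discriminant. At a general, i.e.\ codimension-one, point of $\Delta$ the absence of strictly multiple fibers forces the local monodromy of $\Lambda_M$ and the stalks of both sheaves to be governed by the same one-parameter degeneration; comparing the explicit local models — for instance the identity components of the corresponding N\'eron-type models — one verifies that $\phi$ remains an isomorphism there, whereas a multiple fiber would create a mismatch of component groups that is exactly what the hypothesis rules out. It then remains to treat $\Delta$ in codimension $\ge 2$: here I would show that neither $\calA_{M/B}$ nor $\calJ_{M/B}$ admits nonzero sections supported in codimension $\ge 2$ (a Hartogs/purity property inherited from $\rmR^1f_*\OO_M$ and from the construction of the Albanese sheaf), so that an isomorphism off a codimension-two set extends uniquely; combined with the codimension-one verification this shows $\phi$ is an isomorphism on all of $B$. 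I expect the codimension-one analysis to be the genuine difficulty, since it requires matching the degeneration data of the two sheaves, using the no-multiple-fibers hypothesis together with the fine structure of $\calA_{M/B}$ from \cite{Yoonjoo, AbashevaRogov}.
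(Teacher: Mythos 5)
Your skeleton (isomorphism over the smooth locus from principality, a globally defined comparison map, then analysis at the discriminant) parallels the paper's proof in outline, and your first two steps are essentially what the paper extracts from Abasheva--Rogov: the symplectic form gives a global isomorphism $f_*\sigma\colon \Omega^1_B \xrightarrow{\sim} f_*T_{M/B}$, Matsushita gives $\rmR^1f_*\sO_M \simeq \Omega^1_B$, and \cite[Proposition 4.1]{AbashevaRogov} shows the resulting map carries $\Lambda_M$ into $\Gamma_M = \Ker(\exp)$ (note, though, that your identification of the Albanese lattice with $\rmR^{2n-1}f_*\bbZ_M$ is only established over the smooth locus). The genuine gap is your ``main obstacle'' step, in both halves. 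At codimension-one points of the discriminant you propose to compare ``explicit local models --- the identity components of the corresponding N\'eron-type models''; no argument or reference is given, and none is available: there is no Kodaira-type classification of degenerations of Lagrangian fibrations in dimension $>2$, and this verification is precisely the hard content of the theorem. The paper replaces it by a purely sheaf-theoretic statement about the \emph{lattices}, not the quotient sheaves: $\Lambda_M=\rmR^1f_*\bbZ_M$ is torsion- and cotorsion-free with respect to every closed subset (\cref{thm:R1f-tcf}, \cref{thm:R1f-tcf-cor} --- this is exactly where the no-strictly-multiple-fibers hypothesis enters, via the cycle class of the fiber in the $i^!$-spectral sequence), and so is $\Gamma_M$ (\cref{prop:Gamma-tcf}). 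Hence both lattices equal $\rmR^0j_*$ of their restrictions to the smooth locus, so the map $\Lambda_M\to\Gamma_M$, being an isomorphism there by principality, is an isomorphism everywhere, and the five lemma finishes the proof. Your aside that ``either sheaf equals $j_*$ of its restriction'' is generally false is correct for $\calA_{M/B}$ and $\calJ_{M/B}$ themselves, but it is true for the lattices, and that is what does all the work.

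The codimension-$\ge 2$ half of your argument is also logically incomplete, and the purity it invokes begs the question. If $\phi$ is an isomorphism off a codimension-two set $Z$, purity of $\calA_{M/B}$ kills $\Ker\phi$, but $\Coker\phi$ is a \emph{quotient} supported on $Z$, and such quotients need not vanish even between pure sheaves: the inclusion $\mathfrak{m}_0\subset\sO_{\bbC^2}$ of the ideal sheaf of the origin is an isomorphism off a point between sheaves with no sections supported in codimension two, yet is not surjective. (If instead you make $\phi$ compatible with the presentations, with an isomorphism $f_*T_{M/B}\simeq \rmR^1f_*\sO_M$ in the middle, then the snake lemma makes surjectivity automatic, but the same problem reappears as injectivity: $\Ker\phi\simeq\Coker(\Gamma_M\to\Lambda_M)$, again a cotorsion statement about lattices.) Worse, the purity of $\calJ_{M/B}$ is not ``inherited from $\rmR^1f_*\sO_M$'': applying $i^!$ to $0\to\Lambda_M\to\Omega^1_B\to\calJ_{M/B}\to 0$ and using $\HH^0 i^!\Omega^1_B=\HH^1 i^!\Omega^1_B=0$ for $\codim Z\ge 2$ (Hartogs), one finds
\begin{equation*}
\HH^0 i^!\,\calJ_{M/B}\;\simeq\;\HH^1 i^!\,\Lambda_M ,
\end{equation*}
so the purity you assert for $\calJ_{M/B}$ is \emph{equivalent} to cotorsion-freeness of $\Lambda_M$, i.e.\ to the key lemma of the paper, which requires the spectral-sequence and fiber-class argument of \cref{thm:R1f-tcf} and cannot simply be assumed. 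In short: the architecture is right, but both places where your proposal must do real work are exactly the content of the tcf property, which is neither proved nor correctly substituted for in the proposal.
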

Roughly speaking, this isomorphism extends the self-duality induced by the principal polarization of the generic fiber. 
The assumption about multiple fibers is quite mild (see Definition \ref{def:strictly-mult}). It is currently unknown if Lagrangian fibrations can ever have multiple fibers in codimension one.
The theorem is proved in \S \ref{sec:compareJJAM} building on previous work 
of Abasheva--Rogov \cite{AbashevaRogov}. 
As one of the main steps of the proof, we establish that the sheaf $\Lambda_M = \rmR^1f_*\bbZ_M$ is isomorphic to its pushforward from any Zariski open subset $U \subset B$, see \cref{thm:R1f-tcf-cor}.
We call this the tcf property (torsion and cotorsion free, \cref{def:tcf}),
and it is used in a crucial way throughout the paper.

\medskip 

The main focus of this paper, where the isomorphism in Theorem \ref{intro:JMBisAMppHK} is used, is to study the Albanese sheaf
$\sA_{M/B}$ of an OG10 Lagrangian
fibration $f\colon M \to B \coloneqq(\P^5)^\vee$ associated to a 
smooth cubic fourfold $X \subset \P^5$ (see \cref{def:HKcompIJ})
constructed as follows.
Let $p\colon \YY \to B $ be the universal hyperplane section of 
the cubic fourfold $X$. 
In \cite{DonMar}, Donagi and Markman considered the relative compactified intermediate Jacobian $f_{U_1}\colon \overline{J}_{U_1}\to U_1$ over the open locus $U_1$ parameterizing hyperplane sections with at worst one nodal singularity. 
Lagrangian compactifications of $f_{U_1}$
were constructed
by
\cite{LSV} for $X$ general, and by \cite{Sacbirational} for any smooth cubic fourfold. See also \cite{Sac25} for a more general construction
and \cite{DutMarq} for an explicit generality assumption under which \cite{LSV} applies.
See \cite{MongardiOnorati}, \cite{MongardiOnorati-erratum} for additional properties of $M$.
The resulting Lagrangian fibration $f\colon M \to B$ is  a hyperk\"ahler variety of OG10 type \cite{LSV}, \cite{KLSV} and it
the only known
example of a hyperk\"ahler Lagrangian fibration with principally polarized smooth fibers which are not Jacobians of curves, but are intermediate Jacobians of cubic threefolds.

To analyze the sheaves $\Lambda_M$ and $\JJ_{M/B}$ we consider the following morphisms:
\begin{equation}\label{eq:setupXYJB}
\xymatrix{
& \YY \ar[dl]_q \ar[dr]^p & & M \ar[dl]_f \\
X & & B & \\
}
\end{equation}

\subsection{The sheaves \texorpdfstring{$\Lambda$}{Λ} and \texorpdfstring{$\calJ$}{J}}

In this set-up, we
let $\Lambda\coloneqq \rmR^3p_*\bbZ_{\sY}$; it is a constructible sheaf of generic rank $10$, the middle Betti number of a smooth cubic threefold.
We also define 
the \textit{relative intermediate Jacobian sheaf} 
\[
\sJ\coloneqq (\rmR^2p_*\Omega_{\sY}^1)/\Lambda \simeq \Omega^1_B / \Lambda
\]
of the universal family of the hyperplane sections of $X$ (see \cref{def:J} and \cref{prop:pushforward-Omega1}). 
We show the following. 
   
\begin{alphtheorem}\label{thm:mainsequences}
Let $f\colon M \to B$ be an
OG10 
Lagrangian fibration associated to a smooth cubic fourfold $X$.
There are canonical isomorphisms
\begin{equation}\label{eq:Lambda-LambdaM-intro}
\Lambda\simeq \Lambda_M\ \text{ and }\ \JJ \simeq \calA_{M/B} \simeq \JJ_{M/B}.
\end{equation}
Moreover the sheaf $\calJ$ admits
a presentation
\begin{equation}\label{eq:liesequence}
    0 \to \JJ \to \widetilde{\JJ} \to \rmR^4 p_*\Z_\YY \to 0
\end{equation}
    with $\widetilde{\JJ} = \rmR^4 p_*\Z_{\sY}(2)_{\rmD}$,
    where
$\Z_\YY(2)_{\rmD}$ is the second Deligne complex $[\Z_{\YY} \overset{(2\pi i)^2}{\longrightarrow}\OO_{\YY} \longrightarrow \Omega^1_{\YY}]$. 
\end{alphtheorem}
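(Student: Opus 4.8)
The plan is to establish the two isomorphisms in \eqref{eq:Lambda-LambdaM-intro} first over the dense open locus $U \subseteq B$ where both $p$ and $f$ are smooth, and then to propagate them across the whole base using the tcf property. Over $U$ the fiber $f^{-1}(b)$ is the intermediate Jacobian $J(Y_b)$ of the smooth cubic threefold $Y_b = p^{-1}(b)$, so the principal polarization of $J(Y_b)$ furnishes a canonical self-duality identifying the stalk $\Lambda_{M,b} = H^1(J(Y_b),\Z)$ with $H^3(Y_b,\Z) = \Lambda_b$, compatibly with the intersection pairings. These glue to an isomorphism of local systems $\Lambda|_U \cong \Lambda_M|_U$, and under the identifications $\rmR^1 f_* \OO_M \cong \Omega^1_B$ (which holds for any Lagrangian fibration) and $\rmR^2 p_* \Omega^1_\YY \cong \Omega^1_B$ (\cref{prop:pushforward-Omega1}), it is compatible with the two period maps, so it descends to $\JJ|_U \cong \calJ_{M/B}|_U$.

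To extend over all of $B$, I would use that both $\Lambda_M$ and $\Lambda$ are tcf (\cref{def:tcf}), so each is canonically the pushforward $j_*$ of its restriction to $U$, where $j\colon U \into B$. For $\Lambda_M$ this is \cref{thm:R1f-tcf-cor}; for $\Lambda = \rmR^3 p_*\Z_\YY$ the analogous statement must be proven, and this is where intersection complexes with integral coefficients and the Decomposition Theorem enter, since one must rule out torsion and cotorsion in $\rmR^3 p_*$ along the discriminant of hyperplane sections. Applying $j_*$ to the isomorphism on $U$ then yields the canonical global isomorphism $\Lambda \cong \Lambda_M$; combined with Theorem~\ref{intro:JMBisAMppHK} (giving $\calA_{M/B} \cong \calJ_{M/B}$) and the presentations $\JJ \cong \Omega^1_B/\Lambda$ and $\calJ_{M/B} \cong \Omega^1_B/\Lambda_M$, this produces $\JJ \cong \calA_{M/B} \cong \calJ_{M/B}$.

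For the presentation \eqref{eq:liesequence}, I would use the termwise filtration of the Deligne complex, namely the short exact sequence of complexes on $\YY$
\[
0 \to [\OO_\YY \xrightarrow{d} \Omega^1_\YY][-1] \to \Z_\YY(2)_{\rmD} \to \Z_\YY \to 0,
\]
apply $\rmR p_*$, and read off the long exact sequence in degree $4$. A hypercohomology computation, using that $H^{0,2}(Y_b) = H^{0,3}(Y_b) = 0$ for a cubic threefold, identifies $\rmR^4 p_*\big([\OO_\YY\to\Omega^1_\YY][-1]\big)$ with $\rmR^2 p_*\Omega^1_\YY \cong \Omega^1_B$, and identifies the connecting map $\Lambda = \rmR^3 p_*\Z_\YY \to \Omega^1_B$ with the period map, whose cokernel is $\JJ$ by \cref{def:J}. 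It then remains to see that the next connecting map $\rmR^4 p_*\Z_\YY \to \rmR^5 p_*\big([\OO_\YY\to\Omega^1_\YY][-1]\big)$ vanishes: this map is the obstruction to lifting an integral class to relative Deligne cohomology, hence is Hodge-theoretic and vanishes precisely on $F^2$-classes. Since $H^4(Y_b,\Z)$ is generated by a $(2,2)$-Hodge class, it lies in $F^2$ and the obstruction vanishes over $U$; tcf extends the vanishing to all of $B$. This yields the exact sequence $0 \to \JJ \to \widetilde{\JJ} \to \rmR^4 p_*\Z_\YY \to 0$ with $\widetilde{\JJ} = \rmR^4 p_*\Z_\YY(2)_{\rmD}$.

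The main obstacle is the extension step for $\Lambda$ in the second paragraph: the comparison on the smooth locus is essentially formal, but establishing the tcf property for $\rmR^3 p_*\Z_\YY$ over the geometrically intricate discriminant of hyperplane sections of a cubic fourfold — so that $j_*$ faithfully transports the isomorphism — requires the integral Decomposition Theorem and careful control of torsion in the nearby- and vanishing-cycle data. This is the technical heart of the argument; once it is in place, the remaining isomorphisms and the Deligne-cohomology presentation follow by Hodge-theoretic bookkeeping and the vanishing of the Hodge-class obstruction.
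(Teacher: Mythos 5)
Your treatment of the lattice comparison is sound and matches the paper: the Clemens--Griffiths identification over $U$ together with the tcf property of $\Lambda$ (\cref{thm:decomposition-cubic}) and of $\Lambda_M$ (\cref{thm:R1f-tcf-cor}) is exactly how the paper obtains $\Lambda \simeq \Lambda_M$, and quoting Theorem~\ref{thm:JMBisAMppHK} for $\calA_{M/B} \simeq \calJ_{M/B}$ is also the paper's route. The genuine gap is the step from $\Lambda \simeq \Lambda_M$ to $\JJ \simeq \calJ_{M/B}$ ``via the presentations''. The two presentations involve two a priori unrelated embeddings into $\Omega^1_B$: on one side $\epsilon\colon \Lambda \into \rmR^2 p_*\Omega^1_\YY \simeq \Omega^1_B$, on the other $\Lambda_M \into \rmR^1 f_*\OO_M \simeq \Omega^1_B$. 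To conclude that the quotients are isomorphic you must show that these embeddings match under $\Lambda \simeq \Lambda_M$ up to an automorphism of $\Omega^1_B$; equivalently, that the Hodge-theoretic isomorphism $(\rmR^2 p_*\Omega^1_\YY)|_U \simeq (\rmR^1 f_*\OO_M)|_U$ of Hodge bundles extends across the discriminant divisor. This is not formal: tcf controls the lattices but says nothing about the coherent sheaves, and a section of $\cEnd(\Omega^1_B)$ over $U$ has no reason to extend across a divisor (reflexivity arguments only extend maps across subsets of codimension at least two). This extension problem is exactly why the paper develops \S\ref{sec:HM}: by Saito's formula, $\rmR^2 p_*\Omega^1_\YY$ and $\rmR^1 f_*\OO_M$ are both canonically identified with $s(\sM)^\vee$, the dual of the lowest Hodge piece of (a Tate twist of) one and the same minimal-extension Hodge module $\sM$, with the lattices sitting inside canonically as $\rmR^0 j_*$ of their restrictions (\cref{lem:R1fO}, \cref{thm:comparisonJandJM}). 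Your proposal omits this third main ingredient of the paper entirely, filing it under ``Hodge-theoretic bookkeeping''.

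A smaller, easily repaired, defect concerns the surjectivity of $\wt{\JJ} \to \rmR^4 p_*\Z_\YY$ in \eqref{eq:liesequence}. You argue that the connecting map into $\rmR^4 p_*\Omega^{\le 1}_\YY$ vanishes over $U$ for Hodge-theoretic reasons and that ``tcf extends the vanishing to all of $B$''. That mechanism fails: $\rmR^4 p_*\Z_\YY$ is not tcf for a general smooth cubic (it contains the subsheaf $\sQ$ supported on the positive-defect locus, \cref{thm:decomposition-cubic}), and in any case what is needed is a property of the \emph{target} --- that it has no nonzero subsheaf supported over the discriminant --- not of the source. The paper sidesteps the issue: \eqref{eq:pushforward-OmegaLE1} gives $\rmR^4 p_*\Omega^{\le 1}_\YY = 0$ outright, so the connecting map is zero for trivial reasons, and the identification of the preceding connecting map with $\epsilon$, injective by \cref{prop:jaclatticeinjection} (torsion-freeness of $\Lambda$), finishes the argument. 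Since you already cite \cref{prop:pushforward-Omega1} for $\rmR^2 p_*\Omega^1_\YY \simeq \Omega^1_B$, this part of your proof is fixed simply by invoking the full statement of that proposition.
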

Both claims in \cref{thm:mainsequences} extend 
certain well-known results from the smooth locus $U \subset B$ of $p$ to the whole base.
Namely, on the one hand, the isomorphisms
\eqref{eq:Lambda-LambdaM-intro} extend the isomorphism between the middle cohomology of a smooth cubic threefold and the first cohomology of the corresponding principally polarized intermediate Jacobian \cite{ClemensGriffiths}.
On the other hand, the short exact sequence \eqref{eq:liesequence}
extends the corresponding sequence in \cite[\S 3]{Voitwist} 
from the smooth locus $U$; note that for smooth hyperplane sections $Y \subset X$
it is the exact sequence that involves the cycle class map
\[0\to \CH^2_0(Y)\to \CH^2(Y)\overset{\cl}{\longrightarrow} \rmH^4(Y,\bbZ)\to 0.\]

 Theorem \ref{thm:mainsequences} is a combination of Proposition \ref{prop:jaclatticeinjection}, Proposition \ref{prop:JJ-wtJJ}, Corollary \ref{cor:JisAM}. 
 There are three main steps in the proof of Theorem \ref{thm:mainsequences}, which we recall here, not only as an outline of the proof but also as a way to summarize the main new inputs of the paper.
 The results in each step may be of independent interest.
 \begin{enumerate}
     \item \textit{Integral Decomposition Theorem.} 
     We prove an elementary decomposition of $\rmR f_*\Z_{W}$
     for a proper surjective morphism $f$ of smooth varieties, when certain global cycles exist (Proposition \ref{prop:decomp-Z}). Applied to the complex $\rmR p_*\Z_{\sY}$, this induces an integral analogue of the Decomposition Theorem (\cref{thm:decomposition-cubic}).
     We use this to 
     establish the tcf property for $\Lambda$ which implies the first isomorphism in \eqref{eq:Lambda-LambdaM-intro}. 
\item \textit{Leray spectral 
sequence for Deligne cohomology.} We consider
 the Leray spectral sequence for $\rmR p_*\Z_{\sY}(2)_\rmD$ which does not degenerate at the second page, but only at the third page (\cref{thm:main-cohom-wtJJ}). 
    This allows us to relate $\widetilde{\sJ} = \rmR^4 p_*\Z_{\sY}(2)_\rmD$ to the relative intermediate Jacobian sheaf $\sJ$.
    \item  \textit{Jacobian sheaves for Hodge modules.}   We rely on the theory of Hodge modules and their Jacobians \cite{Zuc76, Clem83, Saiadmissible, GreenGriffithsKerr, BrosnanPearlsteinSaito, SchNeron}. 
    This allows us to construct an isomorphism of analytic sheaves $\JJ\simeq \sJ_{M/B}$ (\cref{thm:comparisonJandJM}). 
 \end{enumerate}

 \subsection{The sections, twists and the Hodge lattice}

We compute the Tate--Shafarevich twists for an
OG10 Lagrangian fibration $f\colon M\to B$ associated to a cubic fourfold $X$. 
The Tate--Shafarevich group is defined as
\[
\Sha(M/B) \coloneqq \rmH^1(M, \calA_{M/B}).
\]
The standard regluing procedure \cite{MarkmanLagrangian, AbashevaRogov} allows for every class $\alpha \in \Sha(M/B)$ to construct a new fibration $M_\alpha \to B$ which is locally over $B$ isomorphic to $M$ but has no section when $\alpha \ne 0$.
Under some mild assumptions, these twists are hyperk\"ahler, and for the torsion classes $\alpha$
the twists are projective.

For the next result, for ease of exposition, we will assume that $X$ is sufficiently general, 
namely that all hyperplane sections of $X$ are $\Q$-factorial. We call this property defect general, see Definition \ref{def:defect-general}. Defect general cubic fourfolds have been explicitly characterized by  Marquand--Viktorova \cite[Theorem 1.2]{MarqVikt} as cubic fourfolds containing no planes and no cubic scrolls.
In the case of defect general cubic fourfolds, the sheaf $\rmR^4 p_*\Z_\YY$ is isomorphic to $\Z_B$ (Proposition \ref{prop:cohom-Y}).

For a smooth cubic fourfold $X$,
we write $h \in \Pic(X)$ for the hyperplane class
and $\rmH^4(X, \Z)_{\pr}$ for
the primitive cohomology $(h^2)^\perp \subset \rmH^4(X, \Z)$.
We express the Tate--Shafarevich group in terms of certain analytic
Brauer groups of Hodge structures of K3 type,
see \cite{Huybrechts-Br}, or Appendix \ref{app:Brauer} for their properties:
\begin{align*}
    \Br_{\an}^4(X) &\coloneqq \mathrm{Coker}(\rmH^4(X, \Z) \to \rmH^{1,3}(X)) \\
\Br_{\an, \pr}^4(X) &\coloneqq \mathrm{Coker}(\rmH^4(X, \Z)_{\pr} \to \rmH^{1,3}(X)).
\end{align*}

\begin{alphtheorem} \label{thm:main} 
Assume that $X$ is a defect general cubic fourfold and $f\colon M \to B$ is a 
Lagrangian fibration as in Theorem \ref{thm:mainsequences}.
We have the following isomorphisms
\[
\begin{aligned}
\rmH^0(B, \widetilde{\JJ}) & \simeq \rmH^{2,2}(X, \Z) \quad &
\rmH^1(B, \widetilde{\JJ}) & \simeq \Br_{\an}^4(X)\\
\rmH^0(B,\JJ) & \simeq \rmH^{2,2}(X, \Z)_{\pr}\quad  &
\rmH^1(B, \JJ) & \simeq \Sha(M/B) \simeq
  \Br_{\an, \pr}^4(X)\\
\end{aligned}
\]
and a short exact sequence
\begin{equation}\label{eq:oggshaf}
    \Z/3\Z \to \Sha(M/B)\to \Br_{\an}^4(X)\to 0.
\end{equation}
    The first map in the sequence (\ref{eq:oggshaf})
    is injective if and only if there is no class $\xi \in \rmH^{2,2}(X, \Z)$ with the property $\xi \cdot h^2 = 1$.
\end{alphtheorem}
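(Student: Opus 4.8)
The plan is to compute the cohomology of $\widetilde{\JJ}$ first, where the relevant groups turn out to be the Deligne cohomology of the cubic fourfold, and then to pass to $\JJ$ through the short exact sequence $0\to\JJ\to\widetilde{\JJ}\to\rmR^4p_*\Z_{\sY}\to 0$ of \cref{thm:mainsequences}. Under the defect general hypothesis \cref{prop:cohom-Y} identifies $\rmR^4p_*\Z_{\sY}$ with $\Z_B$, so this reads $0\to\JJ\to\widetilde{\JJ}\to\Z_B\to 0$, and everything will follow from the associated long exact sequence once $\rmH^0(B,\widetilde{\JJ})$ and $\rmH^1(B,\widetilde{\JJ})$ are known.

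To compute these I would play off two spectral sequences attached to the incidence variety $\sY$. On the one hand, the Leray spectral sequence $E_2^{s,t}=\rmH^s(B,\rmR^tp_*\Z_{\sY}(2)_{\rmD})\Rightarrow\rmH^{s+t}_{\rmD}(\sY,\Z(2))$ degenerates at $E_3$ by \cref{thm:main-cohom-wtJJ}, and its row $t=4$ is exactly $\rmH^s(B,\widetilde{\JJ})$. On the other hand, the projection $q\colon\sY\to X$ realises $\sY$ as the $\P^4$-bundle of hyperplanes through a point, so the projective-bundle formula for Deligne cohomology gives $\rmH^n_{\rmD}(\sY,\Z(2))\simeq\bigoplus_{j=0}^4\rmH^{n-2j}_{\rmD}(X,\Z(2-j))$, with relative hyperplane class $\xi=p^*\eta$. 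Because a smooth cubic fourfold has no odd cohomology, the defining exact sequences of Deligne cohomology collapse and yield $\rmH^4_{\rmD}(X,\Z(2))\simeq\rmH^{2,2}(X,\Z)$ and $\rmH^5_{\rmD}(X,\Z(2))\simeq\Br^4_{\an}(X)$, the remaining summands in degrees $4$ and $5$ being $\Pic(X)\cdot\xi\oplus\Z\cdot\xi^2$ and $0$. Pullback along $q$ followed by the Leray edge homomorphisms then provides maps $\rmH^{s+4}_{\rmD}(X,\Z(2))\to\rmH^s(B,\widetilde{\JJ})$, and I would prove they are isomorphisms for $s=0,1$ by matching the $j=0$ summand of the abutment with the $t=4$ row. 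I expect this matching to be the main obstacle: one must control the lower rows $\rmR^tp_*\Z_{\sY}(2)_{\rmD}$ for $t\le 3$ — whose generic stalks are $0,\ \C/\Z(2),\ 0,\ \C/\Z$ — over the discriminant, and show that the $d_2$ differentials (which are nonzero, this being why the sequence degenerates only at $E_3$) precisely annihilate the contributions of the constant rows and of the classes $\xi,\xi^2$, leaving $\rmH^0(B,\widetilde{\JJ})\simeq\rmH^{2,2}(X,\Z)$ and $\rmH^1(B,\widetilde{\JJ})\simeq\Br^4_{\an}(X)$.

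With these in hand, the long exact sequence of $0\to\JJ\to\widetilde{\JJ}\to\Z_B\to0$, together with $\rmH^0(B,\Z_B)=\Z$ and $\rmH^1(B,\Z_B)=0$, gives
\[
0\to\rmH^0(B,\JJ)\to\rmH^{2,2}(X,\Z)\xrightarrow{\ \delta_0\ }\Z\to\rmH^1(B,\JJ)\to\Br^4_{\an}(X)\to0 .
\]
The connecting map $\delta_0$ sends a section of $\widetilde{\JJ}$ — i.e.\ a Hodge class $\xi\in\rmH^{2,2}(X,\Z)$ — to its fibrewise degree in $\rmR^4p_*\Z_{\sY}\simeq\Z_B$; restricting $\xi$ to a smooth fibre and pairing with $h|_{Y_b}$ identifies this with the intersection number $\xi\cdot h^2$, so $\delta_0=(\,\cdot\,h^2)$. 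Its kernel is $(h^2)^\perp\cap\rmH^{2,2}(X,\Z)=\rmH^{2,2}(X,\Z)_{\pr}$, giving $\rmH^0(B,\JJ)\simeq\rmH^{2,2}(X,\Z)_{\pr}$, and its cokernel is $\Z/d'\Z$ with $d'=\gcd\{\xi\cdot h^2:\xi\in\rmH^{2,2}(X,\Z)\}\in\{1,3\}$ (as $h^2\cdot h^2=3$). Hence $\rmH^1(B,\JJ)$ sits in $0\to\Z/d'\Z\to\rmH^1(B,\JJ)\to\Br^4_{\an}(X)\to0$, which is \eqref{eq:oggshaf} once the first term is written as the image of $\Z/3\Z$ generated by the class of $h^2$; the map $\Z/3\Z\to\rmH^1(B,\JJ)$ is injective exactly when $d'=3$, that is, when no Hodge class $\xi$ satisfies $\xi\cdot h^2=1$. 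Finally $\rmH^1(B,\JJ)\simeq\Sha(M/B)$ is the isomorphism $\JJ\simeq\calA_{M/B}$ of \cref{thm:mainsequences} combined with the definition of the Tate--Shafarevich group.

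It remains to identify $\Sha(M/B)$ with $\Br^4_{\an,\pr}(X)$. Here I would argue on the lattice $\rmH^4(X,\Z)$: it is unimodular and $h^2$ is primitive, so there is always an integral (not necessarily Hodge) class $\xi_0$ with $\xi_0\cdot h^2=1$, whence $\rmH^4(X,\Z)/(\Z h^2\oplus\rmH^4(X,\Z)_{\pr})\simeq\Z/3\Z$. Writing $\pi\colon\rmH^4(X,\C)\to\rmH^{1,3}(X)$ for the projection and using $\pi(h^2)=0$, the natural surjection $\Br^4_{\an,\pr}(X)\to\Br^4_{\an}(X)$ has kernel generated by $\pi(\xi_0)$, and $\pi(\xi_0)\in\im\big(\pi|_{\rmH^4(X,\Z)_{\pr}}\big)$ if and only if some Hodge class pairs to $1$ with $h^2$; thus this kernel is $\Z/3\Z$ exactly when $d'=3$ and $0$ otherwise. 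This exhibits $\Br^4_{\an,\pr}(X)$ as an extension of $\Br^4_{\an}(X)$ by $\Z/d'\Z$ with the same outer terms as the sequence for $\rmH^1(B,\JJ)$ above; matching the two extensions — which reduces to the compatibility of the degree map on $X$ with the cycle-class maps on the fibres — yields $\Sha(M/B)\simeq\Br^4_{\an,\pr}(X)$ and completes the proof.
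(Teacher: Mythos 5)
Your route to the first four isomorphisms is essentially the paper's own: compute $\rmH^0(B,\wt\JJ)$ and $\rmH^1(B,\wt\JJ)$ via the Leray spectral sequence for $\rmR p_*\Z_{\sY}(2)_\rmD$ (degenerating at the third page, as in Theorem~\ref{thm:main-cohom-wtJJ}) together with the projective bundle formula along $q$, then pass to $\JJ$ through $0 \to \JJ \to \wt\JJ \to \Z_B \to 0$. Your fibrewise identification of the induced map $\rmH^{2,2}(X,\Z) \to \Z$ with $\xi \mapsto \xi\cdot h^2$ is correct, and it yields $\rmH^0(B,\JJ)\simeq \rmH^{2,2}(X,\Z)_{\pr}$, the sequence \eqref{eq:oggshaf}, and the injectivity criterion exactly as in Corollary~\ref{cor:Sha-sequence}. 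One inaccuracy en route: the sheaves $\rmR^tp_*\Z_{\sY}(2)_\rmD$ for $t\le 3$ are not the fibrewise Deligne cohomologies $0,\C^*,0,\C^*$; by Lemma~\ref{lem:dbpushforward} they are $0$, $\C^*_B$, $\Coker(\sO_B\xrightarrow{d}\Omega^1_B)$ and $\sO^*_B$, and the nonvanishing of the $t=2$ row is precisely what makes the $d_2$ differentials and the $E_3$-degeneration argument nontrivial.

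The genuine gap is the last isomorphism, $\rmH^1(B,\JJ)\simeq \Br^4_{\an,\pr}(X)$. You correctly exhibit both groups as extensions of $\Br^4_{\an}(X)$ by $\Z/d'\Z$, but two extensions with isomorphic kernel and quotient need not have isomorphic middle terms (compare $\Z/9$ and $\Z/3\oplus\Z/3$), so "matching the two extensions" is not a formal step: one must construct a map between them, and this cannot be extracted from the sequence $0\to\JJ\to\wt\JJ\to\Z_B\to 0$ alone, which only sees the outer terms. The missing input is the other presentation of $\JJ$, namely \eqref{eq:LambdaOmegaJ}: since $\rmH^2(B,\Lambda)=0$ in the defect general case, it gives $\rmH^1(B,\JJ)\simeq \Coker\bigl(\rmH^1(B,\Lambda)\to\rmH^1(B,\Omega^1_B)\bigr)$, and the whole point of Theorem~\ref{thm:Sha-seq-nonDG} is that, under the identifications $\rmH^1(B,\Lambda)\simeq\rmH^4(X,\Z)_{\pr}$ (Lemma~\ref{lem:H1LambdaSigma}) and $\rmH^1(B,\Omega^1_B)\simeq\rmH^{1,3}(X)$, this map \emph{is} the Hodge-theoretic projection, so that its cokernel is $\Br^4_{\an,\pr}(X)$ by definition, compatibly with both five-term sequences. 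Establishing that compatibility is real work: the paper compares the Leray spectral sequences for $\Z_{\sY}$, $\Z_{\sY}(2)_\rmD$ and $\Omega^{\le 1}_{\sY}$ and checks commutativity of the snake-lemma diagram \eqref{eq:threelerays}. Your phrase "compatibility of the degree map on $X$ with the cycle-class maps on the fibres" points at the wrong compatibility (that one you already used for $\delta_0$) and does not supply this; without it, $\Sha(M/B)\simeq\Br^4_{\an,\pr}(X)$ remains unproven.
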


The computation of $\rmH^0(B, \JJ)$ has already been done by 
Sacc{\`a} \cite{Sacbirational}
using a different method but the other results are new.
The cohomology groups of $\calJ$ and $\wt{\calJ}$ are computed in \S\ref{sec:JandJtilde}. 
The isomorphism $\Sha(M/B)\simeq \rmH^1(B,\calJ)$ is a consequence of Corollary \ref{cor:JisAM}. 
\cref{thm:main}
should be understood as a replacement for 
\eqref{eq:Shioda-intro}
and 
\eqref{eq:K3-Sha-intro} in higher dimensions. If $X$ is an arbitrary smooth cubic fourfold, then we get analogous results with $\rmH^{2,2}(X,\Z)_{\pr}$ replaced by  $\Sigma^{\perp}\subset \rmH^{2,2}(X, \Z)$, where $ \Sigma$ is the Hodge sublattice generated by algebraic classes contained in the hyperplane sections of $X$, see  \cref{thm:Sha-seq-nonDG}.

Finally, we explain the relationship
between
 the Hodge lattices $\rmH^4(X, \Z)$ and $\rmH^2(M, \Z)$.
 This is given using the constructible sheaf $\Lambda$, which for $X$ defect general underlies a pure Hodge module.
We prove the following fundamental Torelli type  statement.

\begin{alphtheorem}\emph{(=Theorem \ref{thm:pairings})}\label{thm:pairings-intro}
Let $X$ be a smooth cubic fourfold
and $f\colon M \to B$ be an associated OG10 Lagrangian fibration.
Let $\theta \in \rmH^2(M, \Z)$
be the class of the principal polarization
and $\eta \coloneqq f^*H \in \rmH^2(M, \Z)$ be
the  isotropic class.
We have
an isomorphism of Hodge lattices 
\[
\rmH^4(X, \Z)_{\pr}(1) \simeq  \rmH^1(B,\Lambda_M)
\simeq
\langle \theta, \eta\rangle^\perp \subset \rmH^2(M,\bbZ).
\]
In particular, $f\colon M \to B$ uniquely determines $X$.
\end{alphtheorem}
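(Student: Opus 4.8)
The plan is to prove the two isomorphisms separately and then deduce uniqueness of $X$ from global Torelli for cubic fourfolds. The first isomorphism $\rmH^4(X,\Z)_{\pr}(1)\simeq\rmH^1(B,\Lambda_M)$ rests on the geometry of the universal hyperplane section in \eqref{eq:setupXYJB}. Since $\Lambda_M\simeq\Lambda=\rmR^3p_*\Z_\YY$ by \cref{thm:mainsequences}, it suffices to compute $\rmH^1(B,\Lambda)$. The key point is that $q\colon\YY\to X$ is a $\P^4$-bundle, so by the projective bundle formula $\rmH^4(\YY,\Z)\cong\rmH^4(X,\Z)\oplus\rmH^2(X,\Z)\zeta\oplus\rmH^0(X,\Z)\zeta^2$, where $\zeta=p^*H$ is the relative hyperplane class. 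First I would check that for a primitive class $\beta\in\rmH^4(X,\Z)_{\pr}$ the pullback $q^*\beta$ restricts to zero on every smooth fiber $Y_b=X\cap H_b$: indeed $\int_{Y_b}(q^*\beta)|_{Y_b}\cup h_{Y_b}=\int_X\beta\cup h^2=0$, and $\rmH^4(Y_b,\Z)\cong\Z$ is torsion free. Hence $q^*\beta$ has Leray degree $\geq 1$ for $p$, and composing with the projection onto the Leray graded piece $E^{1,3}_\infty=\rmH^1(B,\Lambda)$ produces a natural map $\rmH^4(X,\Z)_{\pr}\to\rmH^1(B,\Lambda)$.

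To see this map is an isomorphism I would run the Leray spectral sequence for $p$ in total degree $4$, using the integral Decomposition Theorem (\cref{thm:decomposition-cubic}) and the tcf property to control the differentials and to identify $\rmR^2p_*\Z\simeq\Z_B$, $\rmR^4p_*\Z\simeq\Z_B$ (Proposition \ref{prop:cohom-Y}, Proposition \ref{prop:decomp-Z}) and the vanishing of the remaining low-degree contributions. Comparing the two computations of $\rmH^4(\YY,\Z)$ gives $\rank\rmH^1(B,\Lambda)=22=\rank\rmH^4(X,\Z)_{\pr}$; together with the injectivity of $q^*$ and the fact that $L_2\rmH^4(\YY)$ lies in $\zeta\cdot\rmH^2(\YY)$, which meets $q^*\rmH^4(X)$ trivially, this forces the map to be an isomorphism of lattices. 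As $q^*$ is a morphism of Hodge structures, it is an isomorphism of weight-$4$ Hodge structures $\rmH^4(X,\Z)_{\pr}\simeq\rmH^1(B,\Lambda)$; the Clemens--Griffiths identification $\rmR^3p_*\Z\simeq(\rmR^1f_*\Z)(-1)$, extended over all of $B$ by \cref{thm:mainsequences}, then upgrades this to $\rmH^4(X,\Z)_{\pr}(1)\simeq\rmH^1(B,\Lambda_M)$.

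For the second isomorphism $\rmH^1(B,\Lambda_M)\simeq\langle\theta,\eta\rangle^\perp$ I would run the Leray spectral sequence for $f\colon M\to B$ in degree $2$, exactly as in the K3 case \eqref{eq:S-Lambda-intro}. The isotropic class $\eta=f^*H$ generates $E^{2,0}_\infty=\rmH^2(\P^5,\Z)$ (which survives for degree reasons, since $\rmH^0(B,\rmR^1f_*\Z)=0$), the polarization $\theta$ generates the rank-one monodromy-invariant piece $E^{0,2}_\infty\subset\rmH^0(B,\rmR^2f_*\Z)$, and $E^{1,1}_\infty=\rmH^1(B,\Lambda_M)$ is the rank-$22$ middle piece; here one invokes degeneration (Deligne rationally, and the Decomposition Theorem integrally). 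Viewing the Leray filtration as the monodromy weight filtration of $N={\cup}\,\eta$ and using that the Beauville--Bogomolov--Fujiki form $q_M$ satisfies $q_M(Nx,y)+q_M(x,Ny)=0$, one gets $q_M(\eta)=0$ and $q_M(\theta,\eta)\neq 0$, so $\langle\theta,\eta\rangle$ is a nondegenerate rank-two sublattice; its orthogonal complement is precisely $\{x\in\eta^\perp:q_M(\theta,x)=0\}$, which maps isomorphically onto $\gr^W_1=E^{1,1}_\infty=\rmH^1(B,\Lambda_M)$ with matching Hodge filtrations and matching induced forms. Concatenating the two isomorphisms yields the claimed chain, the discrepancy between the cup product on $\rmH^4(X)_{\pr}$ and $q_M$ being absorbed into the sign and the $(1)$-twist.

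Finally, uniqueness of $X$ follows from Voisin's global Torelli theorem: the sublattice $\langle\theta,\eta\rangle^\perp\subset\rmH^2(M,\Z)$ with its Hodge structure is canonically determined by $f\colon M\to B$, since $\eta$ is the pullback of the ample generator of $B$ and $\langle\theta,\eta\rangle$ is intrinsic (independent of the choice of $\theta$ modulo $\eta$), so the polarized Hodge structure on $\rmH^4(X,\Z)_{\pr}$ is recovered up to sign and Torelli recovers $X$. The step I expect to be the main obstacle is not the identification of the underlying groups or Hodge structures --- which follows fairly formally once the integral Decomposition Theorem and the degeneration at the relevant spots are in place --- but the precise matching of the two quadratic forms, namely showing that the cup product on $\rmH^4(X,\Z)_{\pr}$, transported through $q^*$ and the Leray edge maps, agrees up to the predicted sign and twist with the restriction of $q_M$ to $\langle\theta,\eta\rangle^\perp$; this requires tracking how $q_M$ interacts with the weight filtration and how the Fujiki and Clemens--Griffiths normalizations enter.
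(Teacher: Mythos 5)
Your proposal reproduces, for a \emph{general} cubic $X$, the same skeleton as the paper's proof: your step for $\rmH^1(B,\Lambda)$ is the paper's \cref{thm:cohom-Lambda}, and your Leray analysis for $f$ is the paper's identification $\rmH^1(B,\Lambda_M)\simeq W_\Z/\Z\eta$ with $W_\Z=\Ker\bigl(\rmH^2(M,\Z)\to\rmH^0(B,\rmR^2f_*\Z_M)\bigr)$. But the step you explicitly defer at the end --- matching the cup-product form on $\rmH^4(X,\Z)_{\pr}$ with the restriction of the BBF form --- is not a technicality to be ``absorbed into the sign and the $(1)$-twist'': it is the actual content of the theorem, and your proposal contains no argument for it. The paper proves it in three moves: (i) \cref{prop:BBF-dCM} and \cref{ex:K3n-OG10-Fujiki} express the BBF form on $\langle\theta,\eta\rangle^\perp$ as $\frac{1}{4!}\int_M uv\,\theta^{4}\eta^{4}$; (ii) both this pairing and the one of \cref{thm:cohom-Lambda} are polarizations of the Hodge module structure on $\Lambda_\Q[5]$, which is a \emph{simple} perverse sheaf, so they are forced to be proportional; (iii) the constant is pinned down fiberwise by Clemens--Griffiths, using the minimal class $\theta^4/4!$ on a principally polarized intermediate Jacobian, giving the factor $-1/4!$. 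Moreover, even with the rational isometry in hand, your argument only shows $W_\Z\subseteq\eta^\perp$ of finite index; the integral claim needs the paper's discriminant argument (both $\eta^\perp/\Z\eta$ and $\rmH^4(X,\Z)_{\pr}$ have discriminant $3$, so the index $d$ satisfies $3d^2=3$, forcing $d=1$). Without (i)--(iii) and the discriminant step you have an isomorphism of Hodge structures, not of Hodge lattices.

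The second gap is that your argument is supposed to apply to every smooth cubic at once, but several of its inputs are only valid for general $X$. For a non--defect-general cubic, \cref{lem:H1LambdaSigma} gives $\rmH^1(B,\Lambda)\simeq\Sigma^\perp$, which is \emph{strictly smaller} than $\rmH^4(X,\Z)_{\pr}$: a primitive class can restrict nontrivially to $\rmH^4$ of a singular hyperplane section with positive defect, so your claim that $q^*\beta$ lies in Leray level $\geq 1$ fails (vanishing on smooth fibers does not kill the section of $\rmR^4p_*\Z_\YY\simeq\Z_B\oplus\sQ$, since $\sQ$ is supported on the defect locus); the group that equals $\rmH^4(X,\Z)_{\pr}$ is $\rmH^1(B,\Lambda^\bullet)$, not $\rmH^1(B,\Lambda)$. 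Likewise, the identification of $E^{0,2}_\infty$ as rank one and the Ng\^o-type support input require integral fibers. This is precisely why the paper first proves the theorem for general $X$ and then treats an arbitrary smooth cubic by a separate degeneration argument: a family $\calX\to\Delta$ with general fibers over $\Delta^*$, trivialized local systems, and agreement of the two period maps $\calP_\calX,\calP_\calM$ on $\Delta^*$ extended to $0$ by continuity. Your proposal has no counterpart to this step, so as written it proves the statement only for defect-general cubics with integral fibers, and the first isomorphism in your chain is false as stated outside that locus.
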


The isomorphisms above
were only known after tensoring with $\Q$ \cite{LSV}.
The proof of this result uses the ingredients that have been already mentioned above such as 
integral constructible sheaves, the tcf property, Hodge modules
and the Decomposition Theorem.
\cref{thm:pairings-intro} can be interpreted as a higher-dimensional analog of \eqref{eq:S-Lambda-intro}.

\newpage

\subsection{Comparison to other works}

\subsubsection*{Beauville-Mukai systems} 
The situation that we described  for the OG10 Lagrangian fibrations is analogous to the case of Beauville--Mukai systems, that is compactified Jacobians of complete families of curves on K3 surfaces, considered in \cite{MarkmanLagrangian}. These are K3$^{[n]}$-type hyperk\"ahler manifolds, i.e., they are deformation equivalent to the Hilbert scheme of  points on K3 surfaces. 
The geometry of these moduli spaces is encoded in the  diagram analogous to \eqref{eq:setupXYJB}:
\begin{equation}\label{eq:setupSM}
\xymatrix{
& \CC \ar[dl]_q \ar[dr]^p & & M \ar[dl]_f \\
S & & B & \\
}
\end{equation}
Here $S$ is a K3 surface, $\CC$ is a family of curves in a fixed linear system $B$ on $S$ and $M$ is a moduli space of torsion sheaves on $M$ which is also a compactified Jacobian for $\CC$ over $B$.
In this situation, the role of the sheaf $\JJ$ (resp.\ $\wt{\JJ}$) from \cref{thm:mainsequences} is played by the $\Pic^0$-sheaf (resp.\ $\Pic$-sheaf) for the curve $\calC\to B$, and indeed the first Deligne complex $\Z_\YY(1)_\rmD := [\Z_{\sC} \overset{2\pi i}{\longrightarrow} \OO_{\sC}]$ is quasi-isomorphic to $\OO_{\sC}^*[-1]$. The corresponding Tate--Shafarevich group was defined and computed
by Markman \cite{MarkmanLagrangian}
and with a different approach in \cite{HuyMat}. See Appendix \ref{app:comparisonBM} for more details on the
analogy between Beauville--Mukai systems and OG10 Lagrangian fibrations.

\subsubsection*{Moduli space of twisted sheaves}

In a recent work by Bottini \cite{bottini} it has been shown that $M$ can be represented by a moduli space of sheaves on $F(X)$, the Fano variety of lines on $X$. We expect the Tate--Shafarevich twists of $M$ to be isomorphic to the corresponding moduli spaces of twisted sheaves.
This is analogous to the Beauville--Mukai system
on a K3 surface $S$ 
where $\Br(S)$ and related groups appear when parameterizing  Tate--Shafarevich twists \cite{MarkmanLagrangian}, see Proposition \ref{prop:5termseqBM}.

The  group $\Br_{\an}^4(X)$ 
is isomorphic to the Brauer group $\Br_{\an}^2(F(X))$ of the Fano variety of lines of $X$, and we prove in Corollary \ref{cor:Br-M} that $\Br_{\an, \pr}^4(X)$ is isomorphic to $\Br^2_{\an}(M)$. In particular, the sequence (\ref{eq:oggshaf}) can be rewritten as
$\bbZ/3\bbZ \to \Br_{\an}^2(M) \to \Br_{\an}^2(F(X)) \to 0$
which is an analog of \cite[Theorem 1.2]{MatteiMeinsma} for  Beauville--Mukai systems.

\subsubsection*{Locus of OG10 Lagrangian fibrations}
For a fixed  defect general
cubic $X$ with $\rho = \rk \, \rmH^{2,2}(X, \Z)$ the periods of the twists given by $\Sha(M/B) \simeq \C / \Z^{24 - \rho}$
parameterize in the 
moduli space of OG10 hyperk\"ahler manifolds 
the so-called degenerate twistor line 
\cite{VerbitskyDegenerateTwistor, AbashevaRogov, VerbitskySoldatenkovKaehlertwists}.
In particular, varying both the cubic $X$ and the twist we obtain a divisor in the moduli space of OG10 manifolds. 
This is analogous to Markman's description of Lagrangian fibrations of K$3^{[n]}$ type \cite{MarkmanLagrangian}
as Tate--Shafarevich twists of Beauville--Mukai systems.

The twist $M^T$ 
corresponding to a generator of $\Z/3\Z$ in 
\eqref{eq:oggshaf} is the one previously considered by Voisin \cite{Voitwist}, see \S\ref{sec:comparisonVoisin}. It was  proved by 
Sacc\`a \cite{Sacbirational} that $M$ and $M^T$ are not birational for general $X$. The twist $M^T$ has been identified as a moduli space of stable objects in the Kuznetsov component $\AA_X$ of $X$
\cite{LPZ20} (this is known to be not the case for $M$ \cite{Sacbirational}),
and its period has been computed in
\cite{Gx2Onorati}.

\subsubsection*{Decomposition Theorem for Lagrangian fibrations}

Perverse sheaves and the Decomposition Theorem have been used extensively in studying Lagrangian fibrations starting from the pioneering work of Ng{\^o} \cite{Ngo}. 
For the OG10 Lagrangian fibration $f\colon M \to B$
the Decomposition Theorem has been used in
\cite{dCRS_HodgeNumberOG10} 
to compute the Hodge numbers of $M$
and in \cite{ACLSRelativeLefschetz} to verify the standard Lefschetz conjecture.
Furthermore, the Hodge module Decomposition Theorem has quite recently also become prominent in studying Lagrangian fibrations \cite{ShenYin-PW, ShenYin, MSY23, schHK}. The main difference in our work is that we need to keep track of the integral structure on the corresponding first nontrivial perverse pushforward sheaf (or Hodge module). This, in particular, allows us to consider the cohomology group $\rmH^1(B, \Lambda)$ as an integral Hodge lattice.

\subsection{Organization of the paper}
In \S \ref{sec:prelim} we recall some well-known results on constructible sheaves with integer coefficients, and introduce the tcf property. 
In \S \ref{sec:cohomologycomputations}
the intermediate Jacobian sheaf $\sJ$ and its extension via Deligne cohomology $\widetilde{\sJ}$ are introduced and studied, in particular, we compute cohomology the groups from \cref{thm:main}.
In \S \ref{sec:latticesHK} we prove \cref{thm:pairings-intro} 
and in
\S \ref{sec:HM} we study Hodge module theoretic Jacobian sheaves.  
In \S \ref{sec:twist}, we prove \cref{intro:JMBisAMppHK}, compute Tate--Shafarevich groups, consider Voisin's twist \cite{Voitwist} and prove the isomorphisms in \cref{thm:mainsequences}.
In the appendices, we compare our results to those of Markman for  Beauville--Mukai systems and discuss Huybrechts' notion of Brauer group for a Hodge structure of K3 type.

\subsection*{Notation and conventions} We work with algebraic varieties over the field $\C$ often considered as complex manifolds. The sheaves and cohomology groups that we work with are considered in the analytic topology.

\subsection*{Acknowledgment}
We thank
Nick Addington,
Marc Andrea de Cataldo,
Daniel Huybrechts,
Yoonjoo Kim,
Janos Koll\'ar,
Andreas Krug,
Radu Laza,
Eyal Markman,
Gebhard Martin,
Lisa Marquand,
Keiji Oguiso,
Mihnea Popa,
Yuri Prokhorov,
Giulia Sacc{\`a},
Justin Sawon,
Andrey Soldatenkov,
Philip Tosteson,
Sasha Viktorova,
Claire Voisin
for various inputs and discussions related to this work.
We would like to thank the Junior Trimester Program `Algebraic Geometry', HIM Bonn and the University of Leiden for their hospitality where the project has been discussed at several stages of preparations. We gratefully acknowledge the financial support and the
excellent working conditions 
at the Complex Algebraic Geometry group of the University of Bonn where
this project started.

E.S. is currently supported by the UKRI Horizon Europe guarantee award `Motivic invariants and birational geometry of simple normal crossing degenerations' EP/Z000955/1
and was supported by the ERC HyperK grant at the start of the project.

\section{Preliminaries}\label{sec:prelim}

In this section, we recall some relevant facts and definitions regarding constructible sheaves and intersection complexes, allowing integral coefficients. We introduce the tcf property of constructible sheaves (Definition \ref{def:tcf}) which plays the key role in our work. 
The main results are Proposition \ref{prop:decomp-Z} and Corollary \ref{thm:R1f-tcf-cor}.
We also recall some basic facts about Deligne cohomology.

\subsection{Constructible sheaves and decomposition theorems}\label{sec:tcf}

We work with constructible sheaves 
on a smooth variety $B$
with coefficients in a regular commutative Noetherian ring $R$ of finite Krull dimension which for us will be either $\Z$ or a field.
For every local system $L$ on a
smooth open subset 
$U \subset Z$ of a closed irreducible
subvariety $Z \subset B$
there is a canonically defined
constructible complex, called the \textit{Deligne--Goresky--MacPherson intersection complex} $\IC(L)$ supported on $Z$ \cite{GoreskyMacphersonII, Dimca-sheaves}.
For example, when $Z$ is smooth and $L=R_U$ is a constant sheaf on an open subset $U \subset Z$, we have $\IC(L)\simeq R_Z[\dim Z]$. More generally we have the following observation.

\begin{lemma}\label{lem:IC-claim}
If $L$ is a local system on a smooth open subset $j \colon U \into Z$, then 
\begin{equation}\label{eq:IC-bottom-coh}
\HH^{-\dim Z} (\IC(L)) \simeq \rmR^0 j_*L, \quad
\HH^{k} (\IC(L)) = 0 \text{ for 
$k < -\dim Z$.}
\end{equation}
\end{lemma}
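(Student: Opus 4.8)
The plan is to argue directly from Deligne's inductive construction of $\IC(L)$ along a stratification, rather than from the abstract support/cosupport characterization, since both assertions concern only the single bottom cohomology degree and are easiest to track through the construction. Fix a stratification of $Z$ having $U$ as its open dense stratum (possible since $Z$ is irreducible and $U$ is a nonempty open), and record it as an increasing chain of opens $U = U_0 \subset U_1 \subset \dots \subset U_r = Z$ in which $U_k \setminus U_{k-1}$ is a smooth locally closed stratum of dimension $d_k < n \coloneqq \dim Z$. Writing $j_k \colon U_{k-1} \into U_k$ for the open inclusions, Deligne's formula reads
\[
\IC(L) = \tau_{\le -d_r - 1}\, \rmR j_{r*} \cdots \tau_{\le -d_1 - 1}\, \rmR j_{1*}(L[n]),
\]
the truncation level on the stratum of dimension $d_k$ being forced by the support condition $\dim \Supp \HH^m < -m$.

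First I would establish the vanishing $\HH^k(\IC(L)) = 0$ for $k < -n$. This is the easy half: the input $L[n]$ sits in degree $-n$, each $\rmR j_{k*}$ is left exact and so sends complexes in degrees $\ge -n$ to complexes in degrees $\ge -n$, and the truncations $\tau_{\le}$ only discard high degrees. An immediate induction on $k$ shows every term of the tower lives in degrees $\ge -n$, giving the claim.

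Next I would identify $\HH^{-n}$. The key point is that each $d_k \le n-1$ forces $-d_k - 1 \ge -n$, so every truncation $\tau_{\le -d_k - 1}$ leaves the degree $-n$ part untouched; only the pushforwards can alter it. For a complex $K$ concentrated in degrees $\ge -n$ the hypercohomology spectral sequence $E_2^{p,q} = \rmR^p j_* \HH^q(K) \Rightarrow \HH^{p+q}(\rmR j_* K)$ has, in total degree $-n$, the single surviving term $p=0$, $q=-n$, whence $\HH^{-n}(\rmR j_* K) \simeq j_* \HH^{-n}(K)$. Feeding this through the tower, and using that a composite of sheaf pushforwards along open immersions is again such a pushforward, an induction on $k$ yields $\HH^{-n}(\IC_{U_k}(L)) \simeq (j_{U,k})_* L$ with $j_{U,k}\colon U \into U_k$; the base case $k=0$ is $\IC_{U_0}(L) = L[n]$. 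Taking $k=r$ gives $\HH^{-n}(\IC(L)) \simeq j_* L = \rmR^0 j_* L$, as required.

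I expect the main difficulty to be bookkeeping rather than conceptual: one must check that the truncation bounds $-d_k-1$ genuinely reproduce the normalized $\IC(L)$ and simultaneously all lie $\ge -n$, which is exactly where irreducibility of $Z$ (forcing $d_k < n$) enters. The only genuine ingredient is the ``bottom cohomology commutes with open pushforward'' statement extracted from the spectral sequence above, and since this is insensitive to the coefficient ring the entire argument runs verbatim over $\Z$ as well as over a field.
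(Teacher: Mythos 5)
Your proof is correct and is essentially the paper's own argument: the paper disposes of this lemma by citing Deligne's inductive description of the intersection complex (Goresky--MacPherson, p.~101), which is exactly the pushforward-and-truncate tower you unwind. Your write-up simply supplies the details the paper treats as immediate, namely that every truncation bound $-d_k-1 \ge -n$ leaves degree $-n$ untouched and that the bottom cohomology of $\rmR j_{k*}$ of a complex in degrees $\ge -n$ is the plain pushforward $\rmR^0 j_{k*}$ of its bottom cohomology.
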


\begin{proof}
This follows immediately from Deligne's description of the intersection complex \cite[p. 101]{GoreskyMacphersonII}.
\end{proof}

Recall that simple perverse sheaves
with rational coefficients
are precisely the objects of the form $\IC(L)$
for a simple $\Q$-local system
$L$ supported on a locally closed smooth subset of $B$. We will also need the Decomposition Theorem of Beilinson--Bernstein--Deligne--Gabber \cite{BBDG} and more generally of Saito \cite{Sai88} in the following setting.

\begin{citedthm}[(Decomposition Theorem)]\label{thm:decomposition-perverse}
Let $f \colon W \to B$ be a projective
morphism
with smooth $W$ and $B$.
Let $L$ be a $\Q$-local system on $W$ that underlies a polarized variation of Hodge structure (e.g.\ $L = \Q_W$).
Then we have a quasi-isomorphism
\begin{equation}\label{eq:decomp-thm-perv}
\rmR f_*L[\dim W]\simeq \bigoplus_{i \in \Z} P_i[-i].
\end{equation}
The perverse sheaves $P_i$ further decompose into
a direct sum of complexes $\IC(L)$ for various closed subvarieties $Z\subseteq B$ and simple $\Q$-local systems $L$ supported on open subsets $U\subset Z$.
Furthermore, we have the relative Hard Lefschetz
isomorphism $P_{-i}\simeq P_i$.
\end{citedthm}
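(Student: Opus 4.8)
The plan is to deduce this from M.\ Saito's theory of pure Hodge modules, which supplies exactly the Hodge-theoretic enhancement of the $\ell$-adic Decomposition Theorem of \cite{BBDG} in the setting where the coefficients underlie a polarized variation of Hodge structure. (Alternatively one could invoke the original $\ell$-adic argument of \cite{BBDG} via reduction to positive characteristic and purity from the Weil conjectures, or the classical Hodge-theoretic proof of de Cataldo--Migliorini; I will follow the Hodge module route since it matches the stated hypothesis most directly.)

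First I would promote the coefficient system to a Hodge module. Since $L$ underlies a polarizable variation of Hodge structure on a dense smooth open subset of $W$, its intersection extension $\IC(L)$ underlies a pure Hodge module $\mathcal{M}$ on $W$ of some weight $w$; when $L=\Q_W$ and $W$ is smooth this is simply the constant Hodge module, whose underlying complex is $\Q_W[\dim W]$. This uses Saito's equivalence between polarizable variations of Hodge structure on a smooth variety and smooth pure Hodge modules, together with the $\IC$ functor extending it over an arbitrary base.

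Next I would apply Saito's direct image (stability) theorem for the projective morphism $f\colon W\to B$: the perverse cohomology modules ${}^p\mathrm{H}^i(f_*\mathcal{M})$ are again pure Hodge modules, of weight $w+i$, and one has a decomposition in the derived category
\begin{equation}
f_*\mathcal{M}\simeq \bigoplus_{i\in\Z} {}^p\mathrm{H}^i(f_*\mathcal{M})[-i].
\end{equation}
Setting $P_i\coloneqq {}^p\mathrm{H}^i(f_*\mathcal{M})$ and forgetting the Hodge structure recovers \eqref{eq:decomp-thm-perv} for the underlying perverse sheaves. To refine each $P_i$ into simple summands I would use that the category of pure Hodge modules of fixed weight on $B$ is semisimple abelian, with simple objects precisely the intersection extensions $\IC(\mathcal{V})$ of polarizable variations of Hodge structure $\mathcal{V}$ supported on open subsets $U$ of closed subvarieties $Z\subseteq B$ (the strict-support decomposition); forgetting the Hodge structure, each simple summand becomes an $\IC(L)$ for a simple $\Q$-local system $L$, exactly as claimed. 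Finally, the relative Hard Lefschetz isomorphism $P_{-i}\simeq P_i$ comes from cup product with (a power of) the relative ample class $\eta\in\rmH^2(W,\Q)$, which induces an isomorphism ${}^p\mathrm{H}^{-i}(f_*\mathcal{M})\xrightarrow{\sim}{}^p\mathrm{H}^{i}(f_*\mathcal{M})(i)$; this is Saito's relative Hard Lefschetz theorem, generalizing the corresponding statement of \cite{BBDG}.

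The main obstacle is that this entire argument rests on the deep foundational results of \cite{Sai88}: the very construction of the category of (mixed) Hodge modules, the stability theorem asserting that projective direct images of pure Hodge modules remain pure, the strict-support decomposition, and relative Hard Lefschetz. There is no elementary shortcut past these inputs, which is precisely why the statement is recorded here as a cited theorem rather than proved from scratch.
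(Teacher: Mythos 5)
Your proposal is correct and takes essentially the same approach as the paper: the statement is recorded there as a cited theorem attributed to \cite{BBDG} and Saito \cite{Sai88}, and the Hodge-module stability, strict-support decomposition, and relative Hard Lefschetz results you invoke are exactly the content of Saito's Th\'eor\`eme 5.3.1, which the paper itself later states as \cref{thm:HMdecomposition}. Since the paper offers no independent proof of this cited result, your derivation from Saito's theory is precisely the intended justification.
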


\begin{remark}\label{rmk:suppPi}
If the morphism $f \colon W \to B$ in \cref{thm:decomposition-perverse} is, in addition, flat of relative dimension $d$, 
applying proper
base change to \eqref{eq:decomp-thm-perv} for points $i_b\colon b \into B$
and using Lemma \ref{lem:IC-claim}
one concludes that $P_d\simeq \IC(\rmR^d f_{U*} \Q_{W_U})$ where $U\subset B$ is the locus over which the fibers of $f$ are smooth and $f_U$ is the restriction of $f$ to $W_U = f^{-1}(U)$. 
Using $P_{-d}\simeq P_d$, we obtain
\[
P_{-d} \simeq \IC(\rmR^0 f_{U*} \Q_{W_U}).
\]
Moreover, under the flatness assumption, the argument also implies $P_{i} = 0$ for all $|i|>d$.
\end{remark}

Usually, the Decomposition Theorem
does not work with integer coefficients.
We will use the following partial
replacement. 
While considering the push forward of $\Z_W$, unlike in the Decomposition Theorem, we prefer to work with $\Z_W[0]$, as opposed to $\Z_W[\dim W]$.
We write $\bbD(-)$ for the
Verdier duality functor.

\begin{proposition}\label{prop:decomp-Z}
Let $f \colon W \to B$ be a proper
morphism with generic fiber of dimension $d$ 
with smooth $W$ and $B$.
Assume that we have classes
$\xi_i \in \rmH^{2i}(W, \Z)$, $0 \le i \le d$  such that
\[
\deg_W(\xi_i \cdot \xi_{d-i} \cdot [W_b]) = 
\deg_{W_b}(\xi_i|_{W_b} \cdot \xi_{d-i}|_{W_b}) = 
1
\]
where $W_b$ is a smooth fiber of $f$.
Then $\rmR f_*\Z_W$ admits a canonical decomposition
\begin{equation}\label{eq:Lambda-direct-summand}
\rmR f_*\Z_W = \bigoplus_{i = 0}^d \Z_B[-2i] \oplus \Lambda^\bullet[-d]
\end{equation}
for some constructible
complex $\Lambda^\bullet$
which satisfies $\bbD(\Lambda^\bullet) \simeq \Lambda^\bullet[2\dim B]$.
\end{proposition}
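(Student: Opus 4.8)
The plan is to split off the summand $\bigoplus_{i=0}^{d}\bbZ_B[-2i]$ directly from the classes $\xi_i$, using Poincar\'e--Verdier duality to produce a retraction. First I would use the identification $\rmH^{2i}(W,\bbZ)=\Hom_{D^b_c(B)}(\bbZ_B[-2i],\rmR f_*\bbZ_W)$ to read each class $\xi_i$ as a morphism $s_i\colon \bbZ_B[-2i]\to \rmR f_*\bbZ_W$ (for $i=0$ this is the unit $\bbZ_B\to \rmR f_*\bbZ_W$), and assemble $s=\bigoplus_i s_i\colon \bigoplus_{i=0}^{d}\bbZ_B[-2i]\to \rmR f_*\bbZ_W$. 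Since $f$ is proper and $W$ is smooth of dimension $\dim W=\dim B+d$, one has the self-duality $\bbD_B(\rmR f_*\bbZ_W)=\rmR f_*\bbD_W(\bbZ_W)=\rmR f_*\bbZ_W[2\dim W]$; applying $\bbD_B$ to each $s_i$, transporting through this identification and reindexing $i\mapsto d-i$, produces the ``transpose'' morphism $t=\bigoplus_i t_i\colon \rmR f_*\bbZ_W\to\bigoplus_{i=0}^d\bbZ_B[-2i]$, where $t_i$ is built from $\xi_{d-i}$. The decomposition will be canonical because $s$ and $t$ are canonically determined by the $\xi_i$ and by duality. The goal is to show that $\phi:=t\circ s$ is an isomorphism; then $e:=s\circ\phi^{-1}\circ t$ is an idempotent endomorphism of $\rmR f_*\bbZ_W$ whose image is $\bigoplus_i\bbZ_B[-2i]$, and I would define $\Lambda^\bullet[-d]:=\im(\id-e)$ as the complementary summand.

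The key computation is the evaluation of $\phi=t\circ s$. Its $(i,i')$ component lies in $\Hom(\bbZ_B[-2i'],\bbZ_B[-2i])=\rmH^{2(i'-i)}(B,\bbZ)$, which vanishes whenever $i'<i$; hence $\phi$ is upper triangular for the grading by $i$. For a diagonal entry I would restrict the whole construction along a point $i_b\colon\{b\}\hookrightarrow B$ for which $W_b$ is smooth of dimension $d$ (such $b$ exists by generic smoothness). Proper base change is compatible with the unit, the cup products and Verdier duality, so it identifies $i_b^*(t_i\circ s_i)\in\rmH^0(\{b\},\bbZ)=\bbZ$ with the corresponding composite for $W_b\to\{b\}$, namely $\int_{W_b}\xi_i|_{W_b}\cdot\xi_{d-i}|_{W_b}=\deg_{W_b}(\xi_i|_{W_b}\cdot\xi_{d-i}|_{W_b})=1$ by hypothesis; since $B$ is connected the diagonal entry in $\rmH^0(B,\bbZ)=\bbZ$ is forced to be $1$. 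Writing $\phi=\id+N$, the off-diagonal part $N$ strictly lowers the grading index, hence is nilpotent, $N^{d+1}=0$, since there are only $d+1$ summands; thus $\phi$ is invertible with inverse $\sum_{k=0}^{d}(-N)^k$. I emphasize that the off-diagonal entries lie in $\rmH^{>0}(B,\bbZ)$ and are nonzero in general (already for $B=\bbP^n$), so this nilpotency argument is essential and cannot be replaced by entrywise vanishing.

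The remaining, and I expect hardest, point is the self-duality $\bbD(\Lambda^\bullet)\simeq\Lambda^\bullet[2\dim B]$. The strategy is to show that the whole splitting is symmetric under $\bbD_B$. Because $t$ is by construction the duality-transpose of $s$, biduality $\bbD_B\bbD_B\simeq\id$ together with the involutivity of the reindexing $i\mapsto d-i$ gives that $s$ is conversely the transpose of $t$, and hence that $\phi$ is self-dual; formally $\phi=t\circ s=s^\vee\circ s$, so $\phi^\vee=(s^\vee\circ s)^\vee=s^\vee\circ s=\phi$. It follows that the idempotent $e=s\circ\phi^{-1}\circ t=s\circ\phi^{-1}\circ s^\vee$ satisfies $e^\vee=e$ under the duality identification $\theta\colon\bbD_B(\rmR f_*\bbZ_W)\xrightarrow{\sim}\rmR f_*\bbZ_W[2\dim W]$, and therefore so does $\id-e$. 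Applying $\bbD_B$ to the splitting and matching $\im(\id-e)$ with its dual then yields $\bbD_B(\Lambda^\bullet[-d])\simeq\Lambda^\bullet[-d][2\dim W]$, which after unwinding the shifts and using $\dim W=\dim B+d$ becomes $\bbD_B(\Lambda^\bullet)\simeq\Lambda^\bullet[2\dim B]$. The delicate part is purely bookkeeping: one must pin down the compatibility of $\theta$ with the unit, with the cup products defining the $\xi_i$, and with biduality, so as to make the transpose relations $t=s^\vee$ and $s=t^\vee$ and the identity $e^\vee=e$ precise. This is what lets one genuinely cancel the common summand $\bigoplus_i\bbZ_B[-2i]$, rather than merely observing that $\Lambda^\bullet[-d]$ and $\bbD_B(\Lambda^\bullet)[-d-2\dim B]$ occur in abstractly isomorphic direct sums, where no Krull--Schmidt cancellation is available.
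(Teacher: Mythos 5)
Your proposal is correct and takes essentially the same route as the paper's proof: you read the $\xi_i$ as morphisms $\Z_B[-2i]\to\rmR f_*\Z_W$, take their Verdier transposes, show the composite matrix is triangular by degree reasons with diagonal entries equal to $1$ (computed via base change to a general smooth fiber), hence unipotent and invertible, and deduce self-duality of the complement from the symmetry of the construction. The only cosmetic differences are that the paper realizes the complementary summand as $\mathrm{cone}(\Gamma)[d]$ rather than as the image of the idempotent $\id-e$, and its self-duality argument (both cones of $\Gamma$ and of $\Gamma^\vee$ give the same summand) is terser than your explicit bookkeeping with $t=s^\vee$.
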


\begin{proof}
We have canonical isomorphisms
\[
\Hom(\Z_B[-2i], \rmR f_*\Z_W) = \rmH^0(B, \rmR f_*\Z_W[2i]) = 
\rmH^{2i}(B, \rmR f_*\Z_W) = \rmH^{2i}(W, \Z).
\]
For every $i$, let $\gamma_i \in \Hom(\Z_B[-2i], 
\rmR f_*\Z_W)$ be the morphism corresponding to the class $\xi_i$.
We put the morphisms $\gamma_i$ together as follows
\[
\Gamma \coloneqq (\gamma_0, \ldots, \gamma_d) \in \Hom\left(\bigoplus_{i = 0}^d \Z_B[-2i], \rmR f_*\Z_W\right)
\]
and we set
\[
\Lambda^\bullet = \mathrm{cone}\left(\bigoplus_{i = 0}^d \Z_B[-2i] \xrightarrow{\Gamma} \rmR f_*\Z_W\right)[d].
\]

We will show that $\Lambda^\bullet[-d]$ is a direct summand
of $\rmR f_*\Z_W$.
Consider the Verdier dual morphisms
\[
\gamma_i^{\vee} \in \Hom(\bbD(\rmR f_*\Z_W), \bbD(\Z_B[-2i])) 
= \Hom(\rmR f_*\Z_W, \Z_B[-2d+2i]) 
\]
\[
\Gamma^{\vee} \coloneqq (\gamma_d^{\vee}, \ldots, \gamma_0^{\vee}) \in 
\Hom\left(\rmR f_*\Z_W, \bigoplus_{j = 0}^d \Z_B[-2j]\right).
\]

We now consider the composition
\[
\gamma_j^{\vee} \circ \gamma_i \in \Hom(\Z_B[-2i], \Z_B[-2d+2j]) = \rmH^{2i+2j-2d}(B, \Z).
\]
If $i + j < d$, then this morphism is zero by degree reasons.
On the other hand, to compute $\gamma_{d-i}^{\vee} \circ \gamma_i \in \rmH^0(B, \Z) = \Z$ 
we can make a base change to a general point $b \in B$ to obtain $\gamma_{d-i}^{\vee} \circ \gamma_i = 1$
by the assumption on the classes $\xi_0, \ldots, \xi_d$.

Thus the composition $\Gamma^{\vee} \circ \Gamma$ is a lower triangular matrix with $1$ on the diagonal,
hence it is an isomorphism.
This means that $\Gamma$ is an embedding of a direct summand and we obtain \eqref{eq:Lambda-direct-summand}.
Finally we note that $\Lambda^\bullet$
is Verdier self-dual up to shift by construction,
because up to an appropriate shift $\Lambda^\bullet$ is isomorphic to the cone of $\Gamma$ and to the cone of $\Gamma^{\vee}$.
\end{proof}

\begin{remark}\label{rmk:splittingcohomYY}
    When $d$ is odd, we see from the proof above that the choice of classes $\xi_0,\dots,\xi_{\frac{d-1}{2}}$ induces maps
    \begin{equation}\label{eq:splittingRf}
        \bbZ_B[0]\oplus \cdots \oplus \bbZ_B[- (d-1)] 
        \xrightarrow{\gamma_0\oplus\dots\oplus\gamma_{ \frac{d-1}{2}}} \rmR f_*\bbZ_W \xrightarrow{\gamma^\vee_{ \frac{d-1}{2}}\oplus\dots\oplus\gamma^\vee_0}
        \bbZ_B[- (d+1)] \oplus \cdots \oplus \bbZ_B[-2d],
    \end{equation}
    whose composition is trivial. For any integer $k$, the maps in cohomology induced by (\ref{eq:splittingRf}) are given by:
    \begin{equation}\label{eq:complexcohLambdabullet}
                \begin{tikzcd}[row sep=0.05]
        \bigoplus\limits_{i=0}^{\frac{d-1}{2}} \rmH^{k-2i}(B,\bbZ) \ar[r] & \rmH^k(W,\bbZ) \ar[r] & \bigoplus\limits_{j=\frac{d+1}{2}}^{d} \rmH^{k-2j}(B,\bbZ) \\
        \sum_i \beta_{k-2i} \ar[r,mapsto] & \sum_i f^*(\beta_{k-2i})\cdot \xi_i & \\
        & \alpha \ar[r,mapsto] & \sum_j f_*(\alpha\cdot \xi_{d-j}) 
    \end{tikzcd}
    \end{equation}

    The existence of classes $\xi_{\frac{d+1}{2}},\dots,\xi_{d}$ with $\deg(\xi_i\cdot \xi_{d-i}\cdot [W_b])=1$ ensures that the maps in (\ref{eq:splittingRf}) split off direct summands. For any integer $k$ the cohomology group $\rmH^{k-d}(B,\Lambda^\bullet)$ can be computed as the cohomology of the complex (\ref{eq:complexcohLambdabullet}).
    When the classes $\xi_0,\dots,\xi_{\frac{d-1}{2}}$ are algebraic (which will be the case in our applications), this gives the group $\rmH^{*}(B,\Lambda^\bullet)$ an integral Hodge structure.
\end{remark}

In some situations $\Lambda^\bullet$
will be an intersection complex.
We give a simple example below but
the case of main interest for us is the universal hyperplane section of a cubic fourfold in Theorem \ref{thm:decomposition-cubic}.

\begin{example}\label{ex:intdecompcurve}
Let $\rho\colon \CC \to B$
be a flat projective morphism of relative dimension $1$ with connected fibers. Assume that $\CC$ and $B$ are smooth.
Assume that there exists a class $\xi \in \rmH^2(\CC, \Z)$ which restricts as a generator of the second cohomology group on
the smooth fibers of $f$.
Then we can apply Proposition
\ref{prop:decomp-Z}
to the classes $\xi_0 = 1\in \rmH^0(\sC,\bbZ)$ and $\xi_1 = \xi\in \rmH^2(\sC,\bbZ)$
 to obtain
\[
\rmR\rho_*\Z_\CC = \Z_B \oplus \Lambda^\bullet[-1] \oplus \Z_B[-2].
\]
The complex $\Lambda^\bullet$ has  cohomology sheaves only in degrees $0$ and $1$.
The sheaf $\HH^1(\Lambda^\bullet)$
is supported on the set of points
of $B$ parameterizing  reducible fibers.

Assume that smooth fibers have positive genus so that the support of $\HH^0(\Lambda^\bullet)$ is dense in $B$.
Then using the axiomatic description of the intersection complex \cite[p. 107]{GoreskyMacphersonII}
it is easy to see that
$\Lambda^\bullet$
is an intersection complex
if and only if the locus of reducible
fibers of $\rho$ is contained in a subset
of codimension  two in $B$. \end{example}

\subsection{The tcf property}

Let $F$ be a  sheaf of abelian groups on a smooth connected
variety $B$. Given an 
open subvariety $j\colon U \into B$ it will be important for us to know
when $F$ extends itself from the open set $U$, i.e., it is
isomorphic to $\rmR^0 j_*F|_U$. 

\begin{lemma}\label{lem:tcf-defined}
Let $Z \subset B$ be a proper Zariski closed subset
with open complement $U = B \setminus Z$. 
Let $i\colon Z \into B$
and $j\colon U \into B$ be the embedding morphisms.
For a sheaf $F$ of abelian groups on $B$
the following conditions are equivalent:
\begin{enumerate}
    \item[(a)] The unit of adjunction $F \to \rmR^0 j_* F|_U$ is an isomorphism
    \item[(b)] $\HH^0 i^!F = \HH^1 i^!F = 0$

\end{enumerate}
\end{lemma}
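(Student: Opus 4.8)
The plan is to derive both conditions from the standard localization (recollement) distinguished triangle
\[
i_* i^! F \to F \to \rmR j_* (F|_U) \xrightarrow{+1}
\]
attached to the closed--open decomposition $B = Z \sqcup U$ (see, e.g., \cite{Dimca-sheaves}), the point being that the middle arrow is precisely the unit of adjunction for the pair $(j^*, \rmR j_*)$. First I would pass to the long exact sequence of cohomology sheaves. Here $i$ is a closed immersion, so $i_*$ is exact and $\HH^k(i_* i^! F) = i_* \HH^k(i^! F)$; moreover $F$ is concentrated in degree $0$, so $\HH^k(F) = 0$ for $k \neq 0$, and $\rmR^k j_* = 0$ for $k < 0$. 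With these inputs the long exact sequence collapses in low degrees, the boundary terms being killed by $\HH^{-1}(F) = 0$ on the left and $\HH^1(F) = 0$ on the right, to the four-term exact sequence
\[
0 \to i_* \HH^0(i^! F) \to F \to \rmR^0 j_* (F|_U) \to i_* \HH^1(i^! F) \to 0,
\]
in which the middle map is exactly the unit $F \to \rmR^0 j_* F|_U$ of condition (a).

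The second step is simply to read off the kernel and cokernel: the kernel of the unit is $i_* \HH^0(i^! F)$ and its cokernel is $i_* \HH^1(i^! F)$. Since $i$ is a closed immersion, $i_*$ is faithful and detects vanishing, i.e.\ $i_* G = 0$ if and only if $G = 0$. Hence the unit is injective if and only if $\HH^0(i^! F) = 0$ and surjective if and only if $\HH^1(i^! F) = 0$; combining the two, it is an isomorphism if and only if both $\HH^0 i^! F$ and $\HH^1 i^! F$ vanish. This is exactly the asserted equivalence of (a) and (b).

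The only genuinely delicate point, and the part I would spell out carefully, is the identification of the middle arrow of the localization triangle with the unit of adjunction appearing in (a), together with the bookkeeping showing that the long exact sequence truly terminates as the displayed four-term sequence. Everything else is formal. In particular no smoothness or finiteness hypothesis on $B$ is actually used beyond what underlies the recollement formalism, so the equivalence holds in considerable generality.
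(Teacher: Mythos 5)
Your proof is correct and takes essentially the same approach as the paper: the four-term exact sequence
\[
0 \to i_* \HH^0 i^! F \to F \to \rmR^0 j_* F|_U \to i_* \HH^1 i^! F \to 0
\]
that you extract from the recollement triangle is precisely the ``standard exact sequence'' the paper's one-line proof invokes, and the equivalence of (a) and (b) is read off from its kernel and cokernel in the same way. The additional details you supply---identifying the middle arrow with the adjunction unit and checking the vanishing of the boundary terms---merely make explicit what the paper leaves implicit.
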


\begin{proof}\phantom{\qedhere}
Equivalence between (a) and (b) follows from
the standard exact sequence
\[
0 \to i_* \HH^0 i^! F \to F \to \rmR^0 j_* F|_U
\to i_* \HH^1 i^! F \to 0. \tag*{\qed}
\]
\end{proof}

Since 
$\HH^0 i^!F$ is the torsion 
of $F$ with respect to $Z$,
we define
the \emph{cotorsion} $F$ with respect to  $Z$
to be the sheaf
$\HH^1 i^!F$.

\begin{definition}\label{def:tcf}
We say that $F$ is a \emph{tcf (torsion and cotorsion free) sheaf with respect to $Z$}
if the equivalent conditions of Lemma \ref{lem:tcf-defined}
are satisfied.
We say that $F$ is \emph{tcf everywhere on $B$}
if $F$ is tcf for every proper closed subset $Z \subset B$.
\end{definition}

If $Z$ is fixed or clear from context we just say that $F$ is tcf.
It follows immediately from the definition that tcf is a local property in the analytic topology on $B$.

\begin{example}\label{lem:tcf-local-system}
    Let $L$ be a  local system on a smooth variety $B$.
    Then it is tcf with respect to any $Z \subset B$.
Indeed, for every complex ball $U_\varepsilon \subset B$ and
$R$-module $M$ we have
the identification 
\[
M = \rmH^0(U_\varepsilon, M) = \rmH^0(U_\varepsilon \setminus Z, M),
\]
which
implies that $M$ satisfies condition (a) of \cref{lem:tcf-defined}.
\end{example}

It is clear that if $F$ is tcf with respect to $Z$, then it is so with respect to any closed subset of $Z$.
Furthermore, we also have the 
following 
induction properties. 

\begin{lemma}\label{lem:tcf-support}
Let $F$ be a sheaf of abelian groups on a smooth variety $B$.

(i) Let $T \subset Z \subset B$ be Zariski closed subsets.
Assume that $F$ is tcf with respect to $T$
and $F|_{B \setminus T}$ is tcf with respect to $Z \setminus T$.
Then $F$ is tcf with respect to $Z$.

(ii) If \ $T \subset B$ is a Zariski closed subset such
that $F$ is tcf with respect to $T$
and $F|_{B \setminus T}$ is tcf everywhere on 
$B \setminus T$, then $F$ is tcf everywhere on $B$.
\end{lemma}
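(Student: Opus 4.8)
The plan is to deduce both statements from characterization (a) of \cref{lem:tcf-defined}: a sheaf $G$ on a smooth open $W \subset B$ is tcf with respect to a proper closed subset $Y \subset W$ exactly when the adjunction unit $G \to \rmR^0 \jmath_*(G|_{W \setminus Y})$ is an isomorphism, where $\jmath \colon W \setminus Y \into W$ is the open inclusion and $\rmR^0\jmath_* = \jmath_*$ is the ordinary pushforward. The only structural input beyond this characterization is that these units are compatible with composition of open immersions, so I would first isolate that compatibility and then feed it the two hypotheses.

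For part (i), set $V \coloneqq B \setminus T$, so that $U \coloneqq B \setminus Z \subset V$, and factor the open immersion $j \colon U \into B$ as $U \xrightarrow{j'} V \xrightarrow{k} B$; here $k$ is the complement of $T$ and $j'$ is the complement inside $V$ of $Z \cap V = Z \setminus T$, using $T \subset Z$. The coherence of adjunction units under composition identifies the unit $F \to j_*(F|_U)$ with the composite $F \to k_*(F|_V) \to k_* j'_*(F|_U)$, whose first arrow is the unit for $k$ and whose second arrow is $k_*$ applied to the unit $F|_V \to j'_*(F|_U)$ for $j'$. The hypothesis that $F$ is tcf with respect to $T$ makes the first arrow an isomorphism, and the hypothesis that $F|_V$ is tcf with respect to $Z \setminus T$ makes the unit $F|_V \to j'_*(F|_U)$ an isomorphism, hence so is its image under $k_*$. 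Therefore the composite, which is the unit $F \to \rmR^0 j_*(F|_U)$, is an isomorphism, and $F$ is tcf with respect to $Z$ by \cref{lem:tcf-defined}.

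For part (ii), I would fix an arbitrary proper closed $Z \subset B$ and reduce to part (i) applied to the chain $T \subset Z \cup T$. Since $B$ is a smooth connected, hence irreducible, variety, the union $Z \cup T$ of two proper closed subsets is again proper, and consequently $Z \setminus T = (Z \cup T) \setminus T$ is a proper closed subset of $B \setminus T$, as its complement $B \setminus (Z \cup T)$ is nonempty. The hypothesis gives that $F$ is tcf with respect to $T$, while $F|_{B \setminus T}$, being tcf everywhere on $B \setminus T$, is tcf with respect to $Z \setminus T$. Part (i) then yields that $F$ is tcf with respect to $Z \cup T$, and since $Z \subset Z \cup T$ is closed, the already-noted monotonicity of tcf under passing to closed subsets shows $F$ is tcf with respect to $Z$.

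The only genuinely delicate point is the identification in part (i) of the unit for the composite immersion $j = k \circ j'$ with the displayed two-step composite; this is the naturality of adjunction units under composition of functors, and once granted both parts become formal. The single place where a geometric hypothesis enters is the properness of $Z \cup T$ in part (ii), which is exactly where irreducibility of $B$ is used; everything else is a manipulation of units of the $(j^*, j_*)$ adjunction.
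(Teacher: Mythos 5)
Your proof is correct, and part (i) is exactly the paper's argument: factor $j\colon B\setminus Z \into B$ through $B\setminus T$, use $\rmR^0 j_* = \rmR^0 j_{2*}\rmR^0 j_{1*}$, and observe that the unit for the composite is the composite of the two units. For part (ii) your reduction differs slightly from the paper's: you apply (i) to the chain $T \subset Z\cup T \subset B$ and then pass from $Z\cup T$ down to $Z$ by the monotonicity remark, whereas the paper applies (i) to $Z\cap T \subset Z \subset B$, invoking monotonicity at the start (to get tcf with respect to $Z\cap T$) rather than at the end. Your variant has a small advantage: the hypotheses of (i) are met verbatim, since $F$ is tcf with respect to $T$ by assumption and $(Z\cup T)\setminus T = Z\setminus T$ is a proper closed subset of $B\setminus T$, so the "tcf everywhere" hypothesis applies directly. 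The paper's application instead needs $F|_{B\setminus(Z\cap T)}$ to be tcf with respect to $Z\setminus T$, which strictly speaking requires an extra (easy) restriction or locality observation, because $B\setminus(Z\cap T)$ is larger than the open set $B\setminus T$ on which the hypothesis is stated. The price you pay is the properness check on $Z\cup T$, which you correctly settle via irreducibility of the smooth connected $B$; both routes are sound, and yours is, if anything, marginally tighter.
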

\begin{proof}
(i) The easiest way to prove is this is to use the property (a) of Lemma \ref{lem:tcf-defined}.
Writing $j_1\colon B \setminus Z \into B \setminus T$, $j_2 \colon B \setminus T \into B$
and $j\colon B \setminus Z \to B$
for the open embeddings we have
$\rmR^0 j_* = \rmR^0 j_{2*} \rmR^0 j_{1*}$ and the result follows.

(ii) Let $Z \subset B$ be a proper closed subset.
Since $F$ is tcf with respect to $T$ it is also tcf
with respect to $Z \cap T$.
Therefore, we can apply (i) to 
$Z \cap T \subset Z \subset B$
to deduce that $F$ is tcf with respect to $Z$.
\end{proof}

\begin{proposition}\label{prop:IC-bottom}
If $L$ is a local system on $U \subset B$ then
$\HH^{-\dim B} (\IC(L))$
is tcf everywhere on $B$.
\end{proposition}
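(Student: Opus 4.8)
The plan is to identify the sheaf $\HH^{-\dim B}(\IC(L))$ explicitly and then deduce the tcf property from the fact that local systems are already known to be tcf (\cref{lem:tcf-local-system}). First I would reduce to the dense case: if $U$ is not dense then $\IC(L)$ is supported on the proper closed subset $\overline{U}$, so $\HH^{-\dim B}(\IC(L)) = 0$ by the vanishing in \cref{lem:IC-claim}, and the zero sheaf is trivially tcf. So assume $U \subset B$ is open dense and write $j\colon U \into B$. Then \cref{lem:IC-claim} applied with $Z = B$ gives
\[
F \coloneqq \HH^{-\dim B}(\IC(L)) \simeq \rmR^0 j_* L = j_* L,
\]
and the task becomes showing that $j_* L$ is tcf with respect to every proper closed subset $Z \subset B$.

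Next I would fix such a $Z$, set $W = B \setminus Z$, and write $k\colon W \into B$ for the open complement. By the characterization in \cref{lem:tcf-defined}(a), I must show the unit of adjunction $j_* L \to \rmR^0 k_*\big((j_* L)|_W\big)$ is an isomorphism. The key identification is base change along open immersions: for ordinary (non-derived) pushforward this is immediate on sections, since for an open $V \subset W$ one has $(j_* L)(V) = L(V \cap U) = L(V \cap U \cap W)$, which yields $(j_* L)|_W \simeq (j_W)_*\big(L|_{U \cap W}\big)$ for $j_W\colon U \cap W \into W$. Consequently
\[
\rmR^0 k_*\big((j_* L)|_W\big) \simeq \iota_*\big(L|_{U \cap W}\big), \qquad \iota = k \circ j_W\colon U \cap W \into B.
\]

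I would then factor $\iota = j \circ \ell$ through $\ell\colon U \cap W \into U$, so that $\iota_*\big(L|_{U\cap W}\big) = j_*\big(\ell_*(L|_{U \cap W})\big)$. Since $U$ is smooth (being open in the smooth $B$) and $L$ is a local system on it, \cref{lem:tcf-local-system} applied on $U$ to the closed subset $U \cap Z$ tells us precisely that the unit $L \to \ell_*\big(L|_{U \cap W}\big)$ is an isomorphism. Applying the pushforward $j_*$ and tracing through the identifications above, the unit $j_* L \to \rmR^0 k_*\big((j_*L)|_W\big)$ is revealed to be $j_*$ of this isomorphism, hence an isomorphism. As $Z$ was arbitrary, $F$ is tcf everywhere on $B$, as in \cref{def:tcf}.

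I expect the only real obstacle to be the bookkeeping in the last step, namely verifying that the adjunction unit for $k$ genuinely corresponds, under the open-immersion base-change identification, to $j_*$ applied to the adjunction unit for $\ell$. This is a purely formal compatibility of adjunctions with no geometric content; once it is checked, the statement follows immediately from \cref{lem:IC-claim} and \cref{lem:tcf-local-system}, with the dense/non-dense dichotomy handled at the outset.
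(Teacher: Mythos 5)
Your proof is correct and takes essentially the same route as the paper: identify $\HH^{-\dim B}(\IC(L))$ with $\rmR^0 j_* L$ via Lemma \ref{lem:IC-claim}, then propagate the tcf property of the local system $L$ (Example \ref{lem:tcf-local-system}) across the open embedding $j$. The only difference is that where the paper simply cites Lemma \ref{lem:tcf-support}(ii), you re-derive its content by hand through the factorization of $\rmR^0$-pushforwards along the chain $U \cap W \into U \into B$ of open embeddings, and you additionally spell out the (trivial) non-dense case.
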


\begin{proof}
Since $L$ is a local system, 
by Example \ref{lem:tcf-local-system}, we know that $L$ is tcf
everywhere on $U$.
From \eqref{eq:IC-bottom-coh} we see that
$\HH^{-\dim B} (\IC(L))\simeq \rmR^0j_*L$ and the latter is by definition tcf with respect to $B \setminus U$.
Thus, we deduce from Lemma \ref{lem:tcf-support}
that $\rmR^0j_*L$ is tcf everywhere on $B$.
\end{proof}

We now come to the main result of this section, that under certain conditions on the fibers the first pushforward of the integral constant sheaf is tcf.

\begin{definition}\label{def:strictly-mult}
Let $f \colon W \to B$ be a proper morphism.
We say that a scheme fiber $W_b = f^{-1}(b)$ is \emph{strictly multiple} if $W_b$ is nowhere reduced and
for the generic multiplicities $d_1, \ldots, d_m$ of irreducible components of $W_b$ we have $\gcd(d_1, \ldots, d_m) > 1$.

We say that
$f$ has no strictly 
multiple fibers in codimension one
if the set of points
 of $B$ with strictly multiple fibers
 is contained in a Zariski closed subset
 of codimension at least two.
\end{definition}

\begin{theorem}\label{thm:R1f-tcf}
Assume that $f \colon W \to B$ is a flat proper surjective morphism with connected fibers and with smooth $W$ and $B$.
Then the following statements hold
for $F = \rmR^1 f_* \Z_W$.
\begin{enumerate}
    \item $F$ is tcf with respect to any closed subset $Z \subset B$ of codimension at least two.
    \item If $Z \subset B$ is an irreducible divisor such that  fibers of $f$ over general points in $Z$ are not strictly multiple, then $F$ is tcf with respect to $Z$.
 \end{enumerate}
\end{theorem}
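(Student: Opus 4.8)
The plan is to verify condition (a) of \cref{lem:tcf-defined} for $F=\rmR^1f_*\Z_W$ stalkwise, using that tcf is local in the analytic topology. I would fix a small ball $\Delta\subset B$ around a point $b\in Z$ and set $\Delta^*=\Delta\setminus Z$, $W_\Delta=f^{-1}(\Delta)$, $W_{\Delta^*}=f^{-1}(\Delta^*)$. Proper base change gives $F_b\cong\rmH^1(W_\Delta,\Z)\cong\rmH^1(W_b,\Z)$, while $(\rmR^0j_*F|_U)_b=\varinjlim_\Delta\rmH^0(\Delta^*,\rmR^1f_*\Z_W)$. Since $\rmR^0f_*\Z_W=\Z$ by connectedness of the fibres, the Leray spectral sequence over $\Delta^*$ computes the latter from $\rmH^1(W_{\Delta^*},\Z)$ and the low-degree cohomology of $\Delta^*$, and identifies the unit, via the edge maps, with the restriction $\rmH^1(W_\Delta,\Z)\to\rmH^1(W_{\Delta^*},\Z)$ up to the contribution of $\rmH^1(\Delta^*,\Z)$. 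The whole problem thereby reduces to comparing $W_\Delta$ with the complement of $f^{-1}(Z)$.

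For (1), if $\codim_BZ\ge2$ then flatness of $f$ makes $f^{-1}(Z)$ of codimension $\ge2$ in the smooth manifold $W_\Delta$, so $\mathcal H^k_{f^{-1}(Z)}(\Z_{W_\Delta})=0$ for $k\le2$ and restriction $\rmH^k(W_\Delta,\Z)\to\rmH^k(W_{\Delta^*},\Z)$ is an isomorphism for $k\le2$. The same bound gives $\rmH^1(\Delta^*,\Z)=\rmH^2(\Delta^*,\Z)=0$, whence the Leray edge map yields $\rmH^0(\Delta^*,\rmR^1f_*\Z_W)\cong\rmH^1(W_{\Delta^*},\Z)$. Thus the unit is the isomorphism $\rmH^1(W_\Delta,\Z)\xrightarrow{\sim}\rmH^1(W_{\Delta^*},\Z)$, and $F$ is tcf with respect to $Z$.

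For (2), \cref{lem:tcf-support}(i) together with part (1) let me discard from $Z$ a subset of codimension $\ge2$ in $B$, so that $Z$ is smooth, $\Delta\cap Z=\{t=0\}$ for a coordinate $t$, $f$ is smooth over $\Delta^*\simeq S^1$, and the fibre type is constant along $Z$ near $b$. Writing the scheme fibre $W_0=f^{-1}(0)=\sum_i m_iD_i$ with $D_i$ the irreducible components, the localization sequence of $W_0\subset W_\Delta$ reads
\[
0\to\rmH^1(W_\Delta,\Z)\xrightarrow{\ \mathrm{res}\ }\rmH^1(W_{\Delta^*},\Z)\xrightarrow{\ \partial\ }\bigoplus_i\Z[D_i]\xrightarrow{\ g\ }\rmH^2(W_\Delta,\Z),
\]
since $\mathcal H^1_{W_0}(\Z_{W_\Delta})=0$, $\mathcal H^2_{W_0}(\Z_{W_\Delta})$ is free on the Thom classes of the $D_i$, and $g$ is the cycle-class map. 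The Wang sequence of the mapping torus $W_{\Delta^*}$ identifies $\rmH^0(\Delta^*,F)$ with the monodromy invariants $\rmH^1(W_{b'},\Z)^{T}$ of a nearby smooth fibre $W_{b'}$, and the unit with the specialization map $\operatorname{sp}\colon\rmH^1(W_b,\Z)\to\rmH^1(W_{b'},\Z)^{T}$. The decisive local input is that the pulled-back generator $\gamma$ of $\rmH^1(\Delta^*,\Z)=\Z$ has residue $\partial(f^*\gamma)=\sum_i m_i[D_i]$, because $\ord_{D_i}(f^*t)=m_i$. Injectivity of $\operatorname{sp}$ (torsion-freeness) is then unconditional: any class in $\operatorname{im}(\mathrm{res})$ mapping into $f^*\rmH^1(\Delta^*,\Z)$ has vanishing residue, i.e.\ $c\sum_i m_i[D_i]=0$, forcing $c=0$ and, by injectivity of $\mathrm{res}$, the class to vanish.

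The main obstacle is cotorsion-freeness, i.e.\ surjectivity of $\operatorname{sp}$, whose cokernel is $\ker(g)/\Z\cdot(m_1,\dots,m_m)$; so I must show $\ker(g)=\Z\cdot(m_1,\dots,m_m)$. Over $\Q$ the rank of $\ker(g)$ equals $1$: the relation $\sum_i m_i[D_i]=[W_0]=f^*[\{t=0\}]=0$ gives rank $\ge1$, while the local invariant cycle theorem (the higher-dimensional counterpart of Zariski's lemma, that the fibre class is the only relation among the components of a connected fibre) combined with the injectivity above forces $\operatorname{sp}_\Q$ to be an isomorphism, hence rank $=1$. Finally, the no-strictly-multiple hypothesis at $b$ means $\gcd(m_1,\dots,m_m)=1$, so $(m_i)$ is primitive; any integral vector in $\ker(g)$ is a rational multiple of $(m_i)$, and primitivity forces it to be an integral multiple, giving $\ker(g)=\Z\cdot(m_i)$ and vanishing cotorsion. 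The genuinely delicate point is keeping the integral structure under control: the rational invariant cycle theorem only shows the cokernel is finite, and it is precisely the residue computation together with $\gcd=1$ that upgrades this to an integral isomorphism — the strictly multiple fibre ${}_{m}I_0$, with cotorsion $\Z/m$, shows the hypothesis cannot be dropped.
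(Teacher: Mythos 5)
Your proof is correct and, despite the different packaging, is in substance the same as the paper's: your stalkwise computation over a small ball $\Delta$ is a classical-topology translation of the paper's spectral sequence $\HH^p i^! \rmR^q f_*\Z_W \Rightarrow \HH^{p+q} i^! \rmR f_*\Z_W$. Part (1) rests on the same codimension bound (the paper's \cref{lem:cohom-dualizing}); in part (2) your residue identity $\partial(f^*\gamma)=\sum_i m_i[D_i]$, the primitivity of $(m_1,\dots,m_m)$ coming from $\gcd(m_i)=1$, and the upgrade from the rational statement to the integral one are exactly the paper's argument that the boundary map $\Z_Z \to \HH^2 i^!\rmR f_*\Z_W$ is the cycle class of the fiber, is injective, and has torsion-free cokernel precisely because the fiber is not strictly multiple.

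The one step you should justify more carefully is the surjectivity of $\mathrm{sp}_\Q$, which you attribute to the local invariant cycle theorem. The statement you actually use — that $\rmH^1(W_\Delta,\Q)\to \rmH^1(W_{b'},\Q)^{T}$ is surjective for a family over an $n$-dimensional ball degenerating along a divisor — is exactly the stalkwise form of the paper's \cref{lem:Q-R1-tcf}, proved there via the Decomposition Theorem, and that is the cleanest reference. If instead you want to quote the classical one-parameter theorem, you would restrict to a disk $\Delta_1$ transverse to $Z$, and then you must check that the slice $f^{-1}(\Delta_1)$ is smooth (flatness does not guarantee this: $df$ may have corank $\ge 2$ at points of the special fiber, in which case no transverse slice has smooth total space) and that the K\"ahler/projectivity hypotheses of that theorem are met; so the sheaf-theoretic route is the one that closes this step without further work. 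With that substitution your argument is complete, and your closing example of the multiple fiber ${}_{m}I_0$ correctly isolates why the $\gcd$ hypothesis is indispensable.
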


\noindent In particular, we have the following.

\begin{corollary}\label{thm:R1f-tcf-cor}
    Let $f\colon W\to B$ as in \cref{thm:R1f-tcf}. If $f$ has no strictly multiple fibers in codimension one, 
 then $\rmR^1f_*\bbZ_W$ is tcf everywhere on $B$. In particular, let $j\colon U\into B$ be the open locus over which the fibers of $f$ are smooth, then $\rmR^0j_*\left(\rmR^1f_{U*}\bbZ_{W_U}\right)\simeq \rmR^1f_*\bbZ_W$
\end{corollary}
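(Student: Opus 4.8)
The plan is to deduce the corollary from \cref{thm:R1f-tcf} by a purely formal stratification argument, feeding the two cases of that theorem into the induction property \cref{lem:tcf-support}(ii) exactly twice, with the local-system case \cref{lem:tcf-local-system} as the base of the recursion. Write $\Delta \subset B$ for the locus over which $f$ fails to be smooth; since $W$ is smooth and we work in characteristic zero, generic smoothness makes $\Delta$ a proper Zariski closed subset, and over $U \coloneqq B \setminus \Delta$ the sheaf $F = \rmR^1 f_*\bbZ_W$ restricts to the local system $\rmR^1 f_{U*}\bbZ_{W_U}$. Everything else is bookkeeping that isolates where each part of \cref{thm:R1f-tcf} applies.

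First I would collect all the bad behaviour into one codimension-two locus. Decompose $\Delta = \Delta_1 \cup \Delta_{\ge 2}$ into its divisorial part $\Delta_1$ (the union of codimension-one components) and the union $\Delta_{\ge 2}$ of the remaining components, and let $M \subset B$ be a closed subset of codimension $\ge 2$ containing the strictly multiple locus, which exists by the codimension-one hypothesis. Set
\[
T \coloneqq \Delta_{\ge 2} \cup \Sing(\Delta_1) \cup M,
\]
where $\Sing(\Delta_1)$ is taken to include the pairwise intersections of the components of $\Delta_1$. Each piece has codimension $\ge 2$ in $B$, so $\codim_B T \ge 2$, and \cref{thm:R1f-tcf}(1) shows that $F$ is tcf with respect to $T$.

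Next I would analyse the open set $B_1 \coloneqq B \setminus T$, on which the restricted family still satisfies all hypotheses. There the non-smooth locus is $\Delta_1 \cap B_1$, a disjoint union of smooth irreducible divisors $D_\alpha$ (we removed the singularities and self-intersections of $\Delta_1$), and no fibre over $B_1$ is strictly multiple (we removed $M$); in particular the generic fibre over each $D_\alpha$ is not strictly multiple, so \cref{thm:R1f-tcf}(2) applies to each $D_\alpha$. As the $D_\alpha$ are disjoint and tcf is local in the analytic topology, $F|_{B_1}$ is tcf with respect to $T_1 \coloneqq \Delta_1 \cap B_1$; and on $B_1 \setminus T_1 \subseteq U$ the sheaf $F$ is a local system, hence tcf everywhere by \cref{lem:tcf-local-system}. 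Then \cref{lem:tcf-support}(ii) gives that $F|_{B_1}$ is tcf everywhere on $B_1$, and a second application of \cref{lem:tcf-support}(ii) (now with the codimension-two set $T$) upgrades this to $F$ being tcf everywhere on $B$. The ``in particular'' statement is then immediate: tcf everywhere includes being tcf with respect to $\Delta = B \setminus U$, which by condition (a) of \cref{lem:tcf-defined} is exactly the isomorphism $F \xrightarrow{\ \sim\ } \rmR^0 j_*(F|_U) = \rmR^0 j_*\bigl(\rmR^1 f_{U*}\bbZ_{W_U}\bigr)$.

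The genuine content lies entirely in \cref{thm:R1f-tcf}, which I am assuming here, so the only obstacle in this deduction is organisational: choosing $T$ so that it simultaneously has codimension $\ge 2$ (so that part (1) applies) and has complement in which the discriminant appears only as disjoint smooth divisors carrying no strictly multiple fibre (so that part (2) applies). The one delicate point to state carefully is the translation of the hypothesis ``no strictly multiple fibres in codimension one'' into the assertion that the generic fibre over every boundary divisor of $B_1$ is non-strictly-multiple, which is precisely where the codimension-one assumption is consumed.
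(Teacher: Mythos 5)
Your proof is correct and takes essentially the same route as the paper: the paper's own (one-line) proof deduces the corollary by combining \cref{lem:tcf-support} with the two cases of \cref{thm:R1f-tcf}, which is exactly your strategy. Your explicit stratification into a codimension-two set $T$ (absorbing the strictly multiple locus, the small discriminant components, and the singularities of the discriminant divisor) and the disjoint smooth divisors in its complement simply spells out the reduction that the paper leaves implicit.
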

\begin{proof}
The sheaf $\rmR^1f_*\bbZ_W$
is tcf everywhere
since by Lemma \ref{lem:tcf-support} and the assumption on $f$,
to 
check the tcf property
it suffices to consider irreducible subvarieties of type (1) or (2) in Theorem \ref{thm:R1f-tcf}.~\qedhere
\end{proof}

While the proof of Theorem \ref{thm:R1f-tcf} requires some work, the similar statement for $F_{\bbQ}$ follows almost immediately from the Decomposition Theorem \ref{thm:decomposition-perverse}.

\begin{lemma}\label{lem:Q-R1-tcf}
Let $f \colon W \to B$ be a flat projective surjective morphism between smooth varieties. Then
the $\Q$-constructible sheaf $\rmR^1 f_*\Q_W$ is a tcf sheaf.    
\end{lemma}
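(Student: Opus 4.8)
The statement to prove is that for any flat projective surjective morphism $f\colon W\to B$ between smooth varieties, the $\Q$-constructible sheaf $\rmR^1 f_*\Q_W$ is tcf with respect to any proper closed subset $Z\subset B$. I emphasize that there is \emph{no} connectedness hypothesis on the fibres here, so I must not assume $\rmR^0 f_{U*}\Q_{W_U}=\Q_U$. The route is to express $\rmR^1 f_*\Q_W$ as the bottom cohomology sheaf of a perverse summand coming from the Decomposition Theorem, and then invoke \cref{prop:IC-bottom}, which says precisely that such bottom cohomology sheaves are tcf everywhere.

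\textbf{Step 1: Locate $\rmR^1 f_*\Q_W$ as a cohomology sheaf of a perverse summand.} Let $d$ be the relative dimension of $f$. Applying \cref{thm:decomposition-perverse} to $L=\Q_W$ gives
\[
\rmR f_*\Q_W[\dim W]\simeq \bigoplus_{i\in\Z} P_i[-i],
\]
with each $P_i$ a direct sum of intersection complexes $\IC(L')$. Unwinding the shifts, the ordinary cohomology sheaf $\rmR^1 f_*\Q_W$ is a direct summand of $\bigoplus_i \HH^{1-\dim W}(P_i[-i]) = \bigoplus_i \HH^{1-\dim W+i}(P_i)$. The key point is to identify which $P_i$ contribute in this lowest relevant degree. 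Since each $P_i$ decomposes into terms $\IC(L')$ supported on subvarieties $Z'\subseteq B$, \cref{lem:IC-claim} tells us that $\IC(L')$ has cohomology only in degrees $\ge -\dim Z'\ge -\dim B$, with $\HH^{-\dim B}$ occurring exactly when $Z'=B$, i.e. for the summands $\IC(L')$ with $L'$ a local system on a dense open $U\subset B$.

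\textbf{Step 2: Reduce to bottom cohomology of full-support intersection complexes.} Writing $P_i=\bigoplus \IC(L'_{i,\alpha})$, the contribution of $\IC(L'_{i,\alpha})$ to $\rmR^1 f_*\Q_W = \HH^{1-\dim W}(\rmR f_*\Q_W[\dim W])$ is $\HH^{1-\dim W+i}(\IC(L'_{i,\alpha}))$, which by \cref{lem:IC-claim} vanishes unless $1-\dim W+i\ge -\dim Z'_{i,\alpha}$. Combined with the relative Hard Lefschetz symmetry $P_{-i}\simeq P_i$ and the standard perversity bounds, one checks that the only summands hitting the \emph{lowest} possible cohomological degree $-\dim B$ are full-support ones $\IC(L')$ with $L'$ a local system on a dense open $U\subset B$; every other summand contributes a cohomology sheaf sitting in strictly higher degree and hence does not meet the bottom. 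Thus $\rmR^1 f_*\Q_W$ is a direct summand of a finite direct sum of sheaves of the form $\HH^{-\dim B}(\IC(L'))$ for local systems $L'$ on dense opens of $B$. (Concretely, at the generic point $\rmR^1 f_*\Q_W$ is a single local system, and the Decomposition Theorem arranges it as $\HH^{-\dim B}$ of the full-support part of the appropriate $P_i$.)

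\textbf{Step 3: Apply \cref{prop:IC-bottom} and conclude.} By \cref{prop:IC-bottom}, each $\HH^{-\dim B}(\IC(L'))$ is tcf everywhere on $B$. The tcf property is characterized by condition (a) of \cref{lem:tcf-defined}, namely that the adjunction unit $F\to \rmR^0 j_* F|_U$ is an isomorphism; since $\rmR^0 j_*$ commutes with finite direct sums and an isomorphism of a direct sum is equivalent to an isomorphism on each summand, a finite direct sum of tcf sheaves is tcf, and a direct summand of a tcf sheaf is tcf. Therefore $\rmR^1 f_*\Q_W$ is tcf with respect to every proper closed $Z\subset B$, as claimed.

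\textbf{The main obstacle.} The delicate step is Step 2: rigorously ruling out that a lower-support summand $\IC(L')$ (supported on some proper $Z'\subsetneq B$) could contribute to the bottom cohomology degree $-\dim B$, and pinning down that the full-support contribution in the relevant degree really is $\rmR^1 f_*\Q_W$ rather than some other graded piece. This is handled by the dimension estimate of \cref{lem:IC-claim} together with the Hard Lefschetz symmetry, but one must track the shifts carefully; in particular the lack of a connectedness assumption means $\rmR^0 f_*\Q_W$ need not be $\Q_B$, so the relevant full-support local system $L'$ may have rank greater than one and the matching of perverse degrees requires the flatness of $f$ (ensuring the fibre dimension is constant) to control which $P_i$ carry the bottom stratum.
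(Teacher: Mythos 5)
Your strategy is the same as the paper's (Decomposition Theorem plus \cref{prop:IC-bottom}), but Step 2 contains a genuine gap, and it sits exactly where you did \emph{not} place it. Write $n=\dim B$ and $d$ for the fiber dimension. Since $P_i=0$ for $|i|>d$ and $\HH^j(P_i)=0$ for $j<-n$, the decomposition gives
\[
\rmR^1 f_*\Q_W \;\simeq\; \bigoplus_{i}\HH^{1-n-d-i}(P_i)\;=\;\HH^{-n+1}(P_{-d})\,\oplus\,\HH^{-n}(P_{-d+1}).
\]
Your argument handles the second summand: by \cref{lem:IC-claim} only full-support intersection complexes have cohomology in the bottom degree $-n$, so $\HH^{-n}(P_{-d+1})$ is a direct sum of sheaves $\HH^{-n}(\IC(L'))$, which are tcf by \cref{prop:IC-bottom}. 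But the first summand $\HH^{-n+1}(P_{-d})$ is also a direct summand of $\rmR^1 f_*\Q_W$ (it contributes in total degree $1$), and it lives in internal degree $-n+1$, one above the bottom. Your sentence that every other summand ``sits in strictly higher degree and hence does not meet the bottom'' is true but does not help: there is no a priori reason why $\rmR^1 f_*\Q_W$ should receive contributions only from the bottom degree, so the inference ``thus $\rmR^1 f_*\Q_W$ is a direct summand of a sum of $\HH^{-\dim B}(\IC(L'))$'' is a non sequitur. The term $\HH^{-n+1}(P_{-d})$ is exactly the dangerous one: by the perverse support condition it is supported in dimension $\le n-1$ (it could, for instance, be the bottom cohomology of an $\IC$ supported on a divisor), so if it were nonzero it would be a nonzero direct summand of $\rmR^1 f_*\Q_W$ supported on a proper closed subset, i.e.\ torsion with respect to that subset --- the lemma would then be \emph{false}, not merely unproved. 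Neither \cref{lem:IC-claim}, nor perversity, nor Hard Lefschetz symmetry by itself excludes this.

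The missing idea is the paper's reflection against top-degree vanishing. By relative Hard Lefschetz $P_{-d}\simeq P_d$, and $\HH^{-n+1}(P_d)$ occurs as a direct summand of
\[
\rmR^{2d+1}f_*\Q_W\;\simeq\;\bigoplus_i\HH^{d+1-n-i}(P_i),
\]
which vanishes because $f$ is flat and proper, so every fiber has dimension exactly $d$ and hence $\rmR^kf_*\Q_W=0$ for $k>2d$. Therefore $\HH^{-n+1}(P_{-d})\simeq\HH^{-n+1}(P_d)=0$, and only after this does your Step 3 go through. Two smaller remarks: your stated ``main obstacle'' (proper-support summands contributing in degree $-\dim B$) is the easy part, killed immediately by \cref{lem:IC-claim}, whereas the real obstacle is degree $-\dim B+1$; and flatness enters not to ``control which $P_i$ carry the bottom stratum'' but precisely to guarantee $\rmR^{2d+1}f_*\Q_W=0$ in the reflection step. (Also, your shift bookkeeping $\HH^{1-\dim W+i}(P_i)$ should read $\HH^{1-\dim W-i}(P_i)$, but this sign is harmless given $P_{-i}\simeq P_i$.)
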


\begin{proof}
Let $n = \dim B$ and $d$ be the dimension of the fibers.
Let $U\subset B$ denote the locus where $f$ is smooth. We consider the  decomposition of $\rmR f_*\bbQ_W[n+d]$ as in Theorem \ref{thm:decomposition-perverse}. 
The sheaves $\calH^j(P_i)$ vanish when $|i|>d$ by Remark \ref{rmk:suppPi} and also for  $j<-n$ by Lemma \ref{lem:IC-claim}. Therefore, we obtain
\begin{equation}\label{eq:R1fQ}
    \rmR^1 f_*\Q_W \simeq
\bigoplus_{i=-d}^d \HH^{-n-d -i+1}(P_i) = 
\HH^{-n+1}(P_{-d})\oplus\HH^{-n}(P_{-d+1})
\end{equation}
We claim that $\HH^{-n+1}(P_{-d}) = 0$. Indeed, since $P_{-d}\simeq P_d$, we have
 \[
 0=\rmR^{2d+1}f_*\bbQ_B\simeq \bigoplus_{i=-d}^{d-1}\sH^{-n-i+d+1} (P_i) \oplus \sH^{-n+1}(P_d).
 \]
The remaining term $\HH^{-n}(P_{-d+1})$ in the right hand side of \eqref{eq:R1fQ} is the lowest cohomology sheaf of an intersection complex with dense support. 
This is because, by \cref{lem:IC-claim} intersection complexes with proper support have trivial cohomology sheaves in degree $-n$. The result follows by \cref{prop:IC-bottom}.
\end{proof}

To prove Theorem \ref{thm:R1f-tcf}, we need the following 
result which will help us  check the tcf property.

\begin{lemma}\label{lem:cohom-dualizing}
For any closed
embedding $i\colon Z \into B$ with smooth $B$ the complex
$i^!R_B$ sits in cohomological degrees $\ge 2\codim(Z)$.
\end{lemma}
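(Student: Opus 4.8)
The plan is to argue by induction on $\dim Z$, using the purity isomorphism on the smooth locus of $Z$ together with the recollement triangle attached to the stratification of $Z$ by its singular locus. The statement is local on $B$, so I may freely shrink $B$. Write $c = \codim(Z)$ and $d = \dim Z = \dim B - c$. (Equivalently, since $B$ is smooth of complex dimension $n$ one has $R_B \simeq \omega_B[-2n]$ and hence $i^!R_B \simeq \omega_Z[-2n]$, so the claim is the familiar fact that the dualizing complex of a $d$-dimensional space lives in degrees $\ge -2d$; the dévissage below proves this directly.)

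The input I would use is purity. When $Z$ is smooth, $i$ is a closed embedding of complex manifolds of complex codimension $c$, and since complex manifolds are canonically oriented the normal bundle carries a Thom class over any coefficient ring $R$. The Thom/purity isomorphism then gives $i^!R_B \simeq R_Z[-2c]$, concentrated in the single degree $2c$. This settles the assertion whenever $Z$ is smooth, and in particular disposes of the base case $\dim Z = 0$.

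For the inductive step, let $i''\colon Z' \into Z$ be the closed embedding of the singular locus, so $\dim Z' < d$, and let $k\colon V \coloneqq Z\setminus Z' \into Z$ be the open complement, with $V$ smooth of pure codimension $c$ in $B$. Set $i' = i\circ i''\colon Z'\into B$ and $i_0 = i\circ k\colon V \into B$. Applying the recollement triangle $i''_*(i'')^!G \to G \to \rmR k_* k^* G \xrightarrow{+1}$ to $G = i^!R_B$, and using $(i'')^! i^! = (i')^!$ together with $k^* i^! = i_0^!$ (as $k$ is an open embedding), yields
\[
i''_*\,(i')^!R_B \to i^!R_B \to \rmR k_*\bigl(i_0^!R_B\bigr) \xrightarrow{+1}.
\]
By the smooth case applied to $i_0$, the right-hand term is $\rmR k_*\bigl(R_V[-2c]\bigr)$, which sits in degrees $\ge 2c$ because $\rmR k_*$ is left exact and so cannot lower cohomological degrees. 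By the inductive hypothesis applied to $Z'\subset B$, whose codimension is at least $c+1$, the complex $(i')^!R_B$ sits in degrees $\ge 2(c+1) \ge 2c$. An extension of two complexes supported in degrees $\ge 2c$ is again in degrees $\ge 2c$, so $i^!R_B$ is concentrated in degrees $\ge 2c$, completing the induction.

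The only non-formal ingredient is the purity isomorphism with arbitrary coefficients $R$; this is precisely where smoothness of $B$ is used, and it is available because complex submanifolds are oriented. Everything else is the standard recollement formalism plus the elementary fact that $\rmR k_*$ for an open embedding has nonnegative cohomological amplitude. I expect the only point requiring care to be the bookkeeping of codimensions in the induction, which is routine.
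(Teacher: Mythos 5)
Your proof is correct, but it takes a genuinely different route from the paper's. The paper argues in two lines: since $B$ is smooth (hence oriented), $i^!R_B \simeq i^!\DD^\bullet_B[-2\dim B] \simeq \DD^\bullet_Z[-2\dim B]$, where $\DD^\bullet$ denotes the dualizing complex, and then it quotes the Goresky--MacPherson amplitude bound $\HH^j(\DD^\bullet_Z)=0$ for $j < -2\dim Z$, which immediately gives vanishing below degree $2\codim(Z)$. What you do instead is reprove that amplitude bound by d\'evissage: purity (the Thom isomorphism, valid over any coefficient ring $R$ because the normal bundle of a complex submanifold is canonically oriented) on the smooth locus, the recollement triangle $i''_*\,(i')^!R_B \to i^!R_B \to \rmR k_*\bigl(i_0^!R_B\bigr)$, the nonnegative cohomological amplitude of $\rmR k_*$, and induction on $\dim Z$. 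Your parenthetical remark already identifies the two statements, so the proofs are the same fact approached from opposite ends: yours is self-contained and makes visible exactly where smoothness of $B$ and the generality of $R$ enter (only through purity), while the paper's is shorter at the cost of citing the dualizing-complex bound as a black box. One cosmetic point: if $Z$ has irreducible components of different dimensions, its smooth locus $V$ is not of pure codimension $c$; purity then gives $R_{V_j}[-2c_j]$ on each component of $V$ with $c_j \ge c$, which only strengthens the bound, so your induction goes through unchanged.
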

\begin{proof}
Since $B$ is smooth we have
$i^!R_B \simeq 
i^!\DD^\bullet_B[-2\dim B] \simeq 
\DD^\bullet_Z[-2\dim B]$, 
where $\DD^\bullet_B$ and
$\DD^{\bullet}_Z$ are
the dualizing complexes of $B$ and $Z$ respectively. 
We have
\cite[p. 91]{GoreskyMacphersonII}
\begin{equation}\label{eq:DD-vanishing}
 \HH^j(\DD^\bullet_Z) = 0
 \text{ for } j < -2 \dim Z
\end{equation}
so that $i^! R_B$ is acyclic in degrees below $2(\dim B - \dim Z)$ which implies the result.
\end{proof}

\begin{proof}[Proof of Theorem \ref{thm:R1f-tcf}]
For both statements we use the spectral sequence
\begin{equation}\label{eq:ss-Hi}
E_2^{p,q} = \HH^p i^! \rmR^q f_*\Z_W \Longrightarrow \HH^{p+q}i^! \rmR f_* \Z_W
\end{equation}
The terms of the spectral sequence that
we need to control
are
\begin{equation}\label{eq:HiRf-ss-terms}
E_2^{0,1} = \HH^0 i^! \rmR^1 f_*\Z_W
\text{ and }
E_2^{1,1} = \HH^1 i^! \rmR^1 f_*\Z_W.
\end{equation}
We will make use of the fact that by base change we have 
$
i^! \rmR f_* \Z_W \simeq \rmR f'_* i'^! \Z_W 
$
where $i'\colon f^{-1}(Z) \to W$ and $f'\colon f^{-1}(Z) \to Z$ are the natural morphisms.
Note also that we
have $\rmR^0 f_*\Z_W = \Z_B$ because $f$ has connected fibers
hence
\[
E_2^{p,0} = \HH^p i^!\Z_B = 0 \text{ for $p < 2\codim(Z)$ }
\]
by Lemma \ref{lem:cohom-dualizing}.

(1) 
In this case
 $E^{p,0}_2$ vanish
for $p \le 3$, which implies
that the terms
\eqref{eq:HiRf-ss-terms}
are not affected by the differentials of the spectral sequence, i.e., $E_2^{0,1}=E_{\infty}^{0,1}$, and similarly, $E_2^{1,1}=E_{\infty}^{1,1}$.

Using Lemma \ref{lem:cohom-dualizing} again, 
we see that $\sH^{p+q}i'^! \Z_W = 0$ for all $p+q<2\codim(Z)$. Since $f'_*$ is left exact, we have
\[
\HH^{p+q}i^! \rmR f_* \Z_W = \rmR^{p+q}f'_* i'^! \Z_W  = 0 \text{ for $p + q < 2 \codim(Z)$. }
\]
By the $\codim(Z) \ge 2$ assumption
we have, in particular, that
$E_\infty^{0,1} = 
E_\infty^{1,1} = 0$.
This implies that the terms 
\eqref{eq:HiRf-ss-terms} also must
vanish.

(2) Applying Lemma \ref{lem:tcf-support} 
to $\Sing(Z) \subset Z \subset B$,
we see that 
it suffices to show that the restriction $F|_{B \setminus \Sing(Z)}$ is tcf with respect to the divisor $Z \setminus \Sing(Z)$.
Thus, replacing $B$ by $B \setminus \Sing(Z)$ we can assume that $Z$ is smooth. 
By further shrinking $B$, we can assume that fibers of $f$ over points in $Z$ are not strictly multiple.
Now in the spectral sequence \eqref{eq:ss-Hi}
the bottom row only has one nonzero term $E_2^{2,0} = \Z_Z$.
Indeed, since both $Z$ and $B$ are smooth, we have $i^!\Z_B = \Z_Z[-2]$. In particular, as in (1), the differentials do not affect the  $\HH^1 i^!\rmR^1 f_*\Z_W =E_2^{1,1}$ term.
Let us show that
the differential 
\[
\HH^0 i^!\rmR^1 f_*\Z_W =E_2^{0,1} \to E_2^{2,0} = \Z_Z
\]
is also trivial.
For that, it suffices to check that
the boundary map of the spectral sequence
\[
\partial\colon 
E_2^{2,0} = \Z_Z \to  
\HH^2 i^! \rmR f_* \Z_W
\]
is injective.
The sheaf on the right computes the top  homology groups of the fibers $f^{-1}(z)$, $z \in Z$.
Using the flat pullback formalism \cite[\S3.2]{Verdier-cycleclass} one can see that the morphism 
$\partial$ corresponds to the global section of this sheaf given by the cycle class of the fiber
and so it is injective.
Thus,
\[
E_2^{2,0} = E_\infty^{2,0} = \Z_Z = \rmL_0 \HH^2 i^! \rmR f_* \Z_W, 
\]
where $\rmL_0$ stands for the lowest term of the Leray filtration.

Furthermore, we have
\[
\HH^1 i^! \rmR^1 f_* \Z_W = 
E_2^{1,1} = E_\infty^{1,1} = \rmL_1 \HH^2 i^! \rmR f_* \Z_W  / \rmL_0 \HH^2 i^! \rmR f_* \Z_W \subset \Coker(\partial).
\]
By Lemma \ref{lem:Q-R1-tcf} the sheaf in the left-hand side
is a $\Z$-torsion sheaf.
On the other hand, 
since $\partial$
is given by the cycle class of the fiber, and 
the fibers are not strictly multiple, $\partial$
has a torsion-free cokernel.
Thus, we see that the terms \eqref{eq:HiRf-ss-terms}
are trivial.~\qedhere
\end{proof}

\subsection{Deligne cohomology groups}

Recall that for any smooth variety $W$ and $m \in \bbZ_{\geq 0}$
there is a Deligne complex
\cite{EsnaultViehweg}, \cite[13.3.1]{VoiHTtwo}
\[
\Z_W(m)_\rmD \coloneqq [\bbZ_W \overset{(2\pi i)^m}\longrightarrow \sO_{W}\overset{d}\longrightarrow \Omega_{W}^1\overset{d}\longrightarrow \ldots
\overset{d}\longrightarrow \Omega_{W}^{m-1}]
\]
where the first term of the complex is in cohomological degree zero. 
Deligne complexes can also be defined to be a one-term complex $\Z_W$ for $m < 0$ but we will not need that.

Deligne cohomology groups are defined as hypercohomology groups of the complexes $\Z_W(m)_\rmD$.
We will mostly need Deligne cohomology with coefficients in $\Z_W(2)_\rmD$, however some results
are naturally explained in a more general context.

We have
\[
\Z_W(0)_\rmD = \Z_W, \quad \Z_W(1)_\rmD \simeq \OO_W^*[-1]
\]
and
\begin{equation}\label{eq:Z2D-def}
\bbZ_W(2)_\rmD\coloneqq [\bbZ\overset{(2\pi i)^2}\longrightarrow \sO_{W}\overset{d}\longrightarrow \Omega_{W}^1] \simeq [\OO^*_{W} \xrightarrow{d\log} \Omega_{W}^1][-1]
\end{equation}
The only nonvanishing
cohomology sheaves of the
complex $\Z_W(2)_\rmD$ are 
\[
\HH^{1} \Z_W(2)_\rmD= \C^*_W, \quad
\HH^2 \Z_W(2)_\rmD = \Coker(d\colon \OO_W \to \Omega^1_W).
\]
In particular, the connecting morphism
$\HH^2 \Z_W(2)_\rmD[-2] \to \HH^1 \Z_W(2)_\rmD$ induces
a long exact sequence,
which we will use later:
\begin{equation}\label{eq:DelCoh-gluing}
\ldots \to {\mathrm H}^{k-1}(W, \C^*) \to
{\mathrm H}^k(W, \Z_W(2)_\rmD) \to
{\mathrm H}^{k-2}(W, \Coker(d)) \to
{\mathrm H}^k(W, \C^*) \to \ldots
\end{equation}

Now we explain how to compute Deligne cohomology groups in various cases.
These results are well-known, however, we include them in order to freely use them for the computations in Proposition \ref{prop:dbcohomcurlyY}.

\begin{lemma}\label{lem:DelCoh-Tate}
Let $W$ be a smooth projective variety.
Assume that for some $k \ge 0$ we have
$\rmH^{2k-1}(W, \Z) = \rmH^{2k+1}(W, \Z) = 0$
and $\rmH^{2k}(W, \Z)$
 is a torsion-free Hodge structure of Hodge--Tate type.
Then for all $m \ge 0$ we have
    \[
    {\mathrm H}^{2k}(W, \Z_W(m)_\rmD) = \begin{cases}
        0 & m > k \\
        \rmH^{2k}(W, \Z) & m \le k \\
    \end{cases}
    \]
    \[
    {\mathrm H}^{2k+1}(W, \Z_W(m)_\rmD) = \begin{cases}
        \rmH^{2k}(W, \Z) \otimes \C^* & m > k \\
        0 & m \le k \\
    \end{cases}
    \]
\end{lemma}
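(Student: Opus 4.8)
The plan is to reduce the computation to the standard long exact sequence relating Deligne cohomology with Betti cohomology and the Hodge filtration, and then to feed in the two vanishing hypotheses together with the Hodge--Tate assumption. Concretely, I would start from the exact triangle
\[
\Z_W(m)_\rmD \longrightarrow \Z_W \oplus \Omega^{\geq m}_W \longrightarrow \Omega^\bullet_W \xrightarrow{+1}
\]
in which $\Omega^{\geq m}_W = [\Omega^m_W \to \Omega^{m+1}_W \to \cdots]$ is the Hodge (stupid) truncation $F^m\Omega^\bullet_W$, the middle map is the difference $\iota_1 - \iota_2$ of the two natural inclusions into the de Rham complex $\Omega^\bullet_W \simeq \C_W$, and $\Z_W(m)_\rmD$ is the shifted cone (see \cite{EsnaultViehweg}, \cite{VoiHTtwo}). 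Since $W$ is smooth projective the Hodge--de Rham spectral sequence degenerates, so $\rmH^n(W, \Omega^{\geq m}_W) = F^m \rmH^n(W, \C)$ and $\rmH^n(W, \Omega^\bullet_W) = \rmH^n(W, \C)$. Taking hypercohomology of the triangle therefore yields the long exact sequence
\[
\cdots \to \rmH^{n-1}(W,\C) \to \rmH^n(W, \Z_W(m)_\rmD) \to \rmH^n(W, \Z) \oplus F^m\rmH^n(W,\C) \xrightarrow{\ \delta\ } \rmH^n(W, \C) \to \cdots
\]
where $\delta(a,b) = a_\C - b$ and $a_\C$ denotes the image of an integral class in complex cohomology.

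Next I would use the hypotheses $\rmH^{2k-1}(W,\Z) = \rmH^{2k+1}(W,\Z) = 0$. By universal coefficients these force $\rmH^{2k\pm 1}(W,\C) = 0$, hence also $F^m\rmH^{2k\pm 1}(W,\C) = 0$. Substituting $n = 2k$ and $n = 2k+1$, the long exact sequence collapses to the four--term sequence
\[
0 \to \rmH^{2k}(W, \Z_W(m)_\rmD) \to \rmH^{2k}(W,\Z) \oplus F^m\rmH^{2k}(W,\C) \xrightarrow{\ \delta\ } \rmH^{2k}(W,\C) \to \rmH^{2k+1}(W, \Z_W(m)_\rmD) \to 0,
\]
so it remains to compute $\ker \delta$ and $\Coker \delta$.

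Here the Hodge--Tate assumption enters: since $\rmH^{2k}(W,\Z)$ is pure of type $(k,k)$, its Hodge filtration satisfies $F^j\rmH^{2k}(W,\C) = \rmH^{2k}(W,\C)$ for $j \le k$ and $F^j\rmH^{2k}(W,\C) = 0$ for $j > k$. For $m \le k$ the space $F^m$ is everything, so $\delta$ is surjective with kernel the graph $\{(a,a_\C)\} \cong \rmH^{2k}(W,\Z)$, giving $\rmH^{2k}(W,\Z_W(m)_\rmD) \cong \rmH^{2k}(W,\Z)$ and $\rmH^{2k+1}(W,\Z_W(m)_\rmD) = 0$. For $m > k$ we have $F^m = 0$; torsion--freeness makes $\rmH^{2k}(W,\Z) \hookrightarrow \rmH^{2k}(W,\C)$ injective, so $\ker\delta = 0$ and $\Coker\delta = \rmH^{2k}(W,\C)/\rmH^{2k}(W,\Z)$. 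Finally I would identify this quotient with $\rmH^{2k}(W,\Z)\otimes\C^*$ by tensoring the exponential sequence $0 \to \Z \to \C \to \C^* \to 0$ with the free (hence flat) module $\rmH^{2k}(W,\Z)$; the $(2\pi i)^m$ twist only rescales the lattice and does not change the isomorphism type of the quotient. This yields all four cases.

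There is no deep obstacle here: once the exact triangle is in place the argument is essentially bookkeeping. The two points requiring care are the invocation of Hodge--de Rham degeneration to identify $\rmH^n(W,\Omega^{\geq m}_W)$ with $F^m\rmH^n(W,\C)$, and the final abstract identification of the cokernel with $\rmH^{2k}(W,\Z)\otimes\C^*$, where one must keep in mind that the natural lattice inside $\rmH^{2k}(W,\C)$ is $(2\pi i)^m\rmH^{2k}(W,\Z)$, though this does not affect the resulting group.
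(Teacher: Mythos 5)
Your proof is correct and takes essentially the same route as the paper: the paper simply cites \cite[Proposition 12.26]{VoiHTone} for the four-term exact sequence $0 \to \rmH^{2k}(W, \Z_W(m)_\rmD) \to \rmH^{2k}(W, \Z) \to \rmH^{2k}(W, \C)/F^m \to \rmH^{2k+1}(W, \Z_W(m)_\rmD) \to 0$ (which is equivalent to your sequence after projecting away the $F^m$ summand) and then concludes exactly as you do from the shape of the Hodge filtration on a Hodge--Tate structure. Your derivation of that sequence from the cone presentation of $\Z_W(m)_\rmD$ plus Hodge--de Rham degeneration, and your explicit identification of the cokernel with $\rmH^{2k}(W,\Z)\otimes\C^*$, merely fill in steps the paper delegates to the reference.
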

\begin{proof}
Using \cite[Proposition 12.26]{VoiHTone} we get an exact sequence
\begin{equation}\label{eq:DelCoh-Voisin-seq}
0 \to {\mathrm H}^{2k}(W, \Z_W(m)_\rmD) \to 
{\mathrm H}^{2k}(W, \Z) \to {\mathrm H}^{2k}(W, \C) / F^m \to {\mathrm H}^{2k+1}(W, \Z_W(m)_\rmD) \to 0.
\end{equation}
The result follows immediately since the Hodge filtration on the Hodge--Tate structure
is a filtration such that\phantom{\qedhere}
\[
{\mathrm H}^{2k}(W, \C) / F^m = \begin{cases}
    {\mathrm H}^{2k}(W, \C) & m > k \\
    0 & m \le k \\
\end{cases}\tag*{\qed}
\]
\end{proof}

\begin{example}
 Deligne cohomology groups of the projective space
 $\P^n$
 are nonzero in precisely $n + 1$ degrees and
 are given by 
\[
{\mathrm H}^k(\P^n, \Z(m)_\rmD) =   
\begin{cases}
            \C^* & k \text{ odd, } 1 \le k \le 2m-1\\
            \Z & k \text{ even, } 2m \le k \le 2n \\
            0 & \text{otherwise}
\end{cases}
\]
This can be computed 
using Lemma \ref{lem:DelCoh-Tate} or
directly 
from the definition by induction on $m$, or 
 using the projective bundle formula 
\cite[Proposition 8.5]{EsnaultViehweg}.
Later we will need the following particular case
\begin{equation}\label{eq:DelCohP5}
{\mathrm H}^*(\P^5, \Z(2)_\rmD) = \C^*[-1] \oplus \C^*[-3] \oplus \Z[-4] \oplus   \Z[-6] 
 \oplus   \Z[-8]  \oplus   \Z[-10].
\end{equation}
\end{example}

If $W$ is a smooth projective variety with $\rmH^{2k}(W, \Z)$ a Hodge structure of K3 type we set 
\[
\Br_{\an}^{2k}(W) := H^{k-1,k+1}(W, \C) / \image(H^{2k}(W, \Z) \to H^{2k}(W, \C) \to H^{k-1,k+1}(W)),
\]
see Appendix \ref{app:Brauer} for the definitions and the details of this construction.

\begin{lemma}\label{lem:DelCoh-K3}
Let $W$ be a smooth projective variety.
Assume that for some $k \ge 0$ we have
${\mathrm H}^{2k-1}(W, \Z) = {\mathrm H}^{2k+1}(W, \Z) = 0$
and ${\mathrm H}^{2k}(W, \Z)$
 is a Hodge structure of K3 type.
Then we have
canonical isomorphisms
    \[
    {\mathrm H}^{2k}(W, \Z_W(k)_\rmD) \simeq
        {\mathrm H}^{k,k}(W, \Z), \quad
    {\mathrm H}^{2k+1}(W, \Z_W(k)_\rmD) \simeq \Br_{\an}^{2k}(W)
    \]
\end{lemma}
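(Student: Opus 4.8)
The plan is to use the exact sequence \eqref{eq:DelCoh-Voisin-seq} from the proof of \cref{lem:DelCoh-Tate}, which is available here verbatim because its only input is \cite[Proposition 12.26]{VoiHTone} together with the vanishing hypotheses $\rmH^{2k-1}(W,\Z)=\rmH^{2k+1}(W,\Z)=0$, both of which we are again assuming. Setting $m=k$ in that sequence gives
\[
0 \to {\mathrm H}^{2k}(W, \Z_W(k)_\rmD) \to {\mathrm H}^{2k}(W, \Z) \to {\mathrm H}^{2k}(W, \C)/F^k \to {\mathrm H}^{2k+1}(W, \Z_W(k)_\rmD) \to 0.
\]
So the entire content is to identify the middle arrow $\phi\colon {\mathrm H}^{2k}(W,\Z) \to {\mathrm H}^{2k}(W,\C)/F^k$, to compute its kernel as ${\mathrm H}^{k,k}(W,\Z)$, and to compute its cokernel as $\Br_{\an}^{2k}(W)$.

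The key point is that ${\mathrm H}^{2k}(W,\Z)$ is a Hodge structure of K3 type, meaning the Hodge decomposition is concentrated in the three pieces ${\mathrm H}^{k-1,k+1}\oplus {\mathrm H}^{k,k}\oplus {\mathrm H}^{k+1,k-1}$, with ${\mathrm H}^{k+1,k-1}$ one-dimensional (and its conjugate ${\mathrm H}^{k-1,k+1}$ likewise). First I would compute the quotient: since $F^k = {\mathrm H}^{k+1,k-1}\oplus {\mathrm H}^{k,k}$, we get
\[
{\mathrm H}^{2k}(W, \C)/F^k \simeq {\mathrm H}^{k-1,k+1}(W),
\]
and the map $\phi$ is the composite ${\mathrm H}^{2k}(W,\Z)\to {\mathrm H}^{2k}(W,\C)\to {\mathrm H}^{k-1,k+1}(W)$ appearing in the very definition of $\Br_{\an}^{2k}(W)$ recalled just above the lemma. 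Hence by definition $\Coker(\phi) = \Br_{\an}^{2k}(W)$, which is precisely the claimed identification of ${\mathrm H}^{2k+1}(W,\Z_W(k)_\rmD)$.

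For the kernel, an integral class $\alpha \in {\mathrm H}^{2k}(W,\Z)$ lies in $\ker\phi$ exactly when its image in ${\mathrm H}^{k-1,k+1}(W)$ vanishes. I would argue that this is equivalent to $\alpha$ being of type $(k,k)$: an integral, hence real, class $\alpha$ has a Hodge component in ${\mathrm H}^{k-1,k+1}$ that is the complex conjugate of its component in ${\mathrm H}^{k+1,k-1}$, so vanishing of the $(k-1,k+1)$-component forces vanishing of the $(k+1,k-1)$-component as well, leaving $\alpha \in {\mathrm H}^{k,k}(W)\cap {\mathrm H}^{2k}(W,\Z) = {\mathrm H}^{k,k}(W,\Z)$. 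This identifies $\ker\phi = {\mathrm H}^{k,k}(W,\Z)$, which by exactness equals ${\mathrm H}^{2k}(W,\Z_W(k)_\rmD)$, completing the proof.

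I do not expect a genuine obstacle here: the statement is essentially a repackaging of the K3-type Hodge decomposition through the short exact sequence \eqref{eq:DelCoh-Voisin-seq}. The only point requiring mild care is the reality argument identifying $\ker\phi$ with the $(k,k)$-classes — one must use that conjugation swaps ${\mathrm H}^{k-1,k+1}$ and ${\mathrm H}^{k+1,k-1}$ and fixes the lattice ${\mathrm H}^{2k}(W,\Z)$ — but this is standard and the naturality of all maps makes the resulting isomorphisms canonical, as asserted.
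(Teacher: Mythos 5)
Your proposal is correct and follows exactly the paper's own route: the paper's proof of this lemma is precisely to take the exact sequence \eqref{eq:DelCoh-Voisin-seq} with $m=k$ and use the identification $\rmH^{2k}(W,\C)/F^k = \rmH^{k-1,k+1}(W)$, after which the kernel and cokernel are $\rmH^{k,k}(W,\Z)$ and $\Br^{2k}_{\an}(W)$ as you argue. The only difference is that you spell out the standard conjugation argument for the kernel, which the paper leaves implicit.
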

\begin{proof}\phantom{\qedhere}
The proof is similar to that of Lemma \ref{lem:DelCoh-Tate}
using \eqref{eq:DelCoh-Voisin-seq}
and 
\[
\rmH^{2k}(W, \C) / F^k = \rmH^{k-1, k+1}(W).\tag*{\qed}
\]
\end{proof}

\begin{example}\label{ex:DeligneCohK3Surface}
If $S$ is a K3 surface then using Lemma \ref{lem:DelCoh-Tate} and Lemma \ref{lem:DelCoh-K3} we obtain 
\[
    {\mathrm H}^k(S, \bbZ_{S}(1)_\rmD) = 
    \begin{cases}
        0 & k = 0\\
        \C^* & k = 1\\
        \NS(S) & k = 2\\
        \Br_{\an}^2(S) & k = 3 \\
        \Z & k = 4 \\
    \end{cases}
\]
\end{example}

\begin{remark}\label{rmk:IJ-DelCoh}
Deligne cohomology groups are closely related to algebraic cycles and intermediate Jacobians. When $Y$ is a smooth projective variety, by \cite[(7.9)]{EsnaultViehweg}, for all $m\in \bbZ_{\geq 1}$, there is a short exact sequence
\begin{equation}\label{eq:IJ-DelCoh}
0 \to J^m(Y) \to {\mathrm H}^{2m}(Y, \Z_Y(m)_\rmD) \to {\mathrm H}^{2m}(Y, \Z) \to 0
\end{equation}
where $J^m(Y)$ is the $m$-intermediate Jacobian
${\mathrm H}^{2m-1}(Y, \C) / ({\mathrm H}^{2m-1}(Y, \Z) + F^m {\mathrm H}^{2m-1}(Y, \C))$.
Since $\bbZ_\rmD(1)\simeq \sO_Y^*[-1]$, for $m=1$ this recovers the exact sequence
\[0\to \Pic^0(Y)\to \Pic(Y)\to \rmH^2(Y,\bbZ)\to 0.\]
Moreover, in the case when $Y$ is a
smooth cubic 
threefold, we let $J(Y)\coloneqq J^2(Y)$.
The cycle class map 
$\CH^2(Y)\to {\mathrm H}^4(Y, \Z)$ factors 
through ${\mathrm H}^4(Y, \Z_Y(2)_\rmD)$ \cite[\S 7]{EsnaultViehweg} in way compatible with the Abel--Jacobi map. Since the group of homological trivial cycles $\CH^2_0(Y)$ is isomorphic to $J(Y)$ \cite[(8)]{Murre}, 
\eqref{eq:IJ-DelCoh} is nothing but the short exact sequence
\[0\to \CH^2_0(Y)\to \CH^2(Y) \xrightarrow{{\rm deg}} \Z \to 0.\]

\end{remark}

\section{The universal hyperplane section of a cubic fourfold}\label{sec:cohomologycomputations}

Given a smooth cubic threefold $Y$, its Jacobian $J(Y)\coloneqq {\rmH^2(Y,\Omega_Y^1)}/{\rmH^3(Y,\bbZ)}$ is a principally polarized abelian variety of dimension 5. 
In this case, the exact sequence \eqref{eq:IJ-DelCoh} reads as
\begin{equation}
\label{eq:J-wtJ-Z}
0 \to J(Y) \to {\mathrm H}^{4}(Y, \Z_Y(2)_\rmD) \to  \Z \to 0.
\end{equation}
In this section, we sheafify and extend this short exact sequence to the case of the universal hyperplane section
of an arbitrary smooth cubic fourfold $X \subset \P^5$. 
 We denote the two sheaves corresponding to the first two terms in \eqref{eq:J-wtJ-Z} by $\JJ$ and $\wt{\JJ}$ respectively, see  \cref{def:J}
 and \cref{def:wtJJ}. 
We introduce some other objects, most importantly the constructible
sheaf $\Lambda$ \eqref{eq:def-LambdaYY}
and a 
closely related integral intersection complex $\Lambda^\bullet$,
see \cref{thm:decomposition-cubic}.
We compute 
their cohomology groups in
\cref{thm:cohom-Lambda},
\cref{thm:main-cohom-wtJJ} and
\cref{thm:Sha-seq-nonDG}.

\subsection{Singular cubic threefolds}\label{sec:singularcubicthreefold}

We start by recalling some results about cubic threefolds with isolated singularities. The following is a well-known result. We provide a complete proof for reader's convenience.

\begin{lemma}\label{lem:Y-terminal} If $Y$ is a cubic threefold with isolated singularities, then its singularities are
Gorenstein and canonical, in particular, rational.
Furthermore, if $Y$ is not a cone, then the singularities are terminal.
\end{lemma}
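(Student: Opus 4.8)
The plan is to treat the three assertions—Gorenstein, canonical (hence rational), and terminal in the non-cone case—in turn, reducing everything to a local multiplicity analysis at the finitely many singular points. First I would record the formal consequences of $Y$ being a hypersurface. As a cubic hypersurface $Y \subset \P^4$, it is a local complete intersection, hence Cohen--Macaulay and Gorenstein, with $\omega_Y \simeq \OO_Y(-2)$ by adjunction. Since the singularities are isolated, the singular locus has codimension $3$ in $Y$, so $Y$ is regular in codimension one; combined with property $S_2$ (from Cohen--Macaulayness), Serre's criterion gives that $Y$ is normal, in particular integral. For the implication ``canonical $\Rightarrow$ rational'' I would invoke the standard fact that a Gorenstein canonical singularity is rational (Elkik--Kempf, see e.g.\ Koll\'ar--Mori). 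Thus it suffices to prove that $Y$ is canonical in general and terminal when $Y$ is not a cone.

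Next comes the multiplicity dichotomy. At a singular point $p$, writing $Y$ in an affine chart its defining polynomial has degree $\le 3$, so $\operatorname{mult}_p Y \in \{2,3\}$ (the value $1$ being smoothness). If $\operatorname{mult}_p Y = 3$, then the whole degree-$3$ equation equals its own leading form, so $Y$ is the cone with vertex $p$ over a cubic surface $S \subset \P^3$, and isolatedness of the singularities forces $S$ to be smooth. I would then blow up the vertex to obtain a resolution $\pi\colon \widetilde{Y} \to Y$ with smooth $\widetilde{Y} = \operatorname{Tot}(\OO_S(-1))$ and a single exceptional divisor $E \cong S$; an adjunction computation, using $K_S = \OO_S(-1)$ and $N_{E/\widetilde{Y}} = \OO_S(-1)$, yields discrepancy $a(E, Y) = 0$. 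As this is a log resolution whose only exceptional discrepancy equals $0$, the vertex is canonical but not terminal. Consequently, if $Y$ is not a cone, every singular point has multiplicity exactly $2$.

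Finally, for a double point $p$ in the non-cone case I would show $p$ is terminal by exhibiting it as a compound Du Val singularity. A general hyperplane $H$ through $p$ cuts out a cubic surface $Y\cap H \subset H \cong \P^3$ whose only singularity is an isolated double point at $p$: this follows from Bertini applied to the system of hyperplanes through $p$, whose base locus is $\{p\}$, together with the fact that a general such $H$ avoids the remaining finitely many singular points of $Y$. By the classical classification of singularities of normal cubic surfaces, such a point is a rational double point of type ADE, i.e.\ Du Val; hence $p \in Y$ is cDV, and an isolated cDV point is terminal Gorenstein (Reid). Together with the cone case, this shows that all singularities of $Y$ are canonical, hence rational, and terminal whenever $Y$ is not a cone.

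I expect the main obstacle to lie in the terminal case: one must argue carefully that a general hyperplane through $p$ produces a cubic surface whose singularity at $p$ is isolated (equivalently, that the section is normal), and only then appeal to the classification of cubic surface singularities to conclude that it is Du Val. The cone computation, while elementary, also requires some care to verify that the discrepancy is exactly $0$ and that a single log resolution certifies canonicity.
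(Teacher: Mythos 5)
Your proof is correct, and its skeleton matches the paper's: Gorenstein is immediate from the hypersurface property, the terminal statement is reduced to showing that the singular points are (isolated) compound Du Val via a general hyperplane section, the Bruce--Wall classification of cubic surfaces, and Reid's theorem that isolated cDV points are terminal, and rationality follows from the standard fact that Gorenstein canonical singularities are rational. Where you genuinely diverge is in two sub-arguments. First, for the cone case the paper simply cites the standard cone-discrepancy computation (Koll\'ar), whereas you resolve by blowing up the vertex and compute the discrepancy $a(E,Y)=0$ by adjunction on $\mathrm{Tot}(\OO_S(-1))$; this is self-contained and also records that the vertex is canonical but \emph{not} terminal, which the paper does not need. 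Second, and more substantively, the step ``the general hyperplane section $S=Y\cap H$ through $p$ is not a cone'' is handled differently: the paper argues that if $S$ were a cone then the lines on $Y$ through $p$ would form a $2$-dimensional family, forcing $Y$ itself to be a cone with vertex $p$; you instead use the multiplicity dichotomy for cubics --- $\operatorname{mult}_p Y=3$ forces the degree-$3$ affine equation to be homogeneous, i.e.\ $Y$ is the cone with vertex $p$, so in the non-cone case every singular point is a double point, and then $S$ cannot be a cone because the vertex of a cone cubic surface is a triple point while the only singularity of $S$ is the double point $p$. Your multiplicity route is more elementary (no Hilbert scheme of lines) and makes explicit the fact, used implicitly when invoking Bruce--Wall, that the unique non-Du Val case in the classification (the elliptic cone) is excluded precisely by the multiplicity being $2$. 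The only places needing the care you yourself flag --- that Bertini for the system of hyperplanes through $p$ gives $S$ smooth away from $p$, and that $\operatorname{mult}_p S=\operatorname{mult}_p Y$ for general $H$ --- are handled correctly.
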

\begin{proof}
Singularities of $Y$ are Gorenstein because $Y$ is a hypersurface.
If $Y$ is a cone, then the standard computation  of  the canonical class of the resolution 
\cite[Corollary 3.4]{KK13}
shows that the singularity is canonical.  

Assume that $Y$ is not a cone, and let $p \in Y$ be a singular point. Let $S$ be a general hyperplane section of $Y$ through $p$.
By Bertini's theorem $S$ is a cubic surface with an isolated singularity at $p$.
Since $Y$ is not a cone, $S$ is not a cone either.
Indeed if a general hyperplane section through $p$ was a cone, then the Hilbert scheme of lines on $Y$ passing through $p$ would be $2$-dimensional which would force $Y$ to be a cone with vertex $p$.
A singular cubic surface $S$ with 
isolated singularities and 
which is not a cone must have
du Val singularities \cite{BruceWall}.
Thus, $Y$ has a so-called compound du Val singularity at $p$, which is terminal by~\cite{Reid-models}.

 Finally note that canonical singularities are rational, see e.g. \cite[(3.8)]{Reid-young}.
\end{proof}

We have the 
class group $\Cl(Y)$ of rational equivalence
classes of Weil divisors
and the Picard group $\Pic(Y)$ is its subgroup parameterizing Cartier divisors.
We  consider the so-called \emph{defect}
\begin{equation}\label{eq:def-defect}
\delta_Y = \rk (\Cl(Y) / \Pic(Y)) = b_4(Y) - b_2(Y).
\end{equation}
Here the second equality is a result by 
Namikawa--Steenbrink \cite[Theorem 3.2]{NamikawaSteenbrink} which uses the fact that
$Y$ has rational isolated
hypersurface singularities (Lemma \ref{lem:Y-terminal}).
Thus, $Y$ is $\Q$-factorial if and only if the defect is zero. 

\begin{example}\label{ex:nodal-defect}
It is well-known that if $Y$ has a single ordinary double point then $Y$ is $\Q$-factorial and
$\delta_Y = 0$.
On the other hand, if $Y$ contains a plane $\Pi \subset Y$ then $\Pi$ is a Weil divisor. It can not be Cartier by degree reasons 
and so $\delta_Y \ge 1$. 
\cite[Theorem 1.1]{MarqVikt} gives a general explicit formula for the defect $\delta_Y$ of a cubic threefold.
\end{example}

\begin{proposition}\label{prop:cohom-Y}
Let $Y \subset \P^4$ be a cubic threefold with isolated singularities. 
The odd degree integral cohomology groups of
$Y$ are trivial except for $\rmH^3(Y, \Z)$
which is a torsion-free group.
Write $h \in \rmH^2(Y, \Z)$ for the hyperplane class.
The even degree cohomology groups of $Y$ are
\[
\rmH^0(Y, \Z) = \Z, \quad
\rmH^2(Y, \Z) = \Z \cdot h, \quad
\rmH^4(Y, \Z) = \Z^{\oplus (1 + \delta_Y)}, \quad
\rmH^6(Y, \Z) = \Z
\]
where $\delta_Y \ge 0$ is the defect \eqref{eq:def-defect} of $Y$.

Assume that $\delta_Y = 0$. Then $\rmH^4(Y, \Z) = \Z[\ell]$ for any line $\ell \subset Y$
contained in the smooth locus of $Y$.
In particular, in this case the even cohomology
 groups $\rmH^{2*}(Y, \Z)$
 satisfy the integral Poincar\'e duality.
\end{proposition}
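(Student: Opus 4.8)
The plan is to compute the cohomology of a cubic threefold $Y\subset\P^4$ with isolated singularities by comparing it to a resolution and exploiting the rationality of the singularities established in \cref{lem:Y-terminal}. For the \emph{odd} and \emph{even Betti numbers}, I would first recall that $Y$ is a hypersurface, so by the Lefschetz hyperplane theorem $\rmH^k(Y,\Z)\simeq \rmH^k(\P^4,\Z)$ for $k<3=\dim Y$; this immediately gives $\rmH^0=\Z$, $\rmH^1=0$, and $\rmH^2=\Z\cdot h$. For the cohomology in degrees above the middle, I would invoke Poincar\'e--Lefschetz duality on the singular space together with the rationality of the singularities: since $Y$ has rational isolated singularities, a resolution $\pi\colon \widetilde Y\to Y$ satisfies $\rmR\pi_*\Z_{\widetilde Y}\simeq \Z_Y$ away from controlling the exceptional fibers, and one compares $\rmH^*(Y)$ with $\rmH^*(\widetilde Y)$ through the local cohomology / Mayer--Vietoris sequence attached to the singular points. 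The key numerical input is the Namikawa--Steenbrink identity $\delta_Y=b_4(Y)-b_2(Y)$ quoted after \eqref{eq:def-defect}, which pins down $b_4(Y)=1+\delta_Y$ once we know $b_2(Y)=1$.

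Concretely, I would organize the even-degree computation as follows. The top group $\rmH^6(Y,\Z)=\Z$ is the fundamental class (since $Y$ is irreducible projective of complex dimension $3$). For $\rmH^4$, the equality $b_4(Y)=b_2(Y)+\delta_Y=1+\delta_Y$ gives the rank; torsion-freeness should follow from the isolated hypersurface singularity structure, e.g.\ from the fact that links of isolated hypersurface singularities in this dimension are highly connected, or more cleanly by identifying $\rmH^4(Y,\Z)$ with $\Cl(Y)$-type data that is manifestly free. For the odd groups, the only possibly nonzero one is $\rmH^3(Y,\Z)$; I would argue its torsion-freeness via the universal coefficient theorem combined with torsion-freeness of $\rmH_2$ and $\rmH^4$, again using that rational singularities do not introduce torsion in the relevant range.

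For the final (defect-zero) assertion, the plan is: when $\delta_Y=0$ we have $\rmH^4(Y,\Z)=\Z$, and I must identify a geometric generator. A line $\ell\subset Y$ contained in the smooth locus defines a class $[\ell]\in \rmH^4(Y,\Z)$ (this is well-defined because $\ell$ avoids the singularities, so it is a genuine submanifold giving a cycle class). I would show $[\ell]$ is a \emph{generator} by checking it pairs to $1$ against the ample generator $h\in \rmH^2(Y,\Z)$ under the intersection pairing: indeed $\deg(\ell\cdot h)=\deg\calO_\ell(h)=1$ since $\ell$ is a line and $h$ is the hyperplane class. Since $\rmH^4(Y,\Z)\cong\Z$ and the pairing $\rmH^2\times\rmH^4\to\Z$ sends $(h,[\ell])\mapsto 1$, the class $[\ell]$ must generate, and the same computation shows $h\cup h=3[\ell]$ is \emph{not} a generator, highlighting why one needs a line rather than $h^2$. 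Integral Poincar\'e duality on even cohomology is then the statement that the perfect pairing $\rmH^2\times\rmH^4\to\rmH^6=\Z$ is unimodular, which follows once both sides are free of the correct ranks and we have exhibited dual generators $h$ and $[\ell]$.

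The main obstacle I anticipate is the \textbf{torsion-freeness of $\rmH^3(Y,\Z)$ and $\rmH^4(Y,\Z)$ in the singular case}: the Lefschetz theorem only controls degrees below the middle, so above the middle I cannot simply transport torsion-freeness from $\P^4$. I would expect the cleanest route to be through the resolution $\pi\colon\widetilde Y\to Y$ and the decomposition/rationality statement $\rmR\pi_*\Z_{\widetilde Y}\simeq\Z_Y\oplus(\text{skyscrapers at }\Sing Y)$, reducing torsion questions to the known (torsion-free) cohomology of the smooth projective $\widetilde Y$ together with an explicit analysis of the exceptional divisors; alternatively, since the singularities are isolated hypersurface singularities, the local link computation feeds directly into the long exact sequence of the pair $(Y,\Sing Y)$. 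Verifying that no torsion is introduced by the exceptional locus is the delicate step, and it is exactly where the canonical/terminal classification from \cref{lem:Y-terminal} together with the Namikawa--Steenbrink defect formula does the essential work.
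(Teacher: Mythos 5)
Your treatment of the easy parts agrees with the paper: Weak Lefschetz below the middle degree, the Namikawa--Steenbrink identity $\delta_Y=b_4(Y)-b_2(Y)$ to pin down the rank of $\rmH^4$, and the defect-zero statement via $\deg(h\cdot\ell)=1$, which forces $[\ell]$ to be primitive and hence a generator of $\rmH^4(Y,\Z)\simeq\Z$ (this is exactly the paper's argument, including the unimodularity conclusion). The genuine gap is the step you yourself flag as delicate: torsion-freeness of $\rmH^4(Y,\Z)$ (equivalently, by universal coefficients, of $\rmH_3(Y,\Z)$, which is what gives the claim about $\rmH^3$) when $Y$ is singular. None of your three proposed routes closes it. First, rationality of the singularities is a statement about coherent sheaves, $\rmR\pi_*\OO_{\widetilde Y}\simeq\OO_Y$; it does \emph{not} yield an integral splitting $\rmR\pi_*\Z_{\widetilde Y}\simeq\Z_Y\oplus(\text{skyscrapers})$. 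Integral decompositions of this kind are precisely what fails in general --- the Decomposition Theorem is a $\Q$-coefficient statement, which is why the paper proves \cref{prop:decomp-Z} and \cref{thm:decomposition-cubic} under extra hypotheses --- and even a rational splitting is blind to torsion, which is the whole issue here. Second, the link of an isolated hypersurface singularity of complex dimension $3$ is only $(n-2)=1$-connected; $H_2$ of such a link can be nonzero and can carry torsion, and that torsion feeds into $\rmH^4(Y,\Z)$ through local cohomology at the singular points, so ``highly connected links'' is not enough --- you would have to compute the links of the actual singularities occurring. Third, class groups of singular varieties are not ``manifestly free,'' and no identification of $\rmH^4(Y,\Z)$ with such data is offered.

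The paper closes this gap with a concrete geometric device special to cubic threefolds, which is the ingredient missing from your outline. For $p\in\Sing(Y)$, either $p$ is a double point or $Y$ is a cone over a smooth cubic surface (a non-double point forces the affine equation at $p$ to be purely cubic). Hence the exceptional divisor $E\subset\Bl_p(Y)$ is a (possibly singular) quadric surface or a smooth cubic surface, and in both cases $\rmH^3(E,\Z)=0$, so the blow-up exact sequence gives an injection $\rmH^4(Y,\Z)\into\rmH^4(\Bl_p(Y),\Z)$. Then one invokes the classical fact \cite[3.1]{CML} that $\Bl_p(Y)$ is the blow-up of $\P^3$ along the $(2,3)$ complete-intersection sextic curve, whose integral cohomology is torsion-free by the blow-up formula (curves have torsion-free cohomology); torsion-freeness of $\rmH^4(Y,\Z)$ follows. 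If you insist on your link-theoretic route instead, you would need to verify torsion-freeness of $H_2$ of the links of compound du Val points and of cones over smooth cubic surfaces --- genuinely more work than the projection trick, and not supplied by connectivity alone.
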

\begin{proof}
The result about $\rmH^j(Y, \Z)$ for $j \le 2$
follows from the Weak Lefschetz Theorem.
The Betti numbers of $Y$ except for $b_3(Y)$ and $b_4(Y)$ are determined in \cite[Chapter 5, Theorem 4.3]{Dimca-book}.
By the same result all integral homology and cohomology groups are torsion free except possibly $\rmH_3(Y, \Z)$ and $\rmH^4(Y, \Z)$
which have isomorphic torsion by the Universal Coefficient Theorem. 

Let us show that $\rmH^4(Y, \Z)$ is torsion-free.
If $Y$ is smooth, this follows from the integral Poincar\'e duality, see \cite[Chapter 5, Lemma 3.1]{Dimca-book}.
Assume now that $Y$ is singular. We use an
argument similar  to that of  \cite[Proposition 3.2]{MarqVikt}, where rational cohomology groups of $Y$ are computed.
Take any $p \in \Sing(Y)$
and consider the blow up $\Bl_p(Y)$ with exceptional divisor $E \subset \Bl_p(Y)$.
The singular point $p$ is a double point
or $Y$ is a cone over a smooth cubic surface. 
Indeed, since $Y$ is a cubic, if $p$ is not a double point, all the terms in an affine equation of $Y$ near $p$ are cubic, in which case $Y$ is a cone over a cubic surface. 
This surface has to be smooth since we assumed that $Y$ has isolated singularities.
Thus, $E$ is either a (possibly singular) quadric surface or a smooth cubic surface, and in each case
$\rmH^3(E, \Z) = 0$ 
and part of the standard exact sequence for a blow-up square gives an exact sequence
\begin{equation}\label{eq:H4-E}
0 = \rmH^3(E, \Z) \to \rmH^4(Y, \Z) \to \rmH^4(\Bl_p(Y), \Z).
\end{equation}

It is well known that $\Bl_p(Y)$ is isomorphic to the blow up of $\P^3$ at the possibly singular sextic curve, given by the
complete intersection of a quadric and a cubic \cite[3.1]{CML}.
The standard blow-up formula for cohomology applies in this case, showing that $\rmH^*(\Bl_p(Y), \Z)$ is torsion-free. Indeed, curves have torsion-free cohomology groups.
Thus, $\rmH^4(Y, \Z)$ is also torsion-free by \eqref{eq:H4-E}

Now assume that $\delta_Y = 0$.
For any line $\ell \subset Y$ in the smooth locus of $Y$ we have $h \cdot \ell = 1$,
so the class $\ell$ is primitive,
hence it is a generator of $\rmH^4(Y, \Z) = \Z$.
It is now clear that in this case $\rmH^{2*}(Y, \Z)$ satisfy integral Poincar\'e duality.
\end{proof}

\subsection{Cohomology groups and pushforwards}\label{sec:CohandPushforwards}

Let $X \subset \P^5$ be a smooth cubic fourfold
and $\YY \subset X \times (\P^5)^\vee$
be the universal hyperplane section of $X$.
We write $p\colon \YY \to B \coloneqq (\P^5)^\vee$ 
for the natural projection,
and
 $q\colon \YY \to X$
 for the second projection,
which is a Zariski locally trivial
$\P^4$-fibration.
Fibers of $q$ are irreducible cubic threefolds with at worst isolated singularities. 
Indeed, if $X = \rmV(F)$ and $Y= \rmV(x_0)\cap X$ is a coordinate hyperplane section of $X$, then 
\[
 \Sing(Y) \cap \rmV\left(\frac{\partial F}{\partial x_0}\right) \subset \Sing(X).
\]

We denote by $h \in \Pic(X)$
and $H \in \Pic(B)$  the  hyperplane section classes and their pullbacks to $\YY$.

\begin{lemma}\label{lem:cohom-Y}
The integral cohomology groups of $X$ and of $\YY$ are given by
\begin{equation}\label{eq:cohomYY}
{\mathrm H}^k(X, \bbZ) =
    \begin{cases}
        \bbZ & \text{if } k=0,2, 6, 8\\
        \bbZ^{\oplus 23}& \text{if } k=4\\
        0 & \text{ otherwise}\\
    \end{cases}
\quad
\quad
{\mathrm H}^k(\YY, \bbZ) =
    \begin{cases}
        \bbZ & \text{if } k=0,16\\
        \bbZ^{\oplus 2}& \text{if } k=2, 14\\
        \bbZ^{\oplus 25}& \text{if } k=4, 12\\
        \bbZ^{\oplus 26}& \text{if } k=6, 10\\
        \bbZ^{\oplus 27}& \text{if } k=8\\
        0 & \text{ otherwise}\\
    \end{cases}
\end{equation}
\end{lemma}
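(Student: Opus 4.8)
The plan is to compute $\rmH^*(X,\Z)$ and $\rmH^*(\YY,\Z)$ separately, using the Lefschetz-type theorems for the cubic fourfold and the projective bundle structure of $q\colon \YY \to X$ for the universal hyperplane section.

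First I would handle $\rmH^*(X,\Z)$. Since $X \subset \P^5$ is a smooth hypersurface of dimension $4$, the Lefschetz Hyperplane Theorem gives $\rmH^k(X,\Z) \simeq \rmH^k(\P^5,\Z)$ for $k < 4$, so $\rmH^0 = \rmH^2 = \Z$ and the odd groups below degree $4$ vanish. By Poincaré duality the same holds symmetrically in degrees $k > 4$, giving $\rmH^6 = \rmH^8 = \Z$ and the remaining odd groups vanishing. For the middle cohomology $\rmH^4(X,\Z)$, one knows it is torsion-free (it is a smooth hypersurface, so cohomology is torsion-free by Lefschetz, cf. \cite[Chapter 5]{Dimca-book}), and its rank is computed from the Euler characteristic: the topological Euler number of a smooth cubic fourfold is $\chi(X) = 27$, which combined with $b_0 = b_2 = b_6 = b_8 = 1$ and vanishing odd Betti numbers forces $b_4 = 27 - 4 = 23$. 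Hence $\rmH^4(X,\Z) = \Z^{\oplus 23}$.

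Next I would compute $\rmH^*(\YY,\Z)$ via the projective bundle formula. Since $q\colon \YY \to X$ is a Zariski-locally-trivial $\P^4$-bundle (at each point $x \in X$, the fiber is the $\P^4$ of hyperplanes through $x$), the Leray--Hirsch / projective bundle theorem yields
\[
\rmH^k(\YY,\Z) \simeq \bigoplus_{i=0}^{4} \rmH^{k-2i}(X,\Z)\cdot \zeta^i,
\]
where $\zeta = c_1(\OO_{\YY}(1))$ is the relative hyperplane class (equivalently $H$, the pullback of the hyperplane class from $B$). This decomposition is a free $\rmH^*(X,\Z)$-module structure, so in particular all torsion comes from $X$, and since $\rmH^*(X,\Z)$ is torsion-free, so is $\rmH^*(\YY,\Z)$. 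Summing the contributions degree by degree, using the known ranks $b_0(X)=b_2(X)=b_6(X)=b_8(X)=1$, $b_4(X)=23$, and the odd Betti numbers zero, I would read off the Betti numbers of $\YY$: for instance $b_6(\YY) = b_6(X) + b_4(X) + b_2(X) + b_0(X)$ restricted to the allowed range, and so on. A direct bookkeeping of the five shifts $\zeta^0,\dots,\zeta^4$ against the Betti table of $X$ produces exactly $b_0 = b_{16} = 1$, $b_2 = b_{14} = 2$, $b_4 = b_{12} = 25$, $b_6 = b_{10} = 26$, $b_8 = 27$, with all odd groups vanishing, matching \eqref{eq:cohomYY}.

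The only genuinely delicate point is justifying that $q$ is Zariski-locally trivial (so that the integral projective bundle formula applies and no torsion or Brauer obstruction appears); this is immediate here because $\YY$ is the projectivization of a vector subbundle of $X \times \rmH^0(\P^5,\OO(1))^\vee$, namely the bundle of hyperplanes containing each point, which is an honest vector bundle on $X$. Granting this, the computation is entirely mechanical. I expect the main (minor) obstacle to be the careful summation in the projective-bundle formula to land on the exact Betti numbers, and confirming torsion-freeness of $\rmH^4(X,\Z)$, though the latter is standard for smooth hypersurfaces.
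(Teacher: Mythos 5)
Your proof is correct and follows essentially the same route as the paper: the paper cites the standard computation of $\rmH^*(X,\Z)$ (e.g.\ \cite[\S1.3]{Huycubic}) and then applies the integral projective bundle formula to the $\P^4$-bundle $q\colon \YY \to X$, exactly as you do. Your added details (Lefschetz hyperplane theorem, Poincar\'e duality, $\chi(X)=27$ for the middle Betti number, and the Zariski-local triviality of $q$ via the bundle of hyperplanes through a point) are all accurate and simply make explicit what the paper delegates to references.
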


\begin{proof}
Cohomology groups of $X$
are well known, see e.g. \cite[\S1.3]{Huycubic}.
For the cohomology groups of $\YY$ we use
projective bundle formula for $q\colon \YY \to X$:
\begin{equation}\label{eq:proj-bundle}
{\mathrm H}^*(\YY, \Z) = \bigoplus_{i=0}^4
q^* {\mathrm H}^*(X, \Z) \cdot p^*H^i.
\end{equation}
\end{proof}

By the integral Poincar\'e duality
the intersection pairing on $\rmH^4(X,\bbZ)$ is unimodular, in particular since $h^2$ is primitive, there exists a class 
\begin{equation}\label{eq:b-class}
b \in \rmH^4(X, \Z)
\end{equation}
such that $\deg_X(h^2 \cdot b) = 1$. 
This class is algebraic for some special cubic fourfolds, but if $X$ is very general, $b$ in not an algebraic class.
We will need this class later.

\begin{lemma}
We have
    \begin{equation}
    \label{eq:cohom-OmegaX}
    \rmH^*(X, \Omega^1_X) = \C[-1] \oplus \C[-3],
    \quad
    \rmH^*(X, \Omega^1_X(-1)) = \C^6[-3]
    \end{equation}
\end{lemma}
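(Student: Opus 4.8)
The plan is to deduce both isomorphisms from the conormal sequence of the smooth cubic hypersurface $X \subset \P \coloneqq \P^5$, whose conormal bundle is $\sO_X(-3)$, combined with Bott's formula for the twists of $\Omega^1_{\P^5}$ and the standard cohomology of line bundles on $\P^5$. The two statements are proved by the same mechanism: a pair of short exact sequences reduces the cohomology of $\Omega^1_X$ (resp.\ of $\Omega^1_X(-1)$) to that of a single line bundle on $\P^5$, which is then read off from Serre duality.

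For the first isomorphism I would begin with the conormal sequence
\[
0 \to \sO_X(-3) \to \Omega^1_\P|_X \to \Omega^1_X \to 0.
\]
To handle the middle term, I restrict $\Omega^1_\P$ to $X$ via $0 \to \Omega^1_\P(-3) \to \Omega^1_\P \to \Omega^1_\P|_X \to 0$; Bott's formula gives $\rmH^*(\P, \Omega^1_\P(-3)) = 0$ and $\rmH^*(\P, \Omega^1_\P) = \C[-1]$, whence $\rmH^*(X, \Omega^1_\P|_X) = \C[-1]$. For the left term, the sequence $0 \to \sO_\P(-6) \to \sO_\P(-3) \to \sO_X(-3) \to 0$ together with $\rmH^*(\P, \sO_\P(-3)) = 0$ and $\rmH^5(\P, \sO_\P(-6)) \cong \rmH^0(\P, \sO_\P)^\vee = \C$ gives $\rmH^*(X, \sO_X(-3)) = \C[-4]$. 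Substituting both into the long exact sequence of the conormal sequence, the degree-$1$ contribution comes from $\Omega^1_\P|_X$ and the degree-$3$ contribution from the connecting map onto $\rmH^4(X, \sO_X(-3))$, yielding $\rmH^1(X,\Omega^1_X) = \rmH^3(X,\Omega^1_X) = \C$ and vanishing elsewhere.

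For the second isomorphism I would twist the conormal sequence by $\sO_X(-1)$:
\[
0 \to \sO_X(-4) \to \Omega^1_\P|_X(-1) \to \Omega^1_X(-1) \to 0.
\]
The key observation is that the twisted restriction sequence $0 \to \Omega^1_\P(-4) \to \Omega^1_\P(-1) \to \Omega^1_\P|_X(-1) \to 0$ has both outer terms acyclic by Bott's formula, since $\rmH^*(\P, \Omega^1_\P(-1)) = \rmH^*(\P, \Omega^1_\P(-4)) = 0$; hence $\Omega^1_\P|_X(-1)$ is acyclic on $X$ and the long exact sequence collapses to $\rmH^q(X, \Omega^1_X(-1)) \cong \rmH^{q+1}(X, \sO_X(-4))$. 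Finally, $0 \to \sO_\P(-7) \to \sO_\P(-4) \to \sO_X(-4) \to 0$ with $\rmH^*(\P, \sO_\P(-4)) = 0$ and $\rmH^5(\P, \sO_\P(-7)) \cong \rmH^0(\P, \sO_\P(1))^\vee = \C^6$ gives $\rmH^*(X, \sO_X(-4)) = \C^6[-4]$; shifting, $\rmH^*(X, \Omega^1_X(-1)) = \C^6[-3]$.

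I do not expect a genuine obstacle: the whole argument is bookkeeping of Bott vanishing. The only delicate points are the two borderline vanishings $\rmH^*(\P, \Omega^1_\P(-3)) = 0$ and $\rmH^*(\P, \Omega^1_\P(-4)) = 0$, where the twist lands exactly one unit outside the range in which the top cohomology $\rmH^5(\P, \Omega^1_\P(k))$ first becomes nonzero (namely $k < -4$). It is precisely these vanishings that force the middle restriction terms to be acyclic and thereby reduce each computation to a single line bundle on $\P^5$; getting these numerical thresholds right is the one thing to check carefully.
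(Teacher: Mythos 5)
Your proof is correct: all the Bott vanishings you invoke ($\rmH^*(\P^5,\Omega^1_{\P^5}(-3))=\rmH^*(\P^5,\Omega^1_{\P^5}(-4))=0$, since $\rmH^5(\P^5,\Omega^1_{\P^5}(k))\neq 0$ only for $k\le -5$) and the Serre duality identifications $\rmH^5(\P^5,\sO(-6))=\C$, $\rmH^5(\P^5,\sO(-7))=\C^6$ check out, and the two conormal/restriction sequences then force exactly the stated answer. The paper itself gives no argument but cites this as the standard calculation in Huybrechts' book on cubic hypersurfaces (\S 1.2), which proceeds by precisely the same mechanism, so you have simply filled in the computation the paper defers to.
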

\begin{proof}
This is a standard calculation, see \cite[\S1.2]{Huycubic}. 
\end{proof}

\noindent For the next result we define
\begin{equation}\label{eq:def-BX}
\Br^4_{\an}(X) \coloneqq 
\rmH^{1,3}(X) / \image(\rmH^4(X, \Z) \to \rmH^4(X, \C) \to \rmH^{1,3}(X))
\end{equation}
This is a higher-dimensional analog of the analytic 
Brauer group, see Appendix \ref{app:Brauer}, in particular, Example \ref{ex:Br2-X} for more details about this notion.

\begin{proposition}\label{prop:dbcohomcurlyY}
    The cohomology groups of the second Deligne complex on $X$ and on $\YY$ 
    are given as follows:
    \begin{equation}
        {\mathrm H}^k(X, \bbZ_{X}(2)_{\rmD}) = 
        \begin{cases}
            0 & k = 0, 2\\
            \C^* & k = 1, 3\\
            {\mathrm H}^{2,2}(X, \Z) & k = 4\\
            \Br^4_{\an}(X) & k = 5 \\
            {\mathrm H}^k(X, \Z) & k \ge 6
        \end{cases}
    \end{equation}
    
    \begin{equation}\label{eq:CohDeligneComplexY}
        {\mathrm H}^k(\YY, \bbZ_{\YY}(2)_{\rmD}) = 
        \begin{cases}
            0 & k = 0, 2\\
            \bbC^* = {\mathrm H}^0(\YY, \sO^*) & k=1\\
            \bbC^*\times \bbC^* = {\mathrm H}^{1,1}(\YY, \C) / {\mathrm H}^2(\YY, \Z) & k = 3\\
            {\mathrm H}^{2,2}(\YY, \Z) = \Z^2 \oplus {\mathrm H}^{2,2}(X, \Z)  & k = 4\\
              \Br^4_{\an}(\calY)= \Br^4_{\an}(X) & k = 5\\
            {\mathrm H}^{k}(\YY, \Z) & k \ge 6 \\
        \end{cases}
    \end{equation}
\end{proposition}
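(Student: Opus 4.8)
The plan is to compute both sets of groups from the four-term exact sequence \eqref{eq:DelCoh-Voisin-seq} with $m=2$, which is available in every degree because $X$ and $\YY$ both have vanishing odd integral cohomology (\cref{lem:cohom-Y}). For $X$ this is immediate from the two lemmas just proved: the Hodge structure $\rmH^{2k}(X,\Z)$ is of Hodge--Tate type for every $k\ne 2$, while $\rmH^4(X,\Z)$ is of K3 type. Applying \cref{lem:DelCoh-Tate} with $m=2$ to the Hodge--Tate degrees, for $\rmH^0$ and $\rmH^2$ (the case $m>k$) it gives $\rmH^0=\rmH^2=0$ together with $\rmH^1\simeq\rmH^0(X,\Z)\otimes\C^*=\C^*$ and $\rmH^3\simeq\rmH^2(X,\Z)\otimes\C^*=\C^*$, while for $\rmH^6$ and $\rmH^8$ (the case $m\le k$) it gives $\rmH^k(X,\bbZ_X(2)_\rmD)\simeq\rmH^k(X,\Z)$. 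In degree $4$ we have $m=k=2$, so \cref{lem:DelCoh-K3} yields $\rmH^4(X,\bbZ_X(2)_\rmD)\simeq\rmH^{2,2}(X,\Z)$ and $\rmH^5(X,\bbZ_X(2)_\rmD)\simeq\Br^4_{\an}(X)$, the kernel and cokernel of $\rmH^4(X,\Z)\to\rmH^{1,3}(X)$ from \eqref{eq:def-BX}.

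For $\YY$ I would feed the Hodge structure coming from the projective bundle formula \eqref{eq:proj-bundle} into the same sequence \eqref{eq:DelCoh-Voisin-seq}. As a Hodge structure $\rmH^{2k}(\YY)=\bigoplus_{i=0}^4\rmH^{2k-2i}(X)(-i)$, so the relevant quotient is $\rmH^{2k}(\YY,\C)/F^2=\rmH^{0,2k}(\YY)\oplus\rmH^{1,2k-1}(\YY)$. Since $X$ is a cubic fourfold, among the Hodge numbers $h^{p,q}(X)$ with $p\le 1$ only $h^{0,0}=h^{1,1}=h^{1,3}=1$ are nonzero. A short computation with the projective bundle decomposition of each Hodge piece then shows that $\rmH^{2k}(\YY,\C)/F^2$ equals $\C$ for $k=0$, equals $\C^2$ for $k=1$, equals $\rmH^{1,3}(X)\simeq\C$ for $k=2$, and vanishes for $k\ge 3$.

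Plugging these into \eqref{eq:DelCoh-Voisin-seq} disposes of every degree. In degrees $k=0,1$ the map $\rmH^{2k}(\YY,\Z)\to\rmH^{2k}(\YY,\C)/F^2$ is the inclusion of a full lattice, giving $\rmH^0=\rmH^2=0$, $\rmH^1\simeq\C/\Z\simeq\C^*=\rmH^0(\YY,\sO^*)$ and $\rmH^3\simeq\C^2/\Z^2\simeq\C^*\times\C^*=\rmH^{1,1}(\YY,\C)/\rmH^2(\YY,\Z)$. For $k\ge 3$ the quotient $\rmH^{2k}(\YY,\C)/F^2$ vanishes, forcing $\rmH^k(\YY,\bbZ_\YY(2)_\rmD)\simeq\rmH^k(\YY,\Z)$ for all $k\ge 6$, which is the stated answer by \cref{lem:cohom-Y}. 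The only genuinely substantive degree is $k=2$, where the sequence reads
\[
0\to\rmH^4(\YY,\bbZ_\YY(2)_\rmD)\to\rmH^4(\YY,\Z)\to\rmH^{1,3}(X)\to\rmH^5(\YY,\bbZ_\YY(2)_\rmD)\to 0,
\]
so everything comes down to the kernel and cokernel of $\rmH^4(\YY,\Z)\to\rmH^{1,3}(\YY)=\rmH^{1,3}(X)$.

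This last identification is the main obstacle, and I would handle it using the integral decomposition $\rmH^4(\YY,\Z)=q^*\rmH^4(X,\Z)\oplus q^*\rmH^2(X,\Z)\cdot H\oplus q^*\rmH^0(X,\Z)\cdot H^2$ from \eqref{eq:proj-bundle}. The two summands $\Z\,hH$ and $\Z\,H^2$ are of pure Hodge type $(2,2)$, so they lie in the kernel and have zero image in $\rmH^{1,3}$; on the remaining summand the map is exactly $\rmH^4(X,\Z)\to\rmH^{1,3}(X)$. Hence the kernel is $\rmH^{2,2}(X,\Z)\oplus\Z\,hH\oplus\Z\,H^2\simeq\rmH^{2,2}(X,\Z)\oplus\Z^2$, which is $\rmH^{2,2}(\YY,\Z)$, and the cokernel is $\Br^4_{\an}(X)$; as this cokernel is by definition $\Br^4_{\an}(\YY)$, we obtain the stated equality $\Br^4_{\an}(\YY)=\Br^4_{\an}(X)$. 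The only care needed is in pinning down $F^2\rmH^4(\YY)$ and recognizing the two Tate summands; the rest is formal. Alternatively, the entire $\YY$-computation could be deduced from the $X$-computation via the projective bundle formula for Deligne cohomology \cite[Proposition 8.5]{EsnaultViehweg}, at the cost of invoking the negative-weight complexes $\bbZ(2-i)_\rmD$ for $i\ge 3$.
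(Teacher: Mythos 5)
Your proposal is correct and follows essentially the same route as the paper: Lemmas \ref{lem:DelCoh-Tate} and \ref{lem:DelCoh-K3} for $X$, and for $\YY$ the underlying four-term sequence \eqref{eq:DelCoh-Voisin-seq} fed with the Hodge-theoretic projective bundle decomposition of $\rmH^*(\YY)$ — which is precisely the ``same way'' computation whose details the paper omits (and your direct use of the sequence is in fact the right move, since for $k\ge 3$ the groups $\rmH^{2k}(\YY,\Z)$ are of K3 type with $m=2\ne k$, so neither lemma applies verbatim there). Your treatment of the key degree, splitting $\rmH^4(\YY,\Z)$ into the two Tate summands $\Z\,hH\oplus\Z\,H^2$ and the summand $q^*\rmH^4(X,\Z)$ to identify the kernel and cokernel of the map to $\rmH^{1,3}(\YY)=\rmH^{1,3}(X)$, matches the intended argument exactly.
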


\begin{proof}
Deligne cohomology groups of $X$
can be computed immediately using Lemma \ref{lem:DelCoh-Tate} and Lemma \ref{lem:DelCoh-K3}.
Deligne cohomology groups of $\YY$
can be computed in the same way, or using the projective bundle formula \cite[Proposition 8.5]{EsnaultViehweg} applied to $q\colon \YY \to X$.
We omit the details of these computations.~\qedhere
\end{proof}

\begin{proposition}\label{prop:pushforward-Omega1}
For any smooth cubic fourfold
$X$ we have
$\rmR  p_*\OO_\YY \simeq \OO_B$
and
\[
\rmR  p_*\Omega^1_\YY \simeq \Omega^1_B[0] \oplus \OO_B[-1] \oplus \Omega^1_B[-2], \quad
\rmR  p_*\Omega^1_{\YY/B} \simeq \OO_B[-1] \oplus \Omega^1_B[-2].
\]
Furthermore, for the complex $\Omega^{\le 1}_\YY = [\OO_\YY \xrightarrow{d} \Omega^1_\YY]$ (with the first term in degree zero) we have
\begin{equation}\label{eq:pushforward-OmegaLE1}
\rmR  p_* \Omega^{\le 1}_\YY \simeq \Omega^{\le 1}_B \oplus \OO_B[-2] \oplus \Omega^1_B[-3].
\end{equation}
\end{proposition}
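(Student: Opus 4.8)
The plan is to realise $\YY\subset X\times B$ as a smooth divisor and to reduce every pushforward along $p$ to a cohomology computation on $X$ via the product projection $\pr_B\colon X\times B\to B$. The incidence condition $\phi(w)=0$ is a section of $\pr_X^*\OO_X(1)\otimes\pr_B^*\OO_B(1)$, so $\OO_{X\times B}(\YY)=\OO(h+H)$ and I have at my disposal the Koszul resolution
\[
0\to \OO_{X\times B}(-h-H)\to \OO_{X\times B}\to \OO_\YY\to 0
\]
together with the conormal sequence of the smooth divisor $\YY$,
\[
0\to \OO_\YY(-h-H)\to q^*\Omega^1_X\oplus p^*\Omega^1_B\to \Omega^1_\YY\to 0.\tag{$\ast$}
\]
Since $p=\pr_B|_\YY$, for any coherent $\mathcal F$ on $X$ I would compute $\rmR p_*$ of $q^*\mathcal F$ (and of its twists by $p^*\OO_B(-1)$) by tensoring the Koszul sequence with $\pr_X^*\mathcal F$ and applying $\rmR\pr_{B*}$, using the Künneth identity $\rmR\pr_{B*}\pr_X^*\mathcal F\simeq \rmR\Gamma(X,\mathcal F)\otimes_\C\OO_B$ and the projection formula to pull the $\OO_B(-1)$ twists out.

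First I would establish $\rmR p_*\OO_\YY\simeq \OO_B$ together with the vanishing $\rmR p_*\OO_\YY(-h-H)=0$. From the ideal sheaf sequence of $X\subset\P^5$ one checks $\rmH^i(X,\OO_X(-k))=0$ for all $i$ and $k=1,2$; hence $\rmR\pr_{B*}\OO_{X\times B}(-h-H)=\rmR\Gamma(X,\OO_X(-1))\otimes\OO_B(-1)=0$, the analogous $(-h)$-twist vanishes, while $\rmR\pr_{B*}\OO_{X\times B}=\OO_B$ because $X$ is Fano. Feeding these into the Koszul triangle yields $\rmR p_*\OO_\YY\simeq\OO_B$ and the auxiliary vanishing.

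The technical heart is $\rmR p_*(q^*\Omega^1_X)\simeq\OO_B[-1]\oplus\Omega^1_B[-2]$. Tensoring the Koszul sequence with $\pr_X^*\Omega^1_X$, applying $\rmR\pr_{B*}$, and inserting $\rmR\Gamma(X,\Omega^1_X)=\C[-1]\oplus\C[-3]$ and $\rmR\Gamma(X,\Omega^1_X(-1))=\C^6[-3]$ from \eqref{eq:cohom-OmegaX}, I obtain a triangle
\[
\OO_B(-1)^{\oplus 6}[-3]\xrightarrow{\ \phi\ }\OO_B[-1]\oplus\OO_B[-3]\to \rmR p_*(q^*\Omega^1_X)\xrightarrow{+1}.
\]
The component of $\phi$ into $\OO_B[-1]$ lies in $\rmH^2(\P^5,\OO(1))^{\oplus 6}=0$, so $\phi$ reduces to a bundle map $\phi_0\colon\OO_B(-1)^{\oplus 6}\to\OO_B$. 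Over a point $[\phi]\in B$ its fibre is cup product with the linear form $\phi$, i.e. the multiplication pairing $\rmH^0(X,\OO_X(1))\otimes\rmH^3(X,\Omega^1_X(-1))\to\rmH^3(X,\Omega^1_X)$, and \emph{this is where I expect the main obstacle to be}: I would prove this pairing is perfect, so that $\phi_0$ has no common zero and is, after a linear change of basis, the Euler sequence map $0\to\Omega^1_B\to\OO_B(-1)^{\oplus 6}\to\OO_B\to 0$ on $B\cong\P^5$. Concretely, identifying the cohomology of $X$ with its Jacobian ring $R=\C[x_0,\dots,x_5]/(\partial F/\partial x_i)$, this pairing is the Macaulay multiplication $R_1\times R_5\to R_6$ on the Artinian Gorenstein ring $R$ (Hilbert series $(1+t)^6$, socle in degree $6$), which is nondegenerate. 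Granting this, $\mathrm{cone}(\phi_0)\simeq\Omega^1_B[1]$, and the triangle gives $\rmR p_*(q^*\Omega^1_X)\simeq\OO_B[-1]\oplus\Omega^1_B[-2]$.

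Finally I would assemble the remaining three statements. Applying $\rmR p_*$ to $(\ast)$ and using $\rmR p_*\OO_\YY(-h-H)=0$ and $\rmR p_*(p^*\Omega^1_B)=\Omega^1_B$ gives $\rmR p_*\Omega^1_\YY\simeq\rmR p_*(q^*\Omega^1_X)\oplus\Omega^1_B$, which is the claimed decomposition. For $\Omega^1_{\YY/B}$ I would push forward the relative cotangent sequence $0\to p^*\Omega^1_B\to\Omega^1_\YY\to\Omega^1_{\YY/B}\to 0$: the induced $\OO_B$-linear map $\Omega^1_B\to\rmR p_*\Omega^1_\YY$ can only hit the degree-zero summand $\Omega^1_B$ (the components into $\OO_B[-1]$ and $\Omega^1_B[-2]$ lie in negative $\Ext$ groups), where it is an isomorphism since $\End(\Omega^1_{\P^5})=\C$ and it is generically nonzero (smooth cubic threefold fibres have $\rmH^0(\Omega^1)=0$); hence the cofibre is $\OO_B[-1]\oplus\Omega^1_B[-2]$. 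For the truncated de Rham complex I would use the stupid-filtration triangle $\Omega^1_\YY[-1]\to\Omega^{\le 1}_\YY\to\OO_\YY\xrightarrow{d}\Omega^1_\YY$ and the hypercohomology spectral sequence $E_1^{a,b}=\rmR^b p_*\Omega^a_\YY$: its only nonzero differential is the de Rham $d_B\colon\OO_B\to\Omega^1_B$ on the bottom row $b=0$ (for $b\ge 1$ the source $\rmR^b p_*\OO_\YY$ vanishes), so the bottom row assembles to $\Omega^{\le 1}_B$ while the surviving pieces $\rmR^1 p_*\Omega^1_\YY=\OO_B$ and $\rmR^2 p_*\Omega^1_\YY=\Omega^1_B$ contribute $\OO_B[-2]$ and $\Omega^1_B[-3]$. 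The degeneration and the vanishing of the relevant extension classes (in groups such as $\rmH^{>0}(\P^5,T_{\P^5})$) then yield $\rmR p_*\Omega^{\le 1}_\YY\simeq\Omega^{\le 1}_B\oplus\OO_B[-2]\oplus\Omega^1_B[-3]$.
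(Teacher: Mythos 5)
Your proposal is correct and follows essentially the same route as the paper's proof: realizing $\YY$ as the divisor of class $h+H$ in $X\times B$, pushing forward the Koszul and conormal sequences, feeding in the cohomology groups \eqref{eq:cohom-OmegaX} to reduce everything to the bundle map $\OO_B(-1)^{\oplus 6}\to\OO_B$, and splitting off $\rmR p_*\Omega^{\le 1}_\YY$ by the degree-reasons cone argument. The one place you add substance is exactly where the paper is tersest — it simply asserts that the four-term sequence ``is the Euler exact sequence'' — and your fiberwise nondegeneracy argument via Macaulay--Gorenstein duality $R_1\times R_5\to R_6$ in the Jacobian ring is a valid way to justify this (alternatively, Serre duality on $X$ identifies $\rmH^3(X,\Omega^1_X(-1))\simeq\rmH^0(X,\OO_X(1))^\vee$ and $\rmH^3(X,\Omega^1_X)\simeq\C$ so that the pairing becomes the evaluation pairing, which is perfect).
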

\begin{proof}

    Recall that $\YY$ is a smooth divisor on $\XX \coloneqq X \times B$
    and $\OO_\XX(\YY) = \OO_X(1) \boxtimes \OO_B(1)$.   
    Let us write $i\colon \YY \into \XX$ for the closed embedding.
    We compute derived pushforwards 
using the formula $\rmR p_{2*}(\LL \boxtimes \LL') = \rmH^*(X, \LL) \otimes \LL'$. 

Applying $\rmR p_{2*}$ to the short exact sequence
for the divisor $\YY \subset \XX$
\[
0 \to \OO_\XX(-\YY) \to \OO_\XX \to i_*\OO_\YY \to 0
\]
we get $\rmR p_*\OO_\YY = \OO_B$.
Now pushing forward the same exact sequence 
twisted by $\OO_\XX(-\YY)$
we obtain
$\rmR p_*\OO_\YY(-\YY) = 0$.
Therefore, applying $\rmR p_{2*}$ to
    the conormal exact sequence
    \[0 \to \OO_\YY(-\YY) \to i^*\Omega^1_{\XX} \to \Omega^1_\YY \to 0\]
we get
    \begin{equation}\label{eq:Rp-Omega-splitting}
    \rmR p_*\Omega^1_\YY = \rmR p_*i^*\Omega^1_{\XX}
    = \rmR p_*i^*p_1^*\Omega^1_X \oplus 
     \rmR p_*i^*p_2^*\Omega^1_B
    \end{equation}
 Since $\rmR p_*\sO_{\YY}\simeq \sO_B[0]$, the second term in the right-hand side of \eqref{eq:Rp-Omega-splitting}
 is quasi-isomorphic to $\Omega^1_B[0]$ by the projection formula.
Let us now consider the first term 
\[
\FF^{\bullet}\coloneqq \rmR p_*i^*p_1^*\Omega^1_X 
=    \rmR p_{2*}i_*i^*p_1^*\Omega^1_X 
=
    \rmR p_{2*}(p_1^*\Omega^1_X \otimes_{\OO_\XX} \OO_\YY).
    \]
Using the short exact sequence for the divisor $\sY\subset \XX$, it is easy to obtain a distinguished triangle
    \begin{equation}\label{eq:seqOmegaXpushforward}
    {\mathrm H}^*(X, \Omega^1_X(-1)) \otimes \OO_B(-1) \to 
    {\mathrm H}^*(X, \Omega^1_X) \otimes \OO_B \to \FF^\bullet.
    \end{equation}

    Applying \eqref{eq:cohom-OmegaX},
    the triangle becomes
    \[
    \OO_B^{\oplus 6}(-1)[-3] \to \OO_B[-1] \oplus \OO_B[-3] \to 
    \FF^\bullet.
    \]  
   Thus, $\FF^\bullet$ is the direct sum of the shifted
   cohomology sheaves $\HH^1 \FF^\bullet \simeq \OO_B$ and
   $\HH^2 \FF^\bullet$ satisfying
    \[
    0 \to \HH^2 \FF^\bullet \to \OO_B^{\oplus 6}(-1) \to \OO_B \to 0
    \]
    which is the Euler exact sequence,
    giving $\HH^2 \FF^\bullet \simeq \Omega^1_B$. This proves the decomposition for 
    $\rmR p_*\Omega^1_\YY$.
To compute $\rmR p_*\Omega_{\sY/B}^1$
we consider the sequence
 \[
 0\to p^*\Omega_B^1\to \Omega_{\sY}^1\to \Omega_{\sY/B}^1\to 0
 \]
 where the first morphism is injective because $B$ is smooth. 
 Applying $\rmR p_*$ to this sequence gives the result.
 
    It remains to prove \eqref{eq:pushforward-OmegaLE1}. We compute
    \[
    \rmR  p_* \Omega^{\le 1}_\YY \simeq
    \mathrm{cone}(\rmR  p_* \OO_\calY \to \rmR  p_* \Omega^{1}_\YY)[-1]
    \simeq 
    \mathrm{cone}(\OO_B[0] \to \Omega^1_B[0] \oplus \OO_B[-1] \oplus \Omega^1_B[-2])[-1].
    \]
    For degree reasons, out of the three components of the morphism inside the cone, only the first one can be nontrivial. 
    The first morphism is the differential. So,
    \[
    \rmR  p_* \Omega^{\le 1}_\YY \simeq \mathrm{cone}(\OO_B \xrightarrow{d} \Omega^1_B)[-1] \oplus \OO_B[-2] \oplus \Omega^1_B[-3]
    \]
    which gives \eqref{eq:pushforward-OmegaLE1}.
\end{proof}

\begin{definition}\label{def:defect-general}
We say that a smooth cubic fourfold
$X$ is \emph{defect general} if 
all hyperplane sections 
$Y \subset X$
have defect \eqref{eq:def-defect}
$\delta_Y = 0$.
\end{definition}

It can be deduced from the results of \cite{LSV} that if $X$ is a very general cubic fourfold then
it is defect general.
It follows from the recent results of Marquand and Viktorova \cite[Theorem 1.2]{MarqVikt}
that $X$ is defect general
 if and only if 
$X$ does not contain a plane or a cubic scroll.

We consider a constructible sheaf on $B$ defined as
\begin{equation}\label{eq:def-LambdaYY}
    \Lambda \coloneqq \rmR^3 p_*\Z_\YY.
\end{equation}
At the general point $b \in B$ it has a rank
$10$, the middle Betti number of a smooth cubic threefold.
This sheaf and its cohomology play the key role in the paper.
We also have another constructible sheaf $\QQ$ fitting in the short exact sequence
\begin{equation}
\label{eq:Q-seq}
0 \to \QQ \to \rmR^4 p_*\Z_\YY \to \Z_B \to 0
\end{equation}
where the last morphism is the fiberwise intersection with a hyperplane class. The sheaf $\QQ$ is a torsion sheaf.
By \cref{prop:cohom-Y}, for every $b \in B$
we have $\QQ_b \simeq \Z^{\delta_{Y_b}}$.

The Leray spectral sequence for $\rmR p_* \Z_\YY$ does not degenerate in general, however we have the following result.
We write $U \subset B$ for the locus parameterizing smooth hyperplane sections so that $\Lambda_U \coloneqq \Lambda|_U$ is a local system of rank $10$.

\begin{theorem}\label{thm:decomposition-cubic}
    Let $X$ be a  
    smooth cubic fourfold.
    We have a  decomposition
  \begin{equation}\label{eq:general-splitting}
    \rmR p_*\Z_\YY \simeq \bigoplus_{i=0}^3 \Z_B[-2i] \oplus \Lambda^\bullet[-3]
\end{equation}
    where $\Lambda^\bullet[5]\simeq {\rm IC}_B(\Lambda_{U})$ is an intersection complex, which is simple over $\bbQ$.
    Furthermore, 
    $\HH^i(\Lambda^\bullet) = 0$ for $i \ne 0, 1$.
    We have
    and $\HH^0(\Lambda^\bullet) \simeq \Lambda$
    and it is a tcf sheaf.
    We have $\sQ \simeq \HH^1(\Lambda^\bullet)$ and it
    is supported precisely along the locus
    of hyperplane sections which have positive defect, which is a locus of codimension at least two. 
\end{theorem}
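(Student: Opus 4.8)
The plan is to apply the integral decomposition of \cref{prop:decomp-Z} to the morphism $p\colon \YY \to B$ and then identify the resulting summand $\Lambda^\bullet$ with an intersection complex. For the input classes, I would take $\xi_0 = 1 \in \rmH^0(\YY,\Z)$, $\xi_1 = h \in \rmH^2(\YY,\Z)$, $\xi_2 = b$ (the class from \eqref{eq:b-class} pulled back, or rather $q^*b$), and $\xi_3 = q^*(h^2\cdot b)$ suitably arranged, chosen so that the intersection numbers $\deg_{Y_b}(\xi_i|_{Y_b}\cdot \xi_{3-i}|_{Y_b}) = 1$ on a smooth fiber $Y_b$, using the unimodularity of $\rmH^{2*}(X,\Z)$ (integral Poincar\'e duality) guaranteed by $\deg_X(h^2\cdot b)=1$. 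Since the relative dimension here is $d = 3$, \cref{prop:decomp-Z} gives the decomposition \eqref{eq:general-splitting} with four constant summands $\Z_B[-2i]$, $0\le i\le 3$, and a single complex $\Lambda^\bullet[-3]$ that is Verdier self-dual up to shift. The cohomology sheaf $\HH^0(\Lambda^\bullet)$ is $\rmR^3 p_*\Z_\YY = \Lambda$, since $\rmR^j p_*\Z_\YY$ for $j\le 2$ is absorbed into the constant summands (odd lower cohomology vanishes on the fibers and the even ones are spanned by $h^i$).

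Next I would verify that $\Lambda^\bullet[5]$ is the intersection complex $\IC_B(\Lambda_U)$. Over $\Q$, this is immediate from the Decomposition Theorem \ref{thm:decomposition-perverse} together with \cref{rmk:suppPi}: since $p$ is flat of relative dimension $3$, the top perverse summand $P_3$ equals $\IC(\rmR^3 p_{U*}\Q_{\YY_U})$, which matches $\Lambda^\bullet[5]\otimes\Q$, establishing simplicity over $\Q$. To pin down the integral statement, I would use the characterization of intersection complexes via support and cosupport conditions. The key point is that $\Lambda^\bullet$ has cohomology sheaves concentrated in degrees $0$ and $1$: degree $0$ is $\Lambda$, and degree $1$ is identified with $\sQ$ via the defining sequence \eqref{eq:Q-seq}, because $\rmR^4 p_*\Z_\YY$ contributes $\Z_B[-4]$ (the $\xi_2$ summand, carrying the hyperplane-intersection class) plus the torsion sheaf $\sQ$ landing in $\HH^1(\Lambda^\bullet)$.

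For the support statement on $\sQ$: by \cref{prop:cohom-Y} the stalk $\sQ_b \simeq \Z^{\delta_{Y_b}}$, so $\Supp(\sQ)$ is exactly the locus of hyperplane sections with positive defect. I would then invoke \cref{def:defect-general} and the Marquand--Viktorova characterization that positive defect occurs only when the section contains a plane or cubic scroll; a dimension count shows these loci have codimension at least two in $B = (\P^5)^\vee$. Combined with the axiomatic support conditions (as in \cref{ex:intdecompcurve}), the fact that the ``extra'' cohomology $\HH^1(\Lambda^\bullet) = \sQ$ is supported in codimension $\ge 2$ forces $\Lambda^\bullet[5]$ to satisfy the intersection complex conditions, hence $\Lambda^\bullet[5]\simeq \IC_B(\Lambda_U)$. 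Finally, $\Lambda = \HH^0(\Lambda^\bullet)$ is tcf everywhere by \cref{prop:IC-bottom}, since it is the bottom cohomology sheaf of an intersection complex.

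I expect the main obstacle to be the precise identification $\HH^1(\Lambda^\bullet)\simeq \sQ$ and the attendant bookkeeping: one must carefully track which constant summand absorbs the fiberwise hyperplane class in $\rmH^4$ so that the residual torsion in $\rmR^4 p_*\Z_\YY$ is exactly $\sQ$, and confirm that the self-duality of $\Lambda^\bullet$ is compatible with $\Lambda$ being in degree $0$ and $\sQ$ in degree $1$. A secondary subtlety is establishing the codimension-two bound for the positive-defect locus rigorously from \cite{MarqVikt}, rather than merely asserting it; this is what upgrades the $\Q$-level intersection complex statement to the integral structure and, via \cref{prop:IC-bottom}, yields the tcf property for $\Lambda$.
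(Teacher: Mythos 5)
Your strategy is essentially the paper's: apply \cref{prop:decomp-Z} with the classes $1$, $q^*h$, $q^*b$, $q^*(hb)$, read off $\HH^0(\Lambda^\bullet)\simeq\Lambda$ and $\HH^1(\Lambda^\bullet)\simeq\sQ$ from \cref{prop:cohom-Y} and \eqref{eq:Q-seq}, verify the Goresky--MacPherson support/cosupport axioms plus uniqueness, and get the tcf property from \cref{prop:IC-bottom}. (Minor slip: $\xi_3$ must be $q^*(h\cdot b)\in\rmH^6(\YY,\Z)$, not $q^*(h^2\cdot b)$, which sits in the wrong degree.) However, two steps have genuine gaps. The first is the simplicity over $\Q$: you assert that matching $\Lambda^\bullet[5]\otimes\Q$ with the perverse summand $P_3=\IC(\rmR^3p_{U*}\Q_{\YY_U})$ from \cref{rmk:suppPi} ``establishes simplicity over $\Q$''. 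It does not. An intersection complex $\IC(L)$ is simple precisely when the local system $L$ is irreducible; being a summand produced by \cref{thm:decomposition-perverse} only guarantees that $\IC(\Lambda_{U,\Q})$ is a direct sum of simple pieces. The missing input, which the paper supplies, is the classical irreducibility of the monodromy representation on $\rmH^3$ of the smooth hyperplane sections, cited as \cite[Corollary 3.28]{VoiHTtwo}. Without it the simplicity claim is unproven (and even the identification of $\Lambda^\bullet_\Q[5]$ with a single perverse summand, rather than a sum of several, needs an argument).

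The second gap is the codimension-two bound for $\Supp(\sQ)$. This is not a ``secondary subtlety'': it is exactly the support axiom needed to conclude $\Lambda^\bullet[5]\simeq\IC_B(\Lambda_U)$ integrally, so the whole theorem hinges on it. Your route via the Marquand--Viktorova characterization plus ``a dimension count'' has a real hole in the cubic scroll case: a cubic scroll spans a unique $\P^4$, so the relevant locus in $B$ is the image of the family of scrolls in $X$ under the span map, and bounding its dimension is not a routine count — a priori it could sweep out a divisor. The paper avoids this entirely with a softer argument: positive defect forces the section to be singular; the discriminant (the dual variety of $X$) is an irreducible divisor whose general point corresponds to a $1$-nodal section; and a $1$-nodal cubic threefold has defect zero by Example~\ref{ex:nodal-defect}. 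Hence the positive-defect locus is a proper closed subset of an irreducible divisor, so of codimension at least two. (Note that excluding a divisorial scroll locus by your route essentially forces you to reprove this statement.) Finally, for the cosupport axiom you should say explicitly, as the paper does, that the stalks of $\Lambda$ and $\sQ$ are $\Z$-torsion free by \cref{prop:cohom-Y}; self-duality of $\Lambda^\bullet$ alone does not yield cosupport.
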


\begin{remark}    
With rational coefficients Theorem \ref{thm:decomposition-cubic} follows from more general results on Decomposition Theorem applied to universal hyperplane sections 
\cite{BFNP},
\cite[Theorem 1]{Schresidues} and \cite[\S 3.3 Lemma]{Bei15}, however for our applications
it is crucial to work with integer coefficients.
\end{remark}

It follows from Theorem \ref{thm:decomposition-cubic} that
\[
\text{$X$
    is defect general $\iff$  $\sQ = 0$ $\iff$
    $\Lambda^\bullet \simeq \Lambda[0]$.
}\]
Furthermore, for any smooth $X$, \eqref{eq:general-splitting} provides
a splitting $\rmR^4p_*\bbZ_{\sY}\simeq \bbZ_B\oplus \sQ$ which can also be obtained from \eqref{eq:Q-seq} directly.

\begin{proof}[Proof of \cref{thm:decomposition-cubic}]
Consider a class $b \in \rmH^4(X, \Z)$ defined in  \eqref{eq:b-class}.
We apply Proposition \ref{prop:decomp-Z} with classes
$\xi_i \in \rmH^{2i}(\YY, \Z)$ given as
\[
\xi_0 = 1, \quad \xi_1 = q^*h, \quad 
\xi_2 = q^*b, \quad 
\xi_3 = q^*(bh)
\]
where $q\colon \sY\to X$ is the usual projection.
On a smooth fiber $Y_b \subset \YY$
we have
\begin{equation}\label{eq:xi-duality}
\rmH^*(Y_b, \Z) = \displaystyle\bigoplus_{k=0}^3 \Z \xi_k|_{Y_b} \oplus \rmH^3(Y_b, \Z)\
\text{ and }\
\deg(\xi_i|_{Y_b} \cdot \xi_{3-j}|_{Y_b}) = 
\delta_{ij}
\end{equation}
The splitting \eqref{eq:general-splitting}
follows.

By taking cohomology sheaves in
\eqref{eq:general-splitting}, using Proposition \ref{prop:cohom-Y}
we see that the only nontrivial cohomology sheaves of $\Lambda^\bullet$ are
 $\HH^0(\Lambda^\bullet) \simeq \Lambda$ and $\HH^1(\Lambda^\bullet) \simeq \QQ$.
Since general singular hyperplane sections are 1-nodal and have trivial defect by Example \ref{ex:nodal-defect}, it follows $\QQ$ is supported in codimension at least two.

Let us prove that $\Lambda^\bullet[5]$ is quasi-isomorphic to the intersection complex $\IC(\Lambda_U)$.
We check
that it satisfies the  conditions 
\cite[p. 107]{GoreskyMacphersonII}.
The support condition
is satisfied because $\QQ  = \HH^{-4}(\Lambda^\bullet[5])$ has support of dimension less than $4$. 
Since $\sH^i(\Lambda^{\bullet})$ is $\bbZ$-torsion free and $\Lambda^{\bullet}[5]$ is self-dual, the sheaf $\Lambda^\bullet[5]$ also satisfies the cosupport condition (see e.g.\ the argument of \cite[p. 120]{GoreskyMacphersonII}).
Then the quasi-isomorphism
$\Lambda^\bullet[5]\simeq \IC(\Lambda_U)$
follows from uniqueness of the extension satisfying these conditions (see \cite[Theorem 5.2.8]{Dimca-sheaves}, \cite[p.\ 107, Uniqueness theorem]{GoreskyMacphersonII}).

The sheaf $\Lambda = \HH^0(\Lambda^\bullet)$
is tcf by Proposition \ref{prop:IC-bottom}. 
Finally, the complex $\IC(\Lambda_{U,\bbQ})$ is a simple perverse sheaf because $\Lambda_{U,\bbQ}$
    is an irreducible local system \cite[Corollary 3.28]{VoiHTtwo}.
\end{proof}

\begin{theorem}
\label{thm:cohom-Lambda}
For any smooth cubic fourfold $X$, the cohomology groups of the intersection complex $\Lambda^\bullet$ constructed in \cref{thm:decomposition-cubic}
are given by
\begin{equation}\label{eq:QcohomLambda}
\rmH^k(B, \Lambda^\bullet) = 
    \begin{cases}
        \Z^{\oplus 22} & \text{if } k=1,3,7, 9\\
        \Z^{\oplus 23} & \text{if } k= 5\\
        0 &  \text{otherwise}\\
    \end{cases}
\end{equation}
Furthermore, we have
a canonical isomorphism of Hodge structures
\[
\rmH^1(B, \Lambda^\bullet) \simeq
\rmH^4(X, \Z)_{\rm pr}
\]
where the Hodge structure on
$\rmH^i(B, \Lambda^\bullet)$ is induced from the Hodge structure on $\rmH^4(\YY, \Z)$ via  decomposition \eqref{eq:general-splitting}. Moreover,
the pairing $\deg_\YY(\alpha \beta H^4)$ for $\alpha, \beta \in \rmH^1(B, \Lambda^\bullet)$
where $H \in \rmH^2(B, \Z)$ is the ample generator, corresponds to the intersection pairing on
$\rmH^4(X, \Z)_{\rm pr}$.
\end{theorem}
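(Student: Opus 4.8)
The plan is to extract everything from the integral decomposition \eqref{eq:general-splitting} and its cohomological avatar in \cref{rmk:splittingcohomYY}. First I would read off the ranks. Taking hypercohomology of \eqref{eq:general-splitting} gives, for each $k$,
\[
\rmH^k(\YY,\Z) \simeq \bigoplus_{i=0}^{3}\rmH^{k-2i}(B,\Z)\ \oplus\ \rmH^{k-3}(B,\Lambda^\bullet),
\]
where $B=(\P^5)^\vee$ has $\rmH^{2j}(B,\Z)=\Z$ for $0\le j\le 5$ and no odd cohomology. For odd $k$ every term on the right except possibly $\rmH^{k-3}(B,\Lambda^\bullet)$ vanishes while the left side is $0$, so $\rmH^{\mathrm{even}}(B,\Lambda^\bullet)=0$; for even $k$ the group $\rmH^{k-3}(B,\Lambda^\bullet)$ is a direct summand of the free group $\rmH^k(\YY,\Z)$, of rank $b_k(\YY)-\#\{\,0\le i\le 3 : 0\le k-2i\le 10\,\}$. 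Substituting the Betti numbers of $\YY$ from \cref{lem:cohom-Y} for $k=4,6,8,10,12$ yields $22,22,23,22,22$, which is \eqref{eq:QcohomLambda}, and $k=0,2,14,16$ yield $0$.

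For the Hodge-theoretic identification I would specialize the complex \eqref{eq:complexcohLambdabullet} to $d=3$, $k=4$, which presents $\rmH^1(B,\Lambda^\bullet)\simeq\ker(\psi)/\image(\phi)$ with
\[
\phi\colon \rmH^4(B,\Z)\oplus\rmH^2(B,\Z)\to\rmH^4(\YY,\Z),\quad (\beta_4,\beta_2)\mapsto p^*\beta_4+p^*\beta_2\cdot q^*h,
\]
and $\psi\colon\rmH^4(\YY,\Z)\to\rmH^0(B,\Z)=\Z$, $\psi(\alpha)=p_*(\alpha\cdot q^*h)$ (the remaining component of $\psi$ lands in $\rmH^{-2}(B)=0$). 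By the projective bundle formula \eqref{eq:proj-bundle}, $\rmH^4(\YY,\Z)=q^*\rmH^4(X,\Z)\oplus\Z\,(H\cdot q^*h)\oplus\Z\,H^2$, and one checks $\image(\phi)=\Z\,H^2\oplus\Z\,(H\cdot q^*h)$. The crucial computation is $\psi$ on the bundle summand: for $\gamma\in\rmH^4(X,\Z)$, using the projection formula and that a general fibre $Y_b=p^{-1}(b)$ maps isomorphically onto the hyperplane section of class $h$,
\[
\psi(q^*\gamma)=p_*\big(q^*(\gamma h)\big)=\deg_{Y_b}\big((\gamma h)|_{Y_b}\big)=\deg_X(\gamma\cdot h^2),
\]
while $\psi$ kills $H^2$ and $H\cdot q^*h$ for degree reasons. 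Hence $\ker(\psi)=q^*\rmH^4(X,\Z)_{\mathrm{pr}}\oplus\Z\,H^2\oplus\Z\,(H\cdot q^*h)$, so $\rmH^1(B,\Lambda^\bullet)\simeq q^*\rmH^4(X,\Z)_{\mathrm{pr}}$. Since $\phi$ and $\psi$ are assembled from $p^*$, $p_*$ and cup products with the algebraic classes $1$ and $q^*h$, the complex \eqref{eq:complexcohLambdabullet} is a complex of Hodge structures (cf.\ \cref{rmk:splittingcohomYY}), so this is an isomorphism of weight-$4$ Hodge structures, $\rmH^4(X,\Z)_{\mathrm{pr}}$ being a sub-Hodge structure of $\rmH^4(X,\Z)$.

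Finally I would compute the pairing. Representing classes of $\rmH^1(B,\Lambda^\bullet)$ by $\alpha=q^*\tilde\alpha$, $\beta=q^*\tilde\beta$ with $\tilde\alpha,\tilde\beta\in\rmH^4(X,\Z)_{\mathrm{pr}}$, the projection formula for $q$ together with $q_*(H^4)=1\in\rmH^0(X,\Z)$ (the class $H$ restricts to the hyperplane class on each fibre $q^{-1}(x)\cong\P^4$, whose fourth power has degree one) gives
\[
\deg_\YY(\alpha\,\beta\,H^4)=\deg_X\!\big(\tilde\alpha\,\tilde\beta\cdot q_*(H^4)\big)=\deg_X(\tilde\alpha\,\tilde\beta),
\]
the intersection form on $\rmH^4(X,\Z)_{\mathrm{pr}}$. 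To see that this descends to the quotient, I would check it vanishes when $\alpha\in\image(\phi)$: every resulting monomial is divisible by $H^6=0$ on $B=\P^5$, except $\int_\YY H^5\cdot q^*(h\tilde\beta)=\deg_X(h^2\tilde\beta)$, which is zero by primitivity.

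The rank count and the pairing computation are routine; the main obstacle is the middle step. One must correctly transcribe $\phi$ and $\psi$ from \cref{rmk:splittingcohomYY} and, above all, recognise $\psi|_{q^*\rmH^4(X)}$ as intersection against $h^2$: this is exactly what forces primitive cohomology rather than all of $\rmH^4(X,\Z)$ to appear, and it pins down precisely which complement of $\Z\,q^*b$ (for $b$ as in \eqref{eq:b-class}) inside $q^*\rmH^4(X,\Z)$ is singled out by the decomposition. One should also ensure every identification is integral, which holds because \eqref{eq:general-splitting} is an integral direct sum and $\rmH^4(X,\Z)_{\mathrm{pr}}$ is torsion-free, so no torsion or extension ambiguities arise.
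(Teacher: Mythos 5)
Your proposal is correct and takes essentially the same route as the paper: the ranks are read off from the integral splitting \eqref{eq:general-splitting}, the identification $\rmH^1(B,\Lambda^\bullet)\simeq q^*\rmH^4(X,\Z)_{\pr}$ comes from the three-term complex of \cref{rmk:splittingcohomYY} combined with the projective bundle formula \eqref{eq:proj-bundle} and the key computation $\psi(q^*\gamma)=\deg_X(\gamma\cdot h^2)$, and the pairing is $\deg_\YY(\alpha\beta H^4)$. Your fleshed-out verification of the pairing (projection formula along $q$, $q_*(H^4)=1$, and the descent check using primitivity of $\tilde\beta$) supplies details that the paper's proof leaves implicit, but it is the same argument.
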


\begin{proof}
As in Remark \ref{rmk:splittingcohomYY}, consider the morphisms
\[
\Z \oplus \Z[-2] \xrightarrow{(\gamma_0, \gamma_1)}
\rmR p_* \Z_\YY \xrightarrow{(\gamma_0^{\vee}, \gamma_1^{\vee})} \Z[-6] \oplus \Z[-4].
\]
where $\gamma_i$ are corresponding to
the classes $\xi_0 = 1$, $\xi_1 = q^* h$
used in the proof of  \cref{thm:decomposition-cubic}. 
The first morphism is an embedding of a direct summand, the second one is a projection onto another direct summand, and the composition of the two is zero. The cohomology groups $\rmH^*(B, \Lambda^\bullet[-3])$
are canonically isomorphic to the middle cohomology of the following complex:
\[
\rmH^*(B, \Z) \oplus \rmH^{*-2}(B, \Z) \xrightarrow{(p^*(-),h\cdot p^*(-))}
 \rmH^*(\YY, \Z) \xrightarrow{(p_*(-),p_*(h\cdot -))}
\rmH^{*-6}(B, \Z) \oplus \rmH^{*-4}(B, \Z).
\]
Here $h$ denotes the hyperplane class on $X$ pulled back to $\sY$.

In particular, $\rmH^1(B, \Lambda^\bullet)$
is the middle cohomology of the complex of integral Hodge structures
\begin{equation}\label{eq:H1Lambda-formula}
\rmH^4(B, \Z) \oplus \rmH^{2}(B, \Z) \xrightarrow{(p^*(-),h\cdot p^*(-))}
 \rmH^4(\YY, \Z) \xrightarrow{p_*(h\cdot -)}
\rmH^{0}(B, \Z).
\end{equation}
From \eqref{eq:proj-bundle} we get
\[
\rmH^4(\YY, \Z) = 
\Z H^2 \oplus \Z hH \oplus q^*\rmH^4(X, \Z)
\]
with the first two terms corresponding to the images of the first arrow in \eqref{eq:H1Lambda-formula}.
Thus, we obtain a canonical identification of Hodge structures
\[
\rmH^1(B, \Lambda^\bullet) \simeq \Ker(q^*\rmH^4(X, \Z) 
\xrightarrow{p_*(h\cdot -)}
\rmH^{0}(B, \Z)) = q^*\rmH^4(X, \Z)_{\rm pr}.
\]

We can do a similar computation
for $\rmH^9(B, \Lambda^\bullet)$, using instead the middle cohomology of
\[
\rmH^{10}(B, \Z) \xrightarrow{h\cdot p^*(-)}
 \rmH^{12}(\YY, \Z) \xrightarrow{(p_*(-),p_*(h\cdot -))}
 \rmH^{6}(B, \Z) \oplus
\rmH^{8}(B, \Z).
\]
The resulting cohomology is $\rmH^9(B, \Lambda^\bullet) = q^*\rmH^4(X, \Z)_{\rm pr} \cdot H^4 \subset \rmH^{12}(\YY, \Z)$.

Finally, note that the isomorphism $\rmH^1(B, \Lambda^\bullet) \overset{\cdot H^4}{\simeq} \rmH^9(B, \Lambda^\bullet)$ induces a pairing on $\rmH^1(B, \Lambda^{\bullet})$ by sending $\alpha, \beta\in \rmH^1(B, \Lambda^{\bullet})$ to $\deg_{\sY}(\alpha\cdot \beta H^4)$, which corresponds to the intersection pairing on $\rmH^4(X, \Z)_{\rm pr}\simeq \rmH^1(B, \Lambda^\bullet)$.

The ranks of the other cohomology groups 
can be computed similarly using
Lemma \ref{lem:cohom-Y}.
\end{proof}

 By Theorem \ref{thm:decomposition-cubic}  we have the following distinguished triangle useful for accessing $\Lambda$:
\begin{equation}\label{eq:LambdaLambdabulletQ}
\Lambda\to \Lambda^{\bullet}\to \sQ[-1]\to \Lambda[1].
\end{equation}
Denote by $\Sigma\subset \rmH^4(X,\bbZ)$ the sublattice consisting of Poincaré duals of pushforward of classes in $\rmH_4(Y,\bbZ)$ for all hyperplane sections $Y_b$, $b \in B$. Equivalently, $\Sigma$ is generated by algebraic classes in $\rmH^4(X,\bbZ)$ contained in hyperplane sections (see \cite[\S 3]{NamikawaSteenbrink}, or \cite[\S 3.1]{MarqVikt}).
Note that if $X$ is defect general, then $\Sigma = \Z h^2$.

\begin{lemma}\label{lem:H1LambdaSigma}
    We have $\rmH^1(B,\Lambda) \simeq \Ker\left(\rmH^4(X,\bbZ)\to \rmH^0(B, \rmR^4p_*\bbZ_{\sY})\right)\simeq \Sigma^\perp$. 
\end{lemma}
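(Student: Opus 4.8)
The plan is to split the statement into its two isomorphisms: first $\rmH^1(B,\Lambda)\simeq\Ker\rho$, where $\rho\colon\rmH^4(X,\Z)\to\rmH^0(B,\rmR^4p_*\bbZ_\sY)$ is the restriction map $\alpha\mapsto(\alpha|_{Y_b})_b$; and then $\Ker\rho\simeq\Sigma^\perp$. For the first isomorphism I would feed the triangle \eqref{eq:LambdaLambdabulletQ} into its long exact cohomology sequence. Since $\sQ[-1]$ is concentrated in degree $1$ we have $\rmH^k(B,\sQ[-1])=\rmH^{k-1}(B,\sQ)$, so $\rmH^0(B,\sQ[-1])=\rmH^{-1}(B,\sQ)=0$ and the sequence yields
\[
0\to\rmH^1(B,\Lambda)\to\rmH^1(B,\Lambda^\bullet)\xrightarrow{\ \psi\ }\rmH^0(B,\sQ),
\]
so that $\rmH^1(B,\Lambda)=\Ker\psi$, where $\psi$ is the connecting map. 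By \cref{thm:cohom-Lambda} the middle group is canonically $q^*\rmH^4(X,\Z)_{\pr}\subset\rmH^4(\sY,\Z)$.

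Next I would identify $\psi$ with $\rho$. Using the splitting $\rmR^4p_*\bbZ_\sY\simeq\bbZ_B\oplus\sQ$ from \cref{thm:decomposition-cubic}, write $\rho=(\rho_1,\rho_2)$. The first component, obtained by composing $\rho$ with the fiberwise cup with $h$ of \eqref{eq:Q-seq}, sends $\alpha$ to $\deg_{Y_b}(\alpha|_{Y_b}\cdot h)=\deg_X(\alpha\cdot h^2)$; hence $\Ker\rho\subseteq(h^2)^\perp=\rmH^4(X,\Z)_{\pr}$, and on primitive classes $\rho$ reduces to its $\sQ$-component $\rho_2$. The key compatibility is that $\rho$ is the composite of $q^*$ with the Leray edge map $\rmH^4(\sY,\Z)\to\rmH^0(B,\rmR^4p_*\bbZ_\sY)$, and that the derived direct sum decomposition \eqref{eq:general-splitting} respects the Leray filtration; therefore the edge map restricted to the summand $q^*\rmH^4(X,\Z)_{\pr}=\rmH^1(B,\Lambda^\bullet)$ agrees with the hypercohomology edge map of $\Lambda^\bullet$, which is exactly $\psi$. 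Thus $\rho_2=\psi$ on $\rmH^4(X,\Z)_{\pr}$, and combining this with $\Ker\rho\subseteq\rmH^4(X,\Z)_{\pr}$ gives $\rmH^1(B,\Lambda)=\Ker\psi=\Ker\rho$.

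For the second isomorphism I would use integral Poincar\'e duality on $X$ together with the torsion-freeness of $\rmH^4(Y_b,\Z)$ provided by \cref{prop:cohom-Y}. Writing $\iota_b\colon Y_b\into X$ for the inclusion of a hyperplane section, the projection formula gives $\langle\alpha,\iota_{b*}\gamma\rangle_X=\langle\alpha|_{Y_b},\gamma\rangle_{Y_b}$ for all $\gamma\in\rmH_4(Y_b,\Z)$, where on the left the pairing is computed through $\PD_X$ (unimodular since $X$ is smooth). Since $\Sigma$ is generated by the classes $\PD_X(\iota_{b*}\gamma)$, we get that $\alpha\in\Sigma^\perp$ iff $\langle\alpha|_{Y_b},\gamma\rangle=0$ for all $b$ and all $\gamma$; as $\rmH^4(Y_b,\Z)$ is torsion free, the evaluation map $\rmH^4(Y_b,\Z)\to\Hom(\rmH_4(Y_b,\Z),\Z)$ is injective, so this is equivalent to $\alpha|_{Y_b}=0$ for every $b$. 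Finally, $(\rmR^4p_*\bbZ_\sY)_b\simeq\rmH^4(Y_b,\Z)$ by proper base change and a section vanishes iff it vanishes in every stalk, so $\alpha|_{Y_b}=0$ for all $b$ is precisely the condition $\rho(\alpha)=0$. Hence $\Sigma^\perp=\Ker\rho=\rmH^1(B,\Lambda)$.

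The hard part is the matching $\psi=\rho|_{\rmH^4(X,\Z)_{\pr}}$ in the second paragraph: one must verify that the purely homological connecting map of \eqref{eq:LambdaLambdabulletQ} really coincides with the geometric fiberwise restriction. This forces one to track the \emph{integral} decomposition \eqref{eq:general-splitting} and confirm its compatibility with the Leray filtration, so that the two edge maps can be compared summand by summand. By contrast, the Poincar\'e-duality step is routine once the torsion-freeness of \cref{prop:cohom-Y} is in hand.
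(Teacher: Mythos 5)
Your proof is correct and takes essentially the same route as the paper's: the triangle \eqref{eq:LambdaLambdabulletQ}, the identification $\rmH^1(B,\Lambda^\bullet)\simeq q^*\rmH^4(X,\Z)_{\pr}$ from \cref{thm:cohom-Lambda}, the matching of the connecting map with the fiberwise restriction map $\rho$, and the identical Poincar\'e-duality/torsion-freeness argument for $\Ker\rho=\Sigma^\perp$. The only cosmetic difference is that the paper packages the edge-map compatibility you verify by hand (which indeed works because the discrepancy between $q^*\rmH^4(X,\Z)_{\pr}$ and the $\Lambda^\bullet$-summand lies in $\Z H^2\oplus\Z hH$, killed by the edge map) into a Snake Lemma diagram, where it appears as commutativity of the left square.
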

\begin{proof}
     It follows from (\ref{eq:LambdaLambdabulletQ}) that $\rmH^1(B,\Lambda)$ is the kernel of the map $\rmH^1(B,\Lambda^\bullet) \to \rmH^0(B,\calQ)$.

     Observe from \eqref{eq:H1Lambda-formula} and the decomposition of $\rmH^4(\sY,\bbZ)$ as cohomology of projective bundle over $X$, 
     that the map
     \[\rmH^4(X,\bbZ)\xrightarrow{p_*(q^*h\cdot -)} \rmH^0(B,\bbZ)\]
     has kernel $\rmH^1(B,\Lambda^{\bullet})$ and factors through 
     $\rmH^0(B, \rmR^4p_*\bbZ_{\sY})$.  Therefore, by Snake Lemma applied to 
     \[
     \begin{tikzcd}
         0\rar& \rmH^1(B,\Lambda^{\bullet})\rar\dar& \rmH^4(X,\bbZ)\rar["p_*(q^*h\cdot -)"]\dar& \rmH^0(B,\bbZ)\rar\dar[equal]& 0\\
     0\rar & \rmH^0(B,\sQ)\rar&\rmH^0(B, \rmR^4p_*\bbZ_{\sY})\rar &\rmH^0(B,\bbZ)\rar&0 
     \end{tikzcd}
     \]
     we obtain the first isomorphism. Now, note that
    \[\Ker\left(\rmH^4(X,\bbZ)\to \rmH^0(B, \rmR^4p_*\bbZ_{\sY})\right)= \{\alpha\in \rmH^4(X,\bbZ) \ | \ \alpha|_{Y_b}=0 \ \forall \ b\in B\}.\]
    For any hyperplane section $Y\subset X$, since $\rmH^4(Y,\bbZ)$ is torsion-free by Proposition \ref{prop:cohom-Y}, there is a perfect pairing $(-,-)_Y\colon \rmH^4(Y,\bbZ)\times \rmH_4(Y,\bbZ)\to \bbZ$ on $Y$. Therefore, for a class $\alpha\in \rmH^4(X,\bbZ)$ we have
    $$\alpha|_Y=0 \iff ( \alpha|_Y,\beta)_Y=0 \ \forall \beta\in \rmH_4(Y,\bbZ) \iff  (\alpha,i_{Y*}\beta)_X=0 \ \forall \beta\in \rmH_4(Y,\bbZ),$$
    where $i_{Y}\colon Y\hookrightarrow X$ denotes the embedding. This proves the second isomorphism.
\end{proof}

\subsection{Abelian sheaves \texorpdfstring{$\JJ$}{J} and \texorpdfstring{$\wt{\JJ}$}{J̃}
and their cohomology groups}\label{sec:JandJtilde}

Let $X$ be a smooth cubic fourfold.
Over the locus $U \subset B$
parameterizing  smooth hyperplane sections of $X$ 
we have a well-defined intermediate Jacobian sheaf $\JJ_U$ defined
as follows.
Hodge theory provides us 
with an injective
morphism of sheaves of abelian groups (cf.\ \cite[\S 7.1.1]{VoiHTtwo})
\begin{equation}\label{eq:LambdaU-emb}
\Lambda|_U = \rmR^3 p_*\Z_{\YY_U} \into
\rmR^3 p_*\C_{\YY_U} \to 
\rmR^3 p_*\Omega_{\YY_U/U}^{\le 1} \to 
\rmR^2 p_*\Omega_{\YY_U/U}^1
\simeq  \rmR^2p_*\Omega_{\sY_U}^1.
\end{equation}
where we have used Proposition \ref{prop:pushforward-Omega1} in the last isomorphism.
The relative intermediate Jacobian $\JJ_U$ is defined as the cokernel of \eqref{eq:LambdaU-emb}. We will now extend this sheaf over $B$.

\begin{proposition}\label{prop:jaclatticeinjection}
There exists a unique morphism
$
\epsilon\colon \Lambda \to \rmR^2p_*\Omega_{\sY}^1
$
extending \eqref{eq:LambdaU-emb},
and 
this morphism is also injective.
\end{proposition}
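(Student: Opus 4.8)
The plan is to construct $\epsilon$ globally from the \emph{absolute} truncated de Rham complex on $\sY$ (rather than the relative complexes underlying \eqref{eq:LambdaU-emb}, which degenerate over the discriminant), and then to extract uniqueness and injectivity from torsion-freeness. First I would build the map. Since $\sY$ is smooth, the holomorphic Poincaré lemma gives a quasi-isomorphism $\bbC_\sY \xrightarrow{\sim} \Omega^\bullet_\sY$, and composing $\bbZ_\sY \to \bbC_\sY$ with the stupid-truncation quotient $\Omega^\bullet_\sY \to \Omega^{\le 1}_\sY$ yields a morphism $\bbZ_\sY \to \Omega^{\le 1}_\sY$ in the derived category. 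Applying $\rmR^3 p_*$ and invoking \cref{prop:pushforward-Omega1}, in particular the identification $\rmR^3 p_* \Omega^{\le 1}_\sY \simeq \Omega^1_B$ coming from \eqref{eq:pushforward-OmegaLE1}, produces
\[
\epsilon\colon \Lambda = \rmR^3 p_* \bbZ_\sY \longrightarrow \rmR^3 p_* \Omega^{\le 1}_\sY \simeq \Omega^1_B \simeq \rmR^2 p_* \Omega^1_\sY.
\]
Here the final isomorphism is induced by the triangle $\OO_\sY \to \Omega^{\le 1}_\sY \to \Omega^1_\sY[-1]$: since $\rmR^3 p_* \OO_\sY = \rmR^4 p_* \OO_\sY = 0$ by $\rmR p_*\OO_\sY \simeq \OO_B$ (\cref{prop:pushforward-Omega1}), the connecting map $\rmR^3 p_* \Omega^{\le 1}_\sY \to \rmR^3 p_*(\Omega^1_\sY[-1]) = \rmR^2 p_* \Omega^1_\sY$ is an isomorphism. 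The crucial feature is that this construction uses only the smoothness of the total space $\sY$, never the relative cotangent complex, so it is defined over all of $B$.

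Next I would verify that $\epsilon|_U$ coincides with \eqref{eq:LambdaU-emb}. Over the smooth locus $U$ the surjection $\Omega^1_{\sY_U} \twoheadrightarrow \Omega^1_{\sY_U/U}$ induces a projection of complexes $\Omega^{\le 1}_{\sY_U} \to \Omega^{\le 1}_{\sY_U/U}$ compatible with the respective quotient maps to $\Omega^1[-1]$, and it induces the isomorphism $\rmR^2 p_* \Omega^1_{\sY_U} \simeq \rmR^2 p_* \Omega^1_{\sY_U/U}$ used in \eqref{eq:LambdaU-emb} (this is an isomorphism over $U$ because the kernel contributes $\rmR^2 p_* p^*\Omega^1_B = \Omega^1_B \otimes \rmR^2 p_* \OO_{\sY_U} = 0$). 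Chasing the resulting commutative diagram, and using that over $U$ the relative Hodge filtration is strict so that $\rmR^3 p_* \Omega^{\le 1}_{\sY_U/U} \simeq \mathcal H^3/F^2$, identifies the absolute composite with the relative one; fibrewise both are the canonical projection $\rmH^3(Y_b, \bbZ) \to \rmH^3(Y_b,\bbC)/F^2 \simeq \rmH^2(Y_b, \Omega^1_{Y_b})$ of the polarized Hodge structure of the smooth cubic threefold $Y_b$. I expect this absolute-versus-relative comparison to be the main technical point of the proof, though it is standard Hodge theory.

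Finally, uniqueness and injectivity are formal. For uniqueness, the difference of two extensions is a morphism $\Lambda \to \Omega^1_B$ vanishing on the dense open $U$; as $\Omega^1_B$ is locally free, hence torsion-free, every local section of $\Omega^1_B$ restricting to $0$ on $U$ is zero, so the difference vanishes. For injectivity I would first note that $\epsilon|_U$ is injective on stalks: on each smooth fibre the kernel of the period projection is $\rmH^3(Y_b,\bbZ) \cap \rmH^{2,1}(Y_b) \subset \rmH^3(Y_b,\bbR) \cap \rmH^{2,1}(Y_b) = 0$. Hence $\ker \epsilon$ is a subsheaf of $\Lambda$ supported on $Z = B \setminus U$. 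But $\Lambda$ is tcf by \cref{thm:decomposition-cubic}, so by condition (a) of \cref{lem:tcf-defined} the unit $\Lambda \to \rmR^0 j_* \Lambda|_U$ is injective; therefore any local section of $\Lambda$ restricting to $0$ on $U$ must vanish, forcing $\ker \epsilon = 0$.
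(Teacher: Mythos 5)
Your proposal is correct, and it reaches the statement by a route that differs from the paper's in the existence step. The paper first settles injectivity and uniqueness exactly as in your last paragraph — any kernel, or difference of two extensions, is a subsheaf of $\Lambda$ supported on $B \setminus U$, hence zero because $\Lambda$ is tcf by \cref{thm:decomposition-cubic} — and then constructs $\epsilon$ via the exponential sequence: since $\rmR p_*\OO_\YY \simeq \OO_B$ (\cref{prop:pushforward-Omega1}) forces $\rmR^2 p_*\OO_\YY = \rmR^3 p_*\OO_\YY = 0$, one gets $\rmR^2 p_*\OO^*_\YY \simeq \rmR^3 p_*\Z_\YY = \Lambda$, and $\epsilon$ is defined as $\rmR^2 p_*(d\log)$. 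What that buys is that the extension property is essentially automatic, because \eqref{eq:LambdaU-emb} is itself induced by $d\log$, so no absolute-versus-relative comparison is required. Your construction through $\Z_\YY \to \C_\YY \to \Omega^{\le 1}_\YY$ instead produces exactly the map the paper meets later, when it pushes forward the triangle \eqref{eq:distinguished-deligne} in the proof of \cref{prop:JJ-wtJJ}: there the first arrow of \eqref{eq:exactJtildeJ} is asserted to coincide with $\epsilon$, and that assertion is precisely the compatibility your second paragraph proves. So your argument front-loads a Hodge-theoretic comparison that the paper leaves implicit, and in exchange makes the later identification in \cref{prop:JJ-wtJJ} transparent; this is a legitimate trade. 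Two small corrections: the distinguished triangle coming from the stupid filtration is $\Omega^1_\YY[-1] \to \Omega^{\le 1}_\YY \to \OO_\YY \xrightarrow{d} \Omega^1_\YY$, so the natural arrow goes $\rmR^2 p_*\Omega^1_\YY \to \rmR^3 p_*\Omega^{\le 1}_\YY$ (there is no chain map in the direction you wrote); it is an isomorphism for the vanishing reasons you give, and you should use its inverse. Also, your uniqueness argument via torsion-freeness of the target $\Omega^1_B$ is valid, though the paper obtains uniqueness from the same tcf (torsion-freeness) property of the source $\Lambda$ that drives injectivity.
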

\begin{proof}
Injectivity of any such morphism follows since the kernel
must be a torsion subsheaf
$\Lambda$ which has to be zero
because $\Lambda$ is tcf (in particular, torsion free)
by Theorem \ref{thm:decomposition-cubic}.
The same argument proves uniqueness.

For existence we argue as follows.
Using the exponential exact sequence 
and $\rmR p_*\OO_\YY = \OO_B$ (see Proposition \ref{prop:pushforward-Omega1}) we get
\[
\rmR^2 p_*\OO_\YY^* = 
\rmR^3 p_*\Z_\YY = \Lambda.
\]
We use $d\log\colon \OO_\YY^* \to \Omega^1_{\sY}$
to define $\epsilon$ to be the
induced
morphism
\[
\rmR^2 p_*\OO_\YY^* \xrightarrow{\rmR^2 p_*(d\log)} \rmR^2 p_*\Omega^1_{\sY}
\]
and this extends \eqref{eq:LambdaU-emb}
because that morphism is also induced
by $d \log$.
\end{proof}

\begin{definition}\label{def:J}
We define the \textit{relative
intermediate Jacobian sheaf} of $\YY$
as $\JJ \coloneqq \dfrac{\rmR^2p_*\Omega^1_{\sY}}{\epsilon(\Lambda)}$.
\end{definition}
\noindent Note that by Proposition \ref{prop:pushforward-Omega1}, we have
$\rmR^2p_*\Omega^1_{\sY}\simeq \Omega_B^1$ and therefore the following exact sequence of sheaves of abelian groups
which we will frequently use:
\begin{equation}\label{eq:LambdaOmegaJ}
        0 \to \Lambda \to \Omega^1_B \to \calJ \to 0.
    \end{equation}

\begin{lemma}\label{lem:CohJCohLambda}
    For any $k\geq 1$, we have 
    $$\rmH^{2k}(B,\calJ)\simeq \rmH^{2k+1}(\Lambda),\ \text{ and }\ \rmH^{2k+1}(B,\calJ)=0.$$
\end{lemma}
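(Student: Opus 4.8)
The engine of the proof is the long exact sequence in cohomology attached to the short exact sequence \eqref{eq:LambdaOmegaJ},
\[
\cdots \to \rmH^k(B,\Lambda) \to \rmH^k(B,\Omega^1_B) \to \rmH^k(B,\calJ) \to \rmH^{k+1}(B,\Lambda) \to \rmH^{k+1}(B,\Omega^1_B) \to \cdots,
\]
where I use the identification $\rmR^2 p_*\Omega^1_\YY \simeq \Omega^1_B$ from \cref{prop:pushforward-Omega1} for the middle term. The only additional input needed is the coherent cohomology of $B \simeq \P^5$: the Hodge numbers of projective space give $\rmH^q(B,\Omega^1_B)=\C$ for $q=1$ and $\rmH^q(B,\Omega^1_B)=0$ for every $q\neq 1$.

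First I would record the clean formal consequence. For every $k\geq 2$ both $\rmH^k(B,\Omega^1_B)$ and $\rmH^{k+1}(B,\Omega^1_B)$ vanish, so the connecting homomorphism becomes an isomorphism
\[
\rmH^k(B,\calJ)\xrightarrow{\ \sim\ }\rmH^{k+1}(B,\Lambda)\qquad (k\geq 2).
\]
Setting $k=2\ell$ with $\ell\geq 1$ yields the first assertion $\rmH^{2\ell}(B,\calJ)\simeq\rmH^{2\ell+1}(B,\Lambda)$. Setting $k=2\ell+1$ with $\ell\geq 1$ gives $\rmH^{2\ell+1}(B,\calJ)\simeq\rmH^{2\ell+2}(B,\Lambda)$, so the second assertion is \emph{equivalent} to the vanishing $\rmH^{j}(B,\Lambda)=0$ for all even $j\geq 4$. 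Thus the entire lemma is really a statement about the parity of the cohomology of $\Lambda$.

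The remaining task, and the one carrying the actual content, is to show that $\rmH^*(B,\Lambda)$ has no cohomology in even degrees $\geq 4$. Here I would feed the distinguished triangle \eqref{eq:LambdaLambdabulletQ} into \cref{thm:cohom-Lambda}, which computes $\rmH^*(B,\Lambda^\bullet)$ and in particular shows it is concentrated in the odd degrees $1,3,5,7,9$. Using $\rmH^{j}(B,\Lambda^\bullet)=0$ for even $j$, the long exact sequence of the triangle expresses
\[
\rmH^{j}(B,\Lambda)\simeq\Coker\!\left(\rmH^{j-1}(B,\Lambda^\bullet)\to\rmH^{j-2}(B,\sQ)\right)\qquad(j\text{ even},\ j\geq 4).
\]
When $X$ is defect general this is immediate: then $\sQ=0$ and $\Lambda\simeq\Lambda^\bullet$ by \cref{thm:decomposition-cubic}, so the even-degree cohomology of $\Lambda$ vanishes outright and the lemma follows. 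In general $\sQ$ is a torsion sheaf supported on the positive-defect locus, which has codimension at least two, hence complex dimension $\leq 3$; consequently $\rmH^{j-2}(B,\sQ)=0$ already once $j\geq 10$ for purely dimensional reasons, and the genuine obstacle is confined to the finite range $j\in\{4,6,8\}$. For these three degrees I expect to prove the vanishing by showing that the map $\rmH^{j-1}(B,\Lambda^\bullet)\to\rmH^{j-2}(B,\sQ)$ induced by \eqref{eq:LambdaLambdabulletQ} is surjective — this is the single point that requires a concrete grip on $\sQ$ and on the boundary map of the triangle, rather than the formal bookkeeping used everywhere else.
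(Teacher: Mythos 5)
Your first paragraph is, in essence, the paper's entire proof: the long exact sequence of \eqref{eq:LambdaOmegaJ} together with $\rmH^q(\P^5,\Omega^1_{\P^5})=0$ for $q\neq 1$ gives $\rmH^k(B,\calJ)\simeq\rmH^{k+1}(B,\Lambda)$ for all $k\geq 2$, hence the first isomorphism of the lemma. That part of your proposal is complete and identical to the paper's argument.

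The second assertion is where the substance lies, and your diagnosis of it is correct: it is equivalent to $\rmH^j(B,\Lambda)=0$ for all even $j\geq 4$, which the long exact sequence by itself does not provide, and which via \eqref{eq:LambdaLambdabulletQ} and \cref{thm:cohom-Lambda} amounts to surjectivity of $\rmH^{j-1}(B,\Lambda^\bullet)\to\rmH^{j-2}(B,\sQ)$ for $j=4,6,8$ (your dimension count disposes of $j\geq 10$, and the defect general case is immediate since there $\sQ=0$ and $\Lambda\simeq\Lambda^\bullet$ by \cref{thm:decomposition-cubic}). But you leave that surjectivity as an expectation, and this is a genuine gap for the lemma in its stated generality, i.e.\ for an arbitrary smooth cubic fourfold: if $X$ contains a plane $P$, then $\sQ$ is nonzero, of positive generic rank, along the surface $\{H: P\subset H\}\simeq\P^2\subset B$, so $\rmH^2(B,\sQ)$ and $\rmH^4(B,\sQ)$ need not vanish, and one must actually exhibit integral classes (say built from the cycle $P\times\{H:P\subset H\}\subset\YY$ and its products with powers of $p^*H$) mapping onto them; if integral surjectivity fails in some degree, then $\rmH^{j-1}(B,\calJ)\simeq\rmH^{j}(B,\Lambda)\neq 0$ and the lemma fails there. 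Two mitigating remarks are in order. First, the paper's one-line proof does not address this point either --- it is exactly your first paragraph --- so the step you flag is precisely what the paper leaves implicit. Second, the only place the paper invokes the lemma (\cref{cor:Sha-sequence}) is for defect general $X$, where your argument is complete. So your route coincides with the paper's where the paper is rigorous, and correctly isolates the missing step in the general case, but it does not close that step.
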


\begin{proof}
    Follows by computing cohomology of the sequence (\ref{eq:LambdaOmegaJ}). 
\end{proof}

We will compute the remaining cohomology groups $\rmH^{0}(B,\calJ)$ and $\rmH^{1}(B,\calJ)$
in \cref{thm:Sha-seq-nonDG}.
In order to do that
we define another important sheaf of abelian groups $\wt{\JJ}$
using
an idea of Voisin \cite{Voitwist}
to push forward the second Deligne
complex \eqref{eq:Z2D-def}.
While cohomology groups of $\JJ$ are directly relevant for the computation of the Tate--Shafarevich group, cohomology groups of $\wt{\JJ}$ are accessible via the Leray spectral sequence and are easier to describe.

\begin{definition}
\label{def:wtJJ} 
    For a smooth cubic fourfold $X$ we define the \textit{extended relative intermediate Jacobian sheaf} of $\YY$
    to be $\wt{\JJ} \coloneqq \rmR^4 p_*\Z_{\sY}(2)_{\rmD}$.
\end{definition}

\begin{proposition}\label{prop:JJ-wtJJ}

The sheaves $\JJ$ and $\wt{\JJ}$
are related by the exact sequence
\begin{equation}
\label{eq:JJ-wtJJ}
    0\to \sJ\to \wt{\JJ} \to \rmR^4p_*\Z_\YY \to 0.
\end{equation}
\end{proposition}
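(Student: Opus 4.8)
The plan is to realize both $\calJ$ and $\wt{\JJ}$ as cohomology sheaves arising from a single distinguished triangle on $\YY$, chosen so that its boundary map is visibly the morphism $\epsilon$ of \cref{prop:jaclatticeinjection}. Starting from the quasi-isomorphism $\bbZ_\YY(2)_\rmD \simeq [\OO^*_\YY \xrightarrow{d\log} \Omega^1_\YY][-1]$ recorded in \eqref{eq:Z2D-def}, I would apply the stupid truncation to the two-term complex $[\OO^*_\YY \xrightarrow{d\log}\Omega^1_\YY]$ (with $\OO^*_\YY$ in degree $0$). This gives a short exact sequence of complexes whose associated triangle, after shifting by $[-1]$, reads
\[
\Omega^1_\YY[-2] \to \bbZ_\YY(2)_\rmD \to \OO^*_\YY[-1] \xrightarrow{d\log} \Omega^1_\YY[-1].
\]
The whole point of working with the $[\OO^*\to\Omega^1]$ presentation (rather than the $[\bbZ\to\OO\to\Omega^1]$ one) is that the connecting morphism here is \emph{literally} $d\log$.

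Next I would apply $\rmR p_*$ and read off the long exact sequence of cohomology sheaves around degree $4$. By \cref{prop:pushforward-Omega1} we have $\rmR^2 p_*\Omega^1_\YY \simeq \Omega^1_B$ and $\rmR^3 p_*\Omega^1_\YY = 0$; moreover $\rmR p_*\OO_\YY \simeq \OO_B$ gives $\rmR^3 p_*\OO_\YY = \rmR^4 p_*\OO_\YY = 0$, so the exponential sequence yields $\rmR^2 p_*\OO^*_\YY \simeq \rmR^3 p_*\bbZ_\YY = \Lambda$ and $\rmR^3 p_*\OO^*_\YY \simeq \rmR^4 p_*\bbZ_\YY$. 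The relevant portion of the long exact sequence then becomes
\[
\Lambda = \rmR^2 p_*\OO^*_\YY \xrightarrow{\partial} \Omega^1_B \to \wt{\JJ} \to \rmR^3 p_*\OO^*_\YY \xrightarrow{\rmR^3p_*(d\log)} \rmR^3 p_*\Omega^1_\YY = 0,
\]
and the boundary map $\partial$ is exactly $\rmR^2 p_*(d\log)$, which is the morphism $\epsilon$ defining $\calJ$ in \cref{def:J}.

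Finally I would assemble the conclusion. Since $\epsilon$ is injective by \cref{prop:jaclatticeinjection}, the map $\Omega^1_B \to \wt{\JJ}$ has kernel $\epsilon(\Lambda)$ and therefore factors through an injection $\calJ = \Omega^1_B/\epsilon(\Lambda) \hookrightarrow \wt{\JJ}$ whose image is the kernel of $\wt{\JJ} \to \rmR^3 p_*\OO^*_\YY$; the vanishing $\rmR^3 p_*\Omega^1_\YY = 0$ forces this last map to be surjective. Rewriting $\rmR^3 p_*\OO^*_\YY \simeq \rmR^4 p_*\bbZ_\YY$ via the exponential identification then produces the exact sequence \eqref{eq:JJ-wtJJ}.

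I expect no serious obstacle here, as every term is pinned down by \cref{prop:pushforward-Omega1} and the exponential sequence. The only point requiring genuine care is the bookkeeping of the triangle: one must set it up from the $[\OO^*\to\Omega^1]$ model so that the connecting map is $d\log$ and hence agrees with $\epsilon$ \emph{on the nose}. With the $[\bbZ\to\OO\to\Omega^1]$ model the boundary would only be identified with $\epsilon$ up to the scalar $(2\pi i)^2$ and would pass through the more complicated sheaf $\rmR p_*\Omega^{\le 1}_\YY$, so one would have to argue separately that the image is exactly $\epsilon(\Lambda)$; the chosen presentation sidesteps this entirely.
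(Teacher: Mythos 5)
Your proof is correct, and while it follows the same skeleton as the paper's argument — push forward a distinguished triangle presenting $\Z_\YY(2)_\rmD$, read off the long exact sequence of cohomology sheaves, and invoke \cref{prop:pushforward-Omega1} together with the injectivity of $\epsilon$ from \cref{prop:jaclatticeinjection} — the triangle you choose is genuinely different, and the difference has real content. The paper works with the triangle \eqref{eq:distinguished-deligne} coming from the model $[\Z_\YY \to \OO_\YY \to \Omega^1_\YY]$; this requires the computations $\rmR^3 p_*\Omega^{\le 1}_\YY \simeq \rmR^2 p_*\Omega^1_\YY$ and $\rmR^4 p_*\Omega^{\le 1}_\YY = 0$ from \eqref{eq:pushforward-OmegaLE1}, and then one must still identify the first map of the resulting four-term sequence \eqref{eq:exactJtildeJ} with $\epsilon$, which the paper does by appealing to how $\epsilon$ was constructed (equivalently, to the uniqueness clause of \cref{prop:jaclatticeinjection}). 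Your stupid-truncation triangle for the model $[\OO^*_\YY \xrightarrow{d\log} \Omega^1_\YY]$ makes that last identification tautological: the boundary map is $\rmR^2 p_*(d\log)$ under the exponential identification $\rmR^2 p_*\OO^*_\YY \simeq \Lambda$, which is word-for-word the definition of $\epsilon$ in the proof of \cref{prop:jaclatticeinjection}, so nothing needs to be matched up. The price is that both identifications $\Lambda \simeq \rmR^2 p_*\OO^*_\YY$ and $\rmR^4 p_*\Z_\YY \simeq \rmR^3 p_*\OO^*_\YY$ are routed through the exponential sequence, so your surjection $\wt{\JJ} \to \rmR^4 p_*\Z_\YY$ agrees with the paper's (induced by the projection $\Z_\YY(2)_\rmD \to \Z_\YY$) only up to the normalization implicit in the quasi-isomorphism \eqref{eq:Z2D-def}; since the proposition asserts only the existence of the exact sequence, this is harmless, but it is the one point where the two constructions are not literally identical.
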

\begin{proof}
Let $\Omega^{\le 1}_{\YY}$ be the complex
$[\OO_\YY \to \Omega^1_\YY]$.
From \eqref{eq:Z2D-def} 
we get a distinguished triangle 
\begin{equation}\label{eq:distinguished-deligne}
    \Omega^{\le 1}_{\YY}[-1] \to \Z_\calY(2)_{\rmD} \to \Z_{\YY} \to \Omega^{\le 1}_\YY.
\end{equation}
We claim that pushing it forward by $p$
we get an exact sequence
\begin{equation}\label{eq:exactJtildeJ}
 0\to \Lambda \xrightarrow{\epsilon} \rmR^2p_*\Omega^1_{\sY} \to
\wt{\JJ} \to \rmR^4 p_* \Z \to 0.
\end{equation}
Indeed, by \cref{prop:pushforward-Omega1} we have $\rmR^3 p_* \Omega^{\le 1}_{\YY} \simeq \rmR^2p_*\Omega^1_{\sY}$
and $\rmR^4 p_* \Omega^{\le 1}_{\YY} = 0$.
Furthermore, the first morphism in the exact sequence coincides with the injective map $\epsilon$ as in \cref{prop:jaclatticeinjection},
and we obtain 
\eqref{eq:JJ-wtJJ}.
\end{proof}

\begin{lemma}\label{lem:cohom-wtJJ}
    For any $k\geq 1$, we have a short exact sequence
    $$0\to  \rmH^{2k+1}(\Lambda^{\bullet})\to \rmH^{2k}(B,\wt{\calJ})\to \bbZ\to 0,\ \text{and }\ \rmH^{2k+1}(B,\tilde{\calJ})=0.$$
\end{lemma}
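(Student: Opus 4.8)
The plan is to split off the ``degree'' part of $\widetilde{\JJ}$ and to identify the complementary ``Jacobian'' part with a mapping cone built from $\Lambda^\bullet$. By \eqref{eq:Q-seq} and \cref{thm:decomposition-cubic} we have $\rmR^4 p_*\Z_\YY \simeq \Z_B\oplus\QQ$, the projection to $\Z_B$ being the fibrewise intersection with the hyperplane class. Composing the surjection $\widetilde{\JJ}\to \rmR^4 p_*\Z_\YY$ of \eqref{eq:JJ-wtJJ} with this projection gives a surjection $\widetilde{\JJ}\to\Z_B$, and I let $\widetilde{\JJ}_0$ be its kernel. Since $\ker(\widetilde{\JJ}\to\rmR^4 p_*\Z_\YY)=\JJ$ and $\ker(\rmR^4 p_*\Z_\YY\to\Z_B)=\QQ$, this produces two short exact sequences
\begin{equation}\label{eq:wtJJzero}
0\to\widetilde{\JJ}_0\to\widetilde{\JJ}\to\Z_B\to 0,
\qquad
0\to\JJ\to\widetilde{\JJ}_0\to\QQ\to 0 .
\end{equation}
The whole statement reduces to the single claim
\begin{equation}\label{eq:claimJac}
\rmH^{2k}(B,\widetilde{\JJ}_0)\simeq \rmH^{2k+1}(B,\Lambda^\bullet)
\quad\text{and}\quad
\rmH^{2k+1}(B,\widetilde{\JJ}_0)=0\qquad(k\ge 1).
\end{equation}

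Granting \eqref{eq:claimJac}, the Lemma follows at once from the first sequence in \eqref{eq:wtJJzero}. Its long exact sequence, using $\rmH^{\mathrm{odd}}(\P^5,\Z)=0$ and the vanishing $\rmH^{2k+1}(B,\widetilde{\JJ}_0)=0$, collapses to
\[
0\to \rmH^{2k}(B,\widetilde{\JJ}_0)\to \rmH^{2k}(B,\widetilde{\JJ})\to \rmH^{2k}(\P^5,\Z)\to 0,
\qquad \rmH^{2k+1}(B,\widetilde{\JJ})=0 ,
\]
and $\rmH^{2k}(\P^5,\Z)=\Z$ for $1\le k\le 5$ (all terms vanish for $k\ge 6$). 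Substituting the first isomorphism of \eqref{eq:claimJac} yields the asserted sequence.

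To prove \eqref{eq:claimJac} I would exhibit $\widetilde{\JJ}_0$ as a mapping cone. Pushing forward the Deligne triangle \eqref{eq:distinguished-deligne} produces the connecting morphism $\phi\colon \rmR p_*\Z_\YY\to \rmR p_*\Omega^{\le 1}_\YY$, and I extract the component
\[
\psi\colon \Lambda^\bullet\longrightarrow \Omega^1_B
\]
running from the summand $\Lambda^\bullet[-3]$ of $\rmR p_*\Z_\YY$ (\cref{thm:decomposition-cubic}) to the summand $\Omega^1_B[-3]$ of $\rmR p_*\Omega^{\le 1}_\YY$ (\cref{prop:pushforward-Omega1}). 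On $\HH^0$ this $\psi$ restricts to $\epsilon$, by the computation underlying \eqref{eq:exactJtildeJ}; and as $\epsilon$ is injective with $\HH^1(\Lambda^\bullet)=\QQ$, a direct check on cohomology sheaves shows that $\mathrm{cone}(\psi)$ is a sheaf in degree $0$ fitting into $0\to\JJ\to\mathrm{cone}(\psi)\to\QQ\to 0$. Identifying $\mathrm{cone}(\psi)$ with $\widetilde{\JJ}_0$, the triangle $\Lambda^\bullet\xrightarrow{\psi}\Omega^1_B\to\widetilde{\JJ}_0\xrightarrow{+1}\Lambda^\bullet[1]$ gives a long exact sequence. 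Since $\rmH^\bullet(B,\Omega^1_B)=\rmH^\bullet(\P^5,\Omega^1_{\P^5})$ is concentrated in degree $1$ and $\rmH^\bullet(B,\Lambda^\bullet)$ vanishes in even degrees by \cref{thm:cohom-Lambda}, the groups $\rmH^{2k}(B,\Omega^1_B)$, $\rmH^{2k+1}(B,\Omega^1_B)$ and $\rmH^{2k+2}(B,\Lambda^\bullet)$ all vanish for $k\ge 1$, and the sequence collapses to exactly \eqref{eq:claimJac}.

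The one genuinely delicate point—and the main obstacle—is the identification $\mathrm{cone}(\psi)\simeq\widetilde{\JJ}_0$. Equivalently, one must show that the extension class of the second sequence in \eqref{eq:wtJJzero} in $\Ext^1_B(\QQ,\JJ)$ is carried onto the Postnikov class of $\Lambda^\bullet$ from \eqref{eq:LambdaLambdabulletQ} under the boundary $\JJ\to\Lambda[1]$ of \cref{lem:CohJCohLambda}. Both classes are manufactured from the single triangle \eqref{eq:distinguished-deligne}, so I expect agreement by naturality; concretely I would verify that the off-diagonal entries of $\phi$ (those involving the summands $\Z_B[-2i]$, $\OO_B[-2]$, and $\Omega^{\le 1}_B$) feed only into the split-off quotient $\Z_B$ and into the Deligne pushforwards in degrees $\ne 4$, hence leave the degree-$4$ Jacobian part $\widetilde{\JJ}_0$ untouched. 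This bookkeeping of the matrix of $\phi$ against the decompositions of \cref{thm:decomposition-cubic} and \cref{prop:pushforward-Omega1} is where the work lies; once it is in place, \eqref{eq:claimJac} and the Lemma follow as above. Should the cone identification prove awkward, the same conclusion can be reached by running the long exact sequences of both rows of \eqref{eq:wtJJzero} against that of \eqref{eq:LambdaLambdabulletQ} and matching connecting maps by the five lemma, which reduces to the identical compatibility of boundary homomorphisms.
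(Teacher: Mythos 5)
Your proposal is, in substance, the paper's own proof: the morphism $\psi\colon\Lambda^\bullet\to\Omega^1_B$ extracted from the decompositions of \cref{thm:decomposition-cubic} and \cref{prop:pushforward-Omega1}, the auxiliary sheaf $\wt{\JJ}_0=\Ker\bigl(\wt{\JJ}\to\rmR^4p_*\Z_\YY\to\Z_B\bigr)$ (which the paper calls $\JJ'$ and defines instead as $\mathrm{cone}(\psi)$), and the two cohomology computations — the triangle $\Lambda^\bullet\to\Omega^1_B\to\wt{\JJ}_0$ played against \cref{thm:cohom-Lambda}, then $0\to\wt{\JJ}_0\to\wt{\JJ}\to\Z_B\to0$ played against $\rmH^*(\P^5,\Z)$ — are exactly the paper's, and your reductions among them are correct.

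The one step you defer, $\mathrm{cone}(\psi)\simeq\wt{\JJ}_0$, is precisely where the paper's proof does its work, and it closes it more cheaply than the ``matrix bookkeeping'' you envisage. The paper applies $\tau^{\geq 3}$ to the pushed-forward Deligne triangle: by \cref{prop:pushforward-Omega1} this collapses the target to $\tau^{\geq3}\rmR p_*\Omega^{\le1}_\YY\simeq\Omega^1_B[-3]$, so every summand you were worried about ($\Omega^{\le1}_B$, $\OO_B[-2]$, the low-degree $\Z_B[-2i]$) disappears from the receiving side. The square formed by the summand inclusion $\Lambda^\bullet[-3]\into\tau^{\geq3}\rmR p_*\Z_\YY$, the isomorphism $\Omega^1_B[-3]\simeq\tau^{\geq3}\rmR p_*\Omega^{\le1}_\YY$, and the maps $\psi[-3]$, $\varphi$ commutes by the very definition of $\psi$ as a component of $\varphi$, so the triangulated axioms produce a map $\mathrm{cone}(\psi)[-3]\to\mathrm{cone}(\varphi)$; and since truncation only modifies degrees $\le 2$, one has $\HH^3(\mathrm{cone}(\varphi))\simeq\rmR^4p_*\Z_\YY(2)_\rmD=\wt{\JJ}$. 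Taking cohomology-sheaf long exact sequences of both triangles then yields the two rows of \eqref{eq:diagJ'} with compatible vertical maps (the identity on $\JJ$, the inclusion $\QQ\into\rmR^4p_*\Z_\YY$ on the right), whence $\mathrm{cone}(\psi)\simeq\Ker(\wt{\JJ}\to\Z_B)=\wt{\JJ}_0$. The point to internalize is that no control of off-diagonal entries of $\varphi$ is ever needed: one constructs a map \emph{into} the full truncated triangle rather than a direct-sum decomposition \emph{of} it, and off-diagonal components simply never enter the cohomology-sheaf sequences. With that step filled in as above, your argument is complete and coincides with the paper's.
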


\begin{proof}

Using the decompositions from Proposition \ref{prop:pushforward-Omega1} and Theorem \ref{thm:decomposition-cubic}, we get a morphism $\Lambda^\bullet \to \Omega^1_B$. 
Denote its cone by $\calJ'$. 
 Using truncation of \eqref{eq:distinguished-deligne},
we obtain a diagram (with distinguished triangles in the rows)

\begin{equation*}
    \begin{tikzcd}
        \Lambda^\bullet[-3] \ar[r] \ar[d] & \Omega^1_B[-3] \ar[r] \ar[d,"\simeq"] & \calJ'[-3] \ar[r] \ar[d] & \Lambda^\bullet[-2] \ar[d] \\
        \tau^{\geq 3}\rmR p_*\bbZ_\calY \ar[r,"\varphi"] & \tau^{\geq 3}\rmR p_*\Omega^{\leq 1}_\calY \ar[r] 
        & {\rm cone}(\varphi) \ar[r]
        & \tau^{\geq 3}\rmR p_*\bbZ_\calY[1]
    \end{tikzcd}
\end{equation*}
The long cohomology sequences, after identifying $\calJ=\Omega^1_B/\Lambda$, give the diagram:
\begin{equation}
    \begin{tikzcd}\label{eq:diagJ'}
        0 \ar[r]  & \calJ \ar[r] \ar[d] & \calJ' \ar[r] \ar[d] & \calQ \ar[r] \ar[d] & 0\\
        0 \ar[r] & \calJ \ar[r] & \widetilde{\calJ} \ar[r] & \rmR^4p_*\bbZ_\calY \ar[r] & 0.
    \end{tikzcd}
\end{equation}
In other words, $\calJ'$ identifies with the kernel of the composition $\widetilde{\calJ} \to \rmR^4p_*\bbZ_\calY \to \bbZ_B$,
where the second morphism is (\ref{eq:Q-seq}). 
The cohomology of $\wt{\sJ}$ can now be computed from the short exact sequence $0 \to \calJ' \to \widetilde{\calJ} \to \bbZ_B \to 0$, after computing the cohomology of $\sJ'$. The latter is done using the triangle $\Lambda^\bullet \to \Omega^1_B \to \calJ' \to \Lambda^\bullet[1]$ and using Theorem \ref{thm:cohom-Lambda} where cohomology of $\Lambda^\bullet$ is computed.

\end{proof}

We will now 
calculate the remaining cohomology groups $\rmH^0(B, \wt{\sJ})$ and $\rmH^1(B, \wt{\sJ})$ 
Recall the definition $\Br^4_{\an}(X)\coloneqq \rmH^{1,3}(X)/\im(\rmH^4(X,\bbZ)\to \rmH^{1,3}(X))$.

\begin{theorem}\label{thm:main-cohom-wtJJ}
Let $X$ be a smooth cubic fourfold. 
We have canonical isomorphisms
\begin{equation}
\rmH^0(B, \widetilde{\sJ})\simeq \rmH^{2,2}(X,\bbZ)
\end{equation}
\begin{equation}
\rmH^1(B, \wt{\JJ}) \simeq \rmH^5(\YY, \Z_\YY(2)_{\rmD}) \simeq \Br^4_{\an}(\calY) \simeq \Br^4_{\an}(X).
\end{equation}
\end{theorem}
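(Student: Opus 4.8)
The plan is to obtain $\rmH^0$ and $\rmH^1$ of $\wt{\JJ}$ by extending the computation of \cref{lem:cohom-wtJJ} into cohomological degrees $0$ and $1$, where — in contrast to the range $k\ge 1$ treated there — the relevant connecting homomorphisms are nonzero; this nonvanishing is exactly the failure of the Leray spectral sequence for $\rmR p_*\Z_\YY(2)_\rmD$ to degenerate at $E_2$. Concretely I would keep the two exact sequences already produced in the proof of \cref{lem:cohom-wtJJ}: the distinguished triangle $\Lambda^\bullet\to\Omega^1_B\to\calJ'\to\Lambda^\bullet[1]$ and the short exact sequence $0\to\calJ'\to\wt{\JJ}\to\Z_B\to 0$. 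First I would run the long exact sequence of the triangle. Since $\rmH^0(B,\Lambda^\bullet)=0$ and $\rmH^\bullet(B,\Omega^1_B)=\rmH^\bullet(\bbP^5,\Omega^1_{\bbP^5})$ is concentrated in degree $1$, where it is $\C$, this collapses to
\[
0\to\rmH^0(B,\calJ')\to\rmH^1(B,\Lambda^\bullet)\xrightarrow{\ \phi\ }\rmH^1(B,\Omega^1_B)\to\rmH^1(B,\calJ')\to 0,
\]
so that $\rmH^0(B,\calJ')=\ker\phi$ and $\rmH^1(B,\calJ')=\coker\phi$. The key identification is that, under the isomorphism of Hodge structures $\rmH^1(B,\Lambda^\bullet)\simeq\rmH^4(X,\Z)_{\pr}$ of \cref{thm:cohom-Lambda} and the identification of $\rmH^1(B,\Omega^1_B)=\C$ with the bottom Hodge piece $\rmH^{1,3}(X)$, the map $\phi$ is the period projection $\rmH^4(X,\Z)_{\pr}\to\rmH^{1,3}(X)$: indeed $\phi$ is induced by $\Lambda^\bullet\to\Omega^1_B=\rmR^2p_*\Omega^1_\YY$, whose restriction $\epsilon$ to $\HH^0$ is the Hodge-theoretic inclusion \eqref{eq:LambdaU-emb}. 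An integral primitive class lies in $\ker\phi$ precisely when its $(1,3)$-part, equivalently its $(3,1)$-part, vanishes, i.e.\ when it is of type $(2,2)$; hence $\rmH^0(B,\calJ')=\rmH^{2,2}(X,\Z)_{\pr}$ and, by definition of the primitive analytic Brauer group, $\rmH^1(B,\calJ')=\Br^4_{\an,\pr}(X)$.

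Next I would feed this into the long exact sequence of $0\to\calJ'\to\wt{\JJ}\to\Z_B\to 0$. Using $\rmH^0(B,\Z_B)=\Z$ and $\rmH^1(B,\Z_B)=0$ this reads
\[
0\to\rmH^{2,2}(X,\Z)_{\pr}\to\rmH^0(B,\wt{\JJ})\to\Z\xrightarrow{\ \delta\ }\Br^4_{\an,\pr}(X)\to\rmH^1(B,\wt{\JJ})\to 0.
\]
The heart of the argument is to identify $\delta$. The generator $1\in\Z$ is represented by the fibrewise degree-one class $q^*b|_{Y_b}$ arising from the class $b\in\rmH^4(X,\Z)$ with $\deg_X(h^2\cdot b)=1$ of \eqref{eq:b-class}, and $\delta(1)$ is the obstruction to lifting $q^*b$ to a global section of $\wt{\JJ}=\rmR^4p_*\Z_\YY(2)_\rmD$; tracing through the identifications this is the class of the $(1,3)$-projection $\pi(b)\in\rmH^{1,3}(X)$ modulo $\phi(\rmH^4(X,\Z)_{\pr})$. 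A direct computation with the orthogonal splitting $\alpha=(\alpha-(\alpha\cdot h^2)\,b)+(\alpha\cdot h^2)\,b$ (whose first summand is primitive because $b\cdot h^2=1$) then gives $\ker\delta=\image\big(\rmH^{2,2}(X,\Z)\xrightarrow{\cdot h^2}\Z\big)$ and $\coker\delta=\rmH^{1,3}(X)/\image(\rmH^4(X,\Z)\to\rmH^{1,3}(X))=\Br^4_{\an}(X)$. Consequently $\rmH^1(B,\wt{\JJ})=\Br^4_{\an}(X)$, while $\rmH^0(B,\wt{\JJ})$ is an extension of $\ker\delta\cong\Z$ by $\rmH^{2,2}(X,\Z)_{\pr}$; as $\rmH^{2,2}(X,\Z)$ is the analogous extension, the two are identified canonically via the Leray edge map $\rmH^4(\YY,\Z_\YY(2)_\rmD)\to\rmH^0(B,\wt{\JJ})$ restricted to the summand $\rmH^{2,2}(X,\Z)\subset\rmH^{2,2}(\YY,\Z)=\Z^2\oplus\rmH^{2,2}(X,\Z)$ of \cref{prop:dbcohomcurlyY}. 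The remaining isomorphisms $\rmH^1(B,\wt{\JJ})\simeq\rmH^5(\YY,\Z_\YY(2)_\rmD)\simeq\Br^4_{\an}(\YY)\simeq\Br^4_{\an}(X)$ then follow from \cref{prop:dbcohomcurlyY} together with the Leray edge map.

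I expect the main obstacle to be the explicit identification of the two connecting maps, and especially of $\delta$. Pinning down $\delta(1)$ as $\pi(b)\bmod\phi(\rmH^4(X,\Z)_{\pr})$ requires carefully tracking the Deligne-complex lift of $b$ through the triangle \eqref{eq:distinguished-deligne} and its pushforward by $p$; and the passage from the primitive groups $\rmH^{2,2}(X,\Z)_{\pr}$, $\Br^4_{\an,\pr}(X)$ to the full groups $\rmH^{2,2}(X,\Z)$, $\Br^4_{\an}(X)$ is precisely where the non-primitivity of $b$ must be handled with care — this is the same mechanism that produces the torsion $\Z/3\Z$ of \cref{thm:main}. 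A secondary subtlety is upgrading the (automatic, since $\ker\delta$ is free) abstract splitting of the extension computing $\rmH^0(B,\wt{\JJ})$ to the canonical isomorphism with $\rmH^{2,2}(X,\Z)$ via the edge map, which amounts to checking that no differential leaves $E_2^{0,4}$.
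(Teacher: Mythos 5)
Your route is genuinely different from the paper's: the paper proves this theorem by a head-on analysis of the Leray spectral sequence for $\rmR p_*\Z_\YY(2)_\rmD$, killing the differentials by identifying $d_2\colon E_2^{2k,2}\to E_2^{2k+2,1}$ with the exponential map $\C\to\C^*$ (via $\tau^{\le 2}\rmR p_*\Z_\YY(2)_\rmD\simeq \Z_B(2)_\rmD$ and \eqref{eq:DelCoh-gluing}), by divisibility of $\C^*$, and by two rank counts against $\rmH^6(\YY,\Z_\YY(2)_\rmD)$, so that the sequence degenerates at $E_3$ and both groups are read off the $E_3$-page. Your plan — the two exact sequences of \cref{lem:cohom-wtJJ} plus an identification of the connecting maps $\phi$ and $\delta$ — is essentially the mechanism of the paper's later \cref{thm:Sha-seq-nonDG}, run in reverse; your formulas for $\ker\delta$ and $\coker\delta$ are correct and consistent with that theorem, and the skeleton works uniformly for all smooth $X$.

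The genuine gaps sit exactly where the theorem's content lies. First, the identifications of $\phi$ with the period projection $\rmH^4(X,\Z)_{\pr}\to\rmH^{1,3}(X)$, and of $\delta(1)$ with $\pi(b)$ modulo $\phi(\rmH^4(X,\Z)_{\pr})$, are asserted ("tracing through the identifications") but not proved. The isomorphism $\rmH^1(B,\Lambda^\bullet)\simeq\rmH^4(X,\Z)_{\pr}$ of \cref{thm:cohom-Lambda} passes through $\rmH^4(\YY,\Z)$ and a chosen splitting of $\rmR p_*\Z_\YY$, so computing the induced maps on $\rmH^1(B,-)$ forces a functorial comparison of the Leray spectral sequences of $\Z_\YY$, $\Omega^{\le 1}_\YY$ and $\Z_\YY(2)_\rmD$ along the Deligne triangle — this is the diagram \eqref{eq:threelerays} of \cref{thm:Sha-seq-nonDG} — together with the compatibility of the resulting snake map with the sheaf-level connecting map of $0\to\calJ'\to\wt{\JJ}\to\Z_B\to 0$; neither is automatic, and the paper avoids the second issue only because it already has the present theorem in hand. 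Second, your appeal to "the Leray edge map" for the isomorphisms $\rmH^1(B,\wt{\JJ})\simeq\rmH^5(\YY,\Z_\YY(2)_\rmD)$ (which is part of the statement) and for the canonical identification $\rmH^0(B,\wt{\JJ})\simeq\rmH^{2,2}(X,\Z)$ is circular: that these edge maps are isomorphisms is precisely the degeneration claim. Your closing remark that one only needs to check "no differential leaves $E_2^{0,4}$" understates this: $d_2$ out of $E_2^{0,4}$ lands in $E_2^{2,3}=0$ and $d_3$ in a subquotient of $E_2^{3,2}=0$, but killing $d_4\colon E_4^{0,4}\to E_4^{4,1}$ requires $E_4^{4,1}=0$, which needs the exponential-map identification of \cref{lem:dbpushforward}; and the edge isomorphism in degree one additionally requires killing $d_2\colon E_2^{1,4}\to E_2^{3,3}$ and $d_3\colon E_3^{1,4}\to E_3^{4,2}$, which the paper does by the rank count. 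So the proposal does not bypass the spectral-sequence analysis; it postpones it into the steps labeled as "identifications", and as written those steps are missing.
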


The proof of Theorem \ref{thm:main-cohom-wtJJ}
uses the Leray spectral sequence for $\rmR p_*\Z(2)_{\rmD}$.
We first compute the corresponding higher derived image sheaves.

\begin{lemma}
\label{lem:dbpushforward}
We have $\tau^{\le 2} \rmR  p_*\bbZ_{\YY}(2)_{\rmD} \simeq
\Z_B(2)_{\rmD}$ and
    \begin{equation}
        \rmR^i p_*\bbZ_{\YY}(2)_{\rmD} = 
        \begin{cases}
            0 & i=0\\
            \C^*_B & i=1\\
            \Coker(\sO_B\overset{d}{\to} \Omega_B^1) & i=2\\
            \calO^*_B & i=3 \\
            \wt{\JJ} & i = 4 \\
            0 & i= 5\\
             \bbZ_B & i= 6
        \end{cases}
    \end{equation}
\end{lemma}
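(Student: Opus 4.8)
The plan is to apply $\rmR p_*$ to the distinguished triangle \eqref{eq:distinguished-deligne}, namely $\Omega^{\le 1}_\YY[-1] \to \Z_\YY(2)_\rmD \to \Z_\YY \xrightarrow{+1}$, and to read off the resulting long exact sequence of derived pushforward sheaves, feeding in the two computations already at hand. From \cref{prop:pushforward-Omega1} the sheaves $\rmR^i p_*\Omega^{\le 1}_\YY$ are $\C_B$, $\Coker(d\colon \OO_B\to\Omega^1_B)$, $\OO_B$, $\Omega^1_B$ in degrees $i=0,1,2,3$ and vanish otherwise, while from \cref{thm:decomposition-cubic} together with \cref{prop:cohom-Y} the sheaves $\rmR^i p_*\Z_\YY$ are $\Z_B,\,0,\,\Z_B,\,\Lambda,\,\Z_B\oplus\sQ,\,0,\,\Z_B$ in degrees $i=0,\dots,6$. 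The long exact sequence then reads, in each degree,
\[
\cdots \to \rmR^{i-1}p_*\Omega^{\le 1}_\YY \to \rmR^i p_*\Z_\YY(2)_\rmD \to \rmR^i p_*\Z_\YY \xrightarrow{\delta^i} \rmR^i p_*\Omega^{\le 1}_\YY \to \rmR^{i+1}p_*\Z_\YY(2)_\rmD \to \cdots,
\]
where $\delta^i$ is $\rmR^i p_*$ of the natural map $\Z_\YY \to \Omega^{\le 1}_\YY$. Inspecting the sources and targets, the only connecting maps that can be nonzero are $\delta^0\colon \Z_B\to\C_B$, $\delta^2\colon \Z_B\to\OO_B$ and $\delta^3\colon \Lambda\to\Omega^1_B$, so the whole computation reduces to understanding these three maps.

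The maps $\delta^0$ and $\delta^3$ are the easy inputs. The map $\delta^0$ is the inclusion of constants (via $\Z\hookrightarrow\C$), hence injective, which at once gives $\rmR^0 p_*\Z_\YY(2)_\rmD=0$ and $\rmR^1 p_*\Z_\YY(2)_\rmD \simeq \C_B/\Z_B \simeq \C^*_B$. For $\delta^3$ I will observe that the tail of the long exact sequence in degrees $\ge 3$ is precisely the four-term sequence \eqref{eq:exactJtildeJ} extracted in \cref{prop:JJ-wtJJ}: under the identification $\rmR^3 p_*\Omega^{\le 1}_\YY \simeq \rmR^2 p_*\Omega^1_\YY = \Omega^1_B$, the map $\delta^3$ is the injective morphism $\epsilon$ of \cref{prop:jaclatticeinjection}. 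Injectivity of $\delta^3$ forces the preceding map $\rmR^3 p_*\Z_\YY(2)_\rmD \to \Lambda$ to vanish, and simultaneously recovers $\rmR^4 p_*\Z_\YY(2)_\rmD = \wt{\JJ}$ (which is the definition, \cref{def:wtJJ}) together with the sequence \eqref{eq:JJ-wtJJ}. In the top degrees, the vanishing of $\rmR^i p_*\Omega^{\le 1}_\YY$ for $i\ge 4$ and of $\rmR^5 p_*\Z_\YY$ yields immediately $\rmR^5 p_*\Z_\YY(2)_\rmD=0$ and $\rmR^6 p_*\Z_\YY(2)_\rmD=\Z_B$.

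The one genuinely nontrivial point, which I expect to be the main obstacle, is the connecting map $\delta^2\colon \Z_B\to\OO_B$. Since $\Hom(\Z_B,\OO_B)=\rmH^0(B,\OO_B)=\C$, the map $\delta^2$ is multiplication by a single scalar composed with $\Z_B\hookrightarrow\OO_B$, so it suffices to check it is nonzero. I will do this by tracing the generators through the decompositions: the generator of $\rmR^2 p_*\Z_\YY=\Z_B$ is the fiberwise hyperplane class $q^*h=\xi_1$ from the proof of \cref{thm:decomposition-cubic}, and the $\OO_B$ summand of $\rmR^2 p_*\Omega^{\le 1}_\YY$ arises in \cref{prop:pushforward-Omega1} from $\rmH^1(X,\Omega^1_X)=\C$, generated by the Hodge class $[h]$. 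Under $\Z_\YY\to\Omega^{\le 1}_\YY\simeq\Omega^\bullet_\YY/F^2$ the class $q^*h$ maps to the image of $h$ in $\rmH^2(Y_b,\C)/F^2\rmH^2(Y_b,\C)$, which is nonzero because $h$ is a nonzero class of type $(1,1)$ and $F^2\rmH^2(Y_b)=0$ on a cubic threefold; equivalently, $\delta^2(q^*h)=[h]\neq 0$. Granting $\delta^2\neq 0$, it is injective with cokernel $\OO_B/\Z_B\simeq\OO^*_B$ via the exponential, so the long exact sequence gives $\rmR^2 p_*\Z_\YY(2)_\rmD = \Coker(d\colon\OO_B\to\Omega^1_B)$ and $\rmR^3 p_*\Z_\YY(2)_\rmD = \OO^*_B$. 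Finally, the resulting sheaves $0,\ \C^*_B,\ \Coker(d)$ in degrees $0,1,2$ coincide with the cohomology sheaves of $\Z_B(2)_\rmD$ recorded after \eqref{eq:Z2D-def}; since the adjunction morphism $\Z_B(2)_\rmD\to\rmR p_*\Z_\YY(2)_\rmD$ (coming from $p^*\Z_B(2)_\rmD\to\Z_\YY(2)_\rmD$) induces these identifications in degrees $\le 2$, it restricts to a quasi-isomorphism $\tau^{\le 2}\rmR p_*\Z_\YY(2)_\rmD\simeq\Z_B(2)_\rmD$, completing the proof.
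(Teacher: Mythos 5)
Your proof is correct and takes essentially the same route as the paper's: apply $\rmR p_*$ to the triangle \eqref{eq:distinguished-deligne}, feed in \cref{prop:pushforward-Omega1} and \cref{thm:decomposition-cubic}, and observe that the only possibly nonzero connecting maps occur in degrees $0$, $2$, $3$ and are injective. The two points you elaborate --- the nonvanishing of $\delta^2$ traced through the class $q^*h$, and the identification $\tau^{\le 2}\rmR p_*\Z_\YY(2)_\rmD \simeq \Z_B(2)_\rmD$ via the adjunction morphism --- are precisely what the paper compresses into the single assertion that all nontrivial maps between the second and third columns of its long exact sequence are injective, and both check out.
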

\begin{proof}
We use the cohomology sheaves
exact sequence
for the distinguished triangle
\begin{equation}\label{eq:DerivedPushSeqofDeligneCoh}
\rmR p_* \Z_\YY(2)_{\rmD} \to \rmR p_* \Z_\YY \to \rmR p_* \Omega^{\le 1}_\YY \to
\rmR p_* \Z_\YY(2)_{\rmD}[1]
\end{equation}
induced by \eqref{eq:distinguished-deligne}.
The cohomologies of the second 
term can be computed from the decomposition in Theorem \ref{thm:decomposition-cubic} and
the cohomologies of the third term
follow easily from 
\eqref{eq:pushforward-OmegaLE1}, 
in particular, $\rmR^0 p_* \Omega^{\le 1}_B \simeq \C_B$
and 
$
\CC \coloneqq \rmR^1 p_* \Omega^{\le 1}_B = \Coker(d)
$
for the de Rham differential $d\colon \OO_B \to \Omega^1_B$.
Writing $\HH^i$ for $\rmR^i p_*  \Z_\YY(2)_{\rmD}$
to simplify the notation we obtain the following exact sequence:
\[
\hspace*{-1.1cm}
\begin{tikzcd}[row sep = small]
    0 \rar & \HH^0 \rar & \bbZ_B \rar & \bbC_B \ar[out=0, in=180, looseness=3]{dll}\\
    & \HH^1 \rar & 0 \rar & \CC \ar[out=0, in=180, looseness=3]{dll}\\
    & \HH^2 \rar & \bbZ_B \rar & \OO_B \ar[out=0, in=180, looseness=3]{dll}\\
    & \HH^3 \rar & \Lambda \rar & \Omega^1_B \ar[out=0, in=180, looseness=3]{dll}\\
    & \wt{\JJ} \rar & \rmR^4p_*\bbZ_\calY \rar & 0 \ar[out=0, in=180, looseness=3]{dll}\\
    & \HH^5 \rar & 0 \rar & 0 \ar[out=0, in=180, looseness=3]{dll}\\
    & \HH^6 \rar & \Z_B \rar & 0 \rar & 0 \\
\end{tikzcd}
\]

The result follows immediately since all nontrivial maps between the terms in the second and third columns
are injective.
\end{proof}

\begin{proof}[Proof of Theorem \ref{thm:main-cohom-wtJJ}]
    We use the Leray spectral sequence 
    \begin{equation}\label{eq:LeraySSforDeligneCoh}
    E_2^{p,q}\coloneqq \rmH^p(B, \rmR^q p_*\bbZ_{\YY}(2)_{\rmD})
    \Longrightarrow \rmH^{p+q}(\YY, \Z_{\sY}(2)_{\rmD}).
    \end{equation}
    The sheaves 
    $\rmR^q p_*\bbZ_{\YY}(2)_{\rmD}$ are computed in  \cref{lem:dbpushforward}.
   The spectral sequence  takes the form: 
\[\arraycolsep=4pt
\begin{array}{|c|c|c|c|c|c|c|c|c|c|c|c|}
\hline
6 & \mathbb{Z} & 0 & \mathbb{Z} & 0 & \mathbb{Z} & 0 & \mathbb{Z} & 0 & \mathbb{Z} & 0 & \mathbb{Z} \\
\hline
5 & 0 & 0 & 0 & 0 & 0 & 0 & 0 & 0 & 0 & 0 & 0 \\
\hline
4 & \rmH^0 & \rmH^1 & \Z^{23} & 0 & \Z^{24} & 0 & \Z^{23} & 0 & \Z^{23} & 0 & \Z \\
\hline
3 & \bbC^*& \mathbb{Z} & 0 & \mathbb{Z} & 0 & \mathbb{Z} & 0 & \mathbb{Z} & 0 & \mathbb{Z} & 0 \\
\hline
2 & 0 & 0 & \mathbb{C} & 0 & \mathbb{C} & 0 & \mathbb{C} & 0 & \mathbb{C} & 0 & 0 \\
\hline

1 & \mathbb{C}^* & 0 & \mathbb{C}^* & 0 & \mathbb{C}^* & 0 & \mathbb{C}^* & 0 & \mathbb{C}^* & 0 & \mathbb{C}^* \\
\hline
0 & 0 & 0 & 0 & 0 & 0 & 0 & 0 & 0 & 0 & 0 & 0 \\
\hline
\tikz[baseline=(X.base)]{
    \node[minimum size=2em] (X) {\strut};
    \draw (X.north east) -- (X.south west);
    \node[anchor=north west] at (X.north west) {$q$};
    \node[anchor=south east] at (X.south east) {$p$};
} & 0 & 1 & 2 & 3 & 4 & 5 & 6 & 7 & 8 & 9 & 10 \\
\hline
\end{array}
\]
In the fourth row we filled in
the cohomology groups from  \cref{lem:cohom-wtJJ}. We used the notation
$\rmH^0$ and $\rmH^1$ for the first two cohomology groups of $\wt{\JJ}$ which we need to compute.

Let us explain how to obtain the third page of this spectral sequence.
Using the first claim of Lemma \ref{lem:dbpushforward} we see that
the second differentials
between the $p = 2$ and $p = 1$ 
rows
are part of the long exact sequence
\eqref{eq:DelCoh-gluing}
\[
\rmH^{p}(B, \Z_B(2)_{\rmD}) \to E_2^{p-2, 2} \xrightarrow{d_2} E_2^{p,1} \to \rmH^{p+1}(B, \Z_B(2)_{\rmD}).
\]
Plugging in the Deligne cohomology groups
$\rmH^{p}(B, \Z_B(2)_{\rmD})$ from \eqref{eq:DelCohP5}
we see that
each nontrivial differential 
$d_2\colon E_2^{2k,2} \to E_2^{2k+2,1}$
is the exponential map $\C \to \C^*$, which
is surjective with kernel isomorphic to $\Z$, 
in particular, $E_3^{2k,2}\simeq \bbZ$, except possibly when $k=1$
because in that case we also have a differential 
$d_2\colon E_2^{0,3} \to E_2^{2,2}$.
However this differential factors via
\[
\C^* = E_2^{0,3} \to \Ker(E_2^{2,2} \xrightarrow{d_2} E_2^{4,1}) = \Z \subset \C = E_2^{2,2}
\]
and this map is zero because $\C^*$ is a divisible group.
Thus,  the differential $d_2\colon E_2^{0,3} \to E_2^{2,2}$ is  
trivial and $E_3^{2,2}\simeq \bbZ$.

Finally, the differential $d_2\colon E_2^{1,4} \to E_2^{3,3}$ is trivial for the following reason.
The rank of the group $\rmH^6(\YY, \Z(2)_{\rmD})$ is $26$
by Proposition \ref{prop:dbcohomcurlyY} and Lemma \ref{lem:cohom-Y},
and so it is
equal to the sum of the ranks of the groups on the sixth diagonal. Therefore, the differential with the target $E_2^{3,3}$ must be trivial.
We get the third page of the spectral sequence:
\[\arraycolsep=4pt
\begin{array}{|c|c|c|c|c|c|c|c|c|c|c|c|}
\hline
6 & \mathbb{Z} & 0 & \mathbb{Z} & 0 & \mathbb{Z} & 0 & \mathbb{Z} & 0 & \mathbb{Z} & 0 & \mathbb{Z} \\
\hline
5 & 0 & 0 & 0 & 0 & 0 & 0 & 0 & 0 & 0 & 0 & 0 \\
\hline
4 & \rmH^0 & \rmH^1 & \Z^{23} & 0 & \Z^{24} & 0 & \Z^{23} & 0 & \Z^{23} & 0 & \Z \\
\hline
3 & \bbC^*& \mathbb{Z} & 0 & \mathbb{Z} & 0 & \mathbb{Z} & 0 & \mathbb{Z} & 0 & \mathbb{Z} & 0 \\
\hline
2 & 0 & 0 & \mathbb{Z} & 0 & \mathbb{Z} & 0 & \mathbb{Z} & 0 & \mathbb{Z} & 0 & 0 \\
\hline
1 & \mathbb{C}^* & 0 & \mathbb{C}^* & 0 & 0
& 0 & 0 & 0 & 0 & 0 & 0 \\
\hline
0 & 0 & 0 & 0 & 0 & 0 & 0 & 0 & 0 & 0 & 0 & 0 \\
\hline
\tikz[baseline=(X.base)]{
    \draw (X.north east) -- (X.south west);
    \node[anchor=north west] at (X.north west) {$q$};
    \node[anchor=south east] at (X.south east) {$p$};
} & 0 & 1 & 2 & 3 & 4 & 5 & 6 & 7 & 8 & 9 & 10 \\
\hline
\end{array}
\]

The only possible nontrivial third differential
is $d_3\colon E_3^{1,4} \to E_3^{4,2}$.
However, this differential must be
zero
by the same argument involving the rank of $\rmH^6(\sY, \bbZ_{\sY}(2)_{\rmD})$ as above.

All higher differentials are zero by degree reasons and the spectral sequence degenerates at the third page.
In particular, we
see that $\rmH^1 = \rmH^1(B, \wt{\JJ})$
is canonically isomorphic to $\rmH^5(\YY, \Z(2)_{\rmD})$, and this group was computed in Proposition \ref{prop:dbcohomcurlyY}.

To compute $\rmH^0 = \rmH^0(B, \wt{\JJ})$ term
we recall 
\begin{equation}\label{eq:H4Y-D-split}
\rmH^4(\YY, \Z(2)_{\rmD}) = \rmH^{2,2}(\YY, \Z) = \Z \cdot H^2 \oplus \Z \cdot h H \oplus \rmH^{2,2}(X, \Z).
\end{equation}
We claim that the first two terms in the right-hand side of \eqref{eq:H4Y-D-split}
are contained in the Leray filtration
term
\[
\rmL_{3} \rmH^4(\YY, \Z(2)_{\rmD}) = 
\Ker\left(\rmH^4(\YY, \Z(2)_{\rmD}) \to \rmH^0(B, \rmR^4 p_*\Z_\YY(2)_{\rmD})\right).
\]
To see the claim, note that both $H^2$ and $hH$ are induced from the (trivial) projective bundle
$X \times B \to B$ and restrict trivially to the fibers of this projective bundle, so the claim follows by functoriality of the Leray filtration.
Since we see from the spectral sequence above
that $\rmL_{3} \rmH^4(\YY, \Z(2)_{\rmD})$
has rank two and $\Z \cdot H^2 \oplus \Z \cdot h H$
is saturated inside $\rmH^4(\YY, \Z(2)_{\rmD})$
we get
$
\rmL_{3} \rmH^4(\YY, \Z(2)_{\rmD}) = \Z H^2 \oplus \Z h H.
$
Therefore, we have \phantom{\qedhere}
\begin{equation}
\rmH^0(B, \wt{\JJ}) = 
\rmH^4(\YY, \Z(2)_{\rmD}) / \rmL_{3} \rmH^4(\YY, \Z(2)_{\rmD}) = \rmH^{2,2}(X, \Z).\tag*{\qed}
\end{equation}
\end{proof}

The following theorem relates the cohomology groups of $\JJ$ and $\wt{\JJ}$.
We write $\Br_{\rm an}(\Sigma^\perp)$ for the Brauer group of the Hodge substructure $\Sigma^\perp \subset \rmH^4(X,\bbZ)$, see Appendix \ref{app:Brauer}.

\begin{theorem}\label{thm:Sha-seq-nonDG}
        There is a commutative diagram with exact rows and columns
    \[
\begin{tikzcd}[column sep=small]
& & 
& 0
& 0
& & \\
& & 
& \rmH^2(B, \Lambda) \ar[r,"\simeq"] \ar[u]
& \rmH^2(B, \Lambda) \ar[u]
& & \\
0 \ar[r] & \rmH^0(B, \JJ) \ar[r] 
& \rmH^0(B, \wt{\JJ}) \ar[r] 
&\rmH^0(B, \rmR^4p_*\bbZ_{\sY}) \ar[u] \ar[r]  
& \rmH^1(B, \JJ) \ar[u] \ar[r] 
& \rmH^1(B, \wt{\JJ}) \ar[r] 
& 0 \\
0 \ar[r] & (\Sigma^\perp)^{2,2} \ar[r] \ar[u, "\simeq"]
& \rmH^{2,2}(X, \Z) \ar[r] \ar[u,  "\simeq"] 
& \dfrac{\rmH^4(X,\bbZ)}{\Sigma^\perp} \ar[r] \ar[u] 
& \Br_{\rm an}(\Sigma^\perp) \ar[r] \ar[u]
& \Br^4_{\an}(X) \ar[r] \ar[u, "\simeq"]
& 0\\
& & 
& 0 \ar[u]
& 0 \ar[u]
& & 
\end{tikzcd}
\]

\end{theorem}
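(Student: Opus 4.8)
The plan is to exhibit the displayed diagram as the overlay of two exact rows and two exact central columns, glued along four vertical isomorphisms, and then to close up the remaining exactness by a diagram chase. Concretely, the middle row will be the long exact cohomology sequence of \eqref{eq:JJ-wtJJ}, the bottom row a purely Hodge-theoretic comparison of Brauer sequences, and the two central columns will be extracted from the triangle \eqref{eq:LambdaLambdabulletQ} and the sequence \eqref{eq:LambdaOmegaJ}. The outer vertical isomorphisms $\rmH^{2,2}(X,\bbZ)\simeq \rmH^0(B,\wt{\JJ})$ and $\Br^4_{\an}(X)\simeq \rmH^1(B,\wt{\JJ})$ are \cref{thm:main-cohom-wtJJ}; the isomorphism $(\Sigma^\perp)^{2,2}\simeq \rmH^0(B,\JJ)$ will appear en route.

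First I would write the middle row as the cohomology sequence of $0\to \JJ\to \wt{\JJ}\to \rmR^4 p_*\bbZ_\YY\to 0$, identifying its outer terms by \cref{thm:main-cohom-wtJJ}. For the central column through $\rmH^0(B,\rmR^4 p_*\bbZ_\YY)$ I apply $\rmH^\bullet(B,-)$ to \eqref{eq:LambdaLambdabulletQ}; since $\rmH^2(B,\Lambda^\bullet)=0$ by \cref{thm:cohom-Lambda}, the edge map $\rmH^0(B,\sQ)\to \rmH^2(B,\Lambda)$ is onto, and feeding this into the snake-lemma diagram in the proof of \cref{lem:H1LambdaSigma} yields the exact column
\[
0\to \rmH^4(X,\bbZ)/\Sigma^\perp \to \rmH^0(B,\rmR^4 p_*\bbZ_\YY)\to \rmH^2(B,\Lambda)\to 0 .
\]
Combining this with the middle row gives $\rmH^0(B,\JJ)=\ker\big(\rmH^{2,2}(X,\bbZ)\to \rmH^0(B,\rmR^4 p_*\bbZ_\YY)\big)=\rmH^{2,2}(X,\bbZ)\cap\Sigma^\perp=(\Sigma^\perp)^{2,2}$, the middle equality being \cref{lem:H1LambdaSigma}.

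The bottom row is intrinsic to the Hodge structures. For a weight-$4$ K3-type Hodge structure $V$ the projection onto $V^{1,3}$ gives a four-term exact sequence $0\to V^{2,2}_{\bbZ}\to V\to V^{1,3}\to \Br_{\rm an}(V)\to 0$ (Appendix \ref{app:Brauer}). Applying this to $V=\Sigma^\perp$ and $V=\rmH^4(X,\bbZ)$ — noting $(\Sigma^\perp)^{1,3}=\rmH^{1,3}(X)$ because $\Sigma\subseteq \rmH^{2,2}(X)$, so the two ``$V^{1,3}$'' terms coincide — produces a morphism of four-term sequences induced by $\Sigma^\perp\hookrightarrow \rmH^4(X,\bbZ)$. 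Splitting each sequence through its image in $\rmH^{1,3}(X)$ and running the snake lemma twice splices to the bottom row. The remaining central column, through $\rmH^1(B,\JJ)$, can either be read off the cohomology sequence of \eqref{eq:LambdaOmegaJ} — using $\rmH^0(\P^5,\Omega^1)=\rmH^2(\P^5,\Omega^1)=0$, $\rmH^1(\P^5,\Omega^1)=\C$ and $\rmH^1(B,\Lambda)\simeq\Sigma^\perp$ — or deduced from the rest of the diagram by a chase; either way it takes the form $0\to \Br_{\rm an}(\Sigma^\perp)\to \rmH^1(B,\JJ)\to \rmH^2(B,\Lambda)\to 0$.

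Finally I would assemble the four pieces, check commutativity of every square, and obtain the missing surjectivity of the middle row at $\rmH^1(B,\wt{\JJ})$ from the surjection $\Br_{\rm an}(\Sigma^\perp)\twoheadrightarrow \Br^4_{\an}(X)$ in the bottom row together with $\Br^4_{\an}(X)\simeq \rmH^1(B,\wt{\JJ})$. The main obstacle I expect is the Hodge-theoretic bookkeeping in the $\rmH^1(B,\JJ)$ column: one must identify the connecting map $\rmH^1(B,\Lambda)\simeq \Sigma^\perp\to \rmH^1(B,\Omega^1_B)\simeq \rmH^{1,3}(X)$ with the genuine period projection $\Sigma^\perp\hookrightarrow \Sigma^\perp_{\C}\twoheadrightarrow \Sigma^\perp_{\C}/F^2\Sigma^\perp_\C\cong \rmH^{1,3}(X)$ — using that $\Omega^1_B\simeq \rmR^2 p_*\Omega^1_{\YY/B}$ is the relevant Hodge bundle (Proposition \ref{prop:pushforward-Omega1}) — and, just as delicately, verify that this identification, the Deligne-cohomology isomorphisms of \cref{thm:main-cohom-wtJJ}, and the snake map of \cref{lem:H1LambdaSigma} are mutually compatible so that all squares commute on the nose. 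Once the connecting map is the period projection, the identifications $\ker=(\Sigma^\perp)^{2,2}$ and $\coker=\Br_{\rm an}(\Sigma^\perp)$ are definitional.
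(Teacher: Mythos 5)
Your plan assembles the diagram from the same ingredients the paper itself uses: the sheaf sequence \eqref{eq:JJ-wtJJ} for the middle row, the triangle \eqref{eq:LambdaLambdabulletQ} together with \cref{lem:H1LambdaSigma} for the column through $\rmH^0(B,\rmR^4p_*\bbZ_{\sY})$, the sequence \eqref{eq:LambdaOmegaJ} for the column through $\rmH^1(B,\JJ)$, and the Brauer formalism of Appendix \ref{app:Brauer} for the bottom row. Each piece is correctly argued: your derivation of the exact column $0 \to \rmH^4(X,\bbZ)/\Sigma^\perp \to \rmH^0(B,\rmR^4p_*\bbZ_\YY) \to \rmH^2(B,\Lambda) \to 0$ from the vanishing $\rmH^2(B,\Lambda^\bullet)=0$ checks out, as does the splicing of the two four-term Brauer sequences using $(\Sigma^\perp)^{1,3}=\rmH^{1,3}(X)$. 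The difference from the paper is architectural: the paper does not build rows and columns separately and verify commutativity afterwards; it derives the whole diagram at once by comparing the Leray spectral sequences of the three complexes in the triangle \eqref{eq:distinguished-deligne}, producing the diagram \eqref{eq:threelerays}, so that commutativity of the essential squares is automatic from functoriality of the Leray filtration, and a single application of the Snake Lemma then yields the middle row with the identifications already in place.

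This difference is not cosmetic, and it is exactly where your ``check commutativity of every square'' step hides the real content. The isomorphisms $\rmH^0(B,\wt{\JJ})\simeq \rmH^{2,2}(X,\bbZ)$ and $\rmH^1(B,\wt{\JJ})\simeq \Br^4_{\an}(X)$ of \cref{thm:main-cohom-wtJJ} are defined through the Leray filtration on $\rmH^4(\YY,\bbZ_\YY(2)_\rmD)$ and the degeneration at $E_3$, while your rows and columns come from sheaf-level short exact sequences. To see, for instance, that the composite $\rmH^{2,2}(X,\bbZ)\simeq \rmH^0(B,\wt{\JJ})\to \rmH^0(B,\rmR^4p_*\bbZ_\YY)$ agrees with the restriction of the natural map $\rmH^4(X,\bbZ)\to \rmH^0(B,\rmR^4p_*\bbZ_\YY)$ — which is what your computation $\rmH^0(B,\JJ)=(\Sigma^\perp)^{2,2}$ actually requires — or that the connecting map of \eqref{eq:LambdaOmegaJ} is the period projection $\Sigma^\perp\to \rmH^{1,3}(X)$ — which is what produces $\Br_{\an}(\Sigma^\perp)$ in the second column — you are forced to relate the Leray data of $\bbZ_\YY(2)_\rmD$, $\bbZ_\YY$ and $\Omega^{\le 1}_\YY$ under the maps of the triangle, i.e.\ to reconstruct precisely the paper's master diagram. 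You did identify the right delicate point (the period-projection identification via \cref{prop:pushforward-Omega1}), so the plan is completable; but completing it converges to the paper's proof rather than bypassing it, and as written the proposal defers, rather than performs, its hardest step.
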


In particular, we get an exact sequence involving
$\rmH^1(B,\calJ)^0\coloneqq \Ker\left(\rmH^1(B,\calJ)\to \rmH^2(B,\Lambda)\right)$:
\begin{equation}\label{eq:exseqH1J0}
    0 \to \rmH^4(X,\bbZ)/(\rmH^{2,2}(X,\bbZ)+\Sigma^\perp) \to \rmH^1(B,\calJ)^0 \to \Br^4_{\an}(X) \to 0.
\end{equation}
The group $\rmH^1(B,\calJ)^0$ is related to Tate--Shafarevich twists of certain Lagrangian fibrations, see Corollary \ref{cor:JisAM}. 

\begin{proof}

We start with a distinguished triangle \eqref{eq:distinguished-deligne}.
Via the Leray spectral sequence  
\[
E^{p,q}_2(-)\coloneqq \rmH^{p}(B,\rmR^qp_*(-)) \Longrightarrow \rmH^{p+q}(\calY,-)
\]
we obtain a commutative diagram with exact rows
involving the third term $\rmL_3\rmH^4(-)$ of the Leray filtration on $\rmH^4(-)$:
\[\begin{tikzcd}
	0 \ar[r] & \rmL_3\rmH^4(\bbZ_\calY(2)_\rmD) \ar[r] \ar[d] & \rmH^4(\bbZ_\calY(2)_\rmD) \ar[r] \ar[d] & E^{0,4}_\infty(\bbZ_\calY(2)_\rmD) \ar[d] \ar[r] & 0 \\
	0 \ar[r] & \rmL_3\rmH^4(\bbZ_\calY) \ar[r] \ar[d] & \rmH^4(\bbZ_\calY) \ar[r] \ar[d] & E^{0,4}_\infty(\bbZ_\calY) \ar[d] \ar[r] & 0 \\
	0 \ar[r] & \rmL_3\rmH^4(\Omega^{\leq 1}_\calY) \ar[r]  & \rmH^4(\Omega^{\leq 1}_\calY) \ar[r]  & E^{0,4}_\infty(\Omega^{\leq 1}_\calY)  \ar[r] & 0.
\end{tikzcd}\]

The spectral sequences giving the first two
rows
were
analyzed in \cref{thm:main-cohom-wtJJ} and \cref{thm:decomposition-cubic} respectively.
In particular, \cref{thm:decomposition-cubic}
implies that 
\[E^{0,4}_\infty(\bbZ_\calY)=K\coloneqq \Ker \left(\rmH^0(B,\rmR^4p_*\bbZ_\calY)\to \rmH^2(B,\Lambda)\right).\] 
On the other hand, the proof of \cref{thm:main-cohom-wtJJ} shows that 
$\rmL_3\rmH^4(\sY,\bbZ_\calY(2)_\rmD)\simeq \bbZ H^2\oplus \bbZ hH$ and this group 
maps as a direct summand to the respective terms in the top left square. 
From \cref{prop:pushforward-Omega1} we know that 
$E^{0,4}_\infty(\Omega^{\leq 1}_\calY) =0$.
Omitting the $\bbZ H^2\oplus \bbZ hH$ summand from the top left square we get the following diagram:

\begin{equation}\label{eq:threelerays}
\begin{tikzcd}
0 \ar[r] & 0 \ar[r] \ar[d] & \rmH^{2,2}(X,\bbZ) \ar[r,"\sim"] \ar[d] & \rmH^0(B,\wt{\calJ}) \ar[r] \ar[d] & 0\\
      0\ar[r] &   \rmH^1(B,\Lambda) \ar[r] \ar[d,"\varphi"] & \rmH^4(X,\bbZ) \ar[r]
      \ar[ur,phantom,"{\color{purple} \circled{1}}"] 
      \ar[d,"\psi"] & K \ar[d]  \ar[r] & 0\\
       0 \ar[r] &  \rmH^1(B,\rmR^3p_*\Omega^{\leq 1}_\calY) \ar[r,"\sim"] & \rmH^4(\calY,\Omega^{\leq 1}_\calY) 
       \ar[r] & 0 \ar[r] & 0
\end{tikzcd}
\end{equation}

Note that the map $\varphi$ is induced by (\ref{eq:LambdaOmegaJ}), in particular, $\Ker(\varphi)=\rmH^0(B,\calJ)$ and $\coker(\varphi)=\rmH^1(B,\calJ)^0\coloneqq \Ker \left(\rmH^1(B,\calJ)\to \rmH^2(B,\Lambda)\right)$. On the other hand, since $\rmH^5(\calY,\bbZ)=0$, we have 
$\coker(\psi)=\rmH^5(\calY,\bbZ_\calY(2)_\rmD)\simeq \rmH^1(B,\widetilde{\calJ})$. Applying the Snake Lemma to the bottom two rows of (\ref{eq:threelerays}) gives an exact sequence
$$0 \to \rmH^0(B,\calJ) \to \rmH^0(B,\widetilde{\calJ}) \to K \xrightarrow{\delta} \rmH^1(B,\calJ)^0 \to \rmH^1(B,\widetilde{\calJ}) \to 0.$$
where $\delta$ is the connecting morphism. 

Now, we identify $\rmH^1(B,\Lambda)\simeq \Sigma^\perp$ via Lemma \ref{lem:H1LambdaSigma}, $\rmH^4(\calY,\Omega^{\leq 1}_\calY)\simeq \rmH^3(X,\Omega^1_X)$ via the proof of Proposition \ref{prop:pushforward-Omega1}. 
 We obtain a diagram with exact rows
\begin{equation}\label{eq:bigdiagramSHA}
\begin{tikzcd}[column sep=small]
0 \ar[r] & \rmH^0(B, \JJ) \ar[r] 
& \rmH^0(B, \wt{\JJ}) \ar[r] 
& K \ar[r] 
& \rmH^1(B, \JJ)^0 \ar[r] 
& \rmH^1(B, \wt{\JJ}) \ar[r] 
& 0 \\
0 \ar[r] & (\Sigma^\perp)^{2,2} \ar[r] \ar[u, "\simeq"]
& \rmH^{2,2}(X, \Z) \ar[r] \ar[u, "\simeq"] 
\ar[ur,phantom,"{\color{purple} \circled{2}}"]
& \dfrac{\rmH^4(X,\bbZ)}{\Sigma^\perp} \ar[r] \ar[u, "\simeq"]
\ar[ur,phantom,"{\color{purple} \circled{3}}"  pos=0.4]
& \Br_{\rm an}(\Sigma^\perp) \ar[r] \ar[u, "\simeq"]
& \Br^4_{\an}(X) \ar[r] \ar[u, "\simeq"]
& 0
\end{tikzcd}
\end{equation}
To conclude the proof of the theorem, it remains to show that (\ref{eq:bigdiagramSHA}) is commutative. It is enough to show commutativity of the squares {\color{purple} \circled{$2$}} and {\color{purple} \circled{$3$}}. The former follows from the commutativity of {\color{purple} \circled{$1$}}, and the latter by construction of $\delta$.
\end{proof}

\begin{corollary}\label{cor:Sha-sequence}
Assume $X$ is defect general. We have 
\[\rmH^0(B,\sJ)\simeq \rmH^{2,2}(X,\bbZ)_{\pr}\ \text{ and }\ \rmH^1(B,\sJ)\simeq 
\Br_{\an,\pr}^4(X) \coloneqq
\BrAn(\rmH^4(X,\bbZ)_{\pr})\] and exact sequence
\begin{equation}\label{eq:exseq-twists}
\Z/3\Z \to \rmH^1(B, \JJ) \to 
\Br^4_{\an}(X) \to 0.
\end{equation}
The first map in \eqref{eq:exseq-twists} is injective if and only if $h^2$ has divisibility one in $\rmH^{2,2}(X,\bbZ)$.
\end{corollary}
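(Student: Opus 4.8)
The plan is to deduce everything from \cref{thm:Sha-seq-nonDG} by specializing its diagram to the defect general case. First I would record the three simplifications that defect generality forces. By \cref{thm:decomposition-cubic}, $X$ defect general is equivalent to $\sQ=0$, so $\rmR^4p_*\bbZ_{\sY}\simeq\bbZ_B$ and $\Lambda^\bullet\simeq\Lambda$; moreover $\Sigma=\bbZ h^2$, whence $\Sigma^\perp=\rmH^4(X,\bbZ)_{\pr}$, $(\Sigma^\perp)^{2,2}=\rmH^{2,2}(X,\bbZ)_{\pr}$, and $\Br_{\rm an}(\Sigma^\perp)=\Br_{\an,\pr}^4(X)$. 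Finally, from \cref{thm:cohom-Lambda} together with $\Lambda^\bullet\simeq\Lambda$ we get $\rmH^2(B,\Lambda)=\rmH^2(B,\Lambda^\bullet)=0$.

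Feeding these into the diagram of \cref{thm:Sha-seq-nonDG} gives the first two isomorphisms at once. The leftmost vertical arrow reads $\rmH^0(B,\calJ)\simeq(\Sigma^\perp)^{2,2}=\rmH^{2,2}(X,\bbZ)_{\pr}$. For the second, since $\rmH^2(B,\Lambda)=0$ the exact column through $\rmH^1(B,\calJ)$, namely $0\to\Br_{\rm an}(\Sigma^\perp)\to\rmH^1(B,\calJ)\to\rmH^2(B,\Lambda)\to 0$, collapses to an isomorphism $\rmH^1(B,\calJ)\simeq\Br_{\rm an}(\Sigma^\perp)=\Br_{\an,\pr}^4(X)$. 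In particular $\rmH^1(B,\calJ)^0=\rmH^1(B,\calJ)$, because the map defining $\rmH^1(B,\calJ)^0$ lands in $\rmH^2(B,\Lambda)=0$.

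It remains to produce \eqref{eq:exseq-twists} from \eqref{eq:exseqH1J0}, which in the present case reads
\[
0 \to \rmH^4(X,\bbZ)/\bigl(\rmH^{2,2}(X,\bbZ)+\rmH^4(X,\bbZ)_{\pr}\bigr) \to \rmH^1(B,\calJ) \to \Br^4_{\an}(X) \to 0 .
\]
Here I would identify the left term via the intersection map $\varphi\coloneqq(-\cdot h^2)\colon\rmH^4(X,\bbZ)\to\bbZ$. Since $\rmH^4(X,\bbZ)$ is unimodular and $h^2$ is primitive, $\varphi$ is surjective (witnessed by the class $b$ of \eqref{eq:b-class}) with kernel exactly $\rmH^4(X,\bbZ)_{\pr}$ (see \cref{lem:H1LambdaSigma}), so it induces an isomorphism of the left term with $\bbZ/\varphi(\rmH^{2,2}(X,\bbZ))$. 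As $h^2\in\rmH^{2,2}(X,\bbZ)$ and $\varphi(h^2)=(h^2)^2=3$, the subgroup $\varphi(\rmH^{2,2}(X,\bbZ))\subseteq\bbZ$ contains $3\bbZ$, hence equals $d\bbZ$ with $d\mid 3$, where $d$ is the divisibility of $h^2$ in $\rmH^{2,2}(X,\bbZ)$ and $d\in\{1,3\}$. Thus the left term is $\bbZ/d\bbZ$, and precomposing its inclusion into $\rmH^1(B,\calJ)$ with the canonical surjection $\bbZ/3\bbZ\twoheadrightarrow\bbZ/d\bbZ$ yields \eqref{eq:exseq-twists}.

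The injectivity statement then falls out: the first map of \eqref{eq:exseq-twists} is injective if and only if $\bbZ/3\bbZ\to\bbZ/d\bbZ$ is, i.e. if and only if $d=3$, that is, precisely when there is no class $\xi\in\rmH^{2,2}(X,\bbZ)$ with $\xi\cdot h^2=1$, in agreement with \cref{thm:main}. I do not expect a genuine obstacle: the substantive content is already carried by \cref{thm:Sha-seq-nonDG}, and the only points requiring care are the short lattice computation of $\varphi(\rmH^{2,2}(X,\bbZ))$ and keeping straight the direction of the divisibility criterion (the map is injective in the generic case $d=3$, and zero precisely when $h^2$ admits a dual Hodge class).
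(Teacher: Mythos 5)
Your argument is correct and is essentially the paper's own proof: both specialize \cref{thm:Sha-seq-nonDG} and the sequence \eqref{eq:exseqH1J0} to the defect general case, using $\Lambda \simeq \Lambda^\bullet$ and $\Sigma = \bbZ h^2$ from \cref{thm:decomposition-cubic} together with the vanishing $\rmH^2(B,\Lambda) = 0$, and then identify the left-hand term of \eqref{eq:exseqH1J0} with $\bbZ/d\bbZ$, $d \in \{1,3\}$, via the pairing with $h^2$. Your write-up is slightly more careful on two points: you cite \cref{thm:cohom-Lambda} for $\rmH^2(B,\Lambda)=0$ (the paper points to \cref{lem:CohJCohLambda}, which concerns the cohomology of $\calJ$ rather than of $\Lambda$), and you spell out the unimodularity argument identifying $\rmH^4(X,\bbZ)/(\rmH^{2,2}(X,\bbZ)+\rmH^4(X,\bbZ)_{\pr})$ with $\bbZ/d\bbZ$, which the paper leaves implicit.

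The one substantive point concerns the final claim. You conclude that the first map of \eqref{eq:exseq-twists} is injective if and only if $d=3$, equivalently if and only if there is no class $\xi \in \rmH^{2,2}(X,\bbZ)$ with $\xi\cdot h^2 = 1$. This is the negation of the corollary's literal wording (``divisibility one''), but it is the correct direction: exactness of \eqref{eq:exseq-twists} forces the image of the first map to equal the kernel $\bbZ/d\bbZ$ of $\rmH^1(B,\JJ) \to \Br^4_{\an}(X)$, so that map is zero when $d=1$ and injective when $d=3$; moreover your version is exactly what \cref{thm:main} asserts, and it is confirmed by the very general case, where $\rmH^{2,2}(X,\bbZ)=\bbZ h^2$ gives $d=3$ and a nontrivial degree (Voisin) twist. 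So the final sentence of the corollary as printed has the divisibility criterion inverted relative to \cref{thm:main}; the paper's proof, which only says the claim ``follows from \eqref{eq:exseqH1J0}'', does not address this, whereas your proof resolves it the right way. This is a defect of the statement, not a gap in your argument.
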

\begin{proof}
Since $X$ is defect general, $\Lambda=\Lambda^\bullet$ by Theorem \ref{thm:decomposition-cubic}, so $\rmH^2(B,\Lambda)=0$ by Lemma \ref{lem:CohJCohLambda} and hence $\rmH^1(B,\calJ)=\rmH^1(B,\calJ)^0$. Moreover, in this case $\Sigma^\perp=(h^2)^\perp$. Thus, the first two isomorphisms
follow from \cref{thm:Sha-seq-nonDG}.
Define $d$ to be the positive integer such that
\[
\image(\rmH^{2,2}(X, \Z) \xrightarrow{\cdot h^2} \Z) = 
d\Z.
\]
Since $(h^2)^2 = 3$, the divisibility $d$ of $h^2$ can only be equal to $1$ or $3$.
The short exact sequence
and the final claim
follow from \eqref{eq:exseqH1J0}.
\end{proof}

The next result allows constructing torsion classes in 
$\rmH^1(B, \JJ)$.

\begin{lemma}\label{lem:Jm-cohomology}
Let $X$ be a defect general cubic fourfold. 
    For any $m \ge 1$ consider the sheaf of torsion abelian groups
    $\calJ[m]\coloneqq \Ker(\calJ \xrightarrow{\cdot m}\calJ)$.
    Then
    \begin{equation}\label{eq:Jm-cohom}
    \rmH^4(X, \Z/m)_{\pr} \coloneqq
    \Ker(\rmH^4(X, \Z/m) \xrightarrow{h^2} \Z/m)
    \simeq \rmH^1(B, \JJ[m]).
    \end{equation}
    Let $b \in \rmH^4(X, \Z)$
    be any class such that $\deg_X(h^2 \cdot b) = 1$, see  \eqref{eq:b-class}.
    Then
    the image of the first map in \eqref{eq:exseq-twists} is generated by the image of the element $\ol{h^2 - 3b} \in \rmH^4(X, \Z/3)_{\pr}$ under the composition of \eqref{eq:Jm-cohom} for $m = 3$ with the map
    \[
    \rmH^1(B, \JJ[3]) \to \rmH^1(B, \JJ)[3] \subset \rmH^1(B, \JJ).
    \]
    \end{lemma}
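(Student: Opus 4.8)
\emph{Part 1.} First I would identify the torsion sheaf $\JJ[m]$. Applying the snake lemma to multiplication by $m$ on the sequence \eqref{eq:LambdaOmegaJ}, and using that $\Omega^1_B$ is a sheaf of $\C$-vector spaces (so $m$ is invertible on it) while $\Lambda$ is torsion-free by \cref{thm:decomposition-cubic}, one gets $\JJ[m]\cong\Lambda/m\Lambda$ (and that $\JJ$ is $m$-divisible, a fact also used below). It then remains to compute $\rmH^1(B,\Lambda/m)$. Since $X$ is defect general, $\Lambda=\Lambda^\bullet$ is torsion-free, so applying $-\otimes^{\mathbf{L}}\Z/m$ to the decomposition of \cref{thm:decomposition-cubic} yields
\[
\rmR p_*(\Z/m)_\YY\simeq\bigoplus_{i=0}^{3}(\Z/m)_B[-2i]\ \oplus\ (\Lambda/m)[-3].
\]
Extracting $\rmH^1(B,\Lambda/m)$ as a direct summand of $\rmH^4(\YY,\Z/m)$ by the same computation as in the proof of \cref{thm:cohom-Lambda}, it equals the middle cohomology of
\[
\rmH^4(B,\Z/m)\oplus\rmH^2(B,\Z/m)\longrightarrow\rmH^4(\YY,\Z/m)\longrightarrow\rmH^0(B,\Z/m).
\]
Since the odd cohomology of $\YY$ vanishes, $\rmH^4(\YY,\Z/m)=(\Z/m)H^2\oplus(\Z/m)hH\oplus q^*\rmH^4(X,\Z/m)$, the first map hits the first two summands and the second is $\alpha\mapsto\deg(h^2\alpha)$ on the third; hence the middle cohomology is $\rmH^4(X,\Z/m)_{\pr}$, proving \eqref{eq:Jm-cohom}. (Equivalently one may use the Bockstein sequence and $\rmH^2(B,\Lambda)=0$; the two agree because $h^2$ has divisibility one in $\rmH^4(X,\Z)$ via the class $b$.)

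\emph{Part 2: reduction.} Since $X$ is defect general, $\rmR^4p_*\Z_\YY=\Z_B$ and \eqref{eq:JJ-wtJJ} reads $0\to\JJ\to\wt\JJ\to\Z_B\to0$. By \cref{thm:Sha-seq-nonDG} the first map of \eqref{eq:exseq-twists} is induced by the connecting homomorphism $\delta\colon\Z=\rmH^0(B,\Z_B)\to\rmH^1(B,\JJ)$ of this sequence, and under $\rmH^0(B,\rmR^4p_*\Z_\YY)\cong\rmH^4(X,\Z)/(h^2)^\perp$ the generator $1$ is the class of $b$ (indeed $b|_{Y_b}$ generates $\rmH^4(Y_b,\Z)=\Z$ as $\deg(h^2b)=1$). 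Thus the image in question is $\langle\delta(1)\rangle$. On the other hand $\rmH^1(B,\JJ[3])\to\rmH^1(B,\JJ)[3]\subset\rmH^1(B,\JJ)$ is the map $\iota$ induced by $\JJ[3]\hookrightarrow\JJ$, appearing in the long exact sequence of $0\to\JJ[3]\to\JJ\xrightarrow{3}\JJ\to0$ (exact by $3$-divisibility), whose image is exactly $\rmH^1(B,\JJ)[3]$. So it suffices to prove $\iota(\overline{h^2-3b})=\pm\delta(1)$.

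\emph{Part 2: computation.} I would compare both classes inside $\rmH^1(B,\JJ)\cong\Br_{\an,\pr}^4(X)=\rmH^{1,3}(X)/\image(\rho)$, where $\rho\colon\rmH^1(B,\Lambda)\cong\rmH^4(X,\Z)_{\pr}\to\rmH^1(B,\Omega^1_B)\cong\rmH^{1,3}(X)$ is the map induced by \eqref{eq:LambdaOmegaJ}; this is the identification behind \cref{cor:Sha-sequence} (using $\rmH^2(B,\Lambda)=0$, \cref{lem:CohJCohLambda}), under which $\rho$ is the Hodge projection $\alpha\mapsto\alpha^{1,3}$. By the description of the connecting map of the Brauer functor (Appendix \ref{app:Brauer}), $\delta(1)$ corresponds to $\partial_{\mathrm{Hodge}}(\overline b)=[\,b^{1,3}\,]$. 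For the other class, note that under $\JJ[3]\cong\Lambda/3$ the inclusion $\iota$ is $\overline\lambda\mapsto[\tfrac13\lambda]$, so the composite $\rmH^1(B,\Lambda)\xrightarrow{\bmod 3}\rmH^1(B,\Lambda/3)\xrightarrow{\iota}\rmH^1(B,\JJ)$ sends an integral class $[\lambda]$ to the class of $\tfrac13\rho(\lambda)$ in $\coker(\rho)=\rmH^1(B,\JJ)$. This is precisely the composition of \eqref{eq:Jm-cohom} for $m=3$ with $\iota$, evaluated on $\overline{h^2-3b}$. Applying it to the integral class $h^2-3b\in\rmH^4(X,\Z)_{\pr}$ and using $(h^2)^{1,3}=0$,
\[
\iota(\overline{h^2-3b})=\big[\tfrac13(h^2-3b)^{1,3}\big]=\big[-b^{1,3}\big]=-\delta(1),
\]
so the two generate the same cyclic subgroup, as required. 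This is uniform in the two cases of \cref{cor:Sha-sequence}: $[\,b^{1,3}\,]=0$ precisely when some Hodge class $\xi$ satisfies $\xi\cdot h^2=1$.

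\emph{Main obstacle.} The formal homological algebra is routine; the real work is the bookkeeping in the last paragraph, namely checking that inside the common group $\rmH^1(B,\JJ)\cong\rmH^{1,3}(X)/\image(\rho)$ the map $\delta$ is the Brauer connecting map, $\rho$ is the $(1,3)$-projection, and $\iota\circ(\bmod 3)$ is ``multiply by $\tfrac13$, then reduce''. The factor $\tfrac13$ coming from $\JJ[3]\cong\Lambda/3$ is exactly what cancels the $3$ in $h^2-3b$ to leave $-b^{1,3}$; arranging all these identifications compatibly, including signs and the role of the non-Hodge class $b$, is the crux.
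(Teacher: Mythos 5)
Your proof is correct; the first half matches the paper and the second half takes a genuinely different route. For \eqref{eq:Jm-cohom} the paper argues exactly as in your parenthetical remark: the snake lemma applied to \eqref{eq:LambdaOmegaJ} gives $\JJ[m]\simeq\Lambda/m\Lambda$, and then the Bockstein sequence together with $\rmH^2(B,\Lambda)=0$ (valid for defect general $X$ by \cref{thm:cohom-Lambda}) yields $\rmH^1(B,\Lambda/m\Lambda)\simeq\rmH^1(B,\Lambda)/m\simeq\rmH^4(X,\Z/m)_{\pr}$; your main route, reducing the decomposition of \cref{thm:decomposition-cubic} mod $m$ and re-running the computation of \cref{thm:cohom-Lambda}, is an equivalent variant with the mild advantage that compatibility with the integral identification (which you need later) is built in. For the final claim, the paper simply invokes \cref{lem:Bprim}: by \cref{cor:Sha-sequence} the image of the first map of \eqref{eq:exseq-twists} is the kernel of $\BrAn(\rmH^4(X,\Z)_{\pr})\to\BrAn(\rmH^4(X,\Z))$, and \cref{lem:Bprim} exhibits the generator $\rho_m(\ol{mg-s})$ of this kernel, asserted to coincide with the image of $\ol{h^2-3b}$. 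You avoid \cref{lem:Bprim} altogether: you identify the image of the first map with the subgroup generated by $\delta(1)$, where $\delta$ is the connecting map of $0\to\JJ\to\wt{\JJ}\to\Z_B\to 0$, compute $\delta(1)=[b^{1,3}]$ in $\coker(\rho\colon\rmH^4(X,\Z)_{\pr}\to\rmH^{1,3}(X))$, and verify that the composition in the statement is ``multiply by $\tfrac13$, then project'', whence $\iota(\ol{h^2-3b})=[-b^{1,3}]=-\delta(1)$. What your computation buys is exactly the bookkeeping that the paper's citation leaves implicit, namely that under $\rmH^1(B,\JJ)\simeq\Br^4_{\an,\pr}(X)$ the composition of \eqref{eq:Jm-cohom} with $\rmH^1(B,\JJ[3])\to\rmH^1(B,\JJ)$ becomes the map $\rho_3$ of Appendix \ref{app:Brauer}; it also pins down the sign and recovers the injectivity criterion of \cref{cor:Sha-sequence} (indeed $[b^{1,3}]=0$ exactly when a Hodge class pairs to $1$ with $h^2$). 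Two small points of attribution: the fact that $\rho$ becomes the $(1,3)$-projection under these identifications is established in the proof of \cref{thm:Sha-seq-nonDG} (commutativity of the diagram \eqref{eq:threelerays}, where your $\rho$ is the map $\varphi$ and the Hodge projection is $\psi$), so you should cite that proof rather than the bare statement of \cref{cor:Sha-sequence}; and whether your sheaf-theoretic $\delta$ agrees on the nose with the connecting map $K\to\rmH^1(B,\JJ)$ appearing in \cref{thm:Sha-seq-nonDG} is immaterial, since both have image equal to $\Ker(\rmH^1(B,\JJ)\to\rmH^1(B,\wt{\JJ}))$, which is all your argument uses.
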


We will 
explain the role of the class $\ol{h^2 - 3b}$ in  \cref{lem:degree-twists}.
    
\begin{proof}
    Apply multiplication by $m$ and the Snake Lemma to the short exact sequence \eqref{eq:LambdaOmegaJ}
to obtain
\[
\JJ[m] \simeq \Lambda/m\Lambda.
\]
Since $\rmH^2(B,\Lambda) = 0$ for defect general $X$ (\cref{thm:cohom-Lambda}), we observe that
\[
    \rmH^1(B, \JJ[m]) \simeq 
    \rmH^1(B, \Lambda/m\Lambda) \simeq \frac{\rmH^1(B, \Lambda)}{m\rmH^1(B, \Lambda)} \simeq
    \frac{\rmH^4(X, \Z)_{\pr}}{m\rmH^4(X, \Z)_{\pr}} \simeq
    \rmH^4(X, \Z/m)_{\pr}.
\]
 For the final claim, we note that 
 the kernel of the map between analytic Brauer groups
 \[
 \BrAn(\rmH^4(X,\bbZ)_{\pr}) \to \BrAn(\rmH^4(X,\bbZ)).
 \]
 is generated by an explicit class 
 given in \cref{lem:Bprim}. This class is precisely the image of $\ol{h^2 - 3b}$.~\qedhere
\end{proof}

\section{OG10 Lagrangian fibration associated to a cubic fourfold}\label{sec:latticesHK}

We recall some basic results on the Decomposition Theorem for a Lagrangian fibration
and the relation to the Beauville--Bogomolov--Fujiki form, see \cref{prop:BBF-dCM}.
In \cref{thm:pairings} we compute this form for the OG10 Lagrangian fibration $M \to B = (\P^5)^\vee$
associated to a smooth cubic fourfold $X$.
This uses our results about the universal hyperplane sections of cubic fourfolds from \S \ref{sec:cohomologycomputations}, namely
\cref{thm:decomposition-cubic} and \cref{thm:cohom-Lambda}.

\subsection{Perverse filtration and BBF form on a Lagrangian fibration}

For basic facts about hyperk\"ahler manifolds and Lagrangian fibrations see \cite{HuycptHKbasic, Beauville-HK-survey} and references therein.
Recall that any hyperk\"ahler variety $M$
admits the so-called Beauville--Bogomolov--Fujiki (BBF) form $q$
on $\rmH^2(M, \Z)$ which satisfies
\begin{equation}\label{eq:Fujiki}
c_M \cdot q(u)^n = \int_M u^{2n}.
\end{equation}
Here $\dim M = 2n$
and
$c_M$ is the Fujiki constant.
There is a more general version of this relation: for a set of classes $\alpha_1, \ldots, \alpha_{2n} \in \rmH^2(M, \Z)$, we can apply \eqref{eq:Fujiki}
to a linear combination $u = t_1\alpha_1+\cdots + t_{2n}\alpha_{2n}$. Expanding both sides as polynomials in $t_1, \ldots, t_{2n}$
and comparing the 
coefficients of $t_1\cdots t_{2n}$
we obtain the sum over all permutations $\sigma \in \mathfrak{S}_{2n}$
\begin{equation}\label{eq:Fujiki-2}
 c_M\sum_{\sigma\in \mathfrak{S}_{2n}} q(\alpha_{\sigma(1)}, \alpha_{\sigma(2)})\cdots q(\alpha_{\sigma(2n-1)}, \alpha_{\sigma(2n)}) = (2n)!\int_M\alpha_1\cdots\alpha_{2n} 
\end{equation}

Let $f\colon M \to B$ be a Lagrangian fibration of relative dimension $n$ over $B \simeq \P^n$.
We will use \eqref{eq:Fujiki-2} to compute the BBF form on the primitive part of $\rmH^2(M, \Q)$, see Proposition \ref{prop:BBF-dCM}.

\medskip

We start by applying the Decomposition Theorem
\ref{thm:decomposition-perverse} to decompose
 $\rmH^2(M, \Q)$ into simpler pieces. 
 We have
\[
\rmH^2(M, \Q) = 
\rmH^{2-2n}(M, \Q[2n]) \simeq
\rmH^{2-n}(B, P_{-n}) \oplus
\rmH^{1-n}(B, P_{-n+1}) \oplus
\rmH^{-n}(B, P_{-n+2}).
\]
All other cohomology terms vanish by Lemma \ref{lem:IC-claim}.
We have $P_{-n} \simeq \Q[n]$ by Remark \ref{rmk:suppPi} and
for the remaining two complexes, we use the notation
\[
P^\bullet \coloneqq P_{-n+1}[-n], 
\quad
R^\bullet \coloneqq P_{-n+2}[-n]. 
\]
One can show that $P^\bullet$ is supported in cohomological degrees $0$ and $1$.
The splitting above can be rewritten~as
\begin{equation}
\label{eq:H2M-splitting}
\rmH^2(M, \Q) \simeq
\rmH^{2}(B, \Q) \oplus
\rmH^{1}(B, P^\bullet) \oplus
\rmH^{0}(B, R^\bullet).
\end{equation}
We write $\eta \coloneqq f^*c_1(\sO(1)) \in \rmH^2(M, \Q)$, so that 
by \eqref{eq:Fujiki} we have $q(\eta) = 0$.
The first term in decomposition \eqref{eq:H2M-splitting} is $\Q\eta$.
By \cite[Theorem 0.2]{ShenYin-PW}
the last piece is also one dimensional.
The splitting is not canonical
but we have the following  observation.

\begin{lemma}\label{lem:H2M-R}
The kernel of the natural morphism $\rmH^2(M, \Q) \to \rmH^0(B, R^\bullet) \simeq \Q$ equals $\eta^\perp$
and we have a canonical isomorphism
\[
\rmH^{1}(B, P^\bullet) \simeq \eta^\perp / \Q \eta.
\]
\end{lemma}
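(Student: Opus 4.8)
The plan is to read the map $\rmH^2(M,\Q)\to \rmH^0(B,R^\bullet)$ as the canonical projection onto the top graded piece of the perverse (Leray) filtration $\rmL_\bullet$ attached to the splitting \eqref{eq:H2M-splitting}. Writing $\rmL_0\subset \rmL_1\subset \rmL_2=\rmH^2(M,\Q)$ for this filtration, so that $\rmL_0=f^*\rmH^2(B,\Q)=\Q\eta$ (using $P_{-n}\simeq\Q[n]$ as in Remark \ref{rmk:suppPi}), $\rmL_1/\rmL_0\simeq \rmH^1(B,P^\bullet)$ and $\rmL_2/\rmL_1\simeq\rmH^0(B,R^\bullet)$, the stated map is the quotient $\rmH^2(M,\Q)\twoheadrightarrow \rmL_2/\rmL_1$, whose kernel is $\rmL_1$. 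Thus both assertions reduce to the single identity $\rmL_1=\eta^\perp$: granting it, the second isomorphism is just $\rmH^1(B,P^\bullet)=\rmL_1/\rmL_0=\eta^\perp/\Q\eta$. Since $\rmL_2/\rmL_1$ is one-dimensional by \cite[Theorem 0.2]{ShenYin-PW}, $\rmL_1$ has codimension one; as $q$ is nondegenerate and $\eta\neq 0$, so does $\eta^\perp$. Hence it will suffice to prove the inclusion $\rmL_1\subseteq\eta^\perp$.

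To test membership in $\eta^\perp$ I would convert the BBF pairing into an intersection number via the Fujiki relation \eqref{eq:Fujiki}. Substituting $u=s\eta+t\alpha$, using $q(\eta)=0$, and comparing the coefficients of $s^nt^n$ on both sides by the same polarization argument that yields \eqref{eq:Fujiki-2}, one obtains
\[
\binom{2n}{n}\int_M \eta^n\alpha^n = c_M\,2^n\,q(\eta,\alpha)^n .
\]
Since $c_M\neq 0$, this gives the equivalence $q(\eta,\alpha)=0\iff \int_M \eta^n\alpha^n=0$, so it remains to show that $\int_M\eta^n\alpha^n=0$ for every $\alpha\in\rmL_1$.

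This last vanishing is where the geometry enters and is the main point. I would use that the perverse filtration is multiplicative, $\rmL_a\cdot\rmL_b\subseteq\rmL_{a+b}$. As $\eta\in\rmL_0$ and $\alpha\in\rmL_1$, multiplicativity gives $\eta^n\in\rmL_0\rmH^{2n}(M)$ and $\alpha^n\in\rmL_n\rmH^{2n}(M)$, whence $\eta^n\alpha^n\in\rmL_n\rmH^{4n}(M)$. On the other hand, $\rmH^{4n}(M,\Q)=\bigoplus_j\rmH^{2n-j}(B,P_j)$ is concentrated in top perverse degree: since $P_j=0$ for $|j|>n$ (Remark \ref{rmk:suppPi}) and the hypercohomology of a perverse sheaf on the $n$-dimensional base $B$ vanishes outside degrees $[-n,n]$, the only surviving summand is $j=n$, sitting in $\rmL_{2n}/\rmL_{2n-1}$. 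Therefore $\rmL_{2n-1}\rmH^{4n}(M)=0$, and as $n\le 2n-1$ we get $\rmL_n\rmH^{4n}(M)=0$, forcing $\eta^n\alpha^n=0$ and hence $\int_M\eta^n\alpha^n=0$. This proves $\rmL_1\subseteq\eta^\perp$; combined with the codimension count it yields $\rmL_1=\eta^\perp$, and the two claimed isomorphisms follow. The principal obstacle is precisely the interplay between multiplicativity of the perverse filtration and the concentration of $\rmH^{4n}$ in the top perverse degree, that is, translating the fact that $\eta$ is pulled back from the base into the vanishing of $\rmL_n\rmH^{4n}(M)$.
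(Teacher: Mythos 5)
Your reduction agrees with the paper's skeleton: the map in the lemma is the projection onto the top graded piece of the canonical perverse filtration $\rmL_\bullet$, its kernel $\rmL_1$ has codimension one by \cite[Theorem 0.2]{ShenYin-PW}, $\eta^\perp$ has codimension one since $q$ is nondegenerate, so everything reduces to $\rmL_1\subseteq\eta^\perp$; and your Fujiki polarization identity $\binom{2n}{n}\int_M\eta^n\alpha^n=c_M\,2^n\,q(\eta,\alpha)^n$ is correct and converts this into the vanishing of $\int_M\eta^n\alpha^n$ for $\alpha\in\rmL_1$. The gap is the step that carries all the geometric content: you deduce $\alpha^n\in\rmL_n\rmH^{2n}(M,\Q)$ from the multiplicativity $\rmL_a\cdot\rmL_b\subseteq\rmL_{a+b}$, invoked as if it were a standard property. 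It is not: unlike the ordinary Leray filtration, the perverse Leray filtration of a projective morphism is not multiplicative as a formal consequence of the definitions. The only statements available by functoriality are that $f^*\rmH^*(B,\Q)$ lies in $\rmL_0$ and that cup product with $f^*\beta$, $\beta\in\rmH^{2k}(B,\Q)$, maps $\rmL_a$ into $\rmL_{a+2k}$ --- bounds that are useless here. For Lagrangian fibrations of compact hyperk\"ahler manifolds multiplicativity is a difficult theorem, established only recently in the circle of work of Maulik--Shen--Yin (cf.\ \cite{MSY23}), and those arguments build on Shen--Yin's analysis of the perverse filtration, in particular on the identification $\rmL_1\rmH^2(M,\Q)=\eta^\perp$, which is exactly the statement under proof. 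So as written your argument rests on an assertion that is either unjustified at this point or circular through the literature.

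The repair is small and stays within your framework. The input the paper uses (from the proof of \cite[Theorem 0.4(b)]{ShenYin-PW}) is that restriction to a general fiber factors as $\rmH^2(M,\Q)\to\rmH^0(B,R^\bullet)\to\rmH^2(M_b,\Q)$; hence $\alpha|_{M_b}=0$ for every $\alpha\in\rmL_1$. Since $\eta^n$ is the class of a fiber, $\int_M\eta^n\alpha^n=\int_{M_b}(\alpha|_{M_b})^n=0$, and your Fujiki identity then gives $q(\eta,\alpha)=0$, i.e.\ $\rmL_1\subseteq\eta^\perp$, after which your codimension count finishes the proof. Note that this combination replaces the paper's second input, Matsushita's lemma $\Ker\bigl(\rmH^2(M,\Q)\to\rmH^2(M_b,\Q)\bigr)=\eta^\perp$ \cite[Lemma 2.2]{Matsushita-deformations}, by your polarization computation; the paper instead quotes Matsushita directly and never needs the Fujiki relation at this point.
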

\begin{proof}
Let $W = \Ker(\rmH^2(M, \Q) \to \rmH^0(B, R^\bullet))$.
Consider the fiber $M_b$ over a general point $b \in B$.
As in \cite[Proof of Theorem 0.4(b)]{ShenYin-PW}
the  restriction of cohomology classes to the fiber factors as
\[
\rmH^2(M, \Q) \to \rmH^0(B, R^\bullet) \to 
\rmH^2(M_b, \Q)
\]
and so 
\begin{equation}\label{eq:WetaperpQ}
W \subset 
\Ker(\rmH^2(M, \Q) \to \rmH^2(M_b, \Q))
= \eta^\perp
\end{equation}
where the last equality is \cite[Lemma 2.2]{Matsushita-deformations}.
Since $W$ and $\eta^{\perp}$ have codimension one in $\rmH^2(M, \Q)$, these two subspaces coincide.
The final claim follows because $\rmH^1(B, P^\bullet)$ is a quotient of $W = \eta^\perp$ by the image of $\rmH^2(B, \Q)$ given by $\Q \eta$.
\end{proof}

Let $\theta \in \rmH^2(M, \Q)$ be any class
satisfying $q(\theta,\eta) = 1$.
Then we have
\[
\eta^\perp = \Q\eta \perp \langle \eta, \theta\rangle^\perp \text{ and }
\eta^\perp / \Q\eta = \langle \eta, \theta\rangle^\perp.
\]
We thank Radu Laza for explaining us the following result.

\begin{proposition}\label{prop:BBF-dCM}
The BBF form on $\langle \eta, \theta\rangle^\perp \subset \rmH^2(M, \Q)$ is given as follows:
\begin{equation}
\label{eq:BBF-dCM}
c_M \cdot q(u,v) = \binom{2n}{n}\frac{n}{2^n}  \int_M uv \theta^{n-1} \eta^{n-1}.
\end{equation}
Thus,  we have a similitude (that is an isomorphism which preserves the bilinear form up to scaling) of pure polarized rational Hodge structures
\begin{equation}\label{eq:theta-eta-perp}
\rmH^{1}(B, P^\bullet) \simeq \eta^\perp / \Q \eta \simeq \langle \eta, \theta\rangle^\perp. 
\end{equation}
\end{proposition}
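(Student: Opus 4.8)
The plan is to obtain \eqref{eq:BBF-dCM} as a direct application of the generalized Fujiki relation \eqref{eq:Fujiki-2}, and then to read off the similitude \eqref{eq:theta-eta-perp} from the proportionality that this relation establishes. First I would apply \eqref{eq:Fujiki-2} to the $2n$ classes $\alpha_1 = u$, $\alpha_2 = v$, together with $n-1$ copies of $\theta$ and $n-1$ copies of $\eta$ (note $2+(n-1)+(n-1)=2n$). The right-hand side becomes precisely $(2n)!\int_M uv\,\theta^{n-1}\eta^{n-1}$, the integral appearing in \eqref{eq:BBF-dCM}, so the whole computation reduces to evaluating the left-hand side sum over $\mathfrak{S}_{2n}$.

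The key point is that the orthogonality and isotropy hypotheses force almost all terms to vanish. Grouping the permutation sum according to the perfect matching of the $2n$ slots that each $\sigma$ induces (every matching arising from exactly $2^n n!$ permutations), I would argue: since $q(u,\eta)=q(u,\theta)=q(v,\eta)=q(v,\theta)=0$, in any nonzero matching the slots carrying $u$ and $v$ must be paired together; and since $q(\eta,\eta)=0$ while there are equally many $\theta$'s and $\eta$'s, a $\theta$-$\theta$ pair would leave two $\eta$'s forced into a vanishing $\eta$-$\eta$ pair, so every $\eta$ must be matched to a $\theta$. Each surviving matching then contributes $q(u,v)\,q(\theta,\eta)^{n-1}=q(u,v)$, and there are $(n-1)!$ bijective matchings of the $\eta$'s with the $\theta$'s; hence the left-hand side equals $c_M\,2^n n!\,(n-1)!\,q(u,v)$. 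Comparing the two sides gives $c_M q(u,v)=\frac{(2n)!}{2^n n!\,(n-1)!}\int_M uv\,\theta^{n-1}\eta^{n-1}$, and the elementary identity $\frac{(2n)!}{2^n n!\,(n-1)!}=\binom{2n}{n}\frac{n}{2^n}$ yields \eqref{eq:BBF-dCM}. In particular the value $q(\theta,\theta)$ never enters, since any matching using a $\theta$-$\theta$ pair already vanishes.

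For the similitude \eqref{eq:theta-eta-perp}, the linear isomorphisms are supplied by Lemma \ref{lem:H2M-R}, and they are morphisms of weight-two rational Hodge structures: the splitting \eqref{eq:H2M-splitting} is induced by \cref{thm:decomposition-perverse}, and $\eta,\theta$ are $(1,1)$-classes, so $\eta^\perp$, $\Q\eta$ and $\langle\eta,\theta\rangle^\perp$ are sub- and quotient Hodge structures of $\rmH^2(M,\Q)$. It then remains to compare the two polarizations: the BBF form $q$ on $\langle\eta,\theta\rangle^\perp$, and the polarization on $\rmH^1(B,P^\bullet)$ coming from the polarized Hodge module underlying $P^\bullet$. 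Formula \eqref{eq:BBF-dCM} already writes $q$ as the positive multiple $\binom{2n}{n}\frac{n}{2^n c_M}$ of the cup-product pairing $(u,v)\mapsto\int_M uv\,\theta^{n-1}\eta^{n-1}$, so it suffices to identify that cup-product pairing with the intrinsic polarization on $\rmH^1(B,P^\bullet)$. This is the one step that genuinely requires input beyond bookkeeping, and I expect it to be the main obstacle: it should follow from relative Hard Lefschetz (with $\theta$ relatively ample and $\eta$ the pullback of the hyperplane class on $B\simeq\P^n$, which together produce the factors $\theta^{n-1}\eta^{n-1}$) together with the compatibility of the perverse filtration with cup products, using the isomorphism $P_{-i}\simeq P_i$ recorded in \cref{thm:decomposition-perverse}. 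Since the comparison scalar is positive, the isomorphism is a similitude of polarized Hodge structures.
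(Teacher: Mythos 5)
Your proposal is correct and takes essentially the same route as the paper: the identical specialization of the Fujiki relation \eqref{eq:Fujiki-2} to $u$, $v$, $n-1$ copies of $\theta$ and $n-1$ copies of $\eta$, the same permutation/matching count yielding $c_M\, 2^n n!\,(n-1)!\, q(u,v) = (2n)!\int_M uv\,\theta^{n-1}\eta^{n-1}$, and the same use of Lemma \ref{lem:H2M-R} together with \eqref{eq:BBF-dCM} for the similitude. The one step you flag as a potential obstacle --- identifying the pairing $(u,v)\mapsto\int_M uv\,\theta^{n-1}\eta^{n-1}$ with the polarization on $\rmH^{1}(B,P^\bullet)$ --- is handled in the paper exactly along the lines you suggest, via Saito's theory and de Cataldo--Migliorini's theorem on the $(\eta,\theta)$-Lefschetz decomposition (cited in the remark immediately following the proposition), so it is a known input rather than a gap.
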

\begin{proof}
In \eqref{eq:Fujiki-2} let us take
$
\alpha_1 = u$, $\alpha_2 = v
$ 
with $u, v \in \langle \eta, \theta\rangle^\perp$
and
\[
\begin{aligned}
\alpha_3 = \ldots = \alpha_{n+1} &= \theta
\\
\alpha_{n+2} = \ldots = \alpha_{2n} &= \eta \\
\end{aligned}
\]
to obtain
\[
c_M n! \cdot (n-1)! \cdot 2^n q(u,v) = (2n)! \int_X uv \theta^{n-1} \eta^{n-1}
\]
which is equivalent to \eqref{eq:BBF-dCM}.
The isomorphism \eqref{eq:theta-eta-perp}
coming from Lemma \ref{lem:H2M-R} is an isomorphism of pure Hodge structures \cite[Corollary 2]{Sai88}, and as we have just shown, it preserves the lattice structure up to a multiple.~\qedhere
\end{proof}

\begin{example}\label{ex:K3n-OG10-Fujiki}
If $M$ is of K3$^{[n]}$ or of OG10 type, then
the Fujiki constant
has the form $c_M = (2n)!/(2^n n!)$
\cite{Rapagnetta-OG10}
and
\eqref{eq:BBF-dCM}
simplifies to
\[
q(u,v) = \frac1{(n-1)!}  \int_M uv \theta^{n-1}\eta^{n-1}.
\]
\end{example}

\begin{remark}
Decomposition \eqref{eq:H2M-splitting} can be rewritten 
as in \cite[Proposition 1.1]{ShenYin-PW}
\[
\rmH^2(M, \Q) \simeq 
\Q\eta \oplus
\rmH^{1}(B, P^\bullet) \oplus
\Q\theta.
\]
If $\theta \in \rmH^2(M, \Q)$ is an ample class such that $q(\theta, \eta) = 1$ then 
this is the $(\eta, \theta)$-Lefschetz decomposition of de Cataldo--Migliorini \cite[Corollary 2.1.7]{dCM-HT}
and
the form 
\[
\rmS_{\theta,\eta} \colon (u, v) \mapsto \int_M uv \theta^{n-1} \eta^{n-1}
\]
in the right-hand side of \eqref{eq:BBF-dCM} is the de Cataldo--Migliorini polarization of the biprimitive piece $\rmH^1(B, P^{\bullet})$ of $\rmH^2(M, \Q)$ \cite[Theorem 2.1.8]{dCM-HT}.
\end{remark}

\subsection{Hodge lattice of OG10 Lagrangian fibration}

Given a smooth cubic fourfold $X$, one can associate a relative intermediate Jacobian to the family of at worst $1$-nodal hyperplane sections $\calY_{1} \to U_1\subset B\coloneqq (\bbP^5)^\vee$ of $X$, with a natural symplectic form on it \cite{DonMar, LSV}.
 
 \begin{definition}\label{def:HKcompIJ}
    By \textit{an OG10 Lagrangian fibration associated to} $X$, we mean 
    a Lagrangian fibration $f\colon M\to B$ of a 
    projective hyperk\"ahler manifold $M$ compactifying the relative intermediate Jacobian fibration of the family $\calY_1 \to U_1$ of hyperplanes of $X$ with at worst one nodal singularity.
 \end{definition}

 Such a compactification has been constructed in \cite{LSV} for general cubic fourfolds, and in \cite{Sacbirational} for all smooth cubic fourfolds. 
  As a consequence of the definition, the fibers of $f\colon M\to B$ are not strictly multiple in codimension $1$. 
 If $X$ is general, then there is an associated hyperk\"ahler Lagrangian fibration where all fibers of $f$ are integral.
 This generality condition on $X$ is made explicit in \cite{DutMarq}.

Let $\eta \in \rmH^2(M, \Z)$ be the pullback of the ample generator $H \in \rmH^2(B, \Z)$ so that $q(\eta) = 0$.
Following \cite[\S 3]{Sacbirational}, there is an ample
divisor class $\theta \in \rmH^2(M, \Z)$
such that $q(\theta, \eta) = 1$.
We think of $\langle\theta, \eta \rangle^\perp$ as the primitive part of $\rmH^2(M, \Z)$.
Since 
$\langle \theta, \eta \rangle \simeq U$ (the hyperbolic plane), see \cite[Lemma 3.5]{Sacbirational},
we have canonical isomorphisms
\begin{equation}\label{eq:H2M-U}
\rmH^2(M, \Z) \simeq \langle \theta, \eta \rangle^\perp \perp U, \quad
\eta^\perp / \Z \eta \simeq \langle \theta, \eta \rangle^\perp.
\end{equation}

\begin{theorem}\label{thm:pairings}
Let $X$ be a smooth cubic fourfold
and $f\colon M \to B$ be an associated OG10 Lagrangian fibration.
Let $\eta \coloneqq f^*H \in \rmH^2(M, \Z)$ be
the  isotropic class.
We have
a Hodge isometry
\begin{equation}\label{eq:M-X-lattice}
\rmH^4(X, \Z)_{\pr}(1) \simeq 
\eta^\perp / \Z\eta \simeq \langle \theta, \eta \rangle^\perp   \subset \rmH^2(M,\bbZ)
\end{equation}
where the Hodge structure and the
bilinear form
 on $\eta^\perp / \Z\eta$
are induced from $\rmH^2(M, \Z)$ with its BBF~form. In particular, $X$ is uniquely determined by the Lagrangian fibration $f \colon M \to B$.
\end{theorem}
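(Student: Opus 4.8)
The plan is to build the isometry \eqref{eq:M-X-lattice} out of two independent descriptions of the single integral Hodge structure $\rmH^1(B,\Lambda_M)$ — one read off from the universal hyperplane section $\YY$ and one read off from $M$ — and then to reconcile the two geometric realizations of the bilinear form. Throughout I use \cref{thm:mainsequences}, which gives $\Lambda_M\simeq\Lambda=\rmR^3p_*\Z_\YY$, so $\rmH^1(B,\Lambda_M)\simeq\rmH^1(B,\Lambda)$; I carry out the argument for $X$ defect general, where \cref{thm:decomposition-cubic} gives $\Lambda=\Lambda^\bullet$ and $\Sigma=\Z h^2$, so that $\rmH^1(B,\Lambda)\simeq\rmH^4(X,\Z)_{\pr}$ by \cref{lem:H1LambdaSigma} (for arbitrary $X$ one replaces $\rmH^4(X,\Z)_{\pr}$ by $\Sigma^\perp$ throughout). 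The $\YY$-side is then immediate: \cref{thm:cohom-Lambda} provides a canonical isomorphism of integral Hodge structures $\rmH^1(B,\Lambda^\bullet)\simeq\rmH^4(X,\Z)_{\pr}$ under which the geometric pairing $\deg_\YY(\alpha\,\beta\,H^4)$ becomes the intersection form on $\rmH^4(X,\Z)_{\pr}$. This disposes of the leftmost identification in \eqref{eq:M-X-lattice} and, crucially, hands us an explicit model of the form.

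For the $M$-side I would run the integral Leray spectral sequence of $f\colon M\to B$ for $\Z_M$. Since $B=\P^5$ has $\rmH^{\mathrm{odd}}(B,\Z)=0$ and $\rmH^2(B,\Z)=\Z H$, and since $\Lambda_{M}|_U$ is an irreducible local system by \cref{thm:decomposition-cubic} and hence has no monodromy invariants (so $\rmH^0(B,\Lambda_M)=0$ by the tcf property), the only differentials that could enter or leave $E_2^{1,1}=\rmH^1(B,\Lambda_M)$ have source or target $\rmH^{-1}(B,\Z)=0$ or $\rmH^3(B,\Z)=0$; thus $E_\infty^{1,1}=E_2^{1,1}$ and likewise $E_\infty^{2,0}=\Z H$. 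The Leray filtration therefore reads $\rmL_2=\Z\eta$, $\rmL_1/\rmL_2=\rmH^1(B,\Lambda_M)$, with $\rmL_1=\Ker(\rmH^2(M,\Z)\to\rmH^2(M_b,\Z))$. This kernel is saturated and, by \cref{lem:H2M-R} (Matsushita), agrees rationally with $\eta^\perp$, so $\rmL_1=\eta^\perp$ and we get an integral isomorphism $\eta^\perp/\Z\eta\simeq\rmH^1(B,\Lambda_M)$. It is one of Hodge structures, compatible with the $\YY$-side, because $\Lambda_M=\Lambda=\Lambda^\bullet$ underlies the same pure Hodge module $\IC(\Lambda_U)$ computing both sides (here I invoke the Hodge-module input behind \cref{prop:BBF-dCM}, via \cite[Corollary 2]{Sai88}).

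Matching the bilinear forms is the hard part. Rationally, \cref{prop:BBF-dCM} and \cref{ex:K3n-OG10-Fujiki} give $q(u,v)=\tfrac1{(n-1)!}\int_M uv\,\theta^{n-1}\eta^{n-1}$ on $\langle\theta,\eta\rangle^\perp$, i.e. the BBF form is the de Cataldo--Migliorini polarization of the biprimitive piece $\rmH^1(B,P^\bullet)$. Since polarizations of a fixed rational Hodge structure are unique up to a scalar, the Hodge isomorphism of the previous two paragraphs carries the intersection form on $\rmH^4(X,\Q)_{\pr}$ to a rational multiple $\mu$ of $q$; abstractly this is the rational similitude of \cite{LSV}. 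The obstacle is to pin $\mu=\pm1$ integrally. I would do this lattice-theoretically: $\langle\theta,\eta\rangle^\perp$ is the orthogonal complement of the hyperbolic plane $\langle\theta,\eta\rangle\simeq U$ inside the OG10 lattice (Rapagnetta \cite{Rapagnetta-OG10}), hence isometric to $U^{2}\oplus E_8(-1)^{2}\oplus A_2(-1)$, while $\rmH^4(X,\Z)_{\pr}(1)$ is the primitive cubic-fourfold lattice with the sign reversal induced by the Tate twist $(1)$. Both have signature $(2,20)$ and discriminant group $\Z/3$, so $\mu^{22}\cdot 3=3$, forcing $\mu=\pm1$; the residual sign is precisely the one recorded by the twist $(1)$ (the ``up to a sign'' of the abstract, coming from the Clemens--Griffiths polarization \cite{ClemensGriffiths}). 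As both rational maps respect the integral structures $\rmH^4(X,\Z)_{\pr}$ and the primitive sublattice of $\rmH^2(M,\Z)$, this upgrades to the integral Hodge isometry \eqref{eq:M-X-lattice}.

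Finally, the last assertion is a formal consequence. The fibration $f\colon M\to B$ determines $\eta=f^*H$, the class $\theta$ with $q(\theta,\eta)=1$, and the BBF form, hence the polarized integral Hodge structure $\langle\theta,\eta\rangle^\perp\simeq\rmH^4(X,\Z)_{\pr}(1)$; the global Torelli theorem for cubic fourfolds then recovers $X$ uniquely from this Hodge structure. I expect the genuine difficulty to be concentrated entirely in the form comparison of the third paragraph — establishing the rational proportionality cleanly and then rigidifying the scalar and sign via the explicit lattices — while the Leray/perverse bookkeeping and the Torelli deduction are essentially routine given the earlier results.
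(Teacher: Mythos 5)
Your outline shares its skeleton with the paper's proof (the $\YY$-side identification from \cref{thm:cohom-Lambda}, the integral Leray spectral sequence of $f$ on the $M$-side, proportionality of polarizations, then a discriminant count), but the form comparison --- which you correctly single out as the crux --- contains two genuine gaps. First, your scalar-pinning is circular: the discriminant count forces $|\mu|=1$ only if you already know that $W_\Z \coloneqq \Ker\bigl(\rmH^2(M,\Z) \to \rmH^0(B,\rmR^2 f_*\Z_M)\bigr)$ is saturated, i.e.\ $W_\Z = \eta^\perp$, which you assert without proof (``this kernel is saturated''). Saturation amounts to torsion-freeness of the image of $\rmH^2(M,\Z)$ in $\rmH^0(B,\rmR^2 f_*\Z_M)$, which is not automatic for a constructible sheaf. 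Without it the discriminant equation reads $3 = |\mu|^{22}\cdot 3d^2$ with $d = [\eta^\perp : W_\Z]$, i.e.\ $|\mu|^{11}d = 1$, which has spurious solutions such as $d = e^{11}$, $|\mu| = 1/e$. The paper argues in the opposite order: it pins $\mu = -1$ exactly and geometrically, via the Clemens--Griffiths isometry $\rmH^1(J(Y),\Z)(-1)\simeq \rmH^3(Y,\Z)$ and the minimal class $\theta^4/4!$, giving $\langle u,v\rangle_X = -\tfrac{1}{4!}\int_M uv\,\theta^4\eta^4$, and only then uses the discriminant count to conclude $d=1$; saturation is a conclusion, not a hypothesis. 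Second, your proportionality step ``polarizations of a fixed rational Hodge structure are unique up to a scalar'' is false for reducible Hodge structures, and $\rmH^4(X,\Q)_{\pr}$ is reducible as soon as $X$ carries extra algebraic classes --- which defect general cubics certainly may. What is true, and what the paper uses, is uniqueness of polarizations of the \emph{simple perverse sheaf} $\Lambda_\Q[5]=\IC(\Lambda_{U,\Q})$, resting on irreducibility of the monodromy representation $\Lambda_{U,\Q}$ \cite[Corollary 3.28]{VoiHTtwo}, not of the Hodge structure. Moreover, to see that the de Cataldo--Migliorini form of \cref{prop:BBF-dCM} is a polarization of that same object you must identify the perverse summand $P^\bullet$ of $\rmR f_*\Q_M$ with $\Lambda_\Q$; this is exactly where the paper invokes Ng\^o's support theorem \cite{Ngo}, which requires the fibers of $f$ to be integral --- a condition guaranteed for \emph{general} $X$, not for merely defect general $X$.

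This leads to the third gap: your proposal does not prove the theorem as stated. For non-defect-general $X$ you propose to ``replace $\rmH^4(X,\Z)_{\pr}$ by $\Sigma^\perp$'', but \cref{thm:pairings} asserts the isometry with $\rmH^4(X,\Z)_{\pr}$ for \emph{every} smooth cubic fourfold, and $\Sigma^\perp \subsetneq \rmH^4(X,\Z)_{\pr}$ whenever some hyperplane section has positive defect (moreover $\Lambda \ne \Lambda^\bullet$ there, by \cref{thm:decomposition-cubic}), so your argument yields a strictly weaker statement. The paper bridges this by a step absent from your proposal: it takes a family $\calX \to \Delta$ of cubics with general fibers degenerating to $X$, constructs (after finite base change, following \cite{Sacbirational}) the associated family of Lagrangian fibrations, and uses triviality of the relevant local systems over the disk together with the period maps to propagate the Hodge isometry by continuity from $t \ne 0$ to $t=0$. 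Without this deformation argument, even a corrected version of your direct approach covers only the locus of cubics where Ng\^o's theorem applies.
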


Note that here $\rmH^4(X, \Z)_{\pr}(1)$
means that we apply the Tate twist to the Hodge structure and change the sign of the intersection form on $\rmH^4(X, \Z)$.

\begin{proof}
First, let us assume that $X$ is general. 
In this case, by Theorem \ref{thm:decomposition-cubic}
we have $\Lambda \simeq \Lambda^\bullet$, where $\Lambda^\bullet$ is a shift of a certain integral intersection complex on $B$. 
By construction of $M$, over the smooth locus $U \subset B$, we have $\rmR^1 f_* \Z_{M_U} \simeq \Lambda_U$.
When $X$ is general,
the fibers of $M$ are integral
so that by the Ng\^{o} support theorem \cite{Ngo}, \cite{AriFed}, \cite[Proposition 9.5]{ACLSRelativeLefschetz}, \cite[Corollary 5.2]{AnconaFratila},
$P^\bullet[5]$ is an intersection complex with full support on $B$;
in particular, 
\[
P^{\bullet}\simeq \IC(\rmR^1f_{U*}\bbQ_{M_U})[-5] \simeq
\IC(\Lambda_{U, \bbQ})[-5] \simeq \Lambda^\bullet_\Q \simeq \Lambda_\Q.
\]
and $P^\bullet$ is a sheaf.

In \cref{thm:cohom-Lambda} we have already observed the canonical isomorphism of Hodge structures
\begin{equation}
\label{eq:hodge-isom1}
\rmH^4(X, \Z)_{\pr} \simeq \rmH^1(B, \Lambda).
\end{equation}
To compare $\rmH^1(B, \Lambda)$ to $\rmH^2(M, \Z)$ we 
use the Leray spectral sequence for $\rmR f_*\Z_M$ as in \cite[\S4.2]{AbashevaRogov}.
Let
\[
W_\Z \coloneqq \Ker\left(\rmH^2(M, \Z) \to \rmH^0(B, \rmR^2 f_* \Z_M)\right)
\]
so that the spectral sequence gives
an isomorphism of Hodge structures
\begin{equation}\label{eq:hodge-isom2}
\rmH^1(B, \Lambda) \simeq W_\Z /\Z \eta.
\end{equation}
From \eqref{eq:WetaperpQ}, it follows that 
$W_\Z$ is a finite index subgroup of $\eta^{\perp_{\rmH^2(M, \Z)}}$.

We will prove that the two induced bilinear forms on $\rmH^1(B,\Lambda)$ match and $W_\Z\simeq \eta^{\perp_{\rmH^2(M,\bbZ)}}$.
To compare bilinear forms it suffices to work rationally. Tensoring \eqref{eq:hodge-isom1} and \eqref{eq:hodge-isom2} with $\Q$ we get
isomorphisms of
Hodge structures
\begin{equation}\label{eq:both-hodge-isom}
\rmH^4(X, \Q)_{\pr} \simeq \rmH^1(B, \Lambda_\Q) \simeq \eta^{\perp_{\rmH^2(M, \Q)}} / \Q \eta
\end{equation}

The Hodge structure 
$\rmH^1(B, \Lambda_\Q)$ has two natural bilinear forms:
the pairing from \cref{thm:cohom-Lambda} and the pairing from 
the right hand side of \eqref{eq:BBF-dCM}.
Both of them are induced by polarizations of a Hodge module structure on $\Lambda_\Q[5]$
and must be proportional because $\Lambda_\Q[5]$ is a simple perverse 
sheaf hence it admits a unique polarization up to scaling.
Explicitly we can compare the two pairings as follows. For any smooth hyperplane section $Y \subset X$ we have an isomorphism of Hodge structures $\rmH^1(J(Y), \Z)(-1) \simeq \rmH^3(Y, \Z)$
\cite{ClemensGriffiths}
preserving polarizations
\[
-\int_{J(Y)} uv \theta^4/4! = \int_{Y} uv
\]
for all $u, v \in \rmH^3(Y, \Z)$;
for the appearance of the minimal class $\theta^4/4!$ see
\cite[(1), (5)]{Voisin-ppav}.
Thus, the two polarizations on $\Lambda_\Q$ differ by a factor $-1/4!$ and
for all $u, v \in \rmH^1(B, \Lambda)$
we have:
\[
\langle u, v \rangle_X =
\int_\YY u v \cdot p^*H^4 =
 -\frac1{4!} \int_M uv \cdot \theta^4 \eta^4 =-\langle u,v \rangle_M.
\]
Here the first equality is by \cref{thm:cohom-Lambda} and the last equality comes from
Proposition \ref{prop:BBF-dCM}
and Example~\ref{ex:K3n-OG10-Fujiki}.

We deduce that composing \eqref{eq:hodge-isom1} with \eqref{eq:hodge-isom2} gives a Hodge isometry
\[
\rmH^4(X, \Z)_{\pr} \simeq W_\Z / \Z\eta(-1).
\]

Let us prove that
$W_\Z = \eta^{\perp_{\rmH^2(M, \Z)}}$.
Since the discriminant of $\rmH^2(M, \Z)$ equals three (see e.g. \cite{Rapagnetta-OG10}), the
same is true for $\eta^\perp/\Z\eta$,
hence
discriminant of the quadratic
form on $W_\Z/\Z\eta$ equals $3d^2$,
where $d$ is index of $W_\Z \subset \eta^\perp$.
On the other hand, discriminant of $\rmH^4(X, \Z)_{\pr}$ also equals three. Thus, we must have $d = 1$ and 
$W_\Z = \eta^{\perp_{\rmH^2(M, \Z)}}$. This concludes the proof when $X$ is general.

Now let us assume $X$ is any smooth cubic and take a family of cubic fourfolds
$$\pi_\calX\colon  \calX\to \Delta$$
over a disk $\Delta$ such that $\calX_t$ is sufficiently general
for $t\in \Delta^*$ and $\calX_0\simeq X$. Following \cite[Proposition 1.18]{Sacbirational}, up to a finite base change one can construct a family $\pi_\calM\colon \calM\to \Delta$ of hyperk\"ahler manifolds equipped with a fiberwise Lagrangian fibration $f\colon \calM\to \bbP^5_\Delta$
such that for all $t\in \Delta$, $\calM_t \to \bbP^5_{t}$ is associated to $\calX_t$ in the sense of Definition \ref{def:HKcompIJ}. 

Since $\Delta$ is simply connected, the local systems $\rmR^4_{\pr}\pi_{\calX *}\bbZ_{\calX}$ and $\rmR^2_{\pr}\pi_{\calM *}\bbZ_{\sM}$ are trivial; here the latter local system has fiber $\eta_t^{\perp}/\bbZ\eta_t$ at $t\in \Delta$. We constructed, for all $t\in \Delta^*$, a Hodge isometry $\varphi_t\colon \rmH^4(\calX_t,\bbZ)_{\rm pr}(1) \to \eta_t^\perp/\bbZ\eta_t$ and this construction is compatible with small deformation of $\XX_t$. In particular,
the fibers of the local systems at some fixed $t_0\in \Delta^*$ are isometric, and we denote the correspondence lattice by $L$.
We fix a marking 
$\gamma \colon \rmH^4(\calX_{t_0},\bbZ)_{\rm pr}(1) \xrightarrow{\sim} L$, which induces a marking on $\calX_t$ for all $t\in \Delta$. Via the isometry $\varphi_{t_0}$, we obtain a marking $\delta\colon \eta_{t_0}^\perp/\bbZ\eta_{t_0}\simeq L$, which induces one for all $t\in \Delta$. 
In particular, these markings induce two period maps
\[
\calP_\calX, \calP_\calM \colon \Delta\longrightarrow \calD 
\]
where $\calD\subset \bbP\left(L\otimes\bbC \right)$ denotes the period domain associated to $L$.
By construction the two maps
$\calP_\calX$ and $\calP_\calM$
coincide
on all 
sub-disks $\Delta'\subset \Delta^*$, and so 
they must coincide on 
$\Delta^*$.
By continuity, we have $\calP_\calX(0)=\calP_\calM(0)$, and we obtain a Hodge isometry
\[\rmH^4(X,\bbZ)_{\rm pr}(1) \xrightarrow{\sim} \eta_{0}^\perp/\bbZ\eta_{0}.\]

For the final statement, let $X$ and $X'$ be smooth cubic fourfolds, and
$M$ and $M'$ be the respective associated OG10 Lagrangian fibrations. If $M$ and $M'$ are isomorphic as Lagrangian fibrations, then from \eqref{eq:M-X-lattice} we deduce that $\rmH^4(X, \Z)_{\pr}$ and $\rmH^4(X', \Z)_{\pr}$ are Hodge isometric. A standard lattice argument shows that there exists a Hodge isometry $\rmH^4(X, \Z) \simeq \rmH^4(X', \Z)$ mapping $h^2$ to $(h')^2$.
 Voisin's Torelli theorem   
\cite{VoisinTorelliCubic, VoisinTorelliCubicErratum} implies
that $X$ and $X'$ are isomorphic.
\end{proof}

\begin{corollary}\label{cor:Br-M}
We have a Hodge isometry $\rmH^2(M, \Z) \simeq \rmH^4(X, \Z)_{\pr}(1) \perp U$ and an isomorphism
\[
\Br^2_{\an}(M) \simeq 
\Br^4_{\rm an, pr}(X).
\]
\end{corollary}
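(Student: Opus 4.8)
The plan is to deduce both assertions from \cref{thm:pairings} together with the formal properties of the Brauer group of a Hodge structure of K3 type collected in Appendix~\ref{app:Brauer}.

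First I would establish the Hodge isometry. The isometry \eqref{eq:M-X-lattice} of \cref{thm:pairings} identifies $\rmH^4(X,\Z)_{\pr}(1)$ with $\langle\theta,\eta\rangle^\perp \subset \rmH^2(M,\Z)$ as Hodge lattices, while \eqref{eq:H2M-U} gives the orthogonal decomposition $\rmH^2(M,\Z)\simeq \langle\theta,\eta\rangle^\perp \perp U$. Since $\theta$ and $\eta$ are both of type $(1,1)$, the summand $U=\langle\theta,\eta\rangle$ carries the trivial $(1,1)$ Hodge structure, so this decomposition respects Hodge structures. Splicing the two isometries together yields $\rmH^2(M,\Z)\simeq \rmH^4(X,\Z)_{\pr}(1)\perp U$, which is the first claim.

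For the Brauer groups, I would note that $\rmH^2(M,\Z)$ is of K3 type (with $h^{2,0}=1$ generated by the symplectic form), so that $\Br^2_{\an}(M)=\BrAn(\rmH^2(M,\Z))$ in the sense of Appendix~\ref{app:Brauer}. I then invoke three invariance properties of the functor $\BrAn$: invariance under Hodge isometry; invariance under Tate twist, $\BrAn(T(n))\simeq\BrAn(T)$; and invariance under adding an orthogonal summand of Tate (purely $(1,1)$) type, since such classes map to zero in the $(0,2)$-piece defining $\BrAn$, so that $\BrAn(T\perp U)\simeq\BrAn(T)$. Applying these to the isometry just obtained gives
\[
\Br^2_{\an}(M)=\BrAn(\rmH^2(M,\Z))\simeq\BrAn\bigl(\rmH^4(X,\Z)_{\pr}(1)\perp U\bigr)\simeq\BrAn\bigl(\rmH^4(X,\Z)_{\pr}(1)\bigr)\simeq\BrAn\bigl(\rmH^4(X,\Z)_{\pr}\bigr)=\Br^4_{\an,\pr}(X),
\]
the final equality being the definition recorded in \cref{cor:Sha-sequence}.

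The only delicate point, and the step I would treat most carefully, is the Tate-twist bookkeeping: one must check that the anti-holomorphic line used to define $\BrAn(\rmH^4(X,\Z)_{\pr}(1))$ is exactly $\rmH^{1,3}(X)$, so that its defining quotient $\rmH^{1,3}(X)/\image(\rmH^4(X,\Z)_{\pr})$ coincides on the nose with $\Br^4_{\an,\pr}(X)$. Once the relevant functoriality together with the twist- and summand-invariances are extracted from Appendix~\ref{app:Brauer}, the argument is purely formal; the substance of the corollary is entirely contained in the Hodge isometry of \cref{thm:pairings}.
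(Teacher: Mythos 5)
Your proof is correct and takes essentially the same route as the paper: the Hodge isometry is spliced from \cref{thm:pairings} and \eqref{eq:H2M-U}, and the Brauer-group statement is deduced from exactly the same chain $\BrAn(\rmH^2(M,\Z)) \simeq \BrAn\bigl(\rmH^4(X,\Z)_{\pr}(1) \perp U\bigr) \simeq \BrAn\bigl(\rmH^4(X,\Z)_{\pr}\bigr)$. The invariance properties you spell out (under Hodge isometry, Tate twist, and discarding the purely $(1,1)$ orthogonal summand $U$) are precisely the facts from Appendix \ref{app:Brauer} that the paper's terse proof leaves implicit.
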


Here $\Br_{\rm an, pr}(X)\coloneqq \BrAn(\rmH^4(X,\bbZ)_{\pr})$, the analytic Brauer group of the K3 type Hodge structure $\rmH^4(X,\bbZ)_{\pr}$ as defined in Appendix \ref{app:Brauer}.

\begin{proof}
The first isomorphism follows from Theorem \ref{thm:pairings} and \eqref{eq:H2M-U}.
The second isomorphism follows from the first as we 
have \phantom{\qedhere}
\[
\BrAn(M) = \BrAn(\rmH^2(M, \Z)) \simeq \BrAn(\rmH^4(X, \Z)_{\pr}(1) \perp U) \simeq \BrAn(\rmH^4(X, \Z)_{\pr}).\tag*{\qed}
\]
\end{proof}

\begin{remark}
Consider the so-called
degenerate twistor deformation 
$f_\alpha\colon M_\alpha \to B$ of $f\colon M \to B$,
that is a Lagrangian fibration with the symplectic form
 $\sigma_\alpha = \sigma + \alpha \eta$, $\alpha \in \C$ 
\cite[Remark 4.6]{MarkmanLagrangian},
\cite[Theorem 1.10]{VerbitskyDegenerateTwistor}, 
\cite[\S2.3]{AbashevaRogov}. 
Then the primitive Hodge lattice $\eta^\perp / \Z\eta$
does not change
and the isomorphism $\rmH^4(X, \Z)_{\pr}(1) \simeq 
\eta^\perp / \Z\eta$ 
from Theorem \ref{thm:pairings} holds true for $M_\alpha$
instead of $M$. This is analogous  
to the case of Beauville--Mukai systems where $\eta^\perp/\Z\eta$ is the Hodge lattice of an algebraic K3 surface \cite[\S4]{MarkmanLagrangian}.

However, Corollary \ref{cor:Br-M} will not   hold for $M_\alpha$ with $\alpha \ne 0$, 
because $\theta$ 
will not be
a $(1,1)$-class on $M_\alpha$.
See \cite[Example 3.13]{Gx2Onorati} and \cite[Proposition 2.3]{MongardiOnorati-erratum} for the description of the
Hodge lattice $\rmH^2(M^T, \Z)$ of the Voisin twist $M^T$ defined in \cite{Voitwist} and recalled in \S \ref{sec:comparisonVoisin}.
\end{remark}

\section{Jacobian sheaves via Hodge modules}\label{sec:HM}

The main goal of this section is to justify $\sJ\coloneqq \rmR^2p_*\Omega_{\sY}^1/\rmR^3p_*\Z_\YY$ (Definition \ref{def:J}) 
as a suitable candidate for the Hodge theoretic degeneration of intermediate Jacobian of smooth cubic threefolds in $p\colon \sY\to B$.
For that, we reconstruct it as the intermediate
Jacobian sheaf of a Hodge module. Such Jacobian sheaves
have been defined by
Schnell \cite{SchNeron}, generalizing earlier work
on Jacobians of variations of Hodge structure (vHs)
and Hodge modules 
\cite{Zuc76, Clem83, Saiadmissible, GreenGriffithsKerr, BrosnanPearlsteinSaito}.
In particular, this shows that $\JJ$
is naturally reconstructed from the locus of smooth cubics $U \subset B$.
The main results of this section are
\cref{thm:HMreljac}
and \cref{thm:comparisonJandJM}.

\subsection{Preliminaries on Hodge modules}\label{sec:prelimHM}

We need to recall some important conventions and results from the theory of Hodge modules.
The  main reference for the theory of Hodge modules is Saito's work \cite{Sai88};
for a quick overview of the theory, consult \cite{Schoverview, PopaHM, SabSch}. For an overview of the general theory of $D$-modules, consult \cite{HTT08}.

The category of Hodge modules is constructed by a complicated inductive procedure. For our purposes,
it suffices to know that
a Hodge module on a smooth variety $B$
can be understood as a triple $(\MM, K, F_\bullet)$
where 
$\MM$ is a quasi-coherent sheaf with 
a (left) $D_B$-module structure
(here $D_B$ is the non-commutative algebra of differential forms on $B$),
$K$ is a $\bbQ$-perverse sheaf on $B$,
and
$F_{\bullet}\calM$ is an increasing Hodge filtration of $\MM$
by quasi-coherent subsheaves.
The connection $\nabla$
giving $\MM$ a structure of a $D_B$-module
needs to satisfy
$\nabla(F_k\sM) \subseteq \Omega_B^1\otimes F_{k+1}\calM$.
There are various other compatibility constraints; most importantly, there is
an isomorphism $\dr(\sM)\simeq K_\C$, where $\dr$ denotes the de Rham complex, and existence of a polarization, 
which we will not specify.

Any polarized variation of Hodge structure (vHs) 
$\HH = L \otimes \OO_B$ 
for a $\Q$-local system $L$
on a smooth variety $B$ 
gives an example of a Hodge module with 
$\MM = \HH$,
$K = L[\dim B]$, and
$F_{-k} \MM \coloneqq F^k \HH$.

In general, the structure of a Hodge module $(\MM, K, F_\bullet)$ can be thought of enriching the structure of the underlying perverse sheaf $K$. In particular, the six functor formalism on perverse sheaves extends canonically to Hodge modules.

 In what follows, we will simplify
the notation and write $\sM$ to denote a Hodge module, instead of the triple $(\sM, K, F_{\bullet})$, when it does not lead to confusion.
 It is worth mentioning that several sources use right $D_B$-modules, 
 and one needs to be careful when
 translating between the two notions;
 for example, the Hodge filtration gets shifted by $\dim B$ when passing between left and right Hodge modules.
 
\medskip

Below we list certain properties of Hodge modules that we will use. Let $B$ be a smooth variety of dimension $n$.

\begin{enumerate}
    \item \textbf{The minimal extension.} 
    Let $L_{U}$ be a polarized $\Q$-local system on a Zariski open subset $U \subset B$ which underlies
    a
    vHs $\HH_{U} = L_{U} \otimes \OO_B$.
    Then $\HH_U$ admits the so-called \emph{minimal extension} Hodge module $(\sM, \IC(L_{U}), F_\bullet)$. 
    Thus, we may think of $\IC(L_{U})$ as a perverse sheaf underlying the Hodge module $\MM$. 

Furthermore, any
Hodge module $\sM$ decomposes as $\sM\simeq \oplus_Z\sM_Z$, for proper subvarieties 
$Z\subseteq B$ and such that for each $Z$, $\MM_Z$ is a minimal extension of a vHs.   

    \item \textbf{Weight of a Hodge module.}
    The category of Hodge modules is graded by integer weights. 
    By Saito's conventions 
    (see \cite[Th\'eor\`eme 2]{Sai88}) 
    the minimal Hodge module 
    extension of a vHs of weight $m$
    has Hodge module weight $n + m$.

    \item \textbf{Filtered de Rham Complex.}  Let $(\sM,K,  F_{\bullet})$ be a Hodge module. The associated de Rham complex of $\sM$ is defined as
\[\dr(\sM)\coloneqq [\sM\to \Omega_B^1\otimes \sM\to \dots \to \omega_B\otimes \sM][n]\]
a complex living in degrees $-n,\dots, 0$, with the differential induced by the connection $\nabla$. The de Rham complex, $\dr(\sM)$ is quasi-isomorphic to the perverse sheaf $K_\C$.
Furthermore, we have the following filtered version of the de Rham complex
\[F_k \dr(\sM) = [F_k \sM \to \Omega_B^1 \otimes F_{k+1} \sM \to \dots \to \omega_B\otimes F_{k+n}\sM][n]\]
and the complex of coherent sheaves of $\sO_B$-modules
\begin{equation}\label{eq:grFM}
    \gr^F_k \dr(\sM) = [\gr^F_k \sM \to \Omega_B^1 \otimes \gr^F_{k+1} \sM \to \dots \to \omega_B\otimes \gr^F_{k+n}\sM][n]
\end{equation}
    \item  \textbf{Trivial Hodge module $(\sO_B, \bbQ_B[n], F_{\bullet})$.} This is nothing but the rank one constant variation of Hodge structure on $B$ of weight 0.
    It has Hodge module weight $n$.
    We have the Hodge filtration $F_k\sO_B = \sO_B$ if $k\geq 0$ and $0$ otherwise. The de Rham complex recovers the usual holomorphic de Rham complex
    \[
    \DR(\sO_B) = [\sO_B\to \Omega_B^1\to\dots\to \omega_B][n] \simeq \C_B[n].
    \]
    Furthermore, we have $\gr^F_{-k}\dr(\sO_B) \simeq \Omega_B^k[n-k]$ when $0 \leq k\leq n$ and $0$ otherwise. 
    \item\label{item:smallestpiece}\textbf{Lowest piece.} 
    Given any Hodge module $\sM$, following \cite[(2.1.4)]{SaiKolConj}, we denote by $p(\sM)$ the minimal  $p \in \Z$ with $F_{p} \sM \ne 0$ and $s(\sM) \coloneqq F_{p(\sM)} \sM$.
    By construction, we obtain
\begin{equation}\label{eq:smallestgr}
    \gr^F_{-n+p(\sM)}\dr(\sM) = \omega_B\otimes s(\sM)[0]
\end{equation}
Furthermore, let $\sM$ be the minimal extension of a vHs $\sH_U$, then (see  \cite[(2.25)]{SaiKolConj} for a similar expression for right $D_B$-modules) 
\begin{equation}\label{eq:gradingsmallestpiece}
    p(\sM) = -\max\{p\in \bbZ\mid F^p\sH\neq 0\}
\end{equation}
and the lowest piece $s(\sM)$ extends the smallest filtered piece $F^{-p(\sM)}\sH_U$.
For instance, when $\sM\simeq \sO_B$, $p(\sM) =0$, $s(\sM)\simeq \sO_B$ and $\gr^F_{-n}\dr(\sO_B)\simeq \omega_B[0]$.

\item \textbf{Strict support.} A Hodge module on $B$ is said to be have \textit{strict support along $B$} if $\sM$ is supported on $B$ and it does not have any nontrivial sub- or quotient-Hodge module supported along a proper subvariety. Such Hodge modules are uniquely determined by the associated vHs on some open subset of $B$.

   \item \textbf{Tate twist. }Given any Hodge module $\sM$ and $m \in \Z$, the Tate-twisted Hodge module $\sM(-m)$ 
   has the same $D_B$-module structure and the shifted Hodge filtration  
    \begin{equation}\label{eq:filtshift}
    F_{\bullet}\sM(-m) = F_{\bullet+m}\sM.
\end{equation}
Tate twist does not change the underlying perverse sheaf $K$. If $\sM$ has weight $w$, then
$\sM(-m)$ has weight $w+ 2m$.
Also note that by definition $p(\sM(-m)) = p(\sM) - m$ and
$s(\sM(-m)) = s(\sM)$.

In particular, we have the Tate-twisted Hodge module
$(\sO_B(-m), \bbQ_B[n], F_{\bullet})$.
This Hodge module is nothing but the variation of 1-dimensional trivial $(m,m)$-Hodge--Tate structure. We have $\gr^F_{-k}\sO_B(-m)\simeq \sO_B$ when $k=m$ and $0$ otherwise. Consequently, the graded pieces of the de Rham complex have the form
     \begin{equation}\label{eq:trivialtate}
    \gr^F_{-k}\dr(\sO_B(-m))\simeq 
    \Omega_B^{k-m}[n-k+m] \text{ for } m \le k \le m+n
    \end{equation}
    and are trivial outside of this range.

\item \textbf{Duality.} Let $\sM$ be a Hodge module of weight $w$,
then the dual $\scalebox{0.9}{$\bbD \sM\coloneqq \omega_B^{-1}\otimes R\sH om_{D_B}(\sM, D_B)$}$ is again a Hodge module (left $D_B$-module) of weight $-w$. 
Any polarization on $\sM$ induces an isomorphism $\bbD\sM\simeq \sM(w)$ which implies that
there is a quasi-isomorphism (see e.g.\ \cite[Lemma 7.4]{Schvanish})
\[\rmR\sH om_{\sO_B}(\gr^F_{-k}\dr(\sM),\omega_B[n])\simeq \gr^F_{k-w}\dr(\sM).\]
For instance, applying this to  $k = n-p(\sM)$ and using \eqref{eq:smallestgr}, we obtain a quasi-isomorphism 
\begin{equation}\label{eq:smallestdual}
    \rmR\sH om_{\sO_B}(s(\sM),\sO_B[n])\simeq \gr^F_{n-p(\sM)-w}\dr(\sM).
\end{equation}
\noindent Moreover, since $\gr^F_{-k}\dr(\sM) \simeq 0$ for 
$-k < -n + p(\MM)$, by duality we obtain
\begin{equation}\label{eq:gracyclic}
    \gr^F_{-k}\dr(\sM) \text{ is acyclic for all }
-k > n - p(\MM) - w.
\end{equation}

\end{enumerate}

\noindent The category of Hodge modules is semisimple. A Hodge module theoretic version of the decomposition \cref{thm:decomposition-perverse} exists and is one of the main inputs in what follows, so we recall it here.
\begin{citedthm}[{\cite[Th\'eor\`eme 5.3.1]{Sai88}}]\label{thm:HMdecomposition}
    Let $f\colon W\to B$ be a projective morphism of smooth varieties.
    Let $\sM$ be a Hodge module on $W$ of weight $w$. Then the Hodge module derived direct image $f_+ \sM$
splits non-canonically into the following direct sum
\[
f_+ \sM\simeq \bigoplus_{i\in \bbZ}\sP_i[-i],
\]
 where for all $i$, $\sP_i$ is a Hodge module on $B$ of weight $w+i$ with an underlying perverse sheaf structure given by $P_i$, defined in Theorem \ref{thm:decomposition-perverse}. Furthermore, there is an isomorphism $\sP_{-i}(-i)\simeq \sP_i$ induced via the relative Hard Lefschetz isomorphism.
\end{citedthm}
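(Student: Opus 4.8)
The statement is Saito's direct image theorem together with relative Hard Lefschetz, and the plan is to reduce it to two structural pillars and then a purely formal splitting argument. The two pillars are: (i) the \emph{direct image theorem}, namely that for projective $f$ the cohomology objects $\sP_i \coloneqq \HH^i f_+\sM$ are again pure Hodge modules, with $\sP_i$ of weight $w+i$; and (ii) \emph{relative Hard Lefschetz} for a relatively ample class. Once both are available, semisimplicity of the category of polarizable pure Hodge modules (recalled just above the statement) turns the degeneration $f_+\sM \simeq \bigoplus_i \sP_i[-i]$ into a formal consequence, exactly as in Deligne's Lefschetz–degeneration criterion. I would organize the write-up in this order, and I expect pillar (i) to be the genuine obstacle.

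For pillar (i), the standard reduction is to factor $f$ as a closed immersion $W \hookrightarrow \P^N \times B$ followed by the projection $\pi\colon \P^N\times B \to B$. The closed-immersion case is handled by Kashiwara's equivalence: direct image along a closed embedding is an exact operation on Hodge modules that preserves weight, so it does not contribute any shift and reduces the problem to $\pi$. The projection case is the analytic heart of Saito's theory: one must show that each $\HH^i\pi_+\sM$ underlies a pure Hodge module of the correct weight and admits a strict-support (minimal-extension) decomposition. This is where the full inductive construction of Hodge modules enters, since controlling the Hodge filtration on $\pi_+\sM$ near the degeneration locus requires the compatibility of $F_\bullet$ with the Kashiwara–Malgrange $V$-filtration under pushforward, and hence the interplay with the nearby- and vanishing-cycle functors that define the category. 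I would isolate this as the main difficulty and treat it by invoking Saito's \cite[Th\'eor\`eme 5.3.1]{Sai88} direct image theorem rather than reproving it, since a self-contained proof would require redeveloping the entire inductive formalism.

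For pillar (ii), I would fix a class $\ell \in \rmH^2(W,\bbQ)$ that is relatively ample for $f$ (e.g.\ $c_1$ of a relatively ample line bundle) and consider the induced morphisms $\ell^i\colon \sP_{-i} \to \sP_i(i)$ obtained from cup product with $\ell$. Over the open locus $U\subset B$ where $f$ is smooth, $\sM|_{f^{-1}(U)}$ restricts to a polarized variation of Hodge structure and $\ell^i$ is the classical Hard Lefschetz isomorphism on the fibers; since $\sP_{\pm i}$ are pure and hence, on each strict-support summand, are minimal extensions determined by their restriction to a dense open set, the isomorphism extends uniquely from $U$ to all of $B$. The polarization supplies the needed positivity/compatibility to guarantee that $\ell^i$ is an isomorphism of Hodge modules, not merely of perverse sheaves, which yields the asserted $\sP_{-i}(-i)\simeq \sP_i$ after the Tate twist tracked in property~(7) above. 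The weight bookkeeping $\mathrm{wt}(\sP_i)=w+i$ is forced by the weight-monodromy conventions recorded in property~(2).

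Finally, for the splitting itself I would invoke the formal argument: in the derived category of the abelian, semisimple category of pure Hodge modules, any object $K^\bullet = f_+\sM$ equipped with an operator $\ell$ satisfying the Lefschetz isomorphisms $\ell^i\colon \HH^{-i}K^\bullet \xrightarrow{\sim}\HH^{i}K^\bullet(i)$ splits non-canonically as $\bigoplus_i \HH^i(K^\bullet)[-i]$; here one uses that $\bbD(f_+\sM)\simeq f_+\bbD\sM$ (property~(8)) to match the duality with the Lefschetz structure. The relative Hard Lefschetz isomorphism of pillar (ii) is precisely the input this formal lemma requires, and its compatibility with the underlying perverse-sheaf decomposition of Theorem~\ref{thm:decomposition-perverse} identifies the underlying perverse sheaf of $\sP_i$ with $P_i$, completing the argument.
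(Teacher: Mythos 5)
The paper does not actually prove this statement: it is quoted as Saito's Th\'eor\`eme 5.3.1 (stability of polarizable Hodge modules under projective direct image, together with relative Hard Lefschetz), and the splitting itself is the standard consequence via Deligne's degeneration criterion. Your overall architecture --- cite Saito for the purity/direct-image content, then split formally --- is therefore in substance the same as the paper's, and your pillar (i) and the final formal step are fine, modulo one remark: Deligne's splitting criterion needs only the Lefschetz isomorphisms $\ell^i\colon \HH^{-i}(f_+\sM)\xrightarrow{\sim}\HH^{i}(f_+\sM)(i)$, not semisimplicity and not $\bbD(f_+\sM)\simeq f_+\bbD\sM$; semisimplicity is what further refines each $\sP_i$ into strict-support (simple) summands.

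The genuine gap is in your pillar (ii). Relative Hard Lefschetz cannot be obtained by ``extend from the smooth locus $U$ by minimal extension.'' The pure Hodge modules $\sP_{\pm i}$ decompose by strict support, and in general they contain summands whose strict support is a proper closed subset $Z\subset B$ lying in the discriminant of $f$. Such summands restrict to zero on $U$, so an isomorphism over $U$ says nothing about them; and over a dense open subset of $Z$ the fibers of $f$ are singular, so there is no classical fiber-wise Hard Lefschetz available there either. Your argument therefore only shows that $\ell^i$ is an isomorphism on the summands with strict support equal to $B$. In Saito's theory, relative Hard Lefschetz is proved as part of the same inductive package as the direct image theorem --- it is an item of the very Th\'eor\`eme 5.3.1 you invoke for pillar (i) --- using polarizations and induction on dimension; it is not a downstream consequence of the smooth-locus case. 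The repair is immediate: cite it alongside pillar (i), which is exactly what the paper does.
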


It is rather difficult to get hold  of the Hodge filtration on Hodge modules. Nonetheless, Saito in \cite[\S 2.3.7]{Sai88} proved a very useful property for the induced filtration on the de Rham complex (see also \cite[Theorem 28.1]{Schoverview}).
\begin{citedthm}[(Saito's Formula)]\label{thm:Saito}
    Under the assumptions of Theorem \ref{thm:HMdecomposition} for any $k$ we have 
    \[\rmR f_* \gr^F_k \DR(\sM) \simeq \bigoplus_{ i\in \bbZ}\gr_k^F \DR(\sP_i)[-i].\]
\end{citedthm}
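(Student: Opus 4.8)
The plan is to deduce the formula from two standard structural inputs of Saito's theory: the compatibility of the filtered de Rham functor with projective pushforward of filtered $D$-modules, and the strictness of such pushforwards. I would begin by recalling that Saito constructs the direct image $f_+\sM$ in the derived category of filtered $D_B$-modules, using the relative de Rham complex and the transfer bimodule; after forgetting the $D$-module structure its de Rham realization is computed by $\rmR f_* \DR(\sM)$, and the Hodge filtration on $f_+\sM$ is the one induced by the filtration $F_\bullet$ on the (relative) de Rham complex. Thus the content of the theorem is precisely that forming the associated graded $\gr^F_k$ of the filtered de Rham complex commutes with $\rmR f_*$, where $\gr^F_k\DR(\sM)$ is the complex of coherent $\sO$-modules written out in \eqref{eq:grFM}.

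Next I would set up the comparison map. At the filtered level there is a canonical morphism
\[
\rmR f_*\, \gr^F_k \DR(\sM) \longrightarrow \gr^F_k \DR(f_+\sM),
\]
obtained by applying $\gr^F_k$ to the de Rham complex of the filtered pushforward and using that $\DR$ commutes with $f_+$ compatibly with filtrations. A priori $\gr^F$ and $\rmR f_*$ need not commute, so for an arbitrary filtered $D$-module this map is only a morphism and need not be an isomorphism; this gap is exactly what must be closed.

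The closing step is strictness. Saito's fundamental theorem (the engine underlying \cref{thm:HMdecomposition}) asserts that for a projective morphism $f$ and a pure Hodge module $\sM$, the filtered pushforward $f_+\sM$ is \emph{strict}: the Hodge filtration on the cohomology sheaves of $f_+\sM$ is the honest image of the filtration on the complex, with no collapsing or $\Tor$-type obstruction, so that $\gr^F_k$ commutes with the cohomology of the pushforward. Strictness therefore upgrades the morphism above to an isomorphism $\rmR f_*\,\gr^F_k\DR(\sM)\simeq \gr^F_k\DR(f_+\sM)$. I would then invoke the decomposition $f_+\sM\simeq \bigoplus_i \sP_i[-i]$ of \cref{thm:HMdecomposition} together with the additivity of the functor $\gr^F_k\DR$ on $B$ to obtain
\[
\gr^F_k\DR(f_+\sM)\simeq \bigoplus_{i\in\Z}\gr^F_k\DR(\sP_i)[-i],
\]
which, combined with the previous isomorphism, is exactly the claimed formula.

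The main obstacle is the strictness input of the third step: this is one of the deepest and most technical results in Saito's theory, and I would cite \cite{Sai88} (with the exposition in \cite{Schoverview}) rather than reprove it. Once strictness and the de Rham--pushforward compatibility are granted, the remaining steps are formal manipulations with additive functors and the decomposition theorem already recorded in \cref{thm:HMdecomposition}.
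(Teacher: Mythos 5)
Your proposal is correct and matches the paper's treatment: the paper records Saito's formula as a cited theorem with no proof of its own, deferring precisely to \cite[\S 2.3.7]{Sai88} and \cite[Theorem 28.1]{Schoverview}, which is exactly where you defer the single deep input (strictness of the filtered direct image for a projective morphism). Your reconstruction of the surrounding formal steps — the canonical comparison map $\rmR f_*\gr^F_k\DR(\sM)\to\gr^F_k\DR(f_+\sM)$, strictness to upgrade it to an isomorphism, then the decomposition of \cref{thm:HMdecomposition} together with additivity of $\gr^F_k\DR$ — is the standard argument behind that citation and is sound.
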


Now we explain a corollary of Saito's formula. First we need the following.
\begin{lemma}\label{lem:saito-smallest}
    Under the assumptions of Theorem \ref{thm:HMdecomposition}, assume that $f$ is  surjective, and has connected general fibers of dimension $d$. Denote by $\sP_i'$ the direct summand of $\PP_i$ that does not have
    strict support along $B$. Then we have
    \[p(\PP_i')>\max\left(p(\MM)-d,\; p(\MM)-d-i\right).\]
\end{lemma}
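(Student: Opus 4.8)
The plan is to reduce the two inequalities to a single one via relative Hard Lefschetz, and then to prove that one inequality by reading off Saito's formula (\cref{thm:Saito}) grading by grading, the only nonformal input being Saito's torsion-freeness theorem for the lowest Hodge piece. Throughout write $n = \dim B$, so that surjectivity of $f$ and the assumption on fibers give $\dim W = n + d$.

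\textbf{Reduction via Hard Lefschetz.} The relative Hard Lefschetz isomorphism $\sP_{-i}(-i)\simeq \sP_i$ of \cref{thm:HMdecomposition} is an isomorphism of Hodge modules, hence compatible with the canonical strict-support decomposition; since the Tate twist preserves supports, it restricts to an isomorphism $(\sP'_{-i})(-i)\simeq \sP_i'$ of the non-strict-support parts. Using $p(\sN(-m)) = p(\sN)-m$ (Tate twist property) this yields
\[
p(\sP_i') = p(\sP'_{-i}) - i.
\]
Therefore, once the bound $p(\sP_j') > p(\sM)-d$ is known for every index $j$, applying it to $j=-i$ gives $p(\sP_i') = p(\sP'_{-i}) - i > p(\sM)-d-i$, which is the second term of the maximum. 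So it suffices to prove $p(\sP_j') > p(\sM)-d$ for all $j$.

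\textbf{The non-strict bound.} From the shape of the graded de Rham complex \eqref{eq:grFM} and the definition of $p(\sM)$, the complex $\gr^F_k\DR(\sM)$ vanishes for every $k < -(n+d)+p(\sM)$, and by \eqref{eq:smallestgr} applied on $W$ one has $\gr^F_{-(n+d)+p(\sM)}\DR(\sM)\simeq \omega_W\otimes s(\sM)[0]$. Feeding a grading $k < -(n+d)+p(\sM)$ into Saito's formula, the left-hand side $\rmR f_*\gr^F_k\DR(\sM)$ is zero, so each summand $\gr^F_k\DR(\sP_i)$ must vanish. If some $\sP_i$ had $p(\sP_i) < p(\sM)-d$, then at $k = -n+p(\sP_i) < -(n+d)+p(\sM)$ we would obtain $\gr^F_k\DR(\sP_i)\simeq \omega_B\otimes s(\sP_i)[0]\neq 0$ by \eqref{eq:smallestgr} on $B$, a contradiction. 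Hence $p(\sP_i)\ge p(\sM)-d$ for all $i$ (for the full pieces, not merely the non-strict-support parts).

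\textbf{Strictness, and the main obstacle.} Suppose for contradiction $p(\sP'_{i_0}) = p(\sM)-d$ for some $i_0$. Evaluate Saito's formula at the critical grading $k = -(n+d)+p(\sM)$, where the left-hand side is $\rmR f_*(\omega_W\otimes s(\sM))$. By the non-strict bound, each $\gr^F_k\DR(\sP_i)$ equals $\omega_B\otimes s(\sP_i)[0]$ when $p(\sP_i)=p(\sM)-d$ and vanishes otherwise; as these sit in distinct cohomological degrees after the shift $[-i]$, we read off
\[
\rmR^i f_*(\omega_W\otimes s(\sM)) \simeq \omega_B\otimes s(\sP_i) \qquad \text{whenever } p(\sP_i)=p(\sM)-d.
\]
In particular $\rmR^{i_0}f_*(\omega_W\otimes s(\sM))$ contains $\omega_B\otimes s(\sP'_{i_0})$ as a direct summand; but $\sP'_{i_0}$ is supported on a proper closed subvariety $Z\subsetneq B$, so this is a nonzero torsion sheaf. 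This is the step I expect to be the crux: it contradicts the Hodge-module generalization of Koll\'ar's torsion-freeness theorem (\cite{SaiKolConj}), which asserts that $\rmR^j f_*(\omega_W\otimes s(\sM))$ is torsion-free for the lowest piece of a pure Hodge module with strict support. Hence $p(\sP'_{i_0}) > p(\sM)-d$. For a general $\sM$ one first splits $\sM=\bigoplus_Z \sM_Z$ into its strict-support summands and applies the argument to each. The delicate points are matching the normalization of the lowest piece $s(\sM)$ with the coherent sheaf to which the torsion-freeness theorem applies, and checking that both the Tate twist and Hard Lefschetz respect the strict-support decomposition so that the reduction in the first step is legitimate.
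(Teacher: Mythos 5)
Your proof is correct in the generality where the lemma is true and actually used, and the route to the key bound is genuinely different from the paper's. The paper's proof is a two-line citation: the inequality $p(\sP_i')>p(\sM)-d$ is quoted directly from \cite[(2.6) Proposition]{SaiKolConj}, and the second inequality is then obtained from $\sP_i\simeq\sP_{-i}(-i)$ exactly as in your reduction step (including the implicit fact that relative Hard Lefschetz and Tate twist respect the strict-support decomposition). You instead reconstruct the content of that citation: Saito's formula (\cref{thm:Saito}) below the critical grading gives the weak bound $p(\sP_i)\ge p(\sM)-d$; at the critical grading $-(n+d)+p(\sM)$, with $n=\dim B$, it identifies $\rmR^if_*(\omega_W\otimes s(\sM))$ with $\omega_B\otimes s(\sP_i)$ for the summands attaining the minimum; and a summand $\sP_{i_0}'$ attaining the minimum would then make $\omega_B\otimes s(\sP_{i_0}')$ a nonzero torsion direct summand of this sheaf, contradicting Saito's torsion-freeness theorem. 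This is sound, but note that it reverses the logical order of \cite{SaiKolConj}: there, the (2.6)-type statement is what yields the identification of $\rmR^if_*$ of the lowest piece with the lowest piece of the full-support part, and hence torsion-freeness, not the other way around. Since both statements are published theorems, this is not circular from the standpoint of the present paper; the citation is shorter, while your argument makes the mechanism transparent, namely that a too-low lowest piece supported on a proper subvariety would literally be torsion in $\rmR^if_*$ of the lowest piece.

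The one genuine gap is the final step, which you flag yourself: ``split $\sM=\bigoplus_Z\sM_Z$ and apply the argument to each.'' The torsion-freeness input requires the strict support $Z$ to dominate $B$; if $f(Z)\subsetneq B$, Saito's theorem gives torsion-freeness only as a sheaf on $f(Z)$, and every Hodge module summand of $f_+\sM_Z$ is supported inside $f(Z)$, hence belongs to the $\sP_i'$ part. No repair is possible, because the statement is false in that generality: take $\sM$ to be the pushforward of the trivial Hodge structure from a point $w_0\in W$. Then $p(\sM)=n+d$, whereas $f_+\sM$ is the pushforward of the trivial Hodge structure from $f(w_0)\in B$, concentrated in degree $0$, with $p(\sP_0')=n=p(\sM)-d$, so the strict inequality fails. (Both values of $p$ follow from \eqref{eq:smallestgr} together with \cref{thm:Saito} applied to a closed embedding, which force $p$ to increase by exactly the codimension under such a pushforward.) This defect sits in the statement of \cref{lem:saito-smallest} itself: it tacitly assumes that every strict-support summand of $\sM$ dominates $B$, which holds for the only application $\sM=\sO_W$ in \cref{lem:R1fO}, and the paper's citation-proof needs the same reading of \cite[(2.6)]{SaiKolConj}. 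So your proof is exactly as correct as the paper's; you should simply state the dominance (or strict support $W$) hypothesis rather than claim the general case by splitting.
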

\begin{proof}
        \cite[(2.6) Proposition]{SaiKolConj} translated into the language of left Hodge modules, implies that $p(\PP_{i}')>p(\MM)-d$ for all $i$. Moreover, using $\PP_i\simeq \PP_{-i}(-i)$ we have \phantom{\qedhere}
        \[
        p(\PP_i')=p(\PP_{-i}')-i>p(\MM)-d-i.\tag*{\qed}
        \]
\end{proof}

A similar formulation of the following statement can be found in \cite[Corollary 7.8]{KoldualI} and can be deduced from the more general result \cite[(3.2.2)]{SaiKolConj}. Here it is stated in a format useful to the current set-up.

\begin{corollary}\label{lem:R1fO}
Under the assumptions of Theorem \ref{thm:HMdecomposition}, assume that $f$ is  surjective, has connected general fibers of dimension $d$ and $\dim B = n$. Let
$\sM_{-d+1}$ be the Hodge module minimal extension of the vHs $\rmR^1 f_{U*} \Q_{W_U}$ where $U \subset B$ is the smooth locus of $f$. 
Then
\begin{equation}\label{eq:RkfO-formula}
\rmR^1 f_* \OO_W \simeq \sH^{-n}\left(\gr^F_0\dr(\sM_{-d+1})\right)\simeq
\cHom_B(s(\MM_{-d+1}), \OO_B),
\end{equation}
in particular, it is a reflexive sheaf.
\end{corollary}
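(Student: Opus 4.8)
The plan is to realize $\rmR^1 f_*\OO_W$ as a single graded de Rham piece of the Hodge module decomposition of $f_+\sO_W$. Write $N = n+d = \dim W$ and $w(\cdot)$ for the weight of a Hodge module; recall that the trivial Hodge module $\sO_W$ has weight $N$ and $p(\sO_W) = 0$. Applying Saito's formula (\cref{thm:Saito}) with $k = 0$ to the decomposition $f_+\sO_W \simeq \bigoplus_i \sP_i[-i]$ of \cref{thm:HMdecomposition}, where each $\sP_i$ has weight $N+i$, and using $\gr^F_0\DR(\sO_W) \simeq \OO_W[N]$ from the de Rham complex of the trivial Hodge module (item (4)), I would obtain
\[
(\rmR f_*\OO_W)[N] \simeq \bigoplus_i \gr^F_0\DR(\sP_i)[-i].
\]
Taking $\HH^{1-N}$ of both sides then identifies
\[
\rmR^1 f_*\OO_W \simeq \bigoplus_i \HH^{1-N-i}\bigl(\gr^F_0\DR(\sP_i)\bigr).
\]
Since $\gr^F_0\DR(\sP_i)$ is concentrated in cohomological degrees $\geq -n$ by \eqref{eq:grFM}, the $i$-th summand vanishes unless $1-N-i \geq -n$, i.e.\ $i \leq -d+1$, and for $i = -d+1$ the only surviving degree is the bottom one, $\HH^{-n}$.

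It then remains to kill the terms with $i \leq -d$ and, at $i = -d+1$, the summands $\sP_i'$ of $\sP_{-d+1}$ without strict support along $B$. For both I would use the acyclicity criterion \eqref{eq:gracyclic}, which says $\gr^F_0\DR(\sN) \simeq 0$ as soon as $p(\sN) + w(\sN) > n$. For a summand without strict support, \cref{lem:saito-smallest} (applied with $p(\sO_W) = 0$) gives $p(\sP_i') > \max(-d,\,-d-i)$, while $w(\sP_i') = N+i$. For $i = -d+1$ this yields $p(\sP_{-d+1}') \geq 0$, hence $p + w \geq 0 + (n+1) > n$, so these summands are acyclic and $\HH^{-n}(\gr^F_0\DR(\sP_{-d+1})) = \HH^{-n}(\gr^F_0\DR(\sM_{-d+1}))$. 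For $i < -d$ the full-support part vanishes since $\rmR^{d+i}f_{U*}\Q = 0$, so $\sP_i = \sP_i'$ and the bound gives $p + w \geq (1-d-i)+(n+d+i) = n+1 > n$, forcing acyclicity. The borderline case $i = -d$ is handled separately: its full-support part is the minimal extension $\sO_B$ of $\rmR^0 f_{U*}\Q$, for which $\gr^F_0\DR(\sO_B) \simeq \OO_B[n]$ is concentrated in degree $-n$, whereas the extraction degree is $1-N-(-d) = 1-n \neq -n$; the remaining summands $\sP_{-d}'$ satisfy $p \geq 1$, again giving acyclicity.

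Collecting these vanishings leaves exactly $\rmR^1 f_*\OO_W \simeq \HH^{-n}\bigl(\gr^F_0\DR(\sM_{-d+1})\bigr)$, the first claimed isomorphism. For the second I would feed the numerical data of $\sM_{-d+1}$ into the duality formula \eqref{eq:smallestdual}. The vHs $\rmR^1 f_{U*}\Q_{W_U}$ has weight $1$, so $\sM_{-d+1}$ has weight $w = n+1$, and since its Hodge filtration satisfies $F^2 = 0 \neq F^1$ we get $p(\sM_{-d+1}) = -1$ by \eqref{eq:gradingsmallestpiece}. Then $n - p - w = 0$, so \eqref{eq:smallestdual} reads $\rmR\sH om_{\OO_B}(s(\sM_{-d+1}), \OO_B[n]) \simeq \gr^F_0\DR(\sM_{-d+1})$; taking $\HH^{-n}$ gives $\HH^{-n}(\gr^F_0\DR(\sM_{-d+1})) \simeq \cHom_B(s(\sM_{-d+1}), \OO_B)$. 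Reflexivity of $\rmR^1 f_*\OO_W$ is then immediate, being the sheaf dual $\cHom_B(-, \OO_B)$ of a coherent sheaf on the smooth base $B$.

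The hard part will be the bookkeeping of the middle paragraph, where one must simultaneously control (i) the cohomological degrees in which $\gr^F_0\DR(\sP_i)$ can be nonzero, (ii) the strict-support decomposition of each $\sP_i$, and (iii) the index $p$ of every summand. The weight identity $w(\sP_i) = N+i$ together with the sharp bound of \cref{lem:saito-smallest} is precisely what makes the acyclicity criterion \eqref{eq:gracyclic} succeed in each range $i \leq -d$; I expect the case $i = -d$ to be the most delicate, since there the full-support summand is nonzero and must be excluded by a degree count rather than by acyclicity.
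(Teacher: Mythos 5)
Your proof is correct and takes essentially the same route as the paper's: Saito's formula (\cref{thm:Saito}) with $k=0$ applied to the decomposition of $f_+\sO_W$, elimination of the summands without strict support via \cref{lem:saito-smallest} combined with the weight identity and the acyclicity criterion \eqref{eq:gracyclic}, a degree count to discard the $\sO_B = \sM_{-d}$ summand, and duality \eqref{eq:smallestdual} for the second isomorphism. The only differences are cosmetic: your case-by-case application of the bound from \cref{lem:saito-smallest} (by ranges of $i$) versus the paper's uniform use of $p(\sP_j') > -d-j$ for all $j$, and the choice of indexing.
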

\begin{proof}
To prove the first isomorphism in \eqref{eq:RkfO-formula}, we begin by using \cref{thm:HMdecomposition}  and \cref{thm:Saito} with $\sM=\sO_W$ and apply $\gr^F_0\dr(-)$ to both sides of the decomposition to get
    \begin{equation}\label{eq:R1f-sum-proof}
    \rmR^1 f_* \OO_W \simeq \bigoplus_{i\geq 0} \HH^{-n+i}(\gr^F_{0}\dr(\PP_{-d-i+1})).
    \end{equation}
    Indeed, by definition of $\gr^F_{k}\dr(-)$, for $i<0$, $\sH^{-n+i}\gr^F_{0}\dr(\PP_{-d-i+1})=0$.

Let us write $\PP_j \simeq \MM_j \oplus \PP_j'$ where $\MM_j$
has strict support along $B$ and $\PP_j'$
is supported on a proper closed subset of $B$. Here $\MM_{-d+1}$ is the Hodge module from the statement of the corollary
and $\MM_{-d} \simeq \OO_B$
by the connectedness
assumption on the fibers.
Furthermore, $\MM_{-d-i+1} = 0$
for $i > 1$
by the assumption on the dimension of the general fibers.

By \cref{lem:saito-smallest},
using $p(\OO_W) = 0$, we have $p(\PP_j') > -d - j$. 
Since the weight of $\sP_{j}'$ is $n+d+j$, \eqref{eq:gracyclic} implies that $\gr^F_0\dr(\PP_{j}')= 0$ for all $j \in \Z$. 
Thus, only the terms $i = 0$ and $i = 1$ can give nontrivial contribution to \eqref{eq:R1f-sum-proof}, hence we obtain
    \[
    \rmR^1 f_* \OO_W \simeq 
    \HH^{-n}\left(\gr^F_{0}\dr(\MM_{-d+1})\right)
    \oplus
    \HH^{-n+1}\left(\gr^F_{0}\dr(\OO_B)\right).
    \]
Using \eqref{eq:trivialtate}, we get
\[\sH^{-n+1}(\gr^F_0\dr(\sO_B))\simeq \sH^{-n+1}(\sO_B[n]) = 0
\]
and the first isomorphism in \eqref{eq:RkfO-formula} follows.

For the second isomorphism, assume that $\MM_{-d+1} \ne 0$, otherwise there is nothing to prove as both sides are trivial.
The Hodge module $\MM_{-d+1}$ 
has weight $n + 1$
and we have 
$p(\MM_{-d+1}) = -1$ by \eqref{eq:gradingsmallestpiece}.
Thus, the second isomorphism in \eqref{eq:RkfO-formula} follows from  duality \eqref{eq:smallestdual}\phantom{\qedhere}
\[
    \gr^F_{0}\dr(\sM_{-d+1}) \simeq \rmR\sH om(s(\sM_{-d+1}),\sO_B)[n].\tag*{\qed}
\]
\end{proof}

\subsection{Jacobians of Hodge modules}
\label{sec:SheafLagFibr}

Before explaining Schnell's definition of a Jacobian of a Hodge module we recall the classical Jacobian sheaf of a vHs. 
Let $U \subset B$ be an open subset of a smooth projective variety $B$ and $L_U$ be $\Z$-local system underlying a polarized vHs $\sH_U$
on $U$.
We assume that $\sH_U$ has weight $2k+1$ and level 1, i.e., the Hodge filtration is a two-step decreasing filtration with $F^i = \sH_U$ for all $i\leq k$ and $0$ for all $i> k+1$.
In this set-up, one has a family of polarized abelian varieties $J(\sH_U)\to U$
whose sheaf of sections is the Jacobian sheaf $\sJ(\sH_U) \coloneqq (F^{{k+1}})^{\vee}/{L_U}$. 

In \cite{SchNeron}, Schnell constructed an analytic space $J(\sM)\to B$ which extends $J(\sH_U)$, 
where $\sM$ is  the minimal extension as Hodge module  of the vHs $\sH_U$. 
Denote by $j\colon U\into B$  the open immersion. 
Let $s(\sM)$ denote the lowest graded piece of the Hodge filtration on $\sM$ \eqref{eq:smallestgr}. Schnell observed that 
\[
\rmR ^0j_*L_U\into s(\sM)^{\vee} = \cHom_B(s(\sM), \OO_B)
\]
is an injection (because it is generically injective and the source of the morphism is a torsion-free sheaf).
He defined the analytic space $J(\sM)$, as the total space of the analytic sheaf in the following definition.

\begin{definition}\label{def:schnellneron}
    The \textit{Jacobian sheaf of a Hodge module}
    given by the minimal extension $\sM$ of a vHs $\sH_U$ of odd weight and level 1 is defined as
\[\sJ(\sM) \coloneqq  \dfrac{s(\sM)^{\vee}}{\rmR^0j_*L_U}\]
where $s(\sM)$ is the lowest graded piece of the Hodge module $\sM$. 
\end{definition}

Note that $\sJ(\sM)$ depends on the integral structure on the Hodge module $(\sM, K, F_\bullet)$, that is the $\Z$-local system $L_U$, or equivalently, the integral intersection complex $K_\Z = \IC(L_U)$ satisfying $K_\Z \otimes \Q \simeq K$.
It is functorial with respect to certain pullbacks \cite[Prop 2.22]{SchNeron}.

We now give some explicit examples of Jacobians for some Hodge modules arising from geometry.
Recall that, given a projective morphism $f \colon W\to B$ of smooth varieties with connected fibers, by the exponential sequence, one can deduce an injective morphism of sheaves $\rmR ^1f_*\bbZ_{W}\into \rmR ^1f_*\sO_{W}$. We denote $\Lambda_{W}\coloneqq \rmR^1f_*\bbZ_{W}$. 

\begin{definition}\label{def:reljac}
    The \textit{relative Jacobian sheaf} for a projective morphism $f \colon W\to B$ of smooth varieties with connected fibers is defined as the analytic sheaf 
    \[\sJ_{W/B}\coloneqq \dfrac{\rmR^1f_*\sO_{W}}{\Lambda_{W}}.\] 
\end{definition}

\begin{remark}\label{rmk:pic}
    Note that, when $f$ has integral fibers, $\sJ_{W/B}$ is the analytic sheaf of sections of the scheme of relative algebraically trivial line bundles $\Pic^0_{W/B}$.
\end{remark} 

\begin{theorem}\label{thm:HMreljac}
    Let $f\colon W\to B$ be a surjective projective morphism between smooth varieties. 
    Assume that $f$ has connected fibers. 
    Let $\sM$ denote the minimal extension of the vHs corresponding to $\Lambda_{W_U}\coloneqq R^1f_{U*}\bbZ_{W_U}$, where $U$ is the locus parameterizing the smooth fibers of $f$. Then 
    \begin{equation}\label{eq:JMB}
        \sJ(\sM) \simeq \dfrac{\rmR^1f_*\sO_W}{\rmR^0j_*\Lambda_{W_U}}.
    \end{equation}
    If $f$ is flat and has no strictly multiple fibers in codimension one, then we have an isomorphism  
    \begin{equation}\label{eq:JMBPic}
        \sJ(\sM)\simeq \sJ_{W/B},
    \end{equation}
    where $\sJ_{W/B}$ is the relative Jacobian sheaf as defined in \cref{def:reljac}.
\end{theorem}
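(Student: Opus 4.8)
The plan is to deduce the second isomorphism \eqref{eq:JMBPic} from the first isomorphism \eqref{eq:JMB} together with the tcf property of \cref{thm:R1f-tcf-cor}. Recall that \eqref{eq:JMB} presents $\sJ(\sM)$ as the quotient $\rmR^1 f_* \sO_W / \rmR^0 j_* \Lambda_{W_U}$, where $\rmR^0 j_* \Lambda_{W_U}$ is realized as a subsheaf of $\rmR^1 f_* \sO_W$ (via the identification $\rmR^1 f_*\sO_W \simeq s(\sM_{-d+1})^\vee$ of \cref{lem:R1fO} and \cref{def:schnellneron}). On the other hand, by \cref{def:reljac} the sheaf $\sJ_{W/B}$ is the quotient $\rmR^1 f_* \sO_W / \Lambda_W$, where $\Lambda_W = \rmR^1 f_* \bbZ_W$ embeds via the exponential sequence. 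Hence, under the flatness and no-strictly-multiple-fibers-in-codimension-one hypotheses, it suffices to show that the two subsheaves $\Lambda_W$ and $\rmR^0 j_* \Lambda_{W_U}$ of $\rmR^1 f_* \sO_W$ coincide.

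First I would invoke \cref{thm:R1f-tcf-cor}: since $f$ is flat, proper, surjective with connected fibers between smooth varieties and has no strictly multiple fibers in codimension one, the sheaf $\Lambda_W$ is tcf everywhere, so the adjunction unit $u\colon \Lambda_W \to \rmR^0 j_*(\Lambda_W|_U) = \rmR^0 j_* \Lambda_{W_U}$ is an isomorphism. It then remains to check that $u$ is compatible with the two embeddings into $\rmR^1 f_* \sO_W$, that is, that $\kappa \circ u = \iota$, where $\iota\colon \Lambda_W \into \rmR^1 f_* \sO_W$ denotes the exponential inclusion and $\kappa\colon \rmR^0 j_* \Lambda_{W_U} \into \rmR^1 f_* \sO_W$ denotes the inclusion underlying the presentation \eqref{eq:JMB}.

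To verify this compatibility I would argue by restriction to the dense open locus $U$. Over $U$ both inclusions restrict to the same morphism, namely the standard lattice inclusion $\iota_U \colon \Lambda_{W_U} \into \rmR^1 f_{U*} \sO_{W_U}$ coming from the exponential sequence on $W_U$; since $u|_U = \id$, the two maps $\kappa \circ u$ and $\iota$ agree after restriction to $U$. Now $\rmR^1 f_* \sO_W$ is reflexive, in particular torsion-free, by \cref{lem:R1fO}, so any two maps from $\Lambda_W$ into $\rmR^1 f_* \sO_W$ that agree on the dense open $U$ must agree globally: their difference has image supported on the proper closed set $B \setminus U$, yet this image is a subsheaf of a torsion-free sheaf, hence zero. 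Therefore $\kappa \circ u = \iota$, so the images of $\iota$ and $\kappa$ in $\rmR^1 f_* \sO_W$ coincide (as $u$ is an isomorphism), and consequently the two quotients are equal, giving \eqref{eq:JMBPic}.

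The genuine technical content is already packaged in \cref{thm:R1f-tcf-cor} (and behind it \cref{thm:R1f-tcf}), which supplies the isomorphism $\Lambda_W \simeq \rmR^0 j_* \Lambda_{W_U}$; granted that, the final statement is essentially formal. The only subtlety, and thus the main obstacle in this plan, is the bookkeeping identifying the two integral lattices sitting inside $\rmR^1 f_* \sO_W$ rather than merely identifying them generically over $U$; this is precisely where the torsion-freeness of $\rmR^1 f_* \sO_W$ from \cref{lem:R1fO} is indispensable, as it upgrades agreement on $U$ to agreement on all of $B$.
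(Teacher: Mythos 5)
Your proposal is correct and takes essentially the same route as the paper: \eqref{eq:JMB} is obtained by combining \cref{def:schnellneron} with \cref{lem:R1fO}, and \eqref{eq:JMBPic} follows from the tcf property of $\Lambda_W$ supplied by \cref{thm:R1f-tcf-cor}. The compatibility of the two lattice embeddings inside $\rmR^1 f_*\sO_W$, which you check by restricting to $U$ and then using torsion-freeness of $\rmR^1 f_*\sO_W$ from \cref{lem:R1fO}, is precisely what the paper compresses into ``follows from definition,'' so your extra bookkeeping is a more explicit rendering of the same argument rather than a different one.
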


\begin{proof}

   The statement 
   \eqref{eq:JMB} follows from \cref{lem:R1fO}.
      For the second part, note from Corollary \ref{thm:R1f-tcf-cor} that $\Lambda_W$ is a tcf sheaf, in particular, $\rmR^0j_*\Lambda_{W_U}\simeq \Lambda_W$. Then \eqref{eq:JMBPic} follows from definition.~\qedhere
\end{proof}

\begin{example}\label{ex:groupschemelagr}
 Let
 $f\colon M\to B$ be a Lagrangian fibration of a hyperk\"ahler manifold $M$ of dimension $2n$ with no strictly multiple fibers in codimension one. Let $\sM$ be defined as the minimal extension  of the vHs $\Lambda_{M_U}\coloneqq \rmR^1f_{U*}\bbZ_{M_U}$, where $U$ is, as usual, the locus over which the fibers of $f$ are smooth.
 Since $B$ is smooth and $f$ is equidimensional, $f$ is flat. Therefore by \cref{thm:HMreljac} we have,
 \begin{equation}\label{eq:JM}
        \sJ(\sM) \simeq \sJ_{M/B}\simeq \dfrac{\Omega_B^1}{\Lambda_M}.
    \end{equation}
    Indeed, the isomorphism $\rmR^1f_*\sO_M\simeq \Omega_B^1$ follows from \cite[Theorem 1.3]{Matdirect}. 
    Hodge modules associated to higher direct images under Lagrangian fibrations of hyperk\"ahler manifolds and remarkable symmetries of their Hodge filtrations were studied recently in \cite{ShenYin-PW, ShenYin, MSY23, schHK}.
\end{example}

Now we consider an OG10 Lagrangian 
fibration $f\colon M \to B$ 
associated to a fixed smooth cubic fourfold $X$, see \cref{def:HKcompIJ}.
Recall that in \S \ref{sec:cohomologycomputations}
we have introduced and studied
a constructible sheaf $\Lambda= \rmR^3p_*\bbZ_{\sY}$ where $p\colon \sY \to B$
is the universal hyperplane section of $X$.
As usual, we consider $U\subset B$, the open locus over which the fibers of $p$ (and $f$) are smooth.

 \begin{theorem}\label{thm:comparisonJandJM}
   Given a smooth cubic fourfold $X$, let
   $\sM$ be 
   the minimal extension of the vHs  $\Lambda_{U}= \rmR^3p_{U*}\bbZ_{\sY_U}$. 
   Then we have
        \[\sJ(\sM)\simeq \sJ = \dfrac{\rmR^2p_*\Omega_{\sY}^1}{\Lambda}.\]
Furthermore, for an associated OG10 Lagrangian fibration $f\colon M\to B$ we  have $\sJ_{M/B}\simeq \sJ$.
\end{theorem}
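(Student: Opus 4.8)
The plan is to prove the two stated isomorphisms separately, in each case reducing the problem to an identification of the lowest Hodge piece of a minimal extension. For the first isomorphism $\sJ(\sM)\simeq\sJ$, recall that by Definition \ref{def:schnellneron} one has $\sJ(\sM)=s(\sM)^{\vee}/\rmR^0j_*\Lambda_U$, whereas by Definition \ref{def:J} together with Proposition \ref{prop:pushforward-Omega1} one has $\sJ=\rmR^2p_*\Omega^1_{\sY}/\Lambda\simeq\Omega^1_B/\Lambda$. Since $\Lambda$ is tcf by Theorem \ref{thm:decomposition-cubic}, the unit of adjunction gives $\rmR^0j_*\Lambda_U\simeq\Lambda$, so the entire problem reduces to producing an isomorphism $s(\sM)^{\vee}\simeq\rmR^2p_*\Omega^1_{\sY}$ compatible with the two embeddings of $\Lambda$ (the map $\epsilon$ of Proposition \ref{prop:jaclatticeinjection} on one side, and the tautological embedding of $\rmR^0j_*\Lambda_U$ on the other).

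The key computation I would carry out is an application of Saito's formula (Theorem \ref{thm:Saito}) to the constant Hodge module $\sO_{\sY}$ with the functor $\gr^F_{-1}\dr(-)$. The Hodge-module decomposition $p_+\sO_{\sY}\simeq\bigoplus_i\sP_i[-i]$ (Theorem \ref{thm:HMdecomposition}) is controlled by the integral decomposition of Theorem \ref{thm:decomposition-cubic}: comparing underlying perverse sheaves and matching weights ($\sP_i$ has weight $\dim\sY+i=8+i$) identifies the summands as $\sP_{-3}\simeq\sO_B$, $\sP_{-1}\simeq\sO_B(-1)$, $\sP_0\simeq\sM$, $\sP_1\simeq\sO_B(-2)$, $\sP_3\simeq\sO_B(-3)$, with $\sP_{\pm2}=0$, all of full support on $B$. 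With $\dim\sY=8$, the left-hand side of Saito's formula is $\rmR p_*\Omega^1_{\sY}[7]$, which by Proposition \ref{prop:pushforward-Omega1} equals $\Omega^1_B[7]\oplus\OO_B[6]\oplus\Omega^1_B[5]$. On the right-hand side, evaluating $\gr^F_{-1}\dr(-)$ on the Tate-twisted trivial summands via \eqref{eq:trivialtate} contributes precisely $\Omega^1_B[7]$ (from $\sP_{-3}$, after the shift $[3]$) and $\OO_B[6]$ (from $\sP_{-1}$, after the shift $[1]$), while $\sP_1$ and $\sP_3$ contribute zero. Matching the two sides forces $\gr^F_{-1}\dr(\sM)\simeq\Omega^1_B[5]$. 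Feeding this into the duality identity \eqref{eq:smallestdual} (with $w=n+3=8$, $p(\sM)=-2$, $n=5$, so $n-p(\sM)-w=-1$) yields $\rmR\cHom_{\OO_B}(s(\sM),\OO_B)\simeq\Omega^1_B$, in particular $s(\sM)^{\vee}\simeq\Omega^1_B\simeq\rmR^2p_*\Omega^1_{\sY}$ with vanishing higher $\cExt$. To see that this isomorphism respects the lattices, I would check compatibility over the smooth locus $U$, where both embeddings are the standard Hodge-theoretic inclusion $\Lambda_U\hookrightarrow\rmH^{1,2}$ identified through the polarization with $\Lambda_U\hookrightarrow(F^2\Lambda_U)^{\vee}$; because $\Lambda$ is tcf and both target sheaves are torsion-free, any morphism out of $\Lambda$ is determined by its restriction to $U$, so the two presentations of the quotient coincide and $\sJ(\sM)\simeq\sJ$.

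For the second isomorphism $\sJ_{M/B}\simeq\sJ$, I would route through the Lagrangian side. Let $\sM_M$ denote the minimal extension of the vHs $\Lambda_{M_U}=\rmR^1f_{U*}\bbZ_{M_U}$. By the construction of $M$ as a compactified relative intermediate Jacobian (Definition \ref{def:HKcompIJ}), the Clemens--Griffiths isomorphism \cite{ClemensGriffiths} globalizes over $U$ to an isomorphism of polarized integral variations of Hodge structure $\Lambda_U\simeq\Lambda_{M_U}(-1)$. Functoriality of the minimal extension then gives $\sM\simeq\sM_M(-1)$ as Hodge modules. Since a Tate twist alters neither the underlying $\bbZ$-local system nor the lowest piece $s(-)$, the associated Schnell Jacobian sheaves agree, $\sJ(\sM)\simeq\sJ(\sM_M)$. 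Finally, $f\colon M\to B$ is a flat projective surjection of smooth varieties with connected fibers and no strictly multiple fibers in codimension one, so Example \ref{ex:groupschemelagr} (via Theorem \ref{thm:HMreljac}) gives $\sJ(\sM_M)\simeq\sJ_{M/B}$. Chaining the three identifications produces $\sJ\simeq\sJ(\sM)\simeq\sJ(\sM_M)\simeq\sJ_{M/B}$.

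The main obstacle I anticipate is the bookkeeping in the first step: one must pin down the exact Tate twists of the summands $\sP_i$ (which rests on the integral Decomposition Theorem \ref{thm:decomposition-cubic} and the weight grading in Theorem \ref{thm:HMdecomposition}) and then propagate the degree shifts correctly through Saito's formula, since a single weight or shift error would destroy the cancellation that isolates $\gr^F_{-1}\dr(\sM)$. The secondary subtlety is reconciling the abstract isomorphism $s(\sM)^{\vee}\simeq\rmR^2p_*\Omega^1_{\sY}$ with the integral embeddings of $\Lambda$; here the tcf property is what makes the argument work, reducing the compatibility to the open locus $U$ where every identification is canonical.
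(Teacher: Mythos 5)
Your proposal is correct and follows essentially the same route as the paper's proof: identify the Hodge-module refinement of the decomposition of $p_+\sO_{\YY}$ from \cref{thm:decomposition-cubic}, apply Saito's formula (\cref{thm:Saito}) with $\gr^F_{-1}\dr(-)$ together with the duality \eqref{eq:smallestdual} to get $s(\sM)^\vee\simeq\rmR^2p_*\Omega^1_{\sY}$, use the tcf property of $\Lambda$ to match the lattices, and then deduce $\sJ_{M/B}\simeq\sJ$ from the Tate-twist isomorphism $\sM(1)\simeq\sM_M$ (which the paper extracts from \cite[Lemma 3.2]{Voitwist} rather than directly from Clemens--Griffiths) combined with \cref{ex:groupschemelagr}. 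The only differences are cosmetic: you run Saito's formula before invoking duality, whereas the paper sets up the duality identity first, and your treatment of the compatibility of the two embeddings of $\Lambda$ is slightly more explicit than the paper's.
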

\begin{proof}
The Hodge module $\MM$ has weight $8$ and by \eqref{eq:gradingsmallestpiece} it satisfies
$p(\MM) = -2$. To compute $\JJ(\MM)$ we first verify that $s(\MM)^\vee \simeq \rmR^2p_*\Omega_{\sY}^1$. 
By duality \eqref{eq:smallestdual} we can write
\begin{equation}\label{eq:Jnumerator}
    \rmR\sH om(s(\sM), \sO_B[5]) \simeq \gr^F_{-1}\dr(\sM).
\end{equation}
Now we compute the the graded piece in the right-hand-side of
\eqref{eq:Jnumerator} using Saito's \cref{thm:Saito}.
   The Hodge module theoretic version of the decomposition in \cref{thm:decomposition-cubic} is given as follows:
\[
p_+\sO_{\YY} \simeq \sO_B[3] \oplus \sO_B(-1)[1]\oplus \sM \oplus \sO_B(-2)[-1]\oplus \sO_B(-3)[-3].
\]
Indeed, the trivial pieces in \cref{thm:decomposition-cubic} generically underlie a Hodge--Tate structure and hence as Hodge modules they are isomorphic to appropriate Tate twists of the trivial Hodge module $\sO_B$. 
Considering $\gr^F_{-1}\dr(-)$ in Saito's \cref{thm:Saito} and using \eqref{eq:trivialtate}, we obtain the following quasi-isomorphism:
\[\rmR p_*\Omega^1_{\YY}[7] \simeq \Omega_B^1[7]\oplus \sO_B[6]\oplus \gr^F_{-1}\dr(\sM).\]
We deduce
\begin{equation}
\rmR^2p_*\Omega_{\sY}^1 \simeq
\sH^{-5}\left(\gr^F_{-1}\dr(\sM)\right) \simeq s(\MM)^\vee
\end{equation}
where we have used \eqref{eq:Jnumerator} in the last isomorphism. Since $\Lambda$ is a tcf sheaf (see \cref{thm:decomposition-cubic})
we get the desired description of $\JJ(\MM)$.

    Let $\sM_M$ denote the minimal extension of the vHs corresponding to $\Lambda_{M_U}$.
    From \cite[Lemma 3.2]{Voitwist}, one can deduce that the two vHs corresponding to the local systems $\Lambda_U$ and $\Lambda_{M_U}$ are isomorphic up to a Tate twist.
     Therefore, we get an isomorphism of minimal extensions
    \[\sM(1)\simeq \sM_M.\]
    which induces $\sJ(\sM)\simeq \sJ(\sM_M)$. 
    Since by construction of $f$ the fibers are not strictly multiple in codimension $1$, from \cref{ex:groupschemelagr} we have $\sJ(\sM_M)\simeq \sJ_{M/B}$. 
\end{proof}

\begin{remark}
    Comparing the lowest pieces of the Hodge modules $\sM_M\simeq \sM$ above, one can deduce $\rmR^2p_*\Omega_{\sY}^1\simeq \Omega_B^1$, which was  proved directly in \cref{prop:pushforward-Omega1}. 
\end{remark}

Further interpretations of the Hodge module theoretic Jacobian sheaf in the 
relative Prym setting
is a work in progress and will appear in a separate article. A comparison with the Fano variety of lines set-up appears in \cite[Proposition 8]{DutMarq}.

\section{Twists of  hyperk\"ahler Lagrangian fibrations}\label{sec:twist}

We recall the Tate--Shafarevich group of a Lagrangian fibration, and fully compute this group for the OG10 Lagrangian fibration associated to a smooth cubic fourfold. This relies on computations in \S \ref{sec:CohandPushforwards}.
Furthermore we identify Voisin's twisted OG10 Lagrangian fibration \cite{Voitwist} with a degree twist.
The main results of this section are
\cref{thm:JMBisAMppHK},
\cref{cor:JisAM},
\cref{cor:ShaseqSmoothCubic}
and \cref{thm:voisin=DMS}.

\subsection{The Albanese sheaf and Tate--Shafarevich twists}\label{sec:TateShaTwists}

Let $M$ be a projective hyperk\"ahler manifold of dimension $2n$ and $f\colon M\to B\coloneqq\bbP^n$ be a Lagrangian fibration. We assume that $f$ has no strictly multiple fibers in codimension $1$ (see Definition \ref{def:strictly-mult}).
By a result of Campana \cite[Remarque 1.3]{CampanaMultipleFibers} (see also \cite{HOCharFoliation}), this is equivalent to
the following assumption: 
away from codimension two subset of the base,
every fiber of $f$ has a reduced component. 

We denote by $T_{M/B}$ the relative tangent sheaf $(\Omega_{M/B})^{\vee}$. 
The image of the exponential map \cite{Markushevich,AbashevaRogov}
\begin{equation}\label{eq:exp}
    \exp\colon f_*T_{M/B}\to \Aut_{M/B}
\end{equation}
is a sheaf of abelian groups $\calA_{M/B}$ over $B$, called \textit{relative Albanese sheaf} of $M$. 
 This sheaf was considered by \cite{Markushevich} and \cite{AbashevaRogov} in the analytic setting,
 and in the algebraic setting by
 \cite{AriFed} when fibers are integral, 
 and for non-multiple fibers by \cite{Yoonjoo}. See also
 \cite{AbashevaSTII}, \cite{Sac25} for
 other work on the relative Albanese sheaf.
 
Consider the induced exact sequence
\begin{equation}\label{eq:exseq-expAM}
    0 \to \Gamma_{M} \to  f_*T_{M/B} \to \calA_{M/B} \to 0
\end{equation}
where $\Gamma_{M}\coloneqq \Ker(\exp)$ is a sheaf of torsion-free finitely generated abelian groups on $B$
\cite[Lemma 3.1]{AbashevaRogov}.
The restriction of $\Gamma_M$ to the smooth locus $U \subset B$ of $f$ can be described as follows.
For every $b \in U$ we have $\Gamma_{M,b} = \rmH_1(M_b, \Z) \simeq \rmH^{2n-1}(M_b, \Z)$, and this isomorphism is compatible with small deformation of $b \in U$. Thus, we have a canonical isomorphism of local systems
\begin{equation}\label{eq:GammaU-R2n-1}
\Gamma_M|_U \simeq \rmR^{2n-1} f_{U*} \Z_{M_U}.    
\end{equation}

Note that by construction, $\calA_{M/B}$ comes with an action $\calA_{M/B}\times_B M\to M$ over $B$. The following is \cite[Proposition  2.1]{Markushevich}, see also \cite[Theorem  2]{AriFed}.

\begin{proposition}\label{prop:smlocus-torsor}
  If not empty, 
  the smooth locus $M_b^{\rm sm}$ of any irreducible component of a fiber $M_b$ over $b$ is a torsor under the action of $\calA_{M,b}$. In particular, when all fibers of $f$ are integral and $f$ has a section, $\calA_{M/B}$ identifies with the sheaf of sections of the smooth locus $M^{\rm sm}$ of $f$.
\end{proposition}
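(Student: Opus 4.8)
The plan is to realize the $\calA_{M/B}$-action as the flow of vertical Hamiltonian vector fields attached to functions on the base, and then to verify that this action is infinitesimally transitive at smooth points. First I would use the symplectic form $\sigma$: contraction $v\mapsto \iota_v\sigma$ is an isomorphism $T_M\xrightarrow{\sim}\Omega^1_M$, and because the fibers are Lagrangian it carries the vertical subsheaf isomorphically onto $f^*\Omega^1_B\subset \Omega^1_M$. This yields $T_{M/B}\simeq f^*\Omega^1_B$ and, after applying $f_*$ and using $f_*\OO_M=\OO_B$, the identification $f_*T_{M/B}\simeq \Omega^1_B$ underlying \eqref{eq:exp}. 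Concretely, a local closed $1$-form $\alpha=dg$ on $B$ produces the vertical field $v_\alpha$ with $\iota_{v_\alpha}\sigma=f^*\alpha$, which is the Hamiltonian vector field of $f^*g$; these fields are complete (the fibers are proper) and their flows commute, since functions pulled back from $B$ Poisson-commute (the fibers being Lagrangian). Thus $f_*T_{M/B}$ integrates to an action on $M$ over $B$, and passing to the quotient by the period lattice $\Gamma_M=\Ker(\exp)$ recovers the action of $\calA_{M/B}$.

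Next I would prove the torsor property fiberwise. Fix $b\in B$, an irreducible component $Z$ of $M_b$, and suppose $Z^{\rm sm}\neq \emptyset$. The central nondegeneracy input is that at a smooth point $x\in Z^{\rm sm}$ the evaluation map $\Lie(\calA_{M,b})\simeq \Omega^1_{B,b}\to T_x Z$, $\alpha\mapsto v_\alpha(x)$, is an isomorphism: this is immediate from $T_{M/B}\simeq f^*\Omega^1_B$ together with $\dim Z=n=\dim B$. Hence the $\calA_{M,b}$-orbit of any smooth point is open in $Z^{\rm sm}$. Since $\calA_{M,b}$ is connected (being the image of the exponential of a vector space) and $Z^{\rm sm}$ is irreducible, hence connected, the open orbits decompose $Z^{\rm sm}$ into open-and-closed pieces, forcing a single orbit; this gives transitivity. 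For freeness I would argue that the stabilizer of $x$ is discrete (by the local freeness just established) and in fact trivial: lifting to $\Lie(\calA_{M,b})=\Omega^1_{B,b}$, the stabilizer pulls back to a sublattice of periods, and by construction $\Gamma_{M,b}$ is exactly the full period lattice, so the stabilizer in $\calA_{M,b}=\Omega^1_{B,b}/\Gamma_{M,b}$ vanishes. Thus $Z^{\rm sm}$ is a torsor under $\calA_{M,b}$.

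Finally, the \emph{in particular} clause follows by rigidification: when every fiber is integral the unique irreducible component is reduced and $M_b^{\rm sm}$ is a single $\calA_{M,b}$-torsor, so a global section $s$ of $f$ picking out a point of $M_b^{\rm sm}$ for each $b$ supplies a base point identifying the torsor with the group, $M_b^{\rm sm}\simeq \calA_{M,b}$, compatibly in families; consequently $\calA_{M/B}$ is the sheaf of sections of $M^{\rm sm}$. I expect the delicate point to be the freeness and global transitivity over \emph{singular} fibers, since this rests on identifying $\Gamma_{M,b}$ with the genuine period lattice of the generalized abelian group acting on $Z^{\rm sm}$. For this the description \eqref{eq:GammaU-R2n-1} of $\Gamma_M$ over the smooth locus $U$ must be propagated across the discriminant using the absence of strictly multiple fibers in codimension one; controlling the degeneration of this lattice as $b$ leaves $U$, rather than the infinitesimal action, is the real work.
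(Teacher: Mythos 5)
Note first that the paper gives no proof of \cref{prop:smlocus-torsor}: it is quoted from \cite[Proposition 2.1]{Markushevich} and \cite[Theorem 2]{AriFed}, so your argument has to stand on its own. Its first three steps do: the construction of the action by commuting, complete Hamiltonian flows of pullback functions is exactly how the cited sources (and \cite{AbashevaRogov}) set things up, and your transitivity argument (evaluation isomorphism at points where $f$ is submersive, hence open orbits, hence a single orbit since $Z^{\rm sm}$ is connected) is correct. Two caveats there: the isomorphism $T_{M/B}\simeq f^*\Omega^1_B$ holds only on the locus where $f$ is smooth (globally one only has Matsushita/Abasheva--Rogov's $f_*T_{M/B}\simeq\Omega^1_B$, which is a theorem, not a consequence of the projection formula), and accordingly $M_b^{\rm sm}$ must be read as the locus where the scheme-theoretic fiber is smooth --- on a non-reduced component the fields $v_\alpha$ vanish identically and your infinitesimal transitivity fails, which is why the ``if not empty'' caveat is in the statement. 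You also conflate the stalk $\Omega^1_{B,b}$ with the fiber $\Omega^1_B|_b\cong\C^n$ when writing $\Lie(\calA_{M,b})$; this is not cosmetic, because it is exactly the stalk-versus-value distinction on which the next step turns.

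The genuine gap is freeness. Since $v_\alpha|_{M_b}$ depends only on the value $\alpha(b)\in\Omega^1_B|_b$, the stabilizer of $x\in Z^{\rm sm}$ is $P_x/\mathrm{ev}_b(\Gamma_{M,b})$, where $P_x\subset\Omega^1_B|_b$ is the group of values whose time-one flow fixes $x$ (equivalently, by your own transitivity argument, fixes all of $Z$ pointwise), while $\Gamma_{M,b}$ consists by definition of \emph{germs} whose flow is the identity on a whole neighborhood of $M_b$ in $M$ --- equivalently, whose values at nearby points of $U$ are periods of the nearby smooth fibers, cf.\ \eqref{eq:GammaU-R2n-1}. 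So the assertion ``the stabilizer pulls back to a sublattice of periods, and by construction $\Gamma_{M,b}$ is exactly the full period lattice'' is circular: what must be proved is precisely the inclusion $P_x\subseteq\mathrm{ev}_b(\Gamma_{M,b})$, i.e.\ that every period of the degenerate fiber is a limit of monodromy-invariant periods of nearby smooth fibers. Nothing in the construction gives this; a priori there could exist $a\in P_x$ that is the value of no germ in $\Gamma_{M,b}$, and then $Z^{\rm sm}$ would be a torsor only under a proper quotient of $\calA_{M,b}$. Ruling this out is a N\'eron-model-type statement about the degeneration of the period lattice across the discriminant, and it is the actual mathematical content of the cited results (Markushevich argues analytically; Arinkin--Fedorov algebraically, via compactified Picard schemes). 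You flag this yourself at the end (``\dots is the real work'') but then leave it undone, so the proposal is incomplete exactly at its crux; the concluding ``in particular'' clause, which needs the fiberwise identifications to be analytic in $b$, inherits the same gap.
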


The proof of the following result
was explained to us by Andrey Soldatenkov.

\begin{proposition}\label{prop:Gamma-tcf}
The sheaf $\Gamma_M$ is a tcf constructible sheaf. 
\end{proposition}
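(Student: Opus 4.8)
The plan is to verify the equivalent conditions of \cref{lem:tcf-defined} for $\Gamma_M$. Since the tcf property is local in the analytic topology, and by \cref{lem:tcf-support} together with the standing hypothesis that $f$ has no strictly multiple fibers in codimension one (exactly as in the proof of \cref{thm:R1f-tcf-cor}), it suffices to check that $\Gamma_M$ is tcf with respect to (1) an irreducible closed $Z \subset B$ of codimension $\ge 2$, and (2) an irreducible divisor $Z \subset B$ over whose general point the fiber of $f$ is not strictly multiple. Throughout I would exploit the inclusion $\Gamma_M \hookrightarrow f_*T_{M/B}$ from \eqref{eq:exseq-expAM}, together with the fact that the symplectic form induces $T_{M/B}\simeq f^*\Omega^1_B$, so that $f_*T_{M/B}\simeq \Omega^1_B$ is locally free. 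The torsion half $\mathcal{H}^0 i^!\Gamma_M=0$ is then immediate in both cases: a local section of $\Gamma_M$ supported on $Z$ is in particular a holomorphic section of the vector bundle $f_*T_{M/B}$ vanishing on the dense open $B\setminus Z$, hence zero.

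For case (1) the cotorsion also vanishes by a soft argument. Writing $j\colon U=B\setminus Z\hookrightarrow B$, a section $\sigma$ of $\Gamma_M|_U$ near $Z$ is a holomorphic section of the locally free sheaf $f_*T_{M/B}$ on $U$; since $\operatorname{codim} Z\ge 2$, the second Riemann extension theorem extends it to a holomorphic section $\tilde\sigma$ over $B$. As $\exp(\sigma)=0$ on the dense open $U$ and the exponential \eqref{eq:exp} is holomorphic, continuity forces $\exp(\tilde\sigma)=0$, so $\tilde\sigma$ lies in $\Gamma_M=\ker(\exp)$. Thus $\Gamma_M\to\mathbb{R}^0 j_*(\Gamma_M|_U)$ is onto, and with the injectivity above this gives condition (a) of \cref{lem:tcf-defined}.

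The remaining case (2) of an irreducible divisor is the heart of the matter, where Hartogs extension is unavailable and genuine monodromy can occur. Here the plan is to transfer the problem to $\Lambda_M=\mathbb{R}^1 f_*\mathbb{Z}_M$, already known to be tcf by \cref{thm:R1f-tcf-cor}. Over the smooth locus the cup product together with the complex orientation of the abelian-variety fibers gives a perfect pairing $\mathbb{R}^1 f_{U*}\mathbb{Z}\otimes\mathbb{R}^{2n-1}f_{U*}\mathbb{Z}\to\mathbb{R}^{2n}f_{U*}\mathbb{Z}=\mathbb{Z}_U$, identifying $\Gamma_M|_U\simeq\mathcal{H}om(\Lambda_M|_U,\mathbb{Z}_U)$ via \eqref{eq:GammaU-R2n-1}. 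The conceptual input is the elementary observation that for any abelian sheaf $F$ on $B$ the dual $\mathcal{H}om_{\mathbb{Z}}(F,\mathbb{Z}_B)$ is automatically tcf everywhere: since $\mathbb{Z}_B$ is a local system it is tcf (\cref{lem:tcf-local-system}), so the unit $\mathbb{Z}_B\xrightarrow{\sim}\mathbb{R}^0 j_*\mathbb{Z}_U$ is an isomorphism, and applying $\mathcal{H}om(F,-)$ together with the adjunction $\mathcal{H}om_B(F,j_*(-))\simeq j_*\mathcal{H}om_U(F|_U,-)$ yields $\mathcal{H}om(F,\mathbb{Z}_B)\xrightarrow{\sim}\mathbb{R}^0 j_*\mathcal{H}om(F|_U,\mathbb{Z}_U)$. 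Applying this to $F=\Lambda_M$, which being tcf equals $\mathbb{R}^0 j_*(\Lambda_M|_U)$, shows that $\mathcal{H}om(\Lambda_M,\mathbb{Z}_B)$ is tcf and canonically isomorphic to $\mathbb{R}^0 j_*(\Gamma_M|_U)$. It therefore remains only to prove that the canonical inclusion $\Gamma_M\hookrightarrow\mathbb{R}^0 j_*(\Gamma_M|_U)\simeq\mathcal{H}om(\Lambda_M,\mathbb{Z}_B)$ is an equality across the divisor $Z$.

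The main obstacle is exactly this surjectivity: a monodromy-invariant integral section of $\Gamma_M|_U$ around a general point of $Z$ must extend to a section of $\ker(\exp)$ over $Z$. The plan is to establish it by dualizing the divisorial tcf statement for $\Lambda_M$. Near a general point of $Z$ the local monodromy $T$ acts on $\Lambda_{M,b}=\mathrm{H}^1(M_b,\mathbb{Z})$ for nearby smooth $b$, and tcf of $\Lambda_M$ (\cref{thm:R1f-tcf}(2)) realizes its stalk along $Z$ as the invariants $\mathrm{H}^1(M_b,\mathbb{Z})^T$ with torsion-free cokernel; there the no-strictly-multiple-fibers hypothesis enters through the cycle class of a reduced component of the fiber. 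Dualizing, the invariants of $\Gamma_M|_U=(\Lambda_M|_U)^\vee$ are the dual of the coinvariants of $\Lambda_M|_U$, and it is precisely the torsion-freeness of the cokernel for $\Lambda_M$ that forces these dual sections to lift integrally into $\Gamma_M$ rather than merely into $\Gamma_M\otimes\mathbb{Q}$. Making this duality between $i^!\Gamma_M$ and $i^*\Lambda_M$ precise — via relative Poincar\'e--Verdier duality $\mathbb{R}\mathcal{H}om(\mathbb{R}f_*\mathbb{Z}_M,\mathbb{Z}_B)\simeq\mathbb{R}f_*\mathbb{Z}_M[2n]$, which holds since $M$ and $B$ are smooth and $f$ is proper of relative dimension $n$ — and checking that the nearby singular fibers introduce no extra torsion, is the step I expect to require the most care.
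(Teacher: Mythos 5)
Your torsion-vanishing argument, your codimension-two case, and the reduction of the divisor case to surjectivity of $\Gamma_M\hookrightarrow\rmR^0 j_*(\Gamma_M|_U)\simeq\HHom(\Lambda_M,\Z_B)$ are all correct (the observation that $\HHom(F,\Z_B)$ is automatically tcf for any abelian sheaf $F$ is a nice touch). The genuine gap --- which you flag yourself --- is that surjectivity, and the route you propose cannot close it. Verdier duality for $\rmR f_*\Z_M$, monodromy invariants/coinvariants of $\Lambda_M|_U$, and torsion-freeness of cokernels for $\Lambda_M$ are all statements on the topological side: they control the sheaf $\rmR^0 j_*(\Gamma_M|_U)$, which you already understand, but say nothing about the stalks of $\Gamma_M=\Ker(\exp)$ at points of the discriminant, since those stalks are defined analytically (which holomorphic sections of $f_*T_{M/B}$ act trivially on the singular fibres), not topologically. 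Concretely, two things are needed that no duality statement for $\rmR f_*\Z_M$ provides: (i) a single-valued section $\tau$ of $\Gamma_M$ over $V\setminus Z$ extends holomorphically across the divisor $Z$ as a section of $f_*T_{M/B}\simeq\Omega^1_B$ (Hartogs is unavailable here, and a priori a flat section can blow up along $Z$); and (ii) the extension again lies in $\Ker(\exp)$.

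The missing ingredient is a comparison between $\Gamma_M$ and $\Lambda_M$ valid across the discriminant, and this is exactly what the paper uses: by \cite[Proposition 4.4]{AbashevaRogov} there is an isomorphism $\Gamma_M\otimes\Q\simeq\Lambda_M\otimes\Q$ of sheaves on all of $B$, and $\Lambda_M\otimes\Q$ is tcf by \cref{lem:Q-R1-tcf}. Hence $\HH^1 i^!\Gamma_M$ is a $\Z$-torsion sheaf: given $\tau$ as above, some multiple $N\tau$ extends to a section of $\Gamma_M$ over $V$, so $\tilde\tau\coloneqq (N\tau)/N$ is the holomorphic extension required in (i); your own density argument from the codimension-two case (an automorphism restricting to the identity on the dense open $f^{-1}(V\setminus Z)$ is the identity) then gives (ii). The paper packages precisely this into a uniform Snake-Lemma argument with no case division at all: $\HH^1 i^!\Gamma_M$ is $\Z$-torsion, hence maps to zero in the sheaf of $\C$-vector spaces $\HH^1 i^!\Omega^1_B$, hence injects into $\calA_{M/B}=\Omega^1_B/\Gamma_M$, which has no nonzero subsheaf supported on a proper closed subset because it is by definition a subsheaf of $\Aut_{M/B}$. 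So your framework can be completed, but only after importing the rational comparison across the discriminant; the Verdier-duality plan as stated does not reach $\Ker(\exp)$. Once that is fixed, also record constructibility, which the proposition asserts: it follows from $\Gamma_M\simeq\rmR^0 j_*(\Gamma_M|_U)$ since $\Gamma_M|_U$ is a local system.
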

\begin{proof}
    Let us write $i \colon D \into B$ for the closed embedding of the discriminant locus of $f$
    and $j\colon U \into B$ for the open embedding of the complement $U = B \setminus D$.
    To check that $\Gamma_M$ is tcf everywhere on $B$ it suffices to prove that 
    \begin{equation}\label{eq:Gamma-tcf-vanishing}
    \HH^0 i^! \Gamma_M = \HH^1 i^! \Gamma_M = 0.
    \end{equation}
    Indeed, by definition this would imply that $\Gamma_M$ is tcf with respect to $D$, and then since $\Gamma_M|_{U}$ is a local system, \cref{lem:tcf-local-system} and \cref{lem:tcf-support} imply that $\Gamma_M$ is tcf everywhere on $B$. 
    
    Since $\Gamma_M$ is a subsheaf of $\Omega^1_B$,
    it is a torsion-free sheaf which implies vanishing of the first term in \eqref{eq:Gamma-tcf-vanishing}. 
    Let us compare the sheaf $\Gamma_M$ with $\Lambda_M = \rmR^1 f_* \Z$.
    By \cite[Proposition 4.4]{AbashevaRogov} we know that $\Gamma_M \otimes \Q \simeq \Lambda_M \otimes \Q$, and the latter is a tcf sheaf by \cref{lem:Q-R1-tcf}.
    Therefore, $\HH^1 i^! \Gamma_M$ is a $\Z$-torsion sheaf.
    Consider the diagram 
    \[
    \begin{tikzcd}
    0 \ar[r] & \Gamma_M \ar[r] \ar[d, hook] & R^0 j_*(\Gamma_M|_U) \ar[r]  \ar[d, hook] & \HH^1 i^! \Gamma_M \ar[r]  \ar[d, "=0"] & 0 \\
    0 \ar[r] & \Omega^1_B \ar[r] & R^0 j_*\Omega^1_U \ar[r] & \HH^1 i^! \Omega^1_B \ar[r] & 0
    \end{tikzcd}
    \]
    where the vertical map on the right is zero as it goes from a $\Z$-torsion sheaf to a sheaf of $\C$-vector spaces. 
    By the Snake Lemma, $\HH^1 i^! \Gamma_M$ is a subsheaf of $\AA_{M/B}$, so
    $\HH^1 i^! \Gamma_M = 0$ as $\AA_{M/B}$ is a torsion-free sheaf by definition.

    Finally, $\Gamma_M$ is constructible
    because of the isomorphism $\Gamma_M \simeq \rmR^0 j_* \Gamma_M|_U$, as $\Gamma_M|_U$ is a local system.
\end{proof}

\begin{remark}
Using Proposition \ref{prop:Gamma-tcf} it is easy to see that the Albanese sheaf $\AA_{M/B}$ is isomorphic to the Jacobian sheaf of the Hodge module $\NN$, given by the minimal extension of the vHs induced by \eqref{eq:GammaU-R2n-1} with the underlying integral structure of the
local system $\Gamma_M|_U$. See \cref{thm:JMBisAMppHK} for the arguments that are used in this computation.
\end{remark}

As in \cite{AbashevaRogov} and \cite{MarkmanLagrangian}, we introduce the following.

\begin{definition}\label{def:ShaLagFib}
    We denote
    $$\Sha(M/B)\coloneqq \rmH^1(B,\calA_{M/B})$$
    and we call it the \textit{Tate--Shafarevich group} of the Lagrangian fibration $M\to B$.
\end{definition}

Given a class $\alpha\coloneqq \{U_{ij},\alpha_{ij}\}_{i,j}\in \rmH^1(B,\calA_{M/B})$ we obtain a new complex manifold
$M_\alpha$  by regluing the restrictions $M_{U_i}$ via the action of $\alpha_{ij}$ on the overlaps $U_{ij}$, together with a map $f_\alpha\colon M_\alpha \to B$. By construction, $M$ and $M_\alpha$ are analytic-locally isomorphic over $B$. We call $M_\alpha$ a \textit{Tate--Shafarevich} twist of $M$. 

The long exact sequence associated to (\ref{eq:exseq-expAM}) reads
$$\rmH^1(B,\Gamma_{M}) \to \rmH^1(B,\Omega^1_B) \xrightarrow{\psi} \Sha(M/B) \to \rmH^2(B,\Gamma_{M}) \to 0.$$
Given $t\in \rmH^1(B,\Omega^1_B)\simeq \bbC$, we set $M_t\coloneqq M_{\psi(t)}$,  and one can construct a family
\begin{equation}\label{eq:degtw-family}
\calM\to \bbC
\end{equation}
which is the so-called \textit{degenerate twistor family} introduced by Verbitsky in \cite{VerbitskyDegenerateTwistor}, see \cite[Theorem  3.9]{AbashevaRogov}.

Following \cite{MarkmanLagrangian} and \cite{AbashevaRogov} we consider the ``connected component''
\[
\Sha^0(M/B) \coloneqq \im(\psi) = \Ker(\Sha(M/B) \to \rmH^2(B,\Gamma_{M})).
\]
The next result follows from \cite[Theorem  5.19]{AbashevaRogov}, \cite[Theorem  4.7]{VerbitskySoldatenkovKaehlertwists}, \cite[Theorem A]{AbashevaSTII} (see also \cite{MarkmanLagrangian} for twists of moduli spaces of sheaves on K3~surfaces). 

\begin{citedthm}
    The period map $\calP\colon \bbC \to \calD$ associated to (\ref{eq:degtw-family}) is injective, where $\calD\subset \bbP(\rmH^2(M,\bbC))$ is the period domain of $M$. Furthermore, 
    for all $\alpha\in  \Sha^0(M/B)$, the twist $M_\alpha$ is a hyperk\"ahler manifold. If $\alpha$ is torsion, then $M_\alpha$ is projective.
\end{citedthm}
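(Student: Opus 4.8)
The plan is to deduce the three assertions from the cited works by identifying the degenerate twistor family \eqref{eq:degtw-family} with the family of Tate--Shafarevich twists parametrized by $\Sha^0(M/B) = \im(\psi)$. First I would recall the construction: Verbitsky's degenerate twistor deformation \cite{VerbitskyDegenerateTwistor} keeps the underlying real manifold of $M$ fixed and deforms the complex structure so that the holomorphic symplectic form becomes $\sigma_t = \sigma + t\eta$, where $\eta = f^* H$. The hypothesis of the construction is that $\eta$ be nef and isotropic; here $\eta$ is the pullback of an ample class, so it is nef, and $q(\eta) = 0$ by \eqref{eq:Fujiki}. Verbitsky's theorem then yields a hyperk\"ahler structure on every fiber $\calM_t = M_t$. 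By \cite[Theorem 3.9]{AbashevaRogov} each $M_t$ is isomorphic over $B$ to the Tate--Shafarevich twist $M_{\psi(t)}$, and since $\Sha^0(M/B) = \im(\psi)$, every $\alpha \in \Sha^0(M/B)$ is of the form $\psi(t)$; this proves that $M_\alpha$ is hyperk\"ahler for all $\alpha \in \Sha^0(M/B)$.

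For injectivity of the period map, the point is that $\calP$ sends $t$ to the line $[\sigma + t\eta] \in \bbP(\rmH^2(M, \bbC))$. A direct computation gives $q(\sigma + t\eta) = 0$ and $q(\sigma + t\eta, \overline{\sigma + t\eta}) = q(\sigma, \bar\sigma) > 0$, using that $\eta$ is real of type $(1,1)$ and that $q(\sigma, \eta) = 0$ by Hodge type, so the image lands in $\calD$. Since $\sigma$ is of type $(2,0)$ and $\eta \ne 0$ is of type $(1,1)$, these classes are linearly independent, whence $[\sigma + t\eta] = [\sigma + t'\eta]$ forces $t = t'$. This recovers \cite[Theorem 4.7]{VerbitskySoldatenkovKaehlertwists}.

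It remains to prove projectivity for torsion $\alpha$, which I expect to be the main obstacle. Writing $\Pic(M_t) = \{v \in \rmH^2(M, \Z) : q(v, \sigma + t\eta) = 0\}$ and decomposing $v = a\theta + b\eta + w$ with $w \in \langle\theta,\eta\rangle^\perp$, one computes $q(v, \sigma + t\eta) = at + q(w, \sigma)$, since $\theta$ and $w$ are of type $(1,1)$ and $q(\theta,\eta) = 1$, $q(w,\eta) = 0$. Thus the Hodge condition reads $q(w, \sigma) + ta = 0$ and is independent of $b$, while $q(v) = a^2 q(\theta) + 2ab + q(w)$ can be made positive by varying $b$ once $a \ne 0$. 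Hence $M_t$ is projective precisely when $t = -q(w, \sigma)/a$ for some integral $w$ and integer $a \ne 0$, i.e. when $t$ lies in $\Lambda \otimes \bbQ$ for the period lattice $\Lambda = \im(\rmH^1(B, \Gamma_M) \to \rmH^1(B, \Omega^1_B) \simeq \bbC)$.

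The content of \cite[Theorem A]{AbashevaSTII} (see also \cite{MarkmanLagrangian} for the Beauville--Mukai analogue) is exactly that this set coincides with the set of $t$ for which $\psi(t)$ is torsion in $\Sha^0(M/B) \simeq \bbC / \im(\rmH^1(B,\Gamma_M))$. The delicate step, and the one I would isolate as the crux, is the identification of the period lattice $\Lambda$ with the lattice of periods $\{q(w,\sigma) : w \in \langle\theta,\eta\rangle^\perp\}$ governing the twist; this rests on the arithmetic of $\Gamma_M$ (via its tcf property, Proposition \ref{prop:Gamma-tcf}, and the isomorphism $\rmH^1(B,\Lambda) \simeq \langle\theta,\eta\rangle^\perp$ of Theorem \ref{thm:pairings}) rather than on formal deformation theory, and is where I would devote most of the work.
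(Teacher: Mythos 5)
You should first note that the paper contains no proof of this statement: it is a \emph{cited} theorem, quoted with attributions to \cite[Theorem 5.19]{AbashevaRogov}, \cite[Theorem 4.7]{VerbitskySoldatenkovKaehlertwists} and \cite[Theorem A]{AbashevaSTII}, so your proposal can only be measured against those external arguments rather than an internal proof. Within that framework, two of your steps are sound. The period map of the degenerate twistor family is indeed $t \mapsto [\sigma + t\eta]$, and injectivity follows from the linear independence of $\sigma$ and $\eta$, exactly as you say. Your computation $q(v, \sigma + t\eta) = at + q(w,\sigma)$ for $v = a\theta + b\eta + w$ is also correct and is the right mechanism behind projectivity of torsion twists; note only that your ``precisely when'' requires excluding the case $a=0$ (a class $w \perp \eta$ of type $(1,1)$ on $M_t$ with $q(w)>0$ would make $w^\perp \cap \rmH^{1,1}(M_t,\bbR)$ negative definite while containing the nonzero isotropic class $\eta$, contradicting the signature $(1,b_2-3)$), though only your ``if'' direction is needed for the theorem. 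Deferring the period-lattice identification to the literature and flagging it as the crux is legitimate here; but beware that \cref{thm:pairings} is specific to OG10 fibrations, whereas this theorem concerns arbitrary Lagrangian fibrations without a principal polarization, so in this generality you may only invoke the rational Leray comparison of \cref{lem:H2M-R} and \eqref{eq:hodge-isom2}.

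The genuine gap is in your first paragraph: ``Verbitsky's theorem then yields a hyperk\"ahler structure on every fiber'' is false as stated. \cite[Theorem 1.10]{VerbitskyDegenerateTwistor} produces, for each $t$, a complex structure together with a holomorphic symplectic form $\sigma + t\eta$; it does \emph{not} produce a K\"ahler metric, and K\"ahlerness of degenerate twistor deformations was established only in the later references. That K\"ahlerness --- hence hyperk\"ahlerness in the sense required here --- is precisely the content of \cite[Theorem 4.7]{VerbitskySoldatenkovKaehlertwists} and \cite[Theorem A]{AbashevaSTII}, i.e.\ of two of the three results from which the paper quotes this theorem; in particular, your period computation does not ``recover'' the former, whose content is K\"ahler-geometric, not period-theoretic. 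This misattribution conceals the main difficulty of the second assertion and propagates into the third: Huybrechts' projectivity criterion \cite{HuycptHKbasic} (an integral $(1,1)$-class of positive BBF square implies projectivity), which your lattice argument invokes implicitly, is valid only for manifolds already known to be K\"ahler. So the K\"ahler statement is an irreducible input to both remaining assertions: a complete write-up must quote it from the cited works or reprove it, and cannot extract it from Verbitsky's original construction or from the period map.
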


    Note that if $\alpha$ is not in the image of $\psi$, it can be non-K\"ahler, see \cite[Remark 5.3.13]{AbashevaSTII}.

\subsection{Comparison: \texorpdfstring{$\calJ$}{J} and \texorpdfstring{$\calA_{M/B}$}{A}}\label{sec:compareJJAM}

Recall from \cite[Proposition 4.3]{WieneckPolType} that for any hyperk\"ahler Lagrangian fibration $f\colon M\to B$, there exists a K\"ahler class $\omega\in \rmH^2(B,\bbQ)$ which restricts to an integral primitive polarization on the smooth fibers of $f$. Such a class is called a \textit{special K\"ahler class}. The polarization type $d(\omega)=(d_1,\dots,d_n)$ is the same on all smooth fibers, and it does not depend on the choice of $\omega$. A special K\"ahler class $\omega$ is called \textit{principal} if it is of type $(1,\dots,1)$, i.e. if it restricts to a principal polarization on all smooth fibers.

For the following theorem, recall that $\Lambda_M=\rmR^1f_*\bbZ_M$ and $\calJ_{M/B}$ is the relative Jacobian sheaf introduced in Definition \ref{def:reljac}.

\begin{citedthm}[(= \cref{intro:JMBisAMppHK})]\label{thm:JMBisAMppHK}
    Let $M$ be a projective hyperk\"ahler manifold and $f\colon M\to B=\bbP^n$ be a Lagrangian fibration  with no strictly multiple fibers in codimension one.  
    Let $\omega\in  \rmH^2(M,\bbQ)$ be a special K\"ahler class. There is an injection
        $\widetilde{\omega}\colon \Lambda_M\hookrightarrow \Gamma_{M}$
    and an exact sequence
    $$0 \to \Gamma_{M}/\widetilde{\omega}(\Lambda_M) \to \calJ_{M/B} \to \calA_{M/B} \to 0.$$
    If $\omega$ is principal, then
    $$\Lambda_M\simeq \Gamma_{M}\ \text{ and  }\
    \calJ_{M/B}\simeq \calA_{M/B}.$$
    
\end{citedthm}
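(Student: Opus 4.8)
The plan is to realize both isomorphisms as the principal, degenerate case of a single morphism between two short exact sequences — the period presentation of $\calJ_{M/B}$ and that of $\calA_{M/B}$ — where the only role of the principality hypothesis is to upgrade a fibrewise polarization to an integral isomorphism.

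First I would record the two period sequences with a common middle term $\Omega^1_B$. Unwinding \cref{def:J} and \cref{def:reljac} together with Matsushita's isomorphism $\rmR^1 f_*\sO_M \simeq \Omega^1_B$ (as in \cref{ex:groupschemelagr}) gives the sequence $0 \to \Lambda_M \to \Omega^1_B \to \calJ_{M/B}\to 0$. On the Albanese side, the holomorphic symplectic form yields a canonical isomorphism $T_{M/B}\simeq f^*\Omega^1_B$, so that $f_*T_{M/B}\simeq \Omega^1_B$ and the exponential sequence \eqref{eq:exseq-expAM} becomes $0 \to \Gamma_M \to \Omega^1_B \to \calA_{M/B}\to 0$.

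Next I would construct $\widetilde\omega$ over the smooth locus $j\colon U\into B$ and propagate it by the tcf property. There $\Lambda_M|_U = \rmR^1 f_{U*}\bbZ$ and $\Gamma_M|_U \simeq \rmR^{2n-1}f_{U*}\bbZ$ (by \eqref{eq:GammaU-R2n-1}) are local systems, and fibrewise cup product with the minimal class $\omega^{n-1}/(n-1)!$ defines $\widetilde\omega|_U\colon \Lambda_M|_U\to\Gamma_M|_U$; this realizes the polarization isogeny between the dual abelian schemes $\calJ_{M/B}|_U$ and $\calA_{M/B}|_U$, it is integral and injective, and it induces an isomorphism on Lie algebras identifying the two middle sheaves $\Omega^1_B$. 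Since $\Lambda_M$ is tcf by \cref{thm:R1f-tcf-cor} and $\Gamma_M$ is tcf by \cref{prop:Gamma-tcf}, each equals $\rmR^0 j_*$ of its restriction to $U$, so $\widetilde\omega|_U$ extends uniquely to $\widetilde\omega\colon \Lambda_M\to\Gamma_M$; its kernel is a torsion subsheaf of the torsion-free sheaf $\Lambda_M$, hence zero, giving the asserted injection and a commuting ladder between the two sequences whose middle vertical is the comparison isomorphism of $\Omega^1_B$. The snake lemma then produces $0\to \Gamma_M/\widetilde\omega(\Lambda_M)\to \calJ_{M/B}\to\calA_{M/B}\to 0$, since the middle vertical being an isomorphism forces $\ker(\calJ_{M/B}\to\calA_{M/B})\simeq \coker\widetilde\omega$ and surjectivity on the right. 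For the final statement, when $\omega$ is principal the fibrewise polarization form is unimodular, so $\phi_\omega\colon M_b\to\widehat{M_b}$ — equivalently the cup map $\rmH^1(M_b,\bbZ)\to\rmH^{2n-1}(M_b,\bbZ)$ — is an isomorphism over $\bbZ$ for every $b\in U$. Hence $\widetilde\omega|_U$ is an isomorphism of local systems, and applying $\rmR^0 j_*$ with the tcf identifications shows $\widetilde\omega\colon\Lambda_M\to\Gamma_M$ is an isomorphism; then $\Gamma_M/\widetilde\omega(\Lambda_M)=0$ and the exact sequence collapses to $\calJ_{M/B}\simeq\calA_{M/B}$.

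The hard part will be the compatibility in the middle: making the two sequences genuinely share the term $\Omega^1_B$ and checking that the left square of the ladder commutes, i.e. that the polarization's action on Lie algebras matches the two independent identifications of the middle sheaf with $\Omega^1_B$ (Matsushita on the Jacobian side, the symplectic form on the Albanese side). I would settle this over $U$, where the dual abelian schemes and their uniformizations are classical and the isogeny induced by $\omega$ simultaneously governs lattices and Lie algebras, and then transport it to all of $B$ via the tcf property — precisely the mechanism that lets a generically verified identity extend across the discriminant in the absence of strictly multiple fibres in codimension one. The only genuinely principal-specific input is the integral unimodularity of a principal polarization; everything else is formal once the ladder is assembled.
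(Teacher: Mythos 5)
Your proposal is correct and follows essentially the same route as the paper: the same ladder of two short exact sequences with a common middle term (Matsushita's isomorphism $\rmR^1f_*\sO_M\simeq\Omega^1_B$ on the Jacobian side, $f_*\sigma$ on the Albanese side), the snake lemma for the four-term sequence, and the two tcf results (\cref{thm:R1f-tcf-cor} and \cref{prop:Gamma-tcf}) to promote the generic isomorphism to a global one in the principal case. The only organizational difference is that the paper defines $\widetilde\omega$ globally as the composition $f_*\sigma\circ j$ — so the ladder commutes by construction — and cites \cite[Proposition 4.1]{AbashevaRogov} for the fibrewise identification of $\widetilde\omega_b$ with the polarization map $\rmH^1(M_b,\bbZ)\to\rmH_1(M_b,\bbZ)$ (which is what guarantees integrality, i.e.\ that the image lies in $\Gamma_M$), whereas you build $\widetilde\omega$ fibrewise over $U$ and defer exactly this compatibility to your ``hard part''; that step is precisely the content of the cited proposition, so it can be closed by citation rather than by the classical verification you sketch.
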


\begin{proof}
The exponential map \eqref{eq:exp} fits in the following diagram
    \begin{equation}\label{eq:compJMandAM}
        \begin{tikzcd}
        0 \ar[r]& \Lambda_{M} \ar[r,"j"] \ar[d,"\widetilde{\omega}"] & \Omega^1_B \ar[r] \ar[d,"\simeq"',"(f_*\sigma)"] & \calJ_{M/B} \ar[r] \ar[d,"\varphi"] & 0 \\
        0 \ar[r] & \Gamma_{M} \ar[r] &  f_*T_{M/B} \ar[r, "\exp"] & \calA_{M/B} \ar[r] &  0.
    \end{tikzcd}
    \end{equation}
    Indeed, \cite[Theorem 2.6]{AbashevaRogov} showed that
    the symplectic form $\sigma\in \rmH^0(M,\Omega_M^2)$ induces an isomorphism
    $$f_*\sigma\colon \Omega^1_B \xrightarrow{\sim} f_*T_{M/B}.$$
    The morphism $j$ is defined as the composition of the natural injection $\Lambda_M=\rmR^1f_*\bbZ_M \hookrightarrow \rmR^1f_*\calO_M$ with Matsushita's isomorphism $\rmR^1f_*\sO_M\simeq \Omega_B^1$ \cite{Matdirect}. It is proved in \cite[Proposition 4.1]{AbashevaRogov} that the composition of $f_*\sigma \circ j$ sends $\Lambda_M$ to $\Gamma_{M}$: over a point $b\in B$ with $M_b$ smooth, the map $\widetilde{\omega}_b$ identifies with the natural map induced by the polarization
    $$\begin{tikzcd}
        \Lambda_{M,b}= \rmH^1(M_b,\bbZ)  \ar[d,hookrightarrow] \ar[r,"{\widetilde{\omega}_b}"]& \rmH_1(M_b,\bbZ) = \Gamma_{M,b}\ar[d,hookrightarrow] \\
        (\rmR^1f_*\calO_M)_b=\rmH^1(M_b,\calO_{M_b}) \ar[r,"\omega_b"] & \rmH^0(M_b,T_{M_b})=(f_*T_{M/B})_b
    \end{tikzcd}$$
    The first claim of the theorem follows. For the second, note that when $\omega$ is principal, then $\widetilde{\omega}_b$ is an isomorphism for all $b$ with $M_b$ smooth. In particular, $\widetilde{\omega}\colon \Lambda_M\to \Gamma_{M}$ is an isomorphism over the dense Zariski open subset of $B$ parameterizing  smooth fibers.
Since $\Lambda_M$ is tcf by  \cref{thm:R1f-tcf-cor} and $\Gamma_M$ is tcf by \cref{prop:Gamma-tcf}, $\widetilde{\omega}$ must be an isomorphism. Hence $\varphi$ is an isomorphism as well and the theorem is proved.
\end{proof}

\begin{remark}
    In the set-up of Theorem \ref{thm:JMBisAMppHK}, if $\omega$ is principal, and furthermore, the fibers of $f$ are integral and $f$ admits a section, then it can be shown using Remark \ref{rmk:pic} that 
    \[\Pic^0_{M/B}\simeq M^{\rm sm}.\]
    This generalizes \cite[Theorem B]{Arinkin-autoduality}.
\end{remark}

Recall that for a
smooth cubic fourfold $X$, we can
consider an
associated OG10 Lagrangian fibration, see Definition \ref{def:HKcompIJ}. To compute its associated Tate--Shafarevich group, we use the 
sheaves that we introduced in \S \ref{sec:cohomologycomputations}, namely the
constructible sheaf $\Lambda$ on $B$ (see (\ref{eq:def-LambdaYY})) and the sheaf of abelian groups $\calJ\simeq \Omega^1_B/\Lambda$ (Definition \ref{def:J}).

\begin{corollary}\label{cor:JisAM}
    Given a smooth cubic fourfold $X$ and an
    associated OG10 Lagrangian fibration $f\colon M\to B$, we have an isomorphism of analytic sheaves of abelian groups
    \begin{equation}\label{eq:calJ=AM}
   \Lambda\simeq \Lambda_M\simeq  \Gamma_{M}\  \text{ and }\  \calJ\simeq \calJ_{M/B}\simeq \calA_{M/B} .
    \end{equation}
    In particular, $\Sha^0(M/B)\simeq \rmH^1(B,\sJ)^0$, where the latter is as in \eqref{eq:exseqH1J0}.
\end{corollary}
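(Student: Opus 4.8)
The plan is to assemble the isomorphisms in \eqref{eq:calJ=AM} from results already established, and then deduce the statement about the connected component $\Sha^0$ by matching up the two exact sequences that define it.

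\textbf{Proof of Corollary \ref{cor:JisAM}.}
First I would establish the chain $\Lambda \simeq \Lambda_M \simeq \Gamma_M$. By construction of an OG10 Lagrangian fibration (Definition \ref{def:HKcompIJ}), over the smooth locus $U \subset B$ the fibers of $f$ are the intermediate Jacobians of the corresponding smooth cubic threefolds, so the Clemens--Griffiths isomorphism \cite{ClemensGriffiths} gives $\rmR^1 f_{U*}\bbZ_{M_U} \simeq \rmR^3 p_{U*}\bbZ_{\sY_U} = \Lambda_U$, i.e.\ $\Lambda_M|_U \simeq \Lambda|_U$. Since an OG10 fibration has no strictly multiple fibers in codimension one, \cref{thm:R1f-tcf-cor} shows $\Lambda_M = \rmR^1 f_*\bbZ_M$ is tcf everywhere on $B$, hence $\Lambda_M \simeq \rmR^0 j_* (\Lambda_M|_U)$; likewise $\Lambda$ is tcf by \cref{thm:decomposition-cubic}, so $\Lambda \simeq \rmR^0 j_*(\Lambda|_U)$. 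Applying $\rmR^0 j_*$ to the isomorphism on $U$ therefore upgrades it to an isomorphism $\Lambda \simeq \Lambda_M$ on all of $B$. The identification $\Lambda_M \simeq \Gamma_M$ is exactly the principal case of \cref{thm:JMBisAMppHK} (a special K\"ahler class of type $(1,\dots,1)$ exists because the smooth fibers are principally polarized intermediate Jacobians).

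Next I would establish $\calJ \simeq \calJ_{M/B} \simeq \calA_{M/B}$. The second isomorphism is again the principal case of \cref{thm:JMBisAMppHK}. For the first, \cref{thm:comparisonJandJM} already yields $\calJ \simeq \calJ_{M/B}$ directly (via the Hodge-module Jacobian $\sJ(\sM)$ and the Tate-twist comparison of the two variations of Hodge structure). Combining these gives $\calJ \simeq \calJ_{M/B} \simeq \calA_{M/B}$, completing \eqref{eq:calJ=AM}.

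Finally, for the statement $\Sha^0(M/B) \simeq \rmH^1(B,\sJ)^0$, I would compare the two defining exact sequences. The group $\Sha^0(M/B) = \im(\psi)$ is by definition the image of $\rmH^1(B,\Omega^1_B) \xrightarrow{\psi} \Sha(M/B) = \rmH^1(B,\calA_{M/B})$ arising from \eqref{eq:exseq-expAM}, so $\Sha^0(M/B) = \coker\!\big(\rmH^1(B,\Gamma_M) \to \rmH^1(B,\Omega^1_B)\big)$. Under the isomorphism $f_*\sigma\colon \Omega^1_B \xrightarrow{\sim} f_*T_{M/B}$ and $\Gamma_M \simeq \Lambda_M \simeq \Lambda$, the sequence \eqref{eq:exseq-expAM} is identified with \eqref{eq:LambdaOmegaJ}, so this cokernel is precisely $\coker\!\big(\rmH^1(B,\Lambda) \xrightarrow{\varphi} \rmH^1(B,\Omega^1_B)\big)$, which is the group $\rmH^1(B,\calJ)^0 = \Ker\big(\rmH^1(B,\calJ)\to \rmH^2(B,\Lambda)\big)$ appearing in \eqref{eq:exseqH1J0} (as noted in the proof of \cref{thm:Sha-seq-nonDG}, $\coker\varphi = \rmH^1(B,\calJ)^0$). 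The main point to check carefully is that the comparison of \eqref{eq:exseq-expAM} with \eqref{eq:LambdaOmegaJ} is compatible with the maps out of $\rmH^1(B,\Omega^1_B)$, i.e.\ that the diagram \eqref{eq:compJMandAM} from the proof of \cref{thm:JMBisAMppHK} induces the matching of connecting maps at the level of $\rmH^1$; granting this, the two cokernels agree and the isomorphism $\Sha^0(M/B)\simeq \rmH^1(B,\sJ)^0$ follows. \qed
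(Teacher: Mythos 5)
Your proposal is correct and follows essentially the same route as the paper: the paper also deduces $\Lambda\simeq\Lambda_M$ and $\calJ\simeq\calJ_{M/B}$ from \cref{thm:comparisonJandJM} (whose proof rests on the same Clemens--Griffiths/tcf comparison you spell out directly) and then applies the principal case of \cref{thm:JMBisAMppHK}, justified by the relative theta divisor of \cite[\S 5]{LSV}. Your explicit matching of the long exact sequences of \eqref{eq:exseq-expAM} and \eqref{eq:LambdaOmegaJ} via the diagram \eqref{eq:compJMandAM} just makes precise the ``in particular'' step that the paper leaves implicit.
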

\begin{proof}
From \cref{thm:comparisonJandJM} we have the isomorphisms $\Lambda\simeq \Lambda_M$ and $\sJ\simeq \sJ_{M/B}$.
Furthermore, $f\colon M\to B$ admits a relative theta divisor on the smooth fibers \cite[\S5]{LSV}. 
      Then \cref{thm:JMBisAMppHK} implies the result.
\end{proof}

\begin{remark}\label{rmk:MW}

     When $f$ has a section, Proposition \ref{prop:smlocus-torsor} allows us to move this section using elements of $ \rmH^0(B,\sA_{M/B})$. 
     Therefore, Corollary \ref{cor:JisAM} implies that the group of regular sections of $f$ has a subgroup isomorphic to $\rmH^0(B,\calJ)\simeq(\Sigma^{\perp})^{2,2}$ (Theorem \ref{thm:Sha-seq-nonDG}).
     In particular, when $f$ has integral fibers, this reproves the special case of  \cite[Theorem 5.1]{Sacbirational}.
 
\end{remark}

Recall that $\Br^4_{\an}(X)= \BrAn(\rmH^4(X,\bbZ)) = \rmH^{1,3}(X) / \image(\rmH^4(X, \Z) \to \rmH^4(X, \C) \to \rmH^{1,3}(X))$.

\begin{corollary}\label{cor:ShaseqSmoothCubic}
    Given a  defect general smooth cubic fourfold $X$, let $f\colon M\to B$ be an associated OG10 Lagrangian fibration as in Definition \ref{def:HKcompIJ}. Then $\Sha(M/B) \simeq \Sha^0(M/B)$ and there is an exact sequence
    \begin{equation}\label{eq:Sha-seqZd}
    0 \to \bbZ/d\bbZ \overset{\partial}\to \Sha(M/B) \to \Br^4_{\an}(X) \to 0,
    \end{equation}
    where $d$ is the divisibility of the complete intersection class $h^2$ in $\rmH^{2,2}(X,\bbZ)$.   
    Furthermore, 
 \[
 \Sha(M/B)\simeq \Br_{\an}(M).
 \]
\end{corollary}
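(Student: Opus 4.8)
The plan is to deduce all three assertions from the computations for the relative intermediate Jacobian sheaf $\JJ$ carried out in \S\ref{sec:JandJtilde}, transported to the Lagrangian fibration via \cref{cor:JisAM}. The starting point is the dictionary of that corollary: there are canonical isomorphisms $\Gamma_M \simeq \Lambda$ and $\calA_{M/B} \simeq \JJ$, so that $\Sha(M/B) = \rmH^1(B, \calA_{M/B}) \simeq \rmH^1(B, \JJ)$. Each step below is then a matter of reinterpreting the already-computed cohomology of $\JJ$ and $\Lambda$.

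First I would establish $\Sha(M/B) \simeq \Sha^0(M/B)$. Since $X$ is defect general, \cref{thm:decomposition-cubic} gives $\Lambda \simeq \Lambda^\bullet$, whence \cref{thm:cohom-Lambda} yields $\rmH^2(B, \Lambda) = 0$ (the even-degree cohomology of $\Lambda^\bullet$ vanishes). Transporting this via $\Gamma_M \simeq \Lambda$, the target $\rmH^2(B, \Gamma_M)$ in the sequence defining $\Sha^0$ vanishes, so the map $\Sha(M/B) \to \rmH^2(B, \Gamma_M)$ is zero and hence $\Sha^0(M/B) = \Sha(M/B)$; equivalently $\rmH^1(B, \JJ)^0 = \rmH^1(B, \JJ)$.

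Next I would produce the short exact sequence. Under $\Sha(M/B) \simeq \rmH^1(B, \JJ)$, \cref{cor:Sha-sequence} provides the exact sequence $\bbZ/3\bbZ \xrightarrow{\partial} \rmH^1(B, \JJ) \to \Br^4_{\an}(X) \to 0$ together with the criterion that $\partial$ is injective exactly when the divisibility $d$ of $h^2$ equals $3$ (and $d \in \{1,3\}$ because $(h^2)^2 = 3$). The one point to pin down is that the image of $\partial$ equals $\bbZ/d\bbZ$: as $\bbZ/3\bbZ$ is simple, the image is all of $\bbZ/3\bbZ$ when $d = 3$ (injectivity) and trivial when $d = 1$ (a non-injective homomorphism out of $\bbZ/3\bbZ$ vanishes), so in both cases it equals $\bbZ/d\bbZ$. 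Replacing the source $\bbZ/3\bbZ$ by its image $\image(\partial) = \bbZ/d\bbZ$ then converts the exact sequence into the short exact sequence $0 \to \bbZ/d\bbZ \xrightarrow{\partial} \Sha(M/B) \to \Br^4_{\an}(X) \to 0$.

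Finally, for the identification with the Brauer group I would chain $\Sha(M/B) \simeq \rmH^1(B, \JJ) \simeq \Br^4_{\an,\pr}(X)$ (the second isomorphism from \cref{cor:Sha-sequence}) with \cref{cor:Br-M}, which gives $\Br^2_{\an}(M) \simeq \Br^4_{\an,\pr}(X)$, to conclude $\Sha(M/B) \simeq \Br^2_{\an}(M) = \Br_{\an}(M)$. Since the whole argument is a substitution of previously established isomorphisms, I do not expect a genuine obstacle; the most delicate bookkeeping is the identification $\image(\partial) = \bbZ/d\bbZ$, which relies on the injectivity criterion of \cref{cor:Sha-sequence} together with the simplicity of $\bbZ/3\bbZ$.
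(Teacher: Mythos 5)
Your proposal is correct and takes essentially the same route as the paper, whose entire proof is the combination of \cref{cor:JisAM}, \cref{cor:Sha-sequence} and \cref{cor:Br-M}; the only cosmetic difference is that you recover the $\bbZ/d\bbZ$ term from the injectivity criterion together with simplicity of $\bbZ/3\bbZ$, whereas the paper reads it off directly from \eqref{eq:exseqH1J0}, whose first term for defect general $X$ is $\rmH^4(X,\bbZ)/(\rmH^{2,2}(X,\bbZ)+\rmH^4(X,\bbZ)_{\pr})\simeq \bbZ/d\bbZ$. Note also that the injectivity criterion you use ($\partial$ injective exactly when $d=3$) is the correct one, consistent with Theorem~\ref{thm:main} and with \eqref{eq:exseqH1J0}, even though the literal wording of \cref{cor:Sha-sequence} (``divisibility one'') states the reverse direction, apparently a typo in the paper.
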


\begin{proof}
    The exact sequence is just a combination of Corollary \ref{cor:JisAM} and Corollary \ref{cor:Sha-sequence}.
For the second isomorphism, we use \cref{cor:Br-M} applied to the Hodge structure $\rmH^4(X, \Z)_{\pr}$.~
\end{proof}

 \subsection{Degree twists and comparison with Voisin's construction}\label{sec:comparisonVoisin}

In this section, we focus on the short exact sequence in \cref{cor:ShaseqSmoothCubic}, in order to geometrically explain the degree twists, as defined below. 

\begin{definition}\label{def:degreetwist}
Let $M\to B$ be an OG10 Lagrangian fibration associated to a defect general cubic fourfold $X$. 
    We define a \textit{degree twist} to be the Tate--Shafarevich twist of $M\to B$ given by
     any element of the image of $\bbZ/d\bbZ \to \rmH^1(B,\calJ) \simeq \Sha(M/B)$    
     from Corollary \ref{cor:ShaseqSmoothCubic}.
\end{definition}

We can give an explicit way to compute the cohomology class corresponding to a degree twist.
Namely, if
$b \in \rmH^4(X, \Z)$ is any class such that $\deg_X(b\cdot h^2) = 1$,
then by Lemma \ref{lem:Jm-cohomology}
the residue class
\begin{equation}\label{eq:def-t}
t = \ol{h^2 - 3b} \in \rmH^4(X, \Z/3)_{\pr} 
\end{equation}
gives rise to a generator of the degree twists subgroup.

\medskip

 In \cite{Voitwist}, Voisin studies a particular twist of the intermediate Jacobian fibration $M \to B$ when $X$ is general. A smooth fiber of this twisted fibration over a point $b=[H]\in B$ corresponding to a smooth hyperplane section $Y = X \cap H$
parametrizes $1$-cycles of degree $1$ in $Y$, i.e., cycles in the preimage of $1$ by $\CH^2(Y)\to \rmH^2(Y,\bbZ)\simeq \bbZ$ . 
This twist is constructed as a compactification of the corresponding torsor $[J^T_{U_1}]\in \rmH^1(U_1,\calJ_{U_1})$
where $U_1 \subset B$ is the open subset parameterizing smooth or $1$-nodal hyperplane sections.
The torsor $J_{U_1}^T$ can also be constructed when $X$ is, more generally, a defect general cubic fourfold. We will call any smooth hyperk\"ahler compactification $M^T\to B$ of $J_{U_1}^T\to U_1$, 
a \textit{Voisin twist}.
Such a compactification was constructed by Voisin \cite{Voitwist} when $X$ is general and by Sacc{\`a} \cite{Sacbirational} for any smooth cubic $X$. Note that, if $X$ is very general, 
 then
$M^T$ and $M$ are not birational \cite{Sacbirational}.

We now come to the final main result of the section.

 \begin{theorem}\label{thm:voisin=DMS}
          Let $X$ be defect general. 
          A Voisin twist $M^T$ is birational to a degree twist of $M$.
 \end{theorem}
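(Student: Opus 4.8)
The plan is to realise both $M^T$ and every degree twist of $M$ as hyperk\"ahler compactifications of $\mathcal{J}$-torsors over the smooth locus $U \subset B$ of $p$ (equivalently, of $f$), and then to identify the class of Voisin's torsor in $\rmH^1(B, \mathcal{J})$ with a degree twist class. Since $M \to B$ is the intermediate Jacobian fibration it carries a zero section, so $M|_U$ is the trivial $\mathcal{J}_U$-torsor; consequently, for $\alpha \in \Sha(M/B) \simeq \rmH^1(B,\mathcal{J})$ (\cref{cor:JisAM}) the Tate--Shafarevich twist $M_\alpha$ restricts over $U$ to the $\mathcal{J}_U$-torsor of class $\alpha|_U \in \rmH^1(U, \mathcal{J}_U)$. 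As two hyperk\"ahler Lagrangian fibrations over $B$ that restrict to isomorphic torsors over the dense open $U$ contain isomorphic dense open subsets, they are birational; hence it suffices to compute the class of $J^T$ and check that it is a degree twist class.

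First I would identify Voisin's torsor cohomologically. By \cref{prop:JJ-wtJJ} there is an exact sequence $0 \to \mathcal{J} \to \widetilde{\mathcal{J}} \to \rmR^4 p_* \bbZ_{\mathcal{Y}} \to 0$, and for defect general $X$ we have $\rmR^4 p_* \bbZ_{\mathcal{Y}} \simeq \bbZ_B$ by \cref{prop:cohom-Y}. By \cref{rmk:IJ-DelCoh} the stalk $\widetilde{\mathcal{J}}_b = \rmH^4(Y, \bbZ(2)_{\rmD})$ is identified, through the cycle class map, with $\CH^2(Y)$ for a smooth hyperplane section $Y = X \cap H$, and the quotient map $\widetilde{\mathcal{J}} \to \bbZ_B$ becomes the degree $\CH^2(Y) \xrightarrow{\deg} \bbZ$. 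Thus $\widetilde{\mathcal{J}}$ is the relative group of codimension-two cycles and Voisin's degree-one fibration is precisely the preimage of the section $1 \in \rmH^0(B, \bbZ_B)$. A global lift of $1$ to $\widetilde{\mathcal{J}}$ is the same datum as a global section of this degree-one torsor, so its class equals $\partial(1)$, the image of $1$ under the connecting homomorphism $\partial\colon \rmH^0(B, \rmR^4 p_* \bbZ_{\mathcal{Y}}) = \bbZ \to \rmH^1(B, \mathcal{J})$. Running the same argument over $U_1 \supset U$ and using compatibility of $\partial$ with restriction gives $[J^T_{U_1}] = \partial(1)|_{U_1}$.

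It then remains to recognise $\partial(1)$ as a degree twist class. By the top row of \cref{thm:Sha-seq-nonDG}, the image of $\partial$ is the cokernel of $\rmH^0(B,\widetilde{\mathcal{J}}) \to \rmH^0(B, \rmR^4 p_* \bbZ_{\mathcal{Y}}) = \bbZ$, which for defect general $X$ equals the subgroup $\bbZ/d\bbZ$ of \cref{cor:ShaseqSmoothCubic}; this is exactly the degree twist subgroup of \cref{def:degreetwist}. Hence, under the isomorphism $\Sha(M/B) \simeq \rmH^1(B,\mathcal{J})$, the class $\alpha \coloneqq \partial(1)$ is a degree twist class, and by the previous paragraph $\alpha|_U = [J^T_U]$. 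Therefore $M_\alpha|_U$ and $M^T|_U \simeq J^T_U$ are isomorphic $\mathcal{J}_U$-torsors, and $M^T$ is birational to the degree twist $M_\alpha$ (when $d = 1$ this twist is $M$ itself, which is still admissible).

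The hard part will be the cohomological identification $[J^T] = \partial(1)$. It requires recognising $\widetilde{\mathcal{J}}$ as the relative sheaf of codimension-two cycles via the cycle class map into Deligne cohomology, so that Voisin's geometrically defined degree-one torsor is literally the preimage of a section of $\bbZ_B$; and it requires checking that the abstract regluing defining $M_\alpha$ restricts over $U$ to the torsor of class $\alpha|_U$, for which the existence of the zero section of $M \to B$ is used. Once these identifications are secured, matching $\partial(1)$ with the degree twist subgroup is immediate from the exactness in \cref{thm:Sha-seq-nonDG}.
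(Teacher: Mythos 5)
Your plan is correct, and it takes a genuinely different route from the paper's proof. The paper never computes the class of Voisin's torsor explicitly: it only uses the soft facts, cited from \cite{Voitwist}, that this class is $3$-torsion, lifts to $\rmH^1(B,\JJ[3])\simeq \rmH^4(X,\bbZ/3)_{\pr}$, and is monodromy invariant (being defined uniformly over the moduli of cubics); then \cref{lem:h2-class-A} and \cref{lem:degree-twists} force it to be proportional to $t=\overline{h^2-3b}$, and \cref{lem:Jm-cohomology} shows that the image of $t$ generates the degree-twist subgroup. You instead pin the class down exactly: Voisin's fibration is the preimage of the section $1$ under $\wt{\JJ}\to \rmR^4p_*\bbZ_{\YY}\simeq\bbZ_B$, so its class is $\partial(1)$ for the connecting homomorphism of the sequence in \cref{prop:JJ-wtJJ}, and the top row of \cref{thm:Sha-seq-nonDG} --- all of whose vertical comparison maps are isomorphisms in the defect general case, since $\rmH^2(B,\Lambda)=0$ --- identifies $\image(\partial)$ with the subgroup $\bbZ/d\bbZ$ of \cref{cor:ShaseqSmoothCubic}, i.e.\ with the degree-twist subgroup of \cref{def:degreetwist}; reassuringly, the paper itself denotes that injection by $\partial$, so the two descriptions match, and your argument even names a generator ($\partial(1)$), slightly more than the paper concludes. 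What each route buys: yours is more direct, dispenses with the monodromy lemmas entirely, and makes transparent \emph{why} the Voisin twist is a ``degree'' twist; the paper's monodromy argument is deliberately softer so that it never has to establish your flagged ``hard part'', namely that the sheaf of sections of Voisin's fibration over $U_1$ is $\JJ$-equivariantly isomorphic to $\deg^{-1}(1)\subset\wt{\JJ}$, which requires a relative Deligne cycle-class map for families of degree-one cycles --- this is essentially Voisin's own description (it is what the paper alludes to when saying \eqref{eq:liesequence} extends the sequence of \cite[\S 3]{Voitwist}), but it is not proved in the paper, so your route would have to supply it. Two small cautions: the identification of $\wt{\JJ}$ with a relative $\CH^2$ via \cref{rmk:IJ-DelCoh} should only be asserted over $U$ (or $U_1$), since stalks of $\rmR^4p_*\bbZ_{\YY}(2)_{\rmD}$ are not literally the fiberwise Deligne cohomology groups; and the step ``equal torsor classes over $U$ implies the compactifications are birational'' involves extending an analytic isomorphism of dense open subsets to a meromorphic map, a point that both your plan and the paper's proof delegate to the regluing formalism and to Voisin's construction, so you are no worse off there.
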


  We begin with some lemmas
 which will distinguish degree twists as those twists which come from monodromy invariant cohomology classes and thus, exist over the moduli space of cubic fourfolds.
By monodromy invariance of a class,
 we mean that the class is restricted from the universal family of cubic fourfolds.
 
 \begin{lemma}\label{lem:h2-class-A}
 Let $X$ be a smooth cubic fourfold.
 For any coefficient ring $R$, 
 the monodromy invariant classes in $\rmH^4(X, R)$ are generated, as an $R$-module, by the complete intersection class $h^2$.
 \end{lemma}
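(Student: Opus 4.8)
The plan is to read ``monodromy invariant'' as invariant under the action of $\pi_1(\calU)$ on $\rmH^4(X,R)$, where $\calU\subset\bbP(\rmH^0(\bbP^5,\OO(3)))$ is the open locus of smooth cubic fourfolds and $X$ is a fixed fibre of the universal family $\pi\colon\mathcal X\to\calU$. Since $\rmH^4(X,\bbZ)$ is torsion-free (\cref{lem:cohom-Y}), universal coefficients give $\rmH^4(X,R)=\rmH^4(X,\bbZ)\otimes R$, and the monodromy acts $R$-linearly through its integral action. The class $h^2$ is the restriction of $c_1(\OO_{\bbP^5}(1))^2$, which is globally defined on $\mathcal X$; hence $h^2$ is monodromy invariant and $R\cdot h^2$ is contained in the invariants. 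It therefore remains to prove the reverse inclusion, i.e.\ that every invariant class lies in $R\cdot h^2$.

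First I would recall, via the theory of Lefschetz pencils, that the image $G$ of $\pi_1(\calU)$ in $\mathrm{O}(\rmH^4(X,\bbZ))$ is generated by the Picard--Lefschetz reflections $T_\delta(x)=x-(x\cdot\delta)\,\delta$ attached to the vanishing cycles $\delta$, each of which is a \emph{primitive} vector of $\rmH^4(X,\bbZ)_{\pr}$ with $\delta^2=2$; in particular each $T_\delta$ fixes $h^2$. Now let $v\in\rmH^4(X,R)$ be invariant. The identity $T_\delta(v)=v$ reads $(v\cdot\delta)\,\delta=0$ in $\rmH^4(X,\bbZ)\otimes R$, and since $\delta$ is primitive it extends to a $\bbZ$-basis, so $\delta\otimes 1$ is part of an $R$-basis; thus $(v\cdot\delta)\,\delta=0$ forces $v\cdot\delta=0$ in $R$ for every vanishing cycle $\delta$.

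The next input is that the vanishing cycles generate $\rmH^4(X,\bbZ)_{\pr}$ \emph{over} $\bbZ$ (equivalently, that the vanishing lattice is the full primitive lattice). Granting this, $v$ is orthogonal to the image of $\rmH^4(X,\bbZ)_{\pr}\otimes R$ in $\rmH^4(X,R)$. To conclude I would use that $\rmH^4(X,\bbZ)$ is unimodular (integral Poincar\'e duality) and that $N\coloneqq\rmH^4(X,\bbZ)_{\pr}=(h^2)^\perp$ is saturated with $N^\perp=\bbZ\,h^2$ (using primitivity of $h^2$, see \eqref{eq:b-class}). Unimodularity yields a short exact sequence $0\to N^\perp\to\rmH^4(X,\bbZ)\to\Hom(N,\bbZ)\to 0$ with free cokernel, which stays exact after $\otimes R$ and identifies $\{w:w\cdot(N\otimes R)=0\}$ with $N^\perp\otimes R=R\,h^2$. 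Hence $v\in R\,h^2$, as wanted. This last step automatically absorbs the subtlety at the prime $3$: although the form on $N\otimes\F_3$ acquires a one-dimensional radical (since $\operatorname{disc}N=3$), that radical maps into $\F_3\,h^2$ inside $\rmH^4(X,\F_3)$ because $\bbZ h^2\oplus N$ has index $3$ in $\rmH^4(X,\bbZ)$, so no spurious invariant class is created.

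The main obstacle I anticipate is precisely the integral statement that the vanishing cycles span $\rmH^4(X,\bbZ)_{\pr}$ over $\bbZ$, together with the identification of $G$ with the Picard--Lefschetz group. Over $\bbQ$ both facts are classical (irreducibility of the primitive variation of Hodge structure), but it is the integral refinement that makes the argument valid for an arbitrary coefficient ring $R$. I would supply this by invoking Beauville's determination of the monodromy group of the universal family of cubic fourfolds, from which one reads off that a single monodromy orbit of vanishing cycles generates the primitive lattice integrally; everything else in the proof is formal lattice theory.
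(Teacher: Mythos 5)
Your argument is correct, but it takes a genuinely different route from the paper's. The paper reads ``monodromy invariant'' as \emph{restricted from the universal family} (see the sentence preceding \cref{lem:h2-class-A}) and argues geometrically: it chooses a Lefschetz pencil $g\colon \XX\to\P^1$ of hyperplane sections of a cubic fivefold $\ol{\XX}\subset\P^6$ through $X$, observes that the Leray edge map $\rmH^4(\XX,R)\to\rmH^0(\P^1,\rmR^4g_*R_\XX)$ is surjective, and then combines the blow-up formula $\rmH^4(\XX,R)\simeq\rmH^4(\ol{\XX},R)\oplus\rmH^2(X\cap X',R)$ with Weak Lefschetz for these two complete intersections to conclude that only $R$-multiples of $h^2$ can appear. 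You instead read ``monodromy invariant'' as $\pi_1(\calU)$-invariant and argue through the structure of the monodromy group: generation by Picard--Lefschetz reflections (valid because the discriminant is an irreducible hypersurface, so meridians generate $\pi_1(\calU)$), primitivity of the square-two vanishing cycles, integral spanning of $\rmH^4(X,\Z)_{\pr}$ by vanishing cycles, and the unimodularity argument identifying the annihilator of $\rmH^4(X,\Z)_{\pr}\otimes R$ inside $\rmH^4(X,R)$ with $R\,h^2$. All of these steps are sound, and the integral spanning you single out as the main obstacle is in fact classical Lefschetz-pencil theory (Lamotke) --- it does not require the full strength of Beauville's determination of the monodromy group, although citing Beauville is legitimate. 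The trade-offs: the paper's proof is shorter, needs no vanishing-cycle or lattice theory, and treats all $R$ uniformly, but it only bounds the classes \emph{induced} from the universal family; your proof bounds all $\pi_1(\calU)$-invariant classes, which is formally stronger (induced classes are invariant, so your statement implies theirs) and is arguably closer to the form actually invoked in \cref{lem:degree-twists} and in the proof of \cref{thm:voisin=DMS}, where the relevant degree-twist classes arise as monodromy-invariant sections over the moduli space rather than as visibly restricted classes.
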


 \begin{proof}
 Let us take a Lefschetz pencil $g\colon \XX \to \P^1$ with
 $X = \XX_0$.
 Specifically, $\XX$ is obtained by blowing up a smooth cubic fivefold $\ol{\XX} \subset \P^6$ containing $X$ with center in a smooth codimension two complete intersection $X \cap X'$ where $X'$ is another sufficiently general smooth cubic fourfold.
 We use the Leray spectral sequence for $\rmR g_*R_\XX$.
It is clear that
\[
\rmH^0(\rmR^4 f_* R_\XX)  = E_2^{0,4} = E_\infty^{0,4}
\]
because this term is not affected by differentials.
This means that we have a surjective restriction map
\[
\rmH^4(\ol{\XX}, R) \oplus 
\rmH^2(X \cap X', R) \simeq
\rmH^4(\XX, R) \to \rmH^0(\rmR^4 g_* R_\XX)
\]
where we used the blow up formula in the first isomorphism. Since both $\ol{\XX}$ and $X \cap X'$ are complete intersections, by the Weak Lefschetz theorem, we only get $R$-multiples of $h^2$ in the image. In particular, these are the only classes that can be induced from the universal family of cubic fourfolds.~\qedhere
\end{proof}

We will now work with the primitive cohomology of $X$ with coefficients in a ring $R$ defined by
\[
\rmH^4(X, R)_{\pr} = \Ker(h^2 \colon
\rmH^4(X, R) \to R).
\]
 It follows from Lemma \ref{lem:h2-class-A} that there are no 
 nonzero monodromy invariant classes in the primitive cohomology $\rmH^4(X, \Z)_{\rm pr}$. However the situation is different for $3$-torsion coefficients.
 
 \begin{lemma}\label{lem:degree-twists}
 Monodromy invariant classes
 with coefficients in $\Z/3$
 form a subgroup 
 \[
\Z/3 \cdot t \subset \rmH^4(X, \Z/3)_{\pr} \subset \rmH^4(X, \Z/3)
\]
with the class $t$ defined in \eqref{eq:def-t}.
 \end{lemma}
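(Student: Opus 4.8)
The plan is to derive the statement directly from \cref{lem:h2-class-A} with coefficient ring $R = \Z/3$. Recall that, by the definition given just before \cref{lem:h2-class-A}, a class is monodromy invariant precisely when it is restricted from the universal family of cubic fourfolds, and that \cref{lem:h2-class-A} identifies all such classes in $\rmH^4(X, \Z/3)$ as the $\Z/3$-span of the reduction $\ol{h^2}$ of the complete intersection class. As a preliminary, since $\rmH^4(X, \Z)$ is free and $\rmH^5(X, \Z) = 0$ (\cref{lem:cohom-Y}), the universal coefficient theorem gives $\rmH^4(X, \Z/3) \simeq \rmH^4(X, \Z)/3$, and the intersection pairing reduces modulo $3$. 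In particular the monodromy group preserves the primitive subgroup $\rmH^4(X, \Z/3)_{\pr} = \Ker(\,\cdot\, h^2)$, because it fixes $h^2$ and respects the pairing, so the monodromy invariants of the primitive part equal the invariants of the ambient group intersected with $\rmH^4(X, \Z/3)_{\pr}$.

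The main elementary input is to recognise $\ol{h^2}$ inside the primitive part and to match it with $t$. The integral representative $h^2 - 3b$ chosen in \eqref{eq:def-t} satisfies $(h^2 - 3b)\cdot h^2 = (h^2)^2 - 3\,(b\cdot h^2) = 3 - 3 = 0$, so $h^2 - 3b \in \rmH^4(X, \Z)_{\pr}$ and hence $t \in \rmH^4(X, \Z/3)_{\pr}$. Reducing modulo $3$ one has $t = \ol{h^2 - 3b} = \ol{h^2}$, so $t$ is exactly the reduction of the complete intersection class; the only role of the representative $h^2 - 3b$ is to exhibit an integral lift already annihilated by $\cdot\, h^2$. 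This class is nonzero: from $b\cdot h^2 = 1$ the vector $h^2$ is primitive in $\rmH^4(X, \Z)$, so $h^2 \notin 3\,\rmH^4(X, \Z)$ and $\ol{h^2} \ne 0$. Thus $\Z/3 \cdot t \cong \Z/3$ sits inside $\rmH^4(X, \Z/3)_{\pr}$.

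To conclude I would simply intersect the two descriptions. By \cref{lem:h2-class-A} the monodromy invariants of $\rmH^4(X, \Z/3)$ are $\Z/3 \cdot \ol{h^2}$, and since $\rmH^4(X, \Z/3)_{\pr}$ is monodromy stable, the monodromy invariants of the primitive part are $\Z/3 \cdot \ol{h^2} \cap \rmH^4(X, \Z/3)_{\pr}$, which is all of $\Z/3 \cdot \ol{h^2} = \Z/3 \cdot t$ precisely because $\ol{h^2}$ is itself primitive. I do not anticipate a serious obstacle here; the one point that genuinely requires \cref{lem:h2-class-A} rather than a naive mod-$3$ reduction of the integral computation $(\rmH^4(X,\Z))^{\mathrm{mon}} = \Z h^2$ is the exclusion of extra invariants that could a priori appear only after reduction (namely monodromy fixed vectors in $\rmH^4(X, \Z)_{\pr}/3$ not lifting to $\Z$), and this is exactly what the geometric Lefschetz-pencil argument of \cref{lem:h2-class-A} rules out for the coefficient ring $\Z/3$.
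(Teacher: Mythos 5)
Your proof is correct and follows essentially the same route as the paper: both apply \cref{lem:h2-class-A} with $R=\Z/3$, identify $t=\ol{h^2-3b}$ with the reduction $\ol{h^2}$, verify it lies in $\rmH^4(X,\Z/3)_{\pr}$ via $(h^2)^2=3\equiv 0 \pmod 3$, and deduce nonvanishing from the integral primitivity of $h^2$ (guaranteed by $b\cdot h^2=1$). Your additional remarks (the universal coefficient identification $\rmH^4(X,\Z/3)\simeq\rmH^4(X,\Z)/3$ and the monodromy-stability of the primitive subgroup) are correct elaborations of points the paper leaves implicit.
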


 \begin{proof}
 As $(h^2)^2 = 3$, the subgroup $\Z/3 \cdot h^2$ of monodromy invariant classes (see Lemma \ref{lem:h2-class-A}) is consisting
of primitive classes.
   The reduction $t = \ol{h^2 - 3b}$ is nonzero and independent of the choice of $b$.
   Hence it is a generator for the subgroup of monodromy invariant classes.
 \end{proof}

 \begin{proof}[Proof of \cref{thm:voisin=DMS}]
 By construction, see \cite[\S3]{Voitwist}, any
Voisin twist is birational to the Tate--Shafarevich  twist obtained from a
monodromy invariant class in $\rmH^4(X, \Z/3)_{\pr}$ by the composition
\begin{equation}\label{eq:Voisin-composition}
\rmH^4(X, \Z/3)_{\pr} \simeq 
\rmH^1(B, \JJ[3]) \to
\rmH^1(B, \JJ)[3] = 
\Sha(M/B)[3].
\end{equation}
By \cref{lem:degree-twists}, such classes are proportional to $t=\overline{h^2-3b}$.
 It follows from \cref{lem:Jm-cohomology}
 that the image of $t$ under
 \eqref{eq:Voisin-composition} generates subgroup of degree twists.

 \end{proof}

\appendix

\section{Comparison with Beauville--Mukai systems, after Markman}\label{app:comparisonBM}

The goal of this appendix is to compare our results with the work of Markman \cite[\S7]{MarkmanLagrangian}.
Let $S$ be a K3 surface, $B=|\calL|\simeq \bbP^g$ a base point free linear system on $S$ of genus $g \ge 2$. 
In order to apply Proposition \ref{prop:decomp-Z}, we assume that $|\calL|$ is primitive.

Denote by $U\subset B$ the locus parameterizing smooth curves.
We will now explain the analogs of $\Lambda$, $\Lambda^\bullet$ and other objects that we have introduced in \S \ref{sec:cohomologycomputations}, reusing the same notation.

We start by considering the universal curve $\calC\subset S\times B$ and we denote $p\colon \calC\to B$ and $q\colon \calC \to S$ the projections (see \eqref{eq:setupSM}). 
In particular, $q$ is a Zariski locally trivial $\bbP^{g-1}$-bundle. 
We let $\Lambda := \rmR^1 f_*\Z_\CC$ and also consider the sheaf $\sQ$ fitting in the short exact sequence
\[0\to \sQ\to \rmR^2 p_*\bbZ_{\sC}\overset{\deg}{\to} \bbZ_B\to 0.\]
The sheaf $\sQ$ is supported along the locus of reducible curves.

We have the following analog of Theorem \ref{thm:decomposition-cubic}.
\begin{theorem}\label{thm:decomposition-K3}
    Let $S$ be a  
    K3 surface and $B=|\calL|\simeq \bbP^g$ be a primitive base point free linear system on $S$ of genus $g \ge 2$.  
    Let $p\colon \sC \to B$
    be the universal family of curves.
    We have the  decomposition
\begin{equation}\label{eq:Rp*ZBM}
\rmR p_*\bbZ_\calC=\bbZ_B\oplus \Lambda^\bullet[-1] \oplus \bbZ_B[-2]
\end{equation}
    We have
    $\HH^0(\Lambda^\bullet) \simeq \Lambda$, $\HH^1(\Lambda^\bullet) \simeq \sQ$, and $\HH^i(\Lambda^\bullet) = 0$ for $i \neq 0, 1$. 
    Furthermore, $\Lambda$ is a tcf sheaf.
\end{theorem}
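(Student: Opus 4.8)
The plan is to run the argument of Theorem~\ref{thm:decomposition-cubic} in the much simpler relative-dimension-one setting, which is already packaged as Example~\ref{ex:intdecompcurve}. First I would record the basic properties of $p\colon \sC \to B$. Since $q\colon \sC \to S$ is a Zariski-locally trivial $\bbP^{g-1}$-bundle over the smooth surface $S$, the total space $\sC$ is smooth, and $B = \bbP^g$ is smooth. Every fibre of $p$ is a member of $|\calL|$, hence a purely one-dimensional effective divisor on $S$; thus $p$ is equidimensional onto a smooth base with Cohen--Macaulay source, so it is flat of relative dimension one. For $g \ge 2$ we have $\calL^2 = 2g-2 > 0$, and the vanishing $\rmH^1(S, \calL) = 0$ on a K3 surface gives $\rmH^0(\sO_{C_b}) = \C$ for every member $C_b$, so the fibres of $p$ are connected and $\rmR^0 p_* \bbZ_\sC = \bbZ_B$.

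Next I would produce the distinguished cohomology class. Because $\mathrm{NS}(S)$ is saturated in the unimodular lattice $\rmH^2(S,\bbZ)$, primitivity of $|\calL|$ means $\calL$ is primitive in $\rmH^2(S,\bbZ)$, so there is a class $\alpha \in \rmH^2(S,\bbZ)$ with $\alpha \cdot \calL = 1$ (the exact analogue of the class $b$ in \eqref{eq:b-class}, and in general not algebraic). Setting $\xi \coloneqq q^*\alpha \in \rmH^2(\sC,\bbZ)$, its restriction to a smooth fibre $C_b$ has degree $\alpha \cdot \calL = 1$, hence generates $\rmH^2(C_b,\bbZ)\simeq \bbZ$. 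Applying Example~\ref{ex:intdecompcurve} (equivalently Proposition~\ref{prop:decomp-Z} with $\xi_0 = 1$, $\xi_1 = \xi$) yields the decomposition \eqref{eq:Rp*ZBM}, with a complex $\Lambda^\bullet$ whose cohomology sheaves are concentrated in degrees $0$ and $1$.

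To identify these sheaves I would take cohomology sheaves in \eqref{eq:Rp*ZBM}: degree $0$ recovers $\rmR^0 p_* \bbZ_\sC = \bbZ_B$; degree $1$ gives $\mathcal{H}^0(\Lambda^\bullet) \simeq \rmR^1 p_* \bbZ_\sC = \Lambda$. In degree $2$ the summand $\bbZ_B[-2]$ is the image of the morphism attached to $\xi$, and since $\deg(\xi|_{C_b}) = 1$ this class splits the degree map $\rmR^2 p_*\bbZ_\sC \to \bbZ_B$ appearing in the defining sequence of $\sQ$; comparing the two splittings identifies $\mathcal{H}^1(\Lambda^\bullet) \simeq \sQ$, which is therefore supported on the locus of reducible members of $|\calL|$.

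Finally, the tcf property of $\Lambda$ follows from Corollary~\ref{thm:R1f-tcf-cor}. Its only non-formal hypothesis is the absence of strictly multiple fibres in codimension one, and here there are none at all: if a member $D \in |\calL|$ were strictly multiple with $\gcd$ of multiplicities $m > 1$, then $\calL = [D]$ would be $m$-divisible in $\mathrm{NS}(S)$, contradicting primitivity. Since $p$ is flat, proper, surjective with connected fibres and smooth source and base, the corollary applies. I expect no serious obstacle in this argument: unlike Theorem~\ref{thm:decomposition-cubic}, the statement does not assert that $\Lambda^\bullet$ is an intersection complex --- the locus of reducible curves may genuinely be a divisor in $B$ --- so no support or cosupport analysis is required, and the only points needing care are the three bookkeeping verifications (connectedness of fibres, the existence of $\alpha$, and the vanishing of strictly multiple fibres), each of which reduces to a one-line use of primitivity or unimodularity.
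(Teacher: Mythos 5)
Your proof is correct and follows essentially the same route as the paper's: use primitivity of $\calL$ together with unimodularity of $\rmH^2(S,\bbZ)$ to produce a class of fibrewise degree one, apply Example~\ref{ex:intdecompcurve} (i.e.\ Proposition~\ref{prop:decomp-Z}) for the decomposition, and invoke Corollary~\ref{thm:R1f-tcf-cor} for the tcf property after ruling out strictly multiple fibres via primitivity. The extra bookkeeping you supply (flatness, connectedness of fibres, and the comparison of the two splittings identifying $\HH^1(\Lambda^\bullet)\simeq\sQ$) is accurate and simply makes explicit what the paper leaves implicit.
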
 
\begin{proof}
    Since $\calL$ is primitive and $\rmH^2(S,\bbZ)$ is unimodular, there exists a class $\beta\in \rmH^2(S,\bbZ)$ such that $\beta\cdot c_1(\calL)=1$. In particular, the pullback $\xi = q^*(\beta) \in \rmH^2(\calC,\bbZ)$ restricts to $1\in \rmH^2(\calC_b,\bbZ)$ for all $b\in U$. 
    Therefore decomposition $\rmR p_* \Z_\CC$ follows from Example \ref{ex:intdecompcurve}.

    The last statement follows from Corollary \ref{thm:R1f-tcf-cor} since $p$ has no strictly multiple fibers because $\beta\cdot c_1(\LL) = 1$.
\end{proof}

\begin{remark}
    Note from Example \ref{ex:intdecompcurve} that if we assume in addition that the fibers of $p$ are integral in codimension one, then $\Lambda^\bullet[\dim B]\simeq {\rm IC}_B(\Lambda_{U})$ is an intersection complex.
\end{remark}

Denote by $\Sigma\subset \rmH^2(S,\bbZ)$, the sublattice generated by classes of all irreducible components of curves in $B$. 
We have the following analog of \cref{thm:cohom-Lambda} and \cref{lem:H1LambdaSigma}.

\begin{lemma}\label{lem:CohLambdaBMSystem}
We have
    \begin{equation}\label{eq:QcohomLambdaBM}
\rmH^k(B, \Lambda^\bullet) = 
    \begin{cases}
        \Z^{\oplus 21} & \text{if } k=1 \text{ or } k=2g-1,\\
        \Z^{\oplus 22} & \text{if } 1<k<2g-1, \ k \text{ odd,}\\
        0 &  \text{otherwise.}\\
    \end{cases}
\end{equation}
There is a canonical isomorphism of Hodge structures
$$\rmH^1(B,\Lambda)\simeq \Sigma^\perp\subset \rmH^2(S,\bbZ).$$
\end{lemma}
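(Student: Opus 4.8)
The plan is to mirror the arguments of \cref{thm:cohom-Lambda} and \cref{lem:H1LambdaSigma}, with the universal curve $p\colon \calC \to B$ playing the role of the universal hyperplane section. First I would compute $\rmH^*(\calC,\bbZ)$ explicitly. Since $q\colon \calC \to S$ is a Zariski locally trivial $\bbP^{g-1}$-bundle, the projective bundle formula gives $\rmH^*(\calC,\bbZ) = \bigoplus_{i=0}^{g-1} q^*\rmH^*(S,\bbZ)\cdot \zeta^i$, where $\zeta = p^*H$ restricts to the hyperplane class on the fibres of $q$. As $S$ is a K3 surface, all these groups are free and concentrated in even degrees, and a direct count gives $\operatorname{rk}\rmH^{2j}(\calC,\bbZ) = 1$ for $j=0,g+1$, $\ =23$ for $j=1,g$, and $\ =24$ for $2\le j\le g-1$. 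Feeding this into the splitting $\rmH^n(\calC,\bbZ) \simeq \rmH^n(B,\bbZ) \oplus \rmH^{n-1}(B,\Lambda^\bullet) \oplus \rmH^{n-2}(B,\bbZ)$ coming from \eqref{eq:Rp*ZBM} yields at once the freeness of each $\rmH^k(B,\Lambda^\bullet)$ (a direct summand of a free group) and its rank: for $n=2j$ we subtract the two copies of $\rmH^{\mathrm{even}}(\bbP^g,\bbZ)=\bbZ$, obtaining rank $21$ at the ends $j=1,g$ and rank $22$ in between, while for $n$ odd every term vanishes, so $\rmH^k(B,\Lambda^\bullet)=0$ for even $k$. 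This establishes \eqref{eq:QcohomLambdaBM}.

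Next, for the Hodge structure, I would identify $\rmH^1(B,\Lambda^\bullet)$ with the middle cohomology of the complex from \cref{rmk:splittingcohomYY}; here $d=1$, so only $\xi_0=1$ is used on the left and the complex reads
\[
\rmH^2(B,\bbZ) \xrightarrow{\ p^*\ } \rmH^2(\calC,\bbZ) \xrightarrow{\ p_*\ } \rmH^0(B,\bbZ).
\]
Writing $\rmH^2(\calC,\bbZ) = \bbZ\zeta \oplus q^*\rmH^2(S,\bbZ)$, the image of $p^*$ is $\bbZ\zeta$, while $p_*$ annihilates $\zeta$ and sends $q^*\alpha \mapsto \alpha\cdot c_1(\calL)$ (the degree of $\alpha$ along a fibre). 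Hence $\rmH^1(B,\Lambda^\bullet) \simeq c_1(\calL)^\perp \subset \rmH^2(S,\bbZ)$ as Hodge structures, a lattice of rank $21$, consistent with the count above.

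Finally, to descend from $\Lambda^\bullet$ to $\Lambda$ I would run the Snake Lemma argument of \cref{lem:H1LambdaSigma}. Using $\HH^0(\Lambda^\bullet)=\Lambda$ and $\HH^1(\Lambda^\bullet)=\calQ$ from \cref{thm:decomposition-K3}, the triangle $\Lambda \to \Lambda^\bullet \to \calQ[-1] \to \Lambda[1]$ identifies $\rmH^1(B,\Lambda)$ with $\ker\!\left(\rmH^1(B,\Lambda^\bullet)\to \rmH^0(B,\calQ)\right)$. I would then compare the sequence $0 \to \rmH^1(B,\Lambda^\bullet) \to \rmH^2(S,\bbZ) \xrightarrow{\cdot c_1(\calL)} \rmH^0(B,\bbZ) \to 0$ with the global-sections sequence of $0 \to \calQ \to \rmR^2 p_*\bbZ_\calC \to \bbZ_B \to 0$ (the right map being surjective on sections because $\xi=q^*\beta$, with $\beta\cdot c_1(\calL)=1$, gives a fibrewise degree-one section), the two being linked by restriction to fibres; the Snake Lemma then gives $\rmH^1(B,\Lambda) \simeq \ker\!\left(\rmH^2(S,\bbZ)\to \rmH^0(B,\rmR^2 p_*\bbZ_\calC)\right)$. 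The anticipated main obstacle is the reducible-fibre bookkeeping in this last step: I must check that $\alpha|_{C_b}=0$ for all $b$ is equivalent to $\alpha\cdot[C_b^{(i)}]=0$ for every component $C_b^{(i)}$ (so that the kernel is genuinely $\Sigma^\perp$ rather than just $c_1(\calL)^\perp$), which uses torsion-freeness of $\rmH^2(C_b,\bbZ)$ and the detection of restrictions by intersection numbers on components, and that the defining sequence of $\calQ$ remains exact on global sections. Granting this, the kernel is exactly $\Sigma^\perp$ and all maps are morphisms of Hodge structures, completing the proof.
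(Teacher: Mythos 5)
Your proposal is correct and follows essentially the same route as the paper: the same splitting $\rmR p_*\Z_\CC \simeq \Z_B \oplus \Lambda^\bullet[-1] \oplus \Z_B[-2]$ combined with the $\bbP^{g-1}$-bundle structure of $q\colon \CC \to S$ to get the ranks and the identification $\rmH^1(B,\Lambda^\bullet)\simeq c_1(\calL)^\perp$, followed by the triangle $\Lambda \to \Lambda^\bullet \to \calQ[-1]$ and the Snake-Lemma comparison (as in \cref{lem:H1LambdaSigma}) identifying $\rmH^1(B,\Lambda)$ with the classes restricting trivially to all components of all fibers, i.e.\ with $\Sigma^\perp$. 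The only cosmetic difference is that you read off the ranks of $\rmH^k(B,\Lambda^\bullet)$ as direct summands of $\rmH^*(\CC,\Z)$ rather than as middle cohomology of the three-term complex, which is an equivalent bookkeeping of the same decomposition.
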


\begin{proof}
    Similarly to the proof of \cref{thm:cohom-Lambda}, from (\ref{eq:Rp*ZBM}) we get that $\rmH^{k-1}(B,\Lambda^\bullet)$ is the middle cohomology of the sequence
    \begin{equation}\label{eq:cohlambdaBMproof}
        \rmH^k(B,\bbZ) \xrightarrow{p^*} \rmH^k(\calC,\bbZ) \xrightarrow{p_*} \rmH^{k-2}(B,\bbZ).
    \end{equation}
    Since $q\colon \calC\to S$ is a $\bbP^{g-1}$-bundle,
    we can immediately compute $\rmH^k(\CC, \Z)$ and the middle cohomology of the sequence \eqref{eq:cohlambdaBMproof}.
    For example, for $k = 2$
    the kernel of the second map in (\ref{eq:cohlambdaBMproof}) is given by 
    \[
    \rmH^2(B,\bbZ)\oplus \Ker(\rmH^2(S,\bbZ) \xrightarrow{(-)\cdot c_1(\calL)} \bbZ)
    \]
    Thus, we get $\rmH^1(B,\Lambda^\bullet)\simeq c_1(\calL)^\perp\subset \rmH^2(S,\bbZ)$ which is isomorphic to $\Z^{\oplus 21}$. The first statement follows by a similar computation for $k > 1$.

    Consider the triangle $\Lambda \to \Lambda^\bullet \to \sQ[-1] \to \Lambda[1]$. 
    Using an argument similar to Lemma \ref{lem:H1LambdaSigma},
    we obtain that 
    $$\rmH^1(B,\Lambda) \simeq \Ker\left(\rmH^2(S,\bbZ)\to \rmH^0(B, \rmR^2p_*\bbZ_{\sC})\right).$$
    Therefore, the classes in $\rmH^1(B,\Lambda)$
    correspond to the classes in $\rmH^2(S,\bbZ)$ that restrict trivially on all components of all fibers. We get $\rmH^1(B,\Lambda)=\Sigma^\perp$. 
\end{proof}

Analogously to Definition \ref{def:wtJJ}, we consider the sheaf $\widetilde{\calJ}\coloneqq \rmR^2p_*\bbZ_{\calC}(1)_\rmD$. From 
\eqref{eq:Z2D-def} we have
\begin{equation}
    \widetilde{\calJ}\simeq \rmR^1p_*\calO_\calC^*,
\end{equation}
 i.e. $\wt{\sJ}$ is isomorphic to the relative Picard sheaf of $\calC$ over $B$.
The long exact sequence associated to the pushforward of the exponential sequence of $\calC$ gives an exact sequence
\begin{equation}\label{eq:4termseqBM}
   0 \to \rmR^1p_*\bbZ_{\calC} \to \rmR^1p_*\calO_\calC \to \widetilde{\calJ} \xrightarrow{\rm deg} \rmR^2p_*\bbZ_{\calC} \to 0.
\end{equation}
Set
$$\Lambda\coloneqq \rmR^1p_*\bbZ_{\calC} \text{ and } \calJ\coloneqq \calJ_{\calC/B} \simeq \Ker({\rm deg}).$$ 
Note that a computation similar to Proposition \ref{prop:pushforward-Omega1} (see \cite[Lemma 7.3]{MarkmanLagrangian}) shows that $\rmR^1p_*\calO_\calC\simeq \Omega^1_B$. 
The sequence (\ref{eq:4termseqBM}) splits into the two sequences
\begin{align}
    &0 \to \Lambda \to \Omega^1_B \to \calJ \to 0 \label{eq:LambdaOmegaJBM}\\
    &0 \to \calJ \to \widetilde{\calJ} \xrightarrow{\rm deg} \rmR^2p_*\bbZ_{\calC} \to 0. \label{eq:JJtildeZBM}
\end{align}
As observed in \cref{rmk:IJ-DelCoh}, \eqref{eq:JJtildeZBM} can be seen as an analog of the sequence 
\[
0 \to \Pic^0(C)\to \Pic(C)\to \rmH^2(C,\bbZ) \to 0
\]
for a family of curves.

\begin{citedprop}[(\cite{MarkmanLagrangian}, \S 7.1)]\label{prop:5termseqBM}
    There is a commutative diagram with exact rows and columns
    $$\begin{tikzcd}[column sep = 0.8cm]
    & & & 0 & 0 & \\
    & & & \rmH^2(B,\Lambda) \ar[u] \ar[r,"\simeq"] & \rmH^2(B,\Lambda) \ar[u] & \\
        0 \ar[r] & \rmH^0(B,\calJ) \ar[r]  & \rmH^0(B,\widetilde{\calJ}) \ar[r, "{\rmH^0({\rm deg})}"] & \rmH^0(B,\rmR^2p_*\bbZ_\calC) \ar[r] \ar[u] & \rmH^1(B,\calJ) \ar[u] \ar[r] & \rmH^1(B,\widetilde{\calJ}) \ar[r] & 0. \\
          0 \ar[r] & (\Sigma^\perp)^{1,1}  \ar[u,"\simeq"]\ar[r] & \NS(S) \ar[r]  \ar[u,"\simeq"]& \rmH^2(S,\bbZ)/\Sigma^{\perp} \ar[r] \ar[u] & \Br_{\an}(\Sigma^\perp) \ar[r] \ar[u] & \Br_{\rm an}(S) \ar[r] \ar[u,"\simeq"] & 0 \\
          & & & 0 \ar[u] & 0\ar[u] & 
    \end{tikzcd}$$
\end{citedprop}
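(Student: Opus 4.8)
The plan is to reprove the diagram by running, \emph{mutatis mutandis}, the argument of \cref{thm:Sha-seq-nonDG}, with the second Deligne complex $\bbZ_\calY(2)_\rmD$ replaced by the first Deligne complex $\bbZ_\calC(1)_\rmD \simeq \OO_\calC^*[-1]$, the universal hyperplane section $\calY \to B$ replaced by the universal curve $\calC \to B$, and degree-$4$ cohomology replaced by degree-$2$ cohomology. The starting point is the distinguished triangle analogous to \eqref{eq:distinguished-deligne},
\[
\OO_\calC[-1] \to \bbZ_\calC(1)_\rmD \to \bbZ_\calC \to \OO_\calC,
\]
to which I would apply the three Leray spectral sequences $E^{p,q}_2(-) = \rmH^p(B,\rmR^q p_*(-)) \Longrightarrow \rmH^{p+q}(\calC,-)$ for the three complexes $\bbZ_\calC(1)_\rmD$, $\bbZ_\calC$ and $\OO_\calC$, in total degree $2$, extracting the first step $\rmL_1\rmH^2(-)$ of the Leray filtration. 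This yields a commutative diagram of three exact rows of the form $\rmL_1\rmH^2(-) \to \rmH^2(-) \to E^{0,2}_\infty(-) \to 0$, precisely as in the cubic case.

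The inputs needed are the Beauville--Mukai analogs of the ingredients of \cref{thm:Sha-seq-nonDG}. \cref{thm:decomposition-K3} gives $E^{0,2}_\infty(\bbZ_\calC) = \Ker(\rmH^0(B,\rmR^2 p_*\bbZ_\calC) \to \rmH^2(B,\Lambda))$ together with the summand $\rmL_1\rmH^2(\bbZ_\calC(1)_\rmD) \simeq p^*\Pic(B) \simeq \bbZ$ to be discarded. For the top row I would establish the analog of \cref{thm:main-cohom-wtJJ}: since $\bbZ_\calC(1)_\rmD \simeq \OO_\calC^*[-1]$ and the fibers are curves, the higher direct images $\rmR^q p_*\bbZ_\calC(1)_\rmD$ vanish except for $\rmR^1 = \OO_B^*$ and $\rmR^2 = \wt\calJ$ (compare \cref{lem:dbpushforward}), so this spectral sequence has only two nonzero rows; since $\Br_{\an}(\bbP^g) = 0$ and the only possible target $\rmH^3(\bbP^g,\OO^*)\simeq\bbZ$ can receive only the divisible group in question, it degenerates in the relevant range, and using $\rmH^1(\calC,\OO_\calC) = \rmH^1(S,\OO_S) = 0$ one gets $\rmH^2(\calC,\bbZ_\calC(1)_\rmD) = \Pic(\calC) = \NS(S) \oplus \bbZ$, hence $\rmH^0(B,\wt\calJ) \simeq \NS(S)$. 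For the bottom row, the de Rham term is controlled by $\rmR^1 p_*\OO_\calC \simeq \Omega^1_B$ and $\rmH^2(\calC,\OO_\calC) = \rmH^2(S,\OO_S)$, while \cref{lem:CohLambdaBMSystem} supplies $\rmH^1(B,\Lambda) \simeq \Sigma^\perp \subset \rmH^2(S,\bbZ)$. Applying the Snake Lemma to the lower two rows then produces the five-term middle sequence; here the kernel and cokernel of the vertical map $\psi \colon \rmH^2(S,\bbZ) \to \rmH^2(S,\OO_S)$ are, by the Hodge structure of $S$, exactly $\NS(S)$ and $\Br_{\an}(S) = \rmH^2(S,\OO_S)/\image\,\rmH^2(S,\bbZ)$, so the identification $\rmH^1(B,\wt\calJ) \simeq \Br_{\an}(S)$ falls out automatically. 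The columns $\rmH^2(B,\Lambda) \xrightarrow{\sim} \rmH^2(B,\Lambda)$ and the vertical maps $\rmH^0(B,\rmR^2 p_*\bbZ_\calC) \to \rmH^2(B,\Lambda)$, $\rmH^1(B,\calJ) \to \rmH^2(B,\Lambda)$ come from \cref{thm:decomposition-K3} and \eqref{eq:LambdaOmegaJBM}, exactly as in \cref{thm:Sha-seq-nonDG}, and the bottom exact row is the standard presentation of the Hodge structure $\rmH^2(S,\bbZ)$ via $\Sigma^\perp$ and its Brauer group.

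In contrast to the cubic fourfold setting, the principal difficulty of \cref{thm:Sha-seq-nonDG} --- the failure of the Leray spectral sequence to degenerate at the second page and the ensuing analysis of the exponential differentials $\bbC \to \bbC^*$ --- does not arise here, because $\bbZ_\calC(1)_\rmD \simeq \OO_\calC^*[-1]$ has its higher direct images concentrated in two adjacent rows. I therefore expect the real work to be of a bookkeeping nature: pinning down $\rmL_1\rmH^2(\bbZ_\calC(1)_\rmD)$ as a \emph{saturated} rank-one summand $p^*\Pic(B)$ of $\Pic(\calC)$, so that the quotient is exactly $\NS(S)$ rather than a finite-index subgroup, and verifying the commutativity of the full $5 \times 4$ grid. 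As in the proof of \cref{thm:Sha-seq-nonDG}, the latter reduces to the compatibility of the connecting homomorphism with the three restriction maps, which follows from the naturality of the Leray spectral sequences attached to the displayed distinguished triangle.
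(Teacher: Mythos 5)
Your overall strategy is the one the paper intends: its proof of \cref{prop:5termseqBM} is a one-line deferral to ``computations with cohomology and Leray spectral sequences similar to \S\ref{sec:cohomologycomputations}'', and your transposition of the proof of \cref{thm:Sha-seq-nonDG} (the distinguished triangle, the three Leray filtrations in the relevant total degree, the Snake Lemma after splitting off $p^*\Pic(B)$, the bottom row coming from \cref{lem:Bprim}, and the identification $\rmH^1(B,\Lambda)\simeq\Sigma^\perp$ from \cref{lem:CohLambdaBMSystem}) is the correct dictionary. However, there is a genuine gap at exactly the point you declare to be a non-issue: the degeneration of the Leray spectral sequence for $\bbZ_\calC(1)_\rmD\simeq\OO^*_\calC[-1]$. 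The one problematic differential is
\[
d_2\colon E_2^{1,2}=\rmH^1(B,\widetilde{\calJ})\longrightarrow E_2^{3,1}=\rmH^3(B,\OO^*_B)\simeq \Z,
\]
and your justification for its vanishing --- that $\Z$ ``can receive only the divisible group in question'' --- is circular. A priori the spectral sequence only gives an exact sequence $0\to\rmH^2(\calC,\OO^*_\calC)\to\rmH^1(B,\widetilde{\calJ})\xrightarrow{d_2}\Z$. The subgroup $\rmH^2(\calC,\OO^*_\calC)$ is indeed divisible (it is a quotient of $\rmH^2(\calC,\OO_\calC)\simeq\C$, using $\rmH^3(\calC,\Z)=0$), but divisibility of $\rmH^1(B,\widetilde{\calJ})$ itself is precisely what is at stake: if $d_2$ were nonzero, then $\rmH^1(B,\widetilde{\calJ})\simeq\Br_{\an}(S)\oplus\Z$ would fail to be divisible. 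Nothing established before this point rules that out: from the sheaf sequences \eqref{eq:LambdaOmegaJBM} and \eqref{eq:JJtildeZBM} one only sees that $\rmH^1(B,\widetilde{\calJ})$ is an extension involving the finitely generated groups $\rmH^2(B,\Lambda)$ and $\rmH^1(B,\rmR^2p_*\bbZ_\calC)$ (and $\calQ\neq 0$ in general, since reducible members of $|\calL|$ are allowed), so no a priori divisibility is available.

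The repair is short but needs an actual argument; note also that the paper's device for the analogous differential in \cref{thm:main-cohom-wtJJ} --- a rank count along a diagonal of finitely generated groups --- does not transpose here, since neither $\rmH^1(B,\widetilde{\calJ})$ nor the abutment is finitely generated. Work with $\OO^*_\calC$ unshifted, so the differential in question is $d_2\colon E_2^{1,1}\to E_2^{3,0}$ with $E_2^{3,0}=\rmH^3(B,\OO^*_B)$. Since $E_2^{0,2}=\rmH^0(B,\rmR^2p_*\OO^*_\calC)=0$, this is the only differential into position $(3,0)$, so $d_2=0$ if and only if the edge map $\rmH^3(B,\OO^*_B)\to\rmH^3(\calC,\OO^*_\calC)$, which factors as $E_2^{3,0}\twoheadrightarrow E_\infty^{3,0}\into\rmH^3(\calC,\OO^*_\calC)$, is injective. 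This injectivity follows by comparing exponential sequences: the connecting map gives $\rmH^3(B,\OO^*_B)\simeq\rmH^4(B,\Z)$ because $\rmH^3(B,\OO_B)=\rmH^4(B,\OO_B)=0$; the connecting map $\rmH^3(\calC,\OO^*_\calC)\to\rmH^4(\calC,\Z)$ is injective because $\rmH^3(\calC,\OO_\calC)=\rmH^3(S,\OO_S)=0$; and $p^*\colon\rmH^4(B,\Z)\to\rmH^4(\calC,\Z)$ is split injective, e.g.\ by the decomposition of \cref{thm:decomposition-K3} or because $p_*(\xi\cdot p^*\alpha)=\alpha$. The relevant square commutes by naturality of the exponential sequence under $p^{-1}$, so $\rmH^3(B,\OO^*_B)\to\rmH^3(\calC,\OO^*_\calC)$ is injective and $d_2=0$. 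With this inserted, your computation of $\rmH^1(B,\widetilde{\calJ})\simeq\rmH^2(\calC,\OO^*_\calC)\simeq\Br_{\an}(S)$ is justified and the rest of your proposal goes through.
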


\begin{proof}
For the definition of $\BrAn(\Sigma^\perp)$, see Appendix \ref{app:Brauer}. All the identifications come from computations with cohomology and Leray spectral sequences similar to \S\ref{sec:CohandPushforwards}.
\end{proof}

Proposition \ref{prop:5termseqBM} is a complete analog of Theorem \ref{thm:Sha-seq-nonDG}. In particular, denoting $\rmH^1(B,\calJ)^0\coloneqq \Ker (\rmH^1(B,\calJ) \to \rmH^2(B,\Lambda))$ we obtain the exact sequence
\begin{equation}\label{eq:Sha-sequenceBM}
    0 \to \rmH^2(S,\bbZ)/(\Sigma^\perp+\NS(S)) \to \rmH^1(B,\calJ)^0 \to \Br_{\rm an}(S) \to 0
\end{equation}
which is \cite[(7.7)]{MarkmanLagrangian}. It is also (\ref{eq:BrAn-sequence}) for the inclusion of Hodge lattice $\Sigma^\perp\subset \rmH^2(S,\bbZ)$.
\vspace{1\baselineskip}

 When smooth, the moduli space $M\coloneqq M_{S}(0,c_1(\calL),d+1-g)$
 of Gieseker stable  sheaves (with respect to a general polarization) on $S$ is a hyperk\"ahler manifold of K$3^{[g]}$-type. 
 It is equipped with a Lagrangian fibration $f\colon M\to B$, the fibers of which are compactified (degree $d$) Jacobians of the corresponding curves. 
 It is known as the Beauville--Mukai system \cite{MukaiBMsystem, BeaBMsystem}.

 Recall from \eqref{eq:exseq-expAM} the construction of the relative Albanese sheaf $\sA_{M/B}$. Analogously to Corollary \ref{cor:JisAM}, we have the following.

 \begin{theorem}\label{thm:PicisPicPicBM}
 Let $f\colon M\to B$ be a Beauville--Mukai system associated to a base point free primitive linear system $B=|\calL|$ of genus at least $2$ on a K3 surface. Assume that $f$ has no strictly multiple fibers
 in codimension $1$.
     Then we have
     $$\calJ = \JJ_{\CC/B} \simeq \calJ_{M/B}   \simeq \calA_{M/B}.$$
 \end{theorem}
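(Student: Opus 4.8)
The plan is to mirror the proof of \cref{cor:JisAM}. Since $\calJ = \JJ_{\CC/B}$ holds by the very definition of $\calJ$ in this appendix, the content of the theorem is the pair of isomorphisms $\JJ_{\CC/B} \simeq \calJ_{M/B}$ and $\calJ_{M/B} \simeq \calA_{M/B}$. The first I would extract from the Hodge-module comparison of \S\ref{sec:HM}, and the second from \cref{thm:JMBisAMppHK}, once the Beauville--Mukai system is seen to be principally polarized. Chaining the two then gives the asserted string of isomorphisms.

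For the first isomorphism I would work over the smooth locus $j\colon U \into B$. Over $U$ the fibre of $f$ above a point parametrizing a smooth curve $C$ is (a translate of) its Jacobian, and there is a canonical identification of weight-one polarized Hodge structures $\rmH^1(\Pic^d(C), \Z) \simeq \rmH^1(C, \Z)$; in contrast to the cubic fourfold case of \cref{thm:comparisonJandJM}, no Tate twist intervenes here, since both sides already have weight one. Consequently the variations of Hodge structure underlying $\Lambda_U = \rmR^1 p_{U*}\Z_{\CC_U}$ and $\Lambda_{M,U} = \rmR^1 f_{U*}\Z_{M_U}$ are isomorphic, hence so are their minimal Hodge-module extensions, say $\sM \simeq \sM_M$. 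The morphism $p\colon \CC \to B$ is flat with connected fibres, and because there exists $\beta \in \rmH^2(S, \Z)$ with $\beta \cdot c_1(\calL) = 1$ (already used in \cref{thm:decomposition-K3}) its fibres carry a class of degree one and so are not strictly multiple; the morphism $f$ is flat with no strictly multiple fibres in codimension one by hypothesis. Applying \cref{thm:HMreljac} to $p$ and to $f$ therefore yields $\calJ = \JJ_{\CC/B} \simeq \sJ(\sM) = \sJ(\sM_M) \simeq \calJ_{M/B}$.

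For the second isomorphism I would invoke principality of the polarization. The smooth fibres of $f$ are Jacobians of genus-$g$ curves, hence principally polarized abelian varieties; the relative theta divisor of the Beauville--Mukai system provides a special Kähler class $\omega \in \rmH^2(M, \Q)$ of polarization type $(1,\dots,1)$, that is, a principal one (exactly as the relative theta divisor of \cite[\S5]{LSV} was used in \cref{cor:JisAM}). With this input \cref{thm:JMBisAMppHK} applies directly and gives $\Lambda_M \simeq \Gamma_{M}$ together with $\calJ_{M/B} \simeq \calA_{M/B}$. Combining with the previous paragraph completes the proof.

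The argument is conceptual rather than computational: once the machinery of \S\ref{sec:HM} is available, the proof is essentially formal, and the two classical inputs — the canonical weight-one identity $\rmH^1(C,\Z) \simeq \rmH^1(J(C),\Z)$ (precisely what removes the Tate twist present in the cubic case) and principality of the Jacobian polarization — are standard. The only point requiring care, and hence the main potential obstacle, is the verification of the hypotheses of \cref{thm:HMreljac} and \cref{thm:JMBisAMppHK}: flatness, connectedness of fibres, smoothness of the total spaces $\CC$ and $M$, and the absence of strictly multiple fibres in codimension one for both $p$ and $f$. For $p$ these follow from the existence of $\beta$ and from $\CC$ being a $\bbP^{g-1}$-bundle over $S$; for $f$ they are either built into the Beauville--Mukai construction or assumed in the statement.
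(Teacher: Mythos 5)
Your proposal is correct and follows essentially the same route as the paper's proof: identify the weight-one variations of Hodge structure of the two families over the smooth locus, apply Theorem \ref{thm:HMreljac} to both $p$ and $f$ (checking flatness, connectedness, and absence of strictly multiple fibers) to obtain $\calJ \simeq \calJ_{M/B}$, and then deduce $\calJ_{M/B} \simeq \calA_{M/B}$ from Theorem \ref{thm:JMBisAMppHK} once the special K\"ahler class is known to be principal. The only difference is at the level of citation: where you appeal to a relative theta divisor for principality, the paper invokes \cite[Theorem 6.1]{WieneckPolType}, which is the safer reference, since producing a global class on $M$ restricting to the fiberwise theta class is precisely the point that needs justification.
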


 \begin{proof}
     Denote $\Lambda_M=\rmR^1f_*\bbZ_M$. 
     Note that the two vHs underlying the local systems $\Lambda_U=\rmR^1p_{U*}\bbZ_{\calC_U}$ and $\Lambda_{M_U}=\rmR^1f_{U*}\bbZ_{M_U}$ agree. 
    Since both $\Lambda$ and $\Lambda_M$ are tcf everywhere on $B$ by Corollary \ref{thm:R1f-tcf-cor}, they are isomorphic.
     The isomorphism $\calJ_{\calC/B}\simeq \calJ_{M/B}$ follows from Theorem \ref{thm:HMreljac}.

     It is known that all Beauville--Mukai systems admit a principal special K\"ahler class, see for instance \cite[Theorem 6.1]{WieneckPolType}. 
     Therefore, the isomorphism $\calJ_{M/B}\simeq \calA_{M/B}$ follows from Theorem \ref{thm:JMBisAMppHK}.
 \end{proof}

 \begin{remark}
    The assumption on the fibers of $f$ holds
for example when 
the locus of nonreduced 
curves in the linear system $|\calL|$
has codimension at least two \cite[Theorem 2.3]{MRVFineJacobians}.
We expect that the assumption on codimension $1$ fibers of $f$ follows from the assumptions on codimension $1$ curves in $|\calL|$ (see e.g.\ \cite{CKLRjacADE}).

\end{remark}
 
\begin{corollary}
    Under the assumptions of Theorem \ref{thm:PicisPicPicBM}, assume in addition that all curves parametrized by $B$ are integral. Then there is an exact sequence
     \begin{equation}\label{eq:Sha-seqBM-irred}
         0 \to \bbZ/d\bbZ \to \Sha(M/B) \to \Br_{\rm an}(S) \to 0
     \end{equation}
     where $d$ is the divisibility of $c_1(\calL)$ in $\NS(S)$.
\end{corollary}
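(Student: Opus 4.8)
The plan is to deduce \eqref{eq:Sha-seqBM-irred} from the exact sequence \eqref{eq:Sha-sequenceBM} of Proposition \ref{prop:5termseqBM}, after simplifying both ends using the integrality hypothesis. First I would transport the problem to the curve side: by Theorem \ref{thm:PicisPicPicBM} we have $\calA_{M/B}\simeq \calJ$, so by Definition \ref{def:ShaLagFib}
\[
\Sha(M/B)=\rmH^1(B,\calA_{M/B})\simeq \rmH^1(B,\calJ).
\]

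Next I would exploit that all curves parametrized by $B$ are integral. On the one hand, the only irreducible components appearing are the curves themselves, of class $c_1(\calL)$, so $\Sigma=\bbZ\,c_1(\calL)$ and hence $\Sigma^\perp=c_1(\calL)^\perp$. On the other hand there are no reducible fibers, so the sheaf $\sQ$ of Theorem \ref{thm:decomposition-K3} vanishes and $\Lambda^\bullet\simeq \Lambda$; by \eqref{eq:QcohomLambdaBM} the even-degree cohomology of $\Lambda^\bullet$ is trivial, whence $\rmH^2(B,\Lambda)=0$. This forces $\rmH^1(B,\calJ)^0=\rmH^1(B,\calJ)$, since $\rmH^1(B,\calJ)^0$ is by definition the kernel of $\rmH^1(B,\calJ)\to \rmH^2(B,\Lambda)$. (Moreover, via the identification $\Gamma_M\simeq \Lambda$ coming from the principal special K\"ahler class used in Theorem \ref{thm:PicisPicPicBM} together with Theorem \ref{thm:JMBisAMppHK}, this also gives $\Sha(M/B)=\Sha^0(M/B)$.) Thus \eqref{eq:Sha-sequenceBM} already takes the form
\[
0\to \rmH^2(S,\bbZ)/(c_1(\calL)^\perp+\NS(S))\to \Sha(M/B)\to \Br_{\rm an}(S)\to 0.
\]

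It then remains to identify the left-hand term with $\bbZ/d\bbZ$. Since $|\calL|$ is primitive and $\rmH^2(S,\bbZ)$ is unimodular, the pairing $\cdot\, c_1(\calL)\colon \rmH^2(S,\bbZ)\to \bbZ$ is surjective with kernel $c_1(\calL)^\perp$, so it induces an isomorphism $\rmH^2(S,\bbZ)/c_1(\calL)^\perp\xrightarrow{\sim}\bbZ$. Under it the image of $\NS(S)$ becomes $\image\bigl(\NS(S)\xrightarrow{\cdot c_1(\calL)}\bbZ\bigr)=d\bbZ$, where $d$ is the divisibility of $c_1(\calL)$ in $\NS(S)$, i.e.\ the positive generator of $\{c_1(\calL)\cdot w : w\in \NS(S)\}$ (the same convention as for $h^2$ in Corollary \ref{cor:Sha-sequence}). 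Quotienting yields
\[
\rmH^2(S,\bbZ)/(c_1(\calL)^\perp+\NS(S))\simeq \bbZ/d\bbZ,
\]
which gives \eqref{eq:Sha-seqBM-irred}.

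The argument is essentially formal once Proposition \ref{prop:5termseqBM} and Theorem \ref{thm:PicisPicPicBM} are available, so I do not expect a genuine obstacle here; it is the exact analogue of Corollary \ref{cor:ShaseqSmoothCubic} in the cubic fourfold setting. The only points requiring care are the two bookkeeping reductions made possible by integrality — namely $\Sigma=\bbZ\,c_1(\calL)$ and the vanishing $\rmH^2(B,\Lambda)=0$, the latter being what lets me replace $\rmH^1(B,\calJ)^0$ by the full group $\Sha(M/B)$ — and fixing the precise meaning of the divisibility $d$ so that the computation of the quotient lattice is unambiguous.
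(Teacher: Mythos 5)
Your proposal is correct and takes essentially the same approach as the paper: the paper's own proof just says that integrality of the fibers gives $\Lambda^\bullet\simeq\Lambda$ and then argues as in Corollary \ref{cor:ShaseqSmoothCubic}, which unwinds to precisely your steps. Namely, $\Sha(M/B)\simeq\rmH^1(B,\calJ)$ via Theorem \ref{thm:PicisPicPicBM}, the vanishing $\rmH^2(B,\Lambda)=0$ identifying $\rmH^1(B,\calJ)^0$ with the full group in \eqref{eq:Sha-sequenceBM}, and the unimodularity computation $\rmH^2(S,\bbZ)/(c_1(\calL)^\perp+\NS(S))\simeq\bbZ/d\bbZ$.
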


\begin{proof}
Since the fibers are integral, $\Lambda^{\bullet} \simeq \Lambda$. 
Then the proof follows by an argument similar to \cref{cor:ShaseqSmoothCubic}.
\end{proof}

Note that (\ref{eq:Sha-seqBM-irred}) is an analogue of Corollary \ref{cor:Sha-sequence}. The same sequence was described by other means in \cite[Theorem 1.1]{HuyMat}.
For an interpretation of degree twists as moduli spaces, see \cite[Example 7.8]{MarkmanLagrangian}.

\section{Brauer groups of K3 type Hodge structures, after Huybrechts}
\label{app:Brauer}

 Let $H$ be an integral 
 Hodge structure of even weight $2k$.
 We say that $H$ is of K3 type, if it is a 
 finitely generated free abelian group with all 
 graded pieces of the Hodge filtration on $H_{\bbC}$ trivial, apart from
 \[
 H^{k-1,k+1}, H^{k,k}, H^{k+1,k-1} 
 \]
 and such that the pieces
 $H^{k-1,k+1}$ and $H^{k+1,k-1}$ are one-dimensional.
 This is a slight generalization of the usual definition when $k = 1$ is assumed.
By analogy with the case when $H$ is the middle cohomology of a complex K3 surface, we define 
\[
\NS(H) \coloneqq H \cap H^{k,k} = \Ker(H \to H \otimes \C \to H^{k-1,k+1}).
\]
We obtain an exact sequence
\begin{equation}\label{eq:NS-Br-seq}
0 \to \NS(H) \to H \to H^{k-1,k+1} \to \BrAn(H) \to 0.
\end{equation}
Note that all these groups do not change under Tate twisting of $H$.

When the Hodge structure $H=\rmH^{2k}(W,\bbZ)$ of a smooth projective variety $W$ is of K3 type, then we write $\Br_{\an}^{2k}(W)\coloneqq \BrAn(H)$.
In particular, if $k = 1$
(this applies if $W$ is a K3 surface),
 then 
 $\Br_{\an}(H)$ coincides with the analytic Brauer group $\Br^2_{\an}(W) \coloneqq \rmH^2_{\rm an}(W, \OO^*_W)$
 and \eqref{eq:NS-Br-seq} comes from the exponential exact sequence.

\begin{example}\label{ex:Br2-X}
If $X$ is a smooth complex cubic fourfold, then
$\rmH^4(X, \Z)$ is a Hodge structure of K3 type of weight $4$ 
and we can consider the analytic Brauer group
$\Br^4_{\an}(X) = \BrAn(\rmH^4(X,\bbZ))$.

Let $F(X)$ be the Fano variety of lines
of $X$. Beauville and Donagi proved that $\rmH^2(F(X), \Z)$ and $\rmH^4(X, \Z)(1)$
are isomorphic as Hodge structures (even though 
they are not isomorphic as Hodge lattices), see \cite[Proposition 4]{BeaDon}. We 
obtain
$\Br^2_{\an}(F(X)) \simeq \Br^4_{\an}(X)$.

\end{example}

By analogy with the geometric case
we also define
\begin{equation}\label{eq:def-BrtorsH}
\Br(H) \coloneqq
 (H / \NS(H)) \otimes \Q/\Z
\simeq
\BrAn(H)_\mathrm{tors}
\end{equation}
so
that for every $m \ge 1$ 
we have
a homomorphism
\begin{equation}\label{eq:projection-H-BrH}
\rho_m\colon H \otimes \Z/m \to \Br(H), \quad
\ol{a} \mapsto a/m
\end{equation}
which induces an isomorphism
$
\Br(H)[m] \simeq \Coker(\NS(H) \otimes \Z/m \to H \otimes \Z/m).
$

The following lemma says that the Brauer group typically becomes larger when passing to a Hodge substructure.

\begin{lemma}\label{lem:Bprim}
If $H \subset G$ is an embedding of Hodge structures of K3 type, then we have a canonical exact sequence
\begin{equation}\label{eq:BrAn-sequence}
0 \to G / (H + \NS(G)) \to \BrAn(H) \to \BrAn(G) \to 0.
\end{equation}
In particular, if $v\colon G\to \Z$
is a surjective group homomorphism and $H = \Ker(v)$, then the first term in \eqref{eq:BrAn-sequence} is the cyclic group $\Z / m$ with $m = v(\NS(G))$.
With $\rho_m$ as in \eqref{eq:projection-H-BrH}, the image of this subgroup in $\Br_{\an}(H)$
is generated by the class
\[
\rho_m(\ol{mg-s}) = (mg-s)/m  \in \Br(H) 
\subset \Br_{\an}(H)
\]
for any $g \in G$ with $v(g) = 1$ 
and $s\in \NS(G)$ with $v(s) = m$.
\end{lemma}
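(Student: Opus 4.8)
The plan is to reduce the entire statement to bookkeeping with a single complex line. Write the common weight of $H$ and $G$ as $2k$, and let $\pi_G\colon G\to G^{k-1,k+1}$ and $\pi_H\colon H\to H^{k-1,k+1}$ denote the projections onto the $(k-1,k+1)$-pieces, so that by definition $\NS(H)=\Ker(\pi_H)$, $\NS(G)=\Ker(\pi_G)$, $\BrAn(H)=H^{k-1,k+1}/\pi_H(H)$ and $\BrAn(G)=G^{k-1,k+1}/\pi_G(G)$. The first and crucial step is the observation that, since $H\subseteq G$ is a sub-Hodge structure, $H^{k-1,k+1}=H_\C\cap G^{k-1,k+1}\subseteq G^{k-1,k+1}$, where both spaces are one-dimensional and nonzero by the K3-type hypothesis; hence they are equal. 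Denote this common line by $L$; then $\pi_H=\pi_G|_H$ and $\NS(H)=H\cap\NS(G)$.

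With this identification, $\BrAn(H)=L/\pi_G(H)$ and $\BrAn(G)=L/\pi_G(G)$ are quotients of the \emph{same} line $L$ by the nested subgroups $\pi_G(H)\subseteq\pi_G(G)$. Thus the natural map $\BrAn(H)\twoheadrightarrow\BrAn(G)$ is surjective with kernel $\pi_G(G)/\pi_G(H)$. To identify this kernel I would use that $\pi_G$ induces an isomorphism $G/\NS(G)\xrightarrow{\sim}\pi_G(G)$ carrying the image of $H$ to $\pi_G(H)$; dividing out gives
\[
\pi_G(G)/\pi_G(H)\;\simeq\;(G/\NS(G))\big/\image\bigl(H\to G/\NS(G)\bigr)\;=\;G/(H+\NS(G)),
\]
which produces the exact sequence \eqref{eq:BrAn-sequence}; it is canonical because every arrow used is induced by $\pi_G$.

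For the ``in particular'' clause, with $v\colon G\to\Z$ a surjection of Hodge structures (so that $H=\Ker(v)$ is again of K3 type), the map $v$ identifies $G/H\simeq\Z$, and $G/(H+\NS(G))$ is the further quotient of $\Z$ by $v(\NS(G))=m\Z$, hence cyclic of order $m$. To pin down the generator I would chase a class $g\in G$ with $v(g)=1$ through the isomorphisms above: its class generates $G/(H+\NS(G))$, and its image in $\BrAn(H)$ is $[\pi_G(g)]\in L/\pi_G(H)$. Choosing $s\in\NS(G)$ with $v(s)=m$, the element $mg-s$ satisfies $v(mg-s)=0$, so $mg-s\in H$ and $\rho_m(\overline{mg-s})$ is defined; unwinding the isomorphism $\Br(H)=(H/\NS(H))\otimes\Q/\Z\simeq\BrAn(H)_{\mathrm{tors}}$ shows it equals $[\pi_H(mg-s)/m]=[\pi_G(mg-s)/m]$, and since $s\in\Ker(\pi_G)$ this is exactly $[\pi_G(g)]$. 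Therefore $\rho_m(\overline{mg-s})$ coincides with the image of $g$ and generates the first term of \eqref{eq:BrAn-sequence}.

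The main obstacle, though a modest one, lies in this last step: correctly matching the abstract prescription $\rho_m(\overline a)=a/m$ coming from $\Br(H)=(H/\NS(H))\otimes\Q/\Z$ with the concrete torsion classes $[\pi_H(a)/m]$ inside $L/\pi_G(H)$, and verifying that the inclusion of the kernel into $\BrAn(H)$ is literally induced by $\pi_G$. Once the single-line picture is in place and $\pi_H=\pi_G|_H$, $\NS(H)=H\cap\NS(G)$ are recorded, everything else is formal diagram manipulation.
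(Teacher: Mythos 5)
Your proof is correct and takes essentially the same route as the paper: the paper's one-line appeal to the Snake Lemma applied to the defining sequences of $\BrAn(H)$ and $\BrAn(G)$ rests on exactly your key observation that $H^{k-1,k+1}=G^{k-1,k+1}$, and its identification of the torsion generator by applying $-\otimes\Q/\Z$ to the sequence $0 \to H/\NS(H) \to G/\NS(G) \to G/(H+\NS(G)) \to 0$ is the functorial packaging of your explicit computation $\rho_m(\ol{mg-s})=[\pi_G(g)]$ on the common line. The only difference is expository: you unwind the diagram chases that the paper leaves implicit.
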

\begin{proof}
The first statement follows from the Snake Lemma applied to the definition of the Brauer group.
The second statement follows since in this case
$G/H \simeq v(G) = \Z$.

For the last claim, using \eqref{eq:def-BrtorsH} note that torsion in the sequence (\ref{eq:BrAn-sequence}) is obtained by applying $-\otimes \bbQ/\bbZ$ to
\begin{equation}\label{eq:preseqBrauer}
    0 \to H/\NS(H) \xrightarrow{j} G/\NS(G) \to G / (H + \NS(G)) \to 0.
\end{equation}
The last group in (\ref{eq:preseqBrauer}) is generated by the image of $\overline{g}$, and it is easy to check that the kernel of the surjection
$$\left(H/\NS(H)\right)\otimes \bbQ/\bbZ \twoheadrightarrow \left( G/\NS(G)\right)\otimes\bbQ/\bbZ$$
is generated by any element of the form $\rho_m(\overline{t})$, where $t\in H$ satisfies $j(\overline{t})=m\overline{g}$. One can choose $t=mg-s$, for $mg\equiv mg-s \pmod{\NS(G)}$.
\end{proof}

\begin{example}
Let $X$ be a defect general 
smooth cubic fourfold.
There are two natural hyperk\"ahler varieties
associated to $X$: the Fano variety $F(X)$
and the OG10 type Lagrangian fibration $M$
and their Brauer groups can be related
using Lemma \ref{lem:Bprim} as follows.

Set $G = \rmH^4(X, \Z)$ and take $v \colon G \to \Z$ to be cup product with $h^2$ so that $H = \rmH^4(X, \Z)_{\pr}$ and 
\[
v(G)/v(\NS(G)) \simeq \Z/\div(h^2).
\]
By Example \ref{ex:Br2-X} we have
$\Br_{\an}(G) \simeq \Br_{\an}^2(F(X))$
and by Corollary \ref{cor:Br-M}
$\Br_{\an}(H) \simeq \Br_{\an}^2(M)$.
Thus, Lemma \ref{lem:Bprim} gives a short exact sequence
\[0\to \bbZ/\div(h^2) \to \Br_{\an}^2(M)\to \Br_{\an}^2\left(F(X)\right)\to 0.\]
Combining Corollary \ref{cor:Sha-sequence} and \cref{cor:Br-M} we have a
different interpretation of this sequence.
\end{example}
    The sequence (\ref{eq:BrAn-sequence}) appears naturally in many geometric contexts. 
    See (\ref{eq:Sha-sequenceBM}) for $G=\rmH^2(S,\bbZ)$, where $S$ is a K3 surface.
For other examples of this phenomenon, see
\cite[Theorem 1.1]{HuyMat} and
\cite[Proposition 4.2]{MatteiMeinsma}.

\end{document}